\newcommand{\bk}{\Bbbk}
\newcommand{\Z}{\mathbb{Z}}
\newcommand{\C}{\mathbb{C}}
\newcommand{\Q}{\mathbb{Q}}
\newcommand{\F}{\mathbb{F}}
\newcommand{\bO}{\mathbb{O}}
\newcommand{\scO}{\mathscr{O}}
\newcommand{\Gm}{\mathbb{G}_{\mathrm{m}}}
\newcommand{\fR}{\mathfrak{R}}
\newcommand{\fS}{\mathfrak{S}}
\newcommand{\fg}{\mathfrak{g}}
\newcommand{\fb}{\mathfrak{b}}
\newcommand{\fn}{\mathfrak{n}}
\newcommand{\fp}{\mathfrak{p}}
\newcommand{\weyl}{\mathsf{M}}
\newcommand{\coweyl}{\mathsf{N}}
\newcommand{\tilt}{\mathsf{T}}
\newcommand{\irr}{\mathsf{L}}
\newcommand{\LQ}{\mathsf{L}_q}
\newcommand{\TQ}{\mathsf{T}_q}
\newcommand{\oVG}{\overline{V} \hspace{-2pt} {}_{G_1}}
\newcommand{\Gp}{\mathbf{G}}
\newcommand{\BGp}{\mathbf{B}}
\newcommand{\TGp}{\mathbf{T}}
\newcommand{\PGp}{\mathbf{P}}
\newcommand{\LGp}{\mathbf{L}}
\newcommand{\Fr}{\mathrm{Fr}}
\newcommand{\tcN}{{\widetilde{\mathcal{N}}}}
\newcommand{\cN}{\mathcal{N}}
\newcommand{\UQ}{\mathsf{U}_q}
\newcommand{\uQ}{\mathsf{u}_q}
\newcommand{\bX}{\mathbf{X}}
\newcommand{\bY}{\mathbf{Y}}
\newcommand{\Phis}{\Phi_{\mathrm{s}}}
\newcommand{\ext}{{\mathrm{ext}}}
\newcommand{\Wext}{W_{\mathrm{ext}}}
\newcommand{\uw}{\underline{w}}
\newcommand{\Wf}{W_{\mathrm{f}}}
\newcommand{\Sf}{S_{\mathrm{f}}}
\newcommand{\fW}{ {}^{\mathrm{f}} \hspace{-1pt} W}
\newcommand{\fWext}{{}^{\mathrm{f}} \hspace{-1pt} W_{\mathrm{ext}}}
\newcommand{\fWf}{ {}^{\mathrm{f}} \hspace{-1pt} W^{\mathrm{f}}}
\newcommand{\cE}{\mathcal{E}}
\newcommand{\cF}{\mathcal{F}}
\newcommand{\cG}{\mathcal{G}}
\newcommand{\cH}{\mathcal{H}}
\newcommand{\cJ}{\mathcal{J}}
\newcommand{\cK}{\mathcal{K}}
\newcommand{\cL}{\mathcal{L}}
\newcommand{\cO}{\mathcal{O}}
\newcommand{\cV}{\mathcal{V}}
\newcommand{\cT}{\mathcal{T}}
\newcommand{\Ex}{\mathscr{E}}
\newcommand{\PEx}{\mathfrak{E}}
\newcommand{\fL}{\mathfrak{L}}
\newcommand{\bc}{\mathbf{c}}
\newcommand{\Haff}{\mathcal{H}}
\newcommand{\Hf}{\mathcal{H}_{\mathrm{f}}}
\newcommand{\puH}{{}^p \hspace{-1pt} \underline{H}}
\newcommand{\uH}{\underline{H}}
\newcommand{\uN}{\underline{N}}
\newcommand{\puN}{{}^p \hspace{-1pt} \underline{N}}
\newcommand{\Masph}{\mathcal{M}_{\mathrm{asph}}}
\newcommand{\pRleq}{\leq^p_{\mathrm{R}}}
\newcommand{\Rleq}{\leq_{\mathrm{R}}}
\newcommand{\Rgeq}{\geq_{\mathrm{R}}}
\newcommand{\pRsim}{\sim^p_{\mathrm{R}}}
\newcommand{\Rsim}{\sim_{\mathrm{R}}}
\newcommand{\Tleq}{\leq_{\mathsf{T}}}
\newcommand{\Tsim}{\sim_{\mathsf{T}}}
\newcommand{\qTleq}{\leq_{\mathsf{T}}^q}
\newcommand{\qTsim}{\sim_{\mathsf{T}}^q}
\newcommand{\Gr}{\mathrm{Gr}}
\newcommand{\Fl}{\mathrm{Fl}}
\newcommand{\Iw}{\mathrm{Iw}}
\newcommand{\Iwa}{\mathbf{I}^\wedge}
\newcommand{\cC}{\mathcal{C}}
\newcommand{\cA}{\mathscr{A}}
\newcommand{\scS}{\mathscr{S}}
\newcommand{\scT}{\mathscr{T}}
\newcommand{\Irr}{\mathrm{Irr}}
\newcommand{\Lgr}{L^{\mathrm{gr}}}
\newcommand{\dgr}{\Delta^{\mathrm{gr}}}
\newcommand{\ngr}{\nabla^{\mathrm{gr}}}
\newcommand{\mix}{{\mathrm{mix}}}
\newcommand{\Perv}{\mathsf{Perv}}
\newcommand{\Par}{\mathsf{Parity}}
\newcommand{\Tilt}{\mathsf{Tilt}}
\newcommand{\Rep}{\mathsf{Rep}}
\newcommand{\Coh}{\mathsf{Coh}}
\newcommand{\Db}{D^{\mathrm{b}}}
\newcommand{\Dmix}{D^\mix}
\DeclareMathOperator{\Hom}{Hom}
\DeclareMathOperator{\Ext}{Ext}
\DeclareMathOperator{\End}{End}
\DeclareMathOperator{\Ind}{Ind}
\DeclareMathOperator{\pr}{pr}
\newcommand{\id}{\mathrm{id}}
\newcommand{\simto}{\xrightarrow{\sim}}
\newcommand{\la}{\langle}
\newcommand{\ra}{\rangle}
\newcommand{\supp}{\mathrm{Supp}}
\newcommand{\Spec}{\mathrm{Spec}}
\def\lotimes{\@ifnextchar_{\@lotimessub}{\@lotimesnosub}}
\def\@lotimessub_#1{\mathchoice{\mathbin{\mathop{\otimes}^L}_{#1}}%
  {\otimes^L_{#1}}{\otimes^L_{#1}}{\otimes^L_{#1}}}
\def\@lotimesnosub{\mathbin{\mathop{\otimes}^L}}
\numberwithin{equation}{section}
\newtheorem{thm}{Theorem}[section]
\newtheorem{lem}[thm]{Lemma}
\newtheorem{prop}[thm]{Proposition}
\newtheorem{cor}[thm]{Corollary}
\newtheorem{conj}[thm]{Conjecture}
\theoremstyle{definition}
\newtheorem{defn}[thm]{Definition}
\theoremstyle{remark}
\newtheorem{rmk}[thm]{Remark}
\newtheorem{ex}[thm]{Example}
\title[On the Humphreys conjecture on support varieties]{On the Humphreys conjecture on support varieties of tilting modules}
 \author{Pramod N. Achar}
 \address{Department of Mathematics\\
   Louisiana State University\\
   Baton Rouge, LA 70803\\
   U.S.A.}
 \email{pramod@math.lsu.edu}
  \author{William Hardesty}
   \address{Department of Mathematics\\
   Louisiana State University\\
   Baton Rouge, LA 70803\\
   U.S.A.}
  \email{whardesty@lsu.edu}
 \author{Simon Riche}
 \address{Universit\'e Clermont Auvergne, CNRS, LMBP, F-63000 Clermont-Ferrand, France.
 }
 \email{simon.riche@uca.fr}
 \thanks{P.A. was supported by NSF Grant No.~DMS-1500890. S.R. was partially supported by ANR Grant No.~ANR-13-BS01-0001-01. This project has received funding from the European Research Council (ERC) under the European Union's Horizon 2020 research and innovation programme (grant agreement No. 677147).}
\begin{document}

\begin{abstract}
Let $G$ be a simply-connected semisimple algebraic group over an algebraically closed field of characteristic $p$, assumed to be larger than the Coxeter number.  The ``support variety'' of a $G$-module $M$ is a certain closed subvariety of the nilpotent cone of $G$, defined in terms of cohomology for the first Frobenius kernel $G_1$. In the 1990s, Humphreys proposed a conjectural description of the support varieties of tilting modules; this conjecture has been proved for $G = \mathrm{SL}_n$ in earlier work of the second author. 

In this paper, we show that for any $G$, the support variety of a tilting module always contains the variety predicted by Humphreys, and that they coincide (i.e., the Humphreys conjecture is true) when $p$ is sufficiently large.  We also prove variants of these statements involving ``relative support varieties.''
\end{abstract}

\maketitle

%%%%%%%%%%%%%%%%%%%%%%%%%%%%%%%%%%%
\section{Introduction}
%%%%%%%%%%%%%%%%%%%%%%%%%%%%%%%%%%%

%-----------------------------------------------------------------------------
\subsection{Support varieties}
%-----------------------------------------------------------------------------

Let $\bk$ be an algebraically closed field of characteristic $p>0$, and let $G$ be a simply-connected semisimple algebraic group over $\bk$. We assume that $p>h$, where $h$ is the Coxeter number of $G$.

If $\dot G$ is the Frobenius twist of $G$ and $G_1 \subset G$ is the kernel of the Frobenius morphism $\Fr : G \to \dot{G}$, then the algebra $\Ext^\bullet_{G_1}(\bk,\bk)$ admits a natural action of $G$, which factors through an action of $\dot{G}$, and it is well known that there exists a $\dot G$-equivariant isomorphisms of algebras
\[
\Ext^\bullet_{G_1}(\bk,\bk) \cong \cO(\cN),
\]
where $\cN$ is the nilpotent cone of $\dot{G}$. If $M$ is a $G$-module, then the $\bk$-vector space $\Ext^\bullet_{G_1}(M,M)$ admits a natural action of $G$ which factors through $\dot{G}$, and a natural (compatible) structure of module over $\Ext^\bullet_{G_1}(\bk,\bk)$. In this way this space defines a $\dot{G}$-equivariant quasi-coherent sheaf on $\cN$, and the \emph{support variety} $V_{G_1}(M)$ is defined as the support of this sheaf (a closed $\dot{G}$-stable subvariety of $\cN$). In a similar way, one can define the \emph{relative support variety} $\oVG(M)$ of $M$ as the support of $\Ext^\bullet_{G_1}(\bk, M)$. (More generally, one can consider relative support varieties for any pair of $G_1$-modules, see~\cite[\S 2.2]{npv}. But in this paper we will only consider the case of pairs of the form $(M,M)$ or $(\bk,M)$.)

%-----------------------------------------------------------------------------
\subsection{The Humphreys conjecture}
%-----------------------------------------------------------------------------

We fix a Borel subgroup $B \subset G$ and a maximal torus $T \subset G$, denote by $\bX$ the character lattice of $T$, and let $\bX^+ \subset \bX$ be the subset of dominant weights (for the choice of positive roots such that $B$ is the \emph{negative} Borel subgroup).  Let $\Wf := N_G(T)/T$ be the Weyl group of $G$, and let $W$ be the affine Weyl group (the semidirect product of $\Wf$ with the root lattice).  For $\lambda \in \bX^+$, let $\tilt(\lambda)$ denote the indecomposable tilting $G$-module of highest weight $\lambda$.

The \emph{Humphreys conjecture}~\cite{humphreys} gives a conjectural description of the support varieties $V_{G_1}(\tilt(\lambda))$. To describe this answer, consider the set $\fW \subset W$ of elements $w$ that are of minimal length in the right coset $\Wf w$.
Then the alcoves in $\mathbb{R} \otimes_\Z \bX$ which meet $\bX^+$ are in a natural bijection with $\fW$. (In the definition of this bijection we use the ``$p$-dilated dot action'' $\cdot_p$ of $W$ on $\mathbb{R} \otimes_\Z \bX$.) On the other hand, results of Lusztig~\cite{lusztig} provide a bijection between the set of $\dot{G}$-orbits in $\cN$ and the set of \emph{two-sided cells} in $W$, and results of Lusztig--Xi~\cite{lx} show that intersecting with $\fW$ provides a bijection between the set of \emph{two-sided cells} in $W$ and the set of \emph{right cells} in $W$ which are contained in (or equivalently which meet) $\fW$. Combining these results we obtain a bijection between right cells intersecting $\fW$ and $\dot{G}$-orbits in $\cN$; for $w \in \fW$ we denote by $\scO_w$ the orbit corresponding to the cell containing $w$. With this at hand, the Humphreys conjecture states that for all $\lambda \in \bX^+$ we have
\[
V_{G_1}(\tilt(\lambda)) = \overline{\scO_w}
\]
if $w \in \fW$ is the element such that $\lambda$ belongs to the lower closure of the alcove corresponding to $w$. As of this date, this conjecture is open except in the case $G=\mathrm{SL}_n(\bk)$, where it was proved by the second author~\cite{hardesty} under the assumption that $p>n+1$.

This conjecture has a natural ``relative'' version, which can be stated as follows. We denote by $\fWf \subset W$ the subset consisting of elements $w$ which are minimal in $\Wf w \Wf$. Then it seems natural to conjecture that for any $w \in \fWf$ we have
\[
\oVG(\tilt(w \cdot_p 0)) = \overline{\scO_w}.
\]
(Note that $\oVG(\tilt(\lambda))=\varnothing$ if $\lambda \in \bX^+ \smallsetminus (\fWf \cdot_p 0)$, so that only the dominant weights in $\fWf \cdot_p 0$ need to be considered for this question.) To the best of our knowledge, this ``relative'' version is open in all cases.

%-----------------------------------------------------------------------------
\subsection{Main results}
%-----------------------------------------------------------------------------

The main results of this paper are summarized in the following statement.

\begin{thm}
\label{thm:main-intro}
\leavevmode
\begin{enumerate}
\item
\label{it:thm-intro-1}
For any $\lambda \in \bX^+$ and $w \in \fW$ such that $\lambda$ belongs to the lower closure of the alcove corresponding to $w$, we have
\[
V_{G_1}(\tilt(\lambda)) \supset \overline{\scO_w}.
\]
Moreover, if $w \in \fWf$ we have
\[
\oVG(\tilt(w \cdot_p 0)) \supset \overline{\scO_w}.
\]
\item
\label{it:thm-intro-2}
There is an integer $N > 0$ (depending only on the root system of $G$) such that if $p>N$, then the inclusions in~\eqref{it:thm-intro-1} are equalities for all $\lambda$ and $w$.
\end{enumerate}
\end{thm}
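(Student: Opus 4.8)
\section{Outline of the proof}

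The plan is to deduce Theorem~\ref{thm:main-intro} from the ``characteristic-$0$ model'' of the problem, namely the quantum group $\uQ$ at a $p$-th root of unity, for which the analogue of the Humphreys conjecture is a theorem of Ostrik. The bridge is the (modular) affine Hecke category, which simultaneously governs the tilting modules $\tilt(\lambda)$ and the cell combinatorics of $W$; recall that the latter is transported to the geometry of $\cN$ by the dictionary of Lusztig~\cite{lusztig} and Lusztig--Xi~\cite{lx} recalled in the introduction, under which the right cell of $w$ becomes $\scO_w$. The heart of the argument is a \emph{comparison statement}: a natural map of graded $\cO(\cN)$-modules relating $\Ext^\bullet_{G_1}(\tilt(\lambda),\tilt(\lambda))$ to its quantum counterpart $\Ext^\bullet_{\uQ}(\TQ(\lambda),\TQ(\lambda))$, which (i) exists for all $p>h$ and forces $V_{G_1}(\tilt(\lambda)) \supseteq V_{\uQ}(\TQ(\lambda))$, and (ii) is an isomorphism in the range of cohomological degrees that determines the support as soon as $p$ exceeds a bound $N$ depending only on the root datum.

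Granting this, part~\eqref{it:thm-intro-1} follows by combining (i) with Ostrik's equality $V_{\uQ}(\TQ(\lambda)) = \overline{\scO_w}$; the relative assertion is handled the same way, using the relative analogue for $\TQ(w\cdot_p 0)$ ($w\in\fWf$) and the general inclusion $\oVG(M)\subseteq V_{G_1}(M)$, which holds because $\Ext^\bullet_{G_1}(\bk,M)$ is a graded module over $\Ext^\bullet_{G_1}(M,M)$ by Yoneda composition. Should the relative quantum computation not be directly available, one can instead prove the relative lower bound by hand, using the good filtration of $\tilt(w\cdot_p 0)$ and the known description of the $G_1$-cohomology of the $H^0(\mu)$ as global sections of $\dot G$-equivariant sheaves on $\tcN$: isolate the top filtration layer, check that its cohomology class survives the associated spectral sequence, and identify the image of its support under the Springer map with $\overline{\scO_w}$. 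Part~\eqref{it:thm-intro-2} is then immediate from (ii): for $p>N$ we get $V_{G_1}(\tilt(\lambda)) = V_{\uQ}(\TQ(\lambda)) = \overline{\scO_w}$, and likewise for $\oVG$.

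It remains to prove the comparison statement. After work of Riche--Williamson and others, the tilting modules and their morphism spaces are modelled by $\bk$-parity complexes on the affine flag variety, while the same picture with $\Q$-coefficients computes the quantum side; the passage between the two is governed by the $p$-canonical basis, and the inclusion in (i) reflects the fact that ``extra'' homomorphisms can only appear in characteristic $p$. To produce the $\cO(\cN)$-action on the Hecke-categorical side --- and so to control the \emph{support}, not merely the ranks, of the $\Ext$-modules --- I would factor through Bezrukavnikov's realizations of the asymptotic Hecke algebra and of $\Db\Coh(\tcN)$, in which the complex attached to $\tilt(\lambda)$ has support projecting onto $\overline{\scO_w}$ under the Springer map. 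The equality in (ii) for $p>N$ then amounts to the agreement of the $p$-canonical and ordinary canonical bases of the relevant modules for $p$ large --- equivalently, of $p$-cells with $q$-cells --- which follows from the Andersen--Jantzen--Soergel proof of Lusztig's conjecture for $p\gg 0$, with a bound depending only on the root datum.

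The chief obstacle is to make this comparison work at the level of $\cO(\cN)$-modules, and not merely of characters or Grothendieck classes: an equivalence of tilting (or Hecke) categories for $p\gg 0$ says nothing a priori about support varieties, which are invariants of the $\Ext$-algebra \emph{together with} its module structure, so one must transport the geometric $\cO(\cN)$-action faithfully across the comparison, uniformly in $p$. It is precisely the breakdown of this matching for small $p$ --- the strict containment of $q$-cells in $p$-cells --- that keeps the inclusions of part~\eqref{it:thm-intro-1} from being equalities in general, and the content of part~\eqref{it:thm-intro-2} is that this breakdown disappears above $N$; in general type this $N$ is the (effective but possibly enormous) bound inherited from Lusztig's conjecture, in contrast to the sharp bound $p>n+1$ available for $\mathrm{SL}_n$~\cite{hardesty}. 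A secondary difficulty, specific to the hands-on route above, is ruling out cancellation in the good-filtration spectral sequence in arbitrary type, where the explicit $\mathrm{SL}_n$ combinatorics of~\cite{hardesty} is unavailable.
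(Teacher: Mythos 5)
Your overall strategy---bound $V_{G_1}(\tilt(\lambda))$ below by the characteristic-zero (quantum) answer and prove equality for $p$ large---is indeed the shape of the paper's argument, but the two steps carrying all the weight are not supplied, and the second would fail as formulated. For your comparison step (i): there is no ``natural map of graded $\cO(\cN)$-modules'' between $\Ext^\bullet_{G_1}(\tilt(\lambda),\tilt(\lambda))$ and $\Ext^\bullet_{\uQ}(\TQ(\lambda),\TQ(\lambda))$---the first is a $\bk$-vector space supported on $\cN_\bk$, the second a $\C$-vector space supported on $\cN_\C$---so the inclusion of supports cannot be extracted from such a map, nor from rank inequalities governed by the $p$-canonical basis. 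What actually produces the lower bound in the paper is an integral lift: one transports the problem to $\Db \Coh^{\Gp \times \Gm}(\tcN)_\bk$ via the functor $\Xi$ of~\cite{prinblock}, so that $\Ext^\bullet_{G_1}(\tilt(w_\lambda \cdot_p 0),\tilt(w_\lambda \cdot_p 0)) \cong \Ext^\bullet_{\Db \Coh(\tcN)_\bk}(\PEx^\bk_\lambda,\PEx^\bk_\lambda)$ compatibly with the $\cO(\cN)$-actions (Proposition~\ref{prop:supp-tilt-cN}), realizes $\PEx^\bk_\lambda$ as a direct summand of a Bott--Samelson-type object defined over a finite extension $\bO$ of $\Z_p$ admitting maps to both $\bk$ and $\C$, and tests derived fibers at nilpotent-orbit representatives chosen in $\fg_\bO$ whose specializations represent the matching orbits over both fields (Lemma~\ref{lem:padic-nilp-orbits}); nonvanishing over $\C$ then forces nonvanishing over $\bk$. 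You yourself flag the transport of the $\cO(\cN)$-action as ``the chief obstacle,'' but you offer only a pointer to Bezrukavnikov's realizations; that transport is precisely the content that must be constructed (and the relative lower bound comes from the same mechanism applied to $R\pi_*\PEx^\bk_\lambda$, not from a good-filtration spectral sequence).

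The more serious gap is in part~\eqref{it:thm-intro-2}. You propose to take $N$ from the Andersen--Jantzen--Soergel proof of Lusztig's conjecture, i.e.\ from agreement of the $p$-canonical and canonical bases. This cannot work: for every $p$ the antispherical $p$-canonical basis differs from the Kazhdan--Lusztig basis at elements deep in the dominant cone (tilting characters there are governed by Donkin-type tensor product identities and differ from the quantum ones no matter how large $p$ is), AJS controls only a bounded region, and even the weaker statement that $p$-cells coincide with ordinary cells is not known to follow from it---the paper must treat the implication~\eqref{eqn:pRleq-Rleq} as a hypothesis. Moreover, even at weights where characters do agree, character-level agreement does not by itself determine supports, and the comparison with characteristic zero that one can actually prove (via the integral form $\PEx^{\Z[\frac{1}{M!}]}_\lambda$ of Proposition~\ref{prop:indec-parity-integers}) yields a bound depending on $\lambda$, of which there are infinitely many. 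The paper closes exactly this gap with two ingredients absent from your outline: finite generation of each antispherical right cell under left multiplication by $t_\nu$, $\nu \in \bY^+$ (Proposition~\ref{prop:finite-generation}), and the modular analogue of Ostrik's theorem identifying the weight preorder $\Tleq$ with the $p$-cell preorder $\pRleq$ (Theorem~\ref{thm:ostrik}, resting on the character formula of~\cite{mkdkm}), combined with the monotonicity $\mu \Tleq \lambda \Rightarrow V_{G_1}(\tilt(\mu)) \subset V_{G_1}(\tilt(\lambda))$ (Lemma~\ref{lem:V-Tleq}). These allow one to verify the equality on finitely many weights per cell---giving a uniform $N$---and to propagate it to every $w = t_\nu v$ in the cell because $\tilt(w \cdot_p 0)$ is a summand of $\tilt(v \cdot_p 0) \otimes \tilt(p\nu)$. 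Without a reduction of this kind, your criterion of an ``isomorphism in the range of degrees that determines the support'' has no uniform content, and part~\eqref{it:thm-intro-2} does not follow.
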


In other words, part~\eqref{it:thm-intro-2} says that the Humphreys conjecture and its relative version are true when $p > N$.  At the moment, we do not know how to determine $N$ explicitly (except for $\mathrm{SL}_n$).  In view of~\cite{hardesty}, this result has the following consequence.

\begin{cor}
For any given $G$ and $\bk$, if the Humphreys conjecture holds then its relative version also holds. In particular, the relative version holds if $G=\mathrm{SL}_n(\bk)$ and $p>n+1$.
\end{cor}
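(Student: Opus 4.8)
The plan is to deduce the relative conjecture from the ordinary one using only the elementary containment of relative support varieties inside ordinary ones, so that all of the real content in fact comes from Theorem~\ref{thm:main-intro}. The first step is to record the general fact that for any finite-dimensional $G_1$-module $M$ one has $\oVG(M) \subseteq V_{G_1}(M)$: in the notation of \cite[\S 2.2]{npv} this is the special case $N = \bk$ of the standard inclusion $V_{G_1}(M,N) \subseteq V_{G_1}(M) \cap V_{G_1}(N)$, combined with $V_{G_1}(\bk) = \cN$. Since it can be checked after forgetting the $\dot G$-action on the relevant $\Ext$-algebras and modules, this inclusion carries over verbatim to the $\dot G$-equivariant sheaf-theoretic formulation of support varieties adopted in the introduction.

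The second step is the deduction itself. Assume that the Humphreys conjecture holds for the given $G$ and $\bk$, and fix $w \in \fWf \subseteq \fW$. Since $0$ lies in the lower closure of the fundamental alcove and $w \in \fW$, the weight $\lambda := w \cdot_p 0$ is dominant and lies in the lower closure of the alcove corresponding to $w$; hence the (ordinary) Humphreys conjecture gives $V_{G_1}(\tilt(w \cdot_p 0)) = \overline{\scO_w}$. Feeding this into the containment from the first step yields $\oVG(\tilt(w \cdot_p 0)) \subseteq \overline{\scO_w}$, while Theorem~\ref{thm:main-intro}\eqref{it:thm-intro-1} supplies the reverse inclusion $\oVG(\tilt(w \cdot_p 0)) \supseteq \overline{\scO_w}$. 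Thus $\oVG(\tilt(w \cdot_p 0)) = \overline{\scO_w}$ for every $w \in \fWf$; together with the vanishing $\oVG(\tilt(\lambda)) = \varnothing$ for $\lambda \in \bX^+ \smallsetminus (\fWf \cdot_p 0)$ recalled in the introduction, this is precisely the relative Humphreys conjecture. The final assertion is then immediate from \cite{hardesty}, which establishes the ordinary conjecture for $G = \mathrm{SL}_n(\bk)$ when $p > n+1$.

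I do not anticipate a genuine obstacle: the argument is formal once Theorem~\ref{thm:main-intro} is available. The only points demanding a little care are verifying that the inclusion $\oVG(M) \subseteq V_{G_1}(M)$ is indeed available in the equivariant setting (it is, by the ``forget equivariance'' remark above, which is legitimate because the support of a $\dot G$-equivariant coherent sheaf is unaffected by forgetting the equivariant structure), and checking the bookkeeping that the element $w \in \fWf$ is the element of $\fW$ labeling the alcove whose lower closure contains $w \cdot_p 0$ — a standard fact about the $p$-dilated dot action. Both are routine.
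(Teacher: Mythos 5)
Your argument is correct and is essentially the paper's own deduction: the paper proves this in Remark~\ref{rmk:main-thm}(1), via the proof of Theorem~\ref{thm:Humphreys-large}, by combining the inclusion $\oVG(M) \subset V_{G_1}(M)$ from~\eqref{eqn:V-oV} (the special case of the containment in~\cite{npv} you cite) with the unconditional lower bound of Theorem~\ref{thm:Humphreys-large}\eqref{it:Humphreys-large-1}, exactly as you do, and then invokes~\cite{hardesty} for $\mathrm{SL}_n$.
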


We also use Theorem~\ref{thm:main-intro} to check the Humphreys conjecture under some mild (and explicit) assumptions on $p$ in some cases when $G$ has rank~$2$.

%-----------------------------------------------------------------------------
\subsection{The quantum case}
%-----------------------------------------------------------------------------

Our proof of Theorem~\ref{thm:main-intro} builds on some results of Ostrik and Bezrukavnikov obtained in the course of the proof of the quantum analogue of the Humphreys conjecture. Namely, consider Lusztig's quantum group $\UQ$ associated with the root system of $G$, specialized at a complex primitive $\ell$-th root of unity. (Here $\ell>h$ is odd, and prime to $3$ if $G$ has a component of type $\mathbf{G}_2$.) 
This algebra has a subalgebra $\uQ \subset \UQ$, called the ``small quantum group,'' which plays a role analogous to $G_1$. Moreover, the algebra $\Ext^\bullet_{\uQ}(\C,\C)$ (where $\C$ is the trivial module) has a natural action of the complex group $G_\C$ with the same root system as $G$, and is isomorphic as a $G_\C$-equivariant algebra to functions on the nilpotent cone $\cN_\C$ of $G_\C$.

For each $\lambda \in \bX^+$, let $\TQ(\lambda)$ be the corresponding indecomposable tilting $\UQ$-module. It makes sense to consider the support varieties $V_{\uQ}(\TQ(\lambda))$ and $\overline{V} \hspace{-2pt}_{\uQ}(\TQ(\lambda))$.  In this setting, there are obvious analogues of the Humphreys conjecture and its relative version, stated in terms of the $\ell$-dilated dot action $\cdot_\ell$ of $W$ on $\mathbb{R} \otimes_\Z \bX$.  We call these statements the \emph{quantum Humphreys conjecture} and the \emph{relative quantum Humphreys conjecture}, respectively.

The quantum Humphreys conjecture was proved by Ostrik~\cite{ostrik-supp} in type $\mathbf{A}$, and the relative version was proved by Bezrukavnikov~\cite{bezru} in general. As we explain in~\S\ref{ss:quantum-Humphreys} (and as was probably known to experts), one can in fact deduce the quantum Humphreys conjecture from the relative version in general.

%-----------------------------------------------------------------------------
\subsection{Outline of the proof}
%-----------------------------------------------------------------------------

Bezrukavnikov's proof of the relative quantum Hum\-phreys conjecture relies on results of~\cite{abg} to reduce the computation of $\overline{V}\hspace{-2pt}_{\uQ}(\TQ(\lambda))$
to the computation of the support of certain complexes of the form $\pi_* \mathcal{L}$, where $\pi : \tcN_\C \to \cN_\C$ is the Springer resolution, and $\mathcal{L}$ is a simple object in the heart of the \emph{exotic t-structure} on $\Db \Coh^{G_\C}(\tcN_\C)$. Then these objects are identified with the simple \emph{perverse coherent} sheaves on $\cN_\C$, whose support is known.

A ``modular analogue'' of the results of~\cite{abg} have been obtained by the first and third authors in~\cite{prinblock}. As in the quantum case, this reduces the problem to computing the supports of certain objects of the form $\pi_*\cL$, where $\pi: \tcN \to \cN$ is the Springer resolution over $\bk$.  However, $\cL$ is now \emph{not} in general a simple exotic sheaf.  Instead, it is an indecomposable object of $\Db \Coh^{G \times \Gm}(\tcN)$ that obeys certain \emph{parity-vanishing conditions} with respect to the exotic t-structure.  (The principle of replacing simple objects by ``parity objects,'' which has been advocated in the work of Soergel and Juteau--Mautner--Williamson, has already proved to be very fruitful.)

These supports are difficult to compute, but we are able to bound them below by a ``change-of-scalars'' argument.
In large characteristic, they can be deduced from their characteristic-$0$ counterparts. This implies Theorem~\ref{thm:main-intro}.

\begin{rmk}
As suggested above, our proof of Theorem~\ref{thm:main-intro} relies on Bezrukavnikov's results in the quantum setting. But in fact our constructions yield a significant simplification of his proof: namely, one can replace the ``Positivity Lemma'' of~\cite{bezru} by parity considerations, making the proof independent of the results of~\cite{ab}; see Remark~\ref{rmk:proof-Humphreys-quantum} for more details.
\end{rmk}

%-----------------------------------------------------------------------------
\subsection{Weight cells}
\label{ss:intro-weight-cells}
%-----------------------------------------------------------------------------

In the course of the proof of Theorem~\ref{thm:main-intro} we obtain a ``modular version'' of another result of Ostrik on quantum groups, which might be of independent interest.

Namely, define the ``quantum weight preorder'' on $\bX^+$ by setting $\lambda \Tleq^q \mu$ if $\TQ(\lambda)$ is a direct summand of $\TQ(\mu) \otimes M$ for some tilting $\UQ$-module $M$.  The ``modular weight preorder'' $\Tleq$ is defined in the same way, but using tilting $G$-modules instead of tilting $\UQ$-modules.
(Of course, these preorders depend on $\ell$ or on $p$. In the quantum case this preorder was introduced in~\cite{ostrik}; see also~\cite{andersen} for a thorough study of these orders in both settings.) We denote by $\Tsim^q$ and $\Tsim$ the associated equivalence relations. 

It is easily checked that two weights in the lower closure of the some alcove are always equivalent, so that these preorders and equivalence relations descend to alcoves meeting $\bX^+$, i.e.~to $\fW$. The main result of~\cite{ostrik} states that the order $\Tleq^q$ on $\fW$ coincides with the restriction of the right Kazhdan--Lusztig order on $W$; in particular, the corresponding equivalence classes are the Kazhdan--Lusztig right cells contained in $\fW$. Our ``modular counterpart'' states that the order $\Tleq$ on $\fW$ coincides with the order defined by the same rule, replacing the Kazhdan--Lusztig basis by the $p$-Kazhdan--Lusztig basis of~\cite{jw}. The equivalence classes for this preorder are called \emph{right $p$-cells}. (See~\cite{jensen} for a general study of such cells.)

Our proof of this claim is essentially identical to Ostrik's proof, replacing Soergel's description of characters of quantum tilting modules by its modular counterpart conjectured in~\cite{rw} and proved in~\cite{mkdkm}. (This proof is independent of the rest of the paper, except for the definitions given in Section~\ref{sec:antispherical-cells}.)

%-----------------------------------------------------------------------------
\subsection{Contents}
%-----------------------------------------------------------------------------

The paper is organized as follows. 

Sections~\ref{sec:parity}--\ref{sec:support} form the ``geometric'' part of the paper.
In Section~\ref{sec:parity} we develop an analogue for derived categories of graded highest weight categories of the theory of ``parity complexes'' from~\cite{jmw}. Then in Section~\ref{sec:exotic-parity} we study these objects further in the case of the heart of Bezruakvnikov's \emph{exotic t-structure} on $\Db \Coh^{G \times \Gm}(\tcN)$. In particular we provide a ``Bott--Samelson type'' construction of these objects, which lets us study them via modular reduction arguments. In Section~\ref{sec:support} we prove some results on the support of certain objects constructed out of these parity exotic sheaves.

Sections~\ref{sec:antispherical-cells} and~\ref{sec:finite-generation} form the ``combinatorial'' part of the paper. In Section~\ref{sec:antispherical-cells} we recall the theory of $p$-cells in the affine Hecke algebra and its antispherical module, and in Section~\ref{sec:finite-generation} we prove that the (usual) right cells in the antispherical module are ``finitely generated'' in an appropriate sense. (A variant of this result is stated without proof in~\cite{andersen}; but as indicated in~\cite{andersen} it can be deduced from results of Xi~\cite{xi}.)

Finally, Sections~\ref{sec:cells}--\ref{sec:examples} form the ``representation-theoretic'' part of the paper. In Section~\ref{sec:cells} we prove the analogue of Ostrik's theorem described in~\S\ref{ss:intro-weight-cells}. Section~\ref{sec:humphreys-conj} provides a reminder on the Humphreys conjecture, and the relation with its relative version. In Section~\ref{sec:exotic-tilting} we prove Theorem~\ref{thm:main-intro}. We conclude the paper with some examples in Section~\ref{sec:examples}, including many cases in type $\mathbf{G}_2$ (if $p> 7$), as well as a complete proof of the Humphreys conjecture for groups of type $\mathbf{C}_2$ (if $p>5$).

%-----------------------------------------------------------------------------
\subsection{Acknowledgements}
%-----------------------------------------------------------------------------

We thank Lars Thorge Jensen and Geordie Williamson for helpful conversations on cells and $p$-cells, and we thank the anonymous referees for their comments and suggestions.

%%%%%%%%%%%%%%%%%%%%%%%%%%%%%%%%
\section{Parity objects for graded highest weight categories}
\label{sec:parity}
%%%%%%%%%%%%%%%%%%%%%%%%%%%%%%%%

In this section, we develop a theory of ``parity objects'' in the derived category of a graded highest weight abelian category.  This notion is an algebraic or categorical analogue of the theory of parity sheaves from~\cite{jmw}, in much the same way that highest weight categories themselves are an algebraic or categorical counterpart to perverse sheaves.

%----------------------------------------------------------------------------
\subsection{Graded highest weight categories}
\label{ss:hwcat}
%----------------------------------------------------------------------------

Let $\bk$ be a field.
Let $\cA$ be a finite-length $\bk$-linear abelian category, and  
assume that $\cA$ is equipped with an automorphism $\la 1\ra: \cA \to \cA$.  Let $\Irr(\cA)$ be the set of isomorphism classes of irreducible objects of $\cA$, and let $\scS = \Irr(\cA)/\Z$, where $n \in \Z$ acts on $\Irr(\cA)$ by $\la n\ra$.  Assume that $\scS$ is equipped with a partial order $\le$, and that for each $s \in \scS$, we have a fixed representative simple object $\Lgr_s$. Assume also that we are given, for any $s \in \scS$, objects $\dgr_s$ and $\ngr_s$, and morphisms $\dgr_s \to \Lgr_s$ and $\Lgr_s \to \ngr_s$. For $\scT \subset \scS$, we denote by $\cA_{\scT}$ the Serre subcategory of $\cA$ generated by the objects $\Lgr_t \la n \ra$ for $t \in \scT$ and $n \in \Z$. We write $\cA_{\leq s}$ for $\cA_{\{t \in \scS \mid t \leq s\}}$, and similarly for $\cA_{<s}$.

\begin{defn}
\label{defn:qhered}
The category $\cA$ is said to be \emph{graded highest weight} if the following conditions hold:
\begin{enumerate}
\item For any $s \in \scS$, the set $\{t \in \scS \mid t \leq s\}$ is finite.\label{it:qh-def-fin}
\item For each $s \in \scS$, we have 
\[
\Hom(\Lgr_s,\Lgr_s \la n \ra) = \begin{cases}
\bk & \text{if $n=0$;} \\
0 & \text{otherwise.}
\end{cases}
\]
 \label{it:qh-def-split}
\item For any $\scT \subset \scS$ closed (for the order topology) and such that $s \in \scT$ is maximal, $\dgr_s \to \Lgr_s$ is a projective cover in $\cA_{\scT}$ and $\Lgr_s \to \ngr_s$ is an injective envelope in $\cA_{\scT}$.
\item The kernel of $\dgr_s \to \Lgr_s$ and the cokernel of $\Lgr_s \to \ngr_s$ belong to $\cA_{<s}$.
\label{it:qh-def-ker}
\item We have $\Ext^2(\dgr_s, \ngr_t\la n\ra) = 0$ for all $s, t \in \scS$ and $n \in \Z$.\label{it:qh-def-ext2}
\end{enumerate}
\end{defn}

If $\cA$ satisfies Definition~\ref{defn:qhered}, then $\Db(\cA)$ is a Krull--Schmidt category by~\cite[Corollary~B]{le-chen}.
The objects $\dgr_s\la n\ra$ are called \emph{standard objects}, and the objects $\ngr_s\la n\ra$ are called \emph{costandard objects}.

For the following lemma, see~\cite[Lemma~2.2]{modrap3}. (See also~\cite[\S 7.3]{riche-hab} for more details.)

\begin{lem}
\label{lem:qher-closed-subset}
Let $\scT \subset \scS$ be a closed subset in the order topology. 
\begin{enumerate}
\item
The subcategory $\cA_{\scT} \subset \cA$ is a graded highest weight category, with standard (resp.~costandard) objects $\dgr_t$ (resp.~$\ngr_t$) for $t \in \scT$. Moreover, the functor $\iota_{\scT}: \Db (\cA_{\scT})\to \Db (\cA)$ induced by the inclusion $\cA_{\scT} \subset \cA$ is fully faithful.
\item
The Serre quotient $\cA / \cA_{\scT}$ is a graded highest weight category for the order on $\scS \smallsetminus \scT$ obtained by restriction from the order on $\scS$. The standard (resp.~costandard) objects are the images in the quotient of the objects $\dgr_s$ (resp.~$\ngr_s$) for $s \in \scS \smallsetminus \scT$.
\item
The natural functor $\Db(\cA) / \Db(\cA_{\scT}) \to \Db(\cA/\cA_{\scT})$ (where the left-hand side is the Verdier quotient) is an equivalence. Moreover, the functors $\Pi_{\scT} : \Db(\cA) \to \Db(\cA/\cA_{\scT})$ and $\iota_{\scT}$ admit left and right adjoints, denoted $\Pi_{\scT}^R$, $\Pi_{\scT}^L$, $\iota_{\scT}^R$, $\iota_{\scT}^L$, which satisfy
\begin{equation}
\label{eqn:adjoints-pi-d-n}
\Pi_\scT^L \circ \Pi_{\scT}(\dgr_s) \cong \dgr_s, \qquad \Pi_\scT^R \circ \Pi_{\scT}(\ngr_s) \cong \ngr_s
\end{equation}
for $s \in \scS \smallsetminus \scT$
and such that, for any $M$ in $\Db(\cA)$, the adjunction morphisms induce functorial distinguished triangles
\[
\iota_{\scT} \iota_{\scT}^R M \to M \to \Pi_\scT^R \Pi_\scT M \xrightarrow{[1]}, \qquad \Pi_\scT^L \Pi_\scT M \to M \to \iota_{\scT} \iota_{\scT}^L M \xrightarrow{[1]}.
\]
\end{enumerate}
\end{lem}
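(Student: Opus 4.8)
The plan is to check the five conditions of Definition~\ref{defn:qhered} for $\cA_\scT$ and for $\cA/\cA_\scT$ more or less directly, and to deduce the derived-categorical assertions from the formalism of recollement, using two inputs. The first is that Yoneda $\Ext^2$-groups inject under passage to a Serre subcategory (since the relevant $\Ext^1$-groups do not change). The second is a standard consequence of Definition~\ref{defn:qhered} in any category $\cA$ satisfying it: $\Hom_\cA(\dgr_s,\ngr_t\la m\ra)$ equals $\bk$ if $t=s$ and $m=0$ and vanishes otherwise, and $\Ext^{>0}_\cA(\dgr_s,\ngr_t\la m\ra)=0$ for all $s,t,m$ (proved by the usual dévissage, using that $\cA$ has enough projectives in the finite truncations $\cA_{\scT'}$, $\scT'$ finite). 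From the second input one gets the usual corollaries: every $\Lgr_s$ has a finite resolution by standard objects $\dgr_{s'}\la\cdot\ra$ with $s'\le s$ and a finite coresolution by costandard objects $\ngr_{s'}\la\cdot\ra$ with $s'\le s$, and standard objects are relatively projective against costandard-filtered objects.

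\emph{Part~(1).} Conditions \eqref{it:qh-def-fin} and \eqref{it:qh-def-split} are inherited because $\cA_\scT$ is a full subcategory and the sets $\{t\le s\}$ do not change; the third condition because a subset of $\scT$ closed in the order topology of $\scT$ is closed in that of $\scS$, with $(\cA_\scT)_{\scT'}=\cA_{\scT'}$; condition~\eqref{it:qh-def-ker} because the kernel of $\dgr_s\to\Lgr_s$ lies in $\cA_{<s}$ and in $\cA_\scT$, hence in $(\cA_\scT)_{<s}$ (dually for the cokernel). Condition~\eqref{it:qh-def-ext2} follows from the first input: $\Ext^2_{\cA_\scT}(\dgr_s,\ngr_t\la m\ra)\hookrightarrow\Ext^2_\cA(\dgr_s,\ngr_t\la m\ra)=0$. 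Thus $\cA_\scT$ is graded highest weight, so the second input applies to it as well. For the full faithfulness of $\iota_\scT$ it suffices, by truncation and dévissage, to show that $\Ext^n_{\cA_\scT}(\Lgr_u,\Lgr_v\la m\ra)\to\Ext^n_\cA(\Lgr_u,\Lgr_v\la m\ra)$ is an isomorphism for $u,v\in\scT$; resolving $\Lgr_u$ by standards and coresolving $\Lgr_v\la m\ra$ by costandards inside $\cA_\scT$ (the relevant indices all lie in $\scT$), both sides are computed by one and the same double complex with terms $\Hom(\dgr_{u'}\la\cdot\ra,\ngr_{v'}\la\cdot\ra)$, which is unchanged since $\cA_\scT$ is full in $\cA$.

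\emph{Parts~(2) and~(3).} Knowing that $\iota_\scT$ is fully faithful with essential image the subcategory $\Db_{\cA_\scT}(\cA)$ of objects with cohomology in $\cA_\scT$ (it contains, hence is generated by, the $\dgr_t\la m\ra$, $t\in\scT$), we obtain Part~(3) by reducing — via an induction on $|\scS\smallsetminus\scT|$, the general case following since objects and morphisms of $\Db(\cA/\cA_\scT)$ come from finite truncations of $\cA$ — to the situation where $\scS\smallsetminus\scT=\{s_0\}$ with $s_0$ maximal in $\scS$. There, condition~\eqref{it:qh-def-ker} forces $\Pi_\scT\dgr_{s_0}\cong\Pi_\scT\Lgr_{s_0}\cong\Pi_\scT\ngr_{s_0}$ to be the unique simple object of $\cA/\cA_\scT$ up to grading shift, with endomorphism ring $\bk$ in degree $0$ by condition~\eqref{it:qh-def-split}, so $\cA/\cA_\scT$ is equivalent to the semisimple category of finite-dimensional $\Z$-graded $\bk$-vector spaces; and since $s_0$ is maximal, the third condition of Definition~\ref{defn:qhered} applied to the closed subset $\scS$ shows that $\dgr_{s_0}$ is projective and $\ngr_{s_0}$ injective in all of $\cA$. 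Hence the Serre quotient functor $\cA\to\cA/\cA_\scT$ has an exact left adjoint (generator $\mapsto\dgr_{s_0}$) and right adjoint (generator $\mapsto\ngr_{s_0}$), giving a recollement of abelian categories that, all functors being exact, induces one of bounded derived categories; composing along the chain gives the equivalence, the four adjoints, and the distinguished triangles. The relations \eqref{eqn:adjoints-pi-d-n} amount to $\iota_\scT^L\dgr_s=0$ and $\iota_\scT^R\ngr_s=0$ for $s\notin\scT$, which follow from the second input (e.g.\ $\Hom_{\Db(\cA)}(\dgr_s,Z)=0$ for $Z$ with cohomology in $\cA_\scT$, after coresolving that cohomology by costandards $\ngr_t\la\cdot\ra$ with $t\ne s$). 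Finally, for Part~(2): the simple objects of $\cA/\cA_\scT$ are the images of the $\Lgr_s$, $s\in\scS\smallsetminus\scT$; conditions \eqref{it:qh-def-fin}, \eqref{it:qh-def-ker} pass to the quotient by exactness of $\Pi_\scT$, condition~\eqref{it:qh-def-split} because $\Hom_{\cA/\cA_\scT}(\Pi_\scT\Lgr_s,\Pi_\scT\Lgr_t\la n\ra)=\Hom_\cA(\Lgr_s,\Lgr_t\la n\ra)$ for these simples; the third condition because applying the exact functor $\cA_{\scT\cup\scT'}\to(\cA/\cA_\scT)_{\scT'}$ to $\dgr_s\to\Lgr_s$ (with $\scT\cup\scT'$ closed, $s$ maximal in it) yields a projective object — using the relative projectivity of $\dgr_s$ — with simple head, hence the projective cover; and condition~\eqref{it:qh-def-ext2} because, via Part~(3) and $\Pi_\scT^R\Pi_\scT\ngr_t\cong\ngr_t$, one has $\Ext^2_{\cA/\cA_\scT}(\Pi_\scT\dgr_s,\Pi_\scT\ngr_t\la n\ra)\cong\Ext^2_\cA(\dgr_s,\ngr_t\la n\ra)=0$.

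\emph{Main obstacle.} The delicate step is upgrading the one-step recollement of \emph{abelian} categories to one of bounded \emph{derived} categories — equivalently, constructing $\iota_\scT^L,\iota_\scT^R$ and identifying $\Db(\cA)/\Db(\cA_\scT)$ with $\Db(\cA/\cA_\scT)$ — and checking that the dévissage is compatible with the standard and costandard objects so that \eqref{eqn:adjoints-pi-d-n} and the triangles survive. What makes this manageable is precisely that, when $s_0$ is maximal, $\dgr_{s_0}$ is projective (and $\ngr_{s_0}$ injective) in $\cA$, not merely in $\cA_{\le s_0}$, so the one-step case reduces to elementary homological algebra over a semisimple quotient; the remaining bookkeeping (preservation of closedness and maximality along the chain) is routine.
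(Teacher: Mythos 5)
The paper does not actually prove this lemma: it cites \cite[Lemma~2.2]{modrap3} and \cite[\S 7.3]{riche-hab}, so the comparison is with the standard development of (graded) highest weight categories found there. Your overall strategy is the same as that development: verify the axioms for $\cA_{\scT}$ (your use of the injectivity of $\Ext^2$ under passage to a Serre subcategory to get axiom~\eqref{it:qh-def-ext2} is exactly the standard trick), reduce the derived statements to the one-step case where the complement is a single maximal weight, and exploit that for $s_0$ maximal the object $\dgr_{s_0}$ is projective and $\ngr_{s_0}$ injective, so the quotient is semisimple and the recollement can be built by hand.

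However, as written there are genuine gaps, all concentrated in your ``second input.'' First, the claim that every $\Lgr_s$ admits a finite resolution by \emph{direct sums of standard objects} is not a consequence of the axioms and is false in general: a direct sum of standards need not even surject onto an arbitrary object of $\cA_{<s}$ (for instance, $\Hom(\dgr_t,\ngr_u\la m\ra)$ vanishes for $t\neq u$, so no direct sum of standards surjects onto a costandard object with more than one composition factor). What is true, and what your double-complex argument for full faithfulness really needs, is the existence of finite resolutions by objects admitting standard filtrations --- e.g.\ by projective objects of finite truncations $\cA_{\scT'}$ --- together with $\Ext^{>0}_{\cA}(\dgr_s,\ngr_t\la m\ra)=0$ computed \emph{in $\cA$}. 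Second, your justification of these facts (``the usual d\'evissage, using that $\cA$ has enough projectives in the finite truncations'') is circular as stated: computing $\Ext_{\cA}$ by means of projectives of $\cA_{\scT'}$ presupposes that Ext-groups taken in $\cA_{\scT'}$ agree with those taken in $\cA$, which is precisely the full-faithfulness assertion of part~(1) that you are trying to prove. This can be repaired --- Yoneda $\Ext^n_{\cA}(X,Y)$ is the filtered colimit of $\Ext^n_{\cA_{\scT'}}(X,Y)$ over finite closed $\scT'$, so it suffices to prove the vanishing in each finite truncation, where one constructs projective covers with standard filtrations by the classical universal-extension induction using axioms~\eqref{it:qh-def-ker} and~\eqref{it:qh-def-ext2} --- but that construction is the actual technical heart of the lemma (it is where axiom~\eqref{it:qh-def-ext2} is really used) and it appears nowhere in your proposal. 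Relatedly, the interleaving of parts~(2) and~(3) (your proof of axiom~\eqref{it:qh-def-ext2} for $\cA/\cA_{\scT}$ uses part~(3), whose general case in turn needs compatibility of quotients with finite truncations, i.e.\ part~(1) applied to $\cA/\cA_{\scT}$) and the passage from finite to infinite complements need to be organized as a genuine simultaneous induction rather than dismissed as routine bookkeeping. With these inputs supplied, the rest of your argument (the identities~\eqref{eqn:adjoints-pi-d-n} via $\iota_{\scT}^L\dgr_s=0$, $\iota_{\scT}^R\ngr_s=0$, the construction of the adjoints from the triangles, and the verification of the axioms for the quotient) is sound.
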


When $\scT = \{t \in \scS \mid t \leq s\}$, we will write $\iota_{\le s}$, $\Pi_{\le s}$, etc., for the functors introduced in this lemma, and likewise when $\scT = \{t \in \scS \mid t < s \}$.

%----------------------------------------------------------------------------
\subsection{Parity objects}
\label{ss:parity-objects}
%----------------------------------------------------------------------------

We fix a graded $\bk$-linear highest weight category $\cA$ with weight poset $(\scS,\leq)$.  Assume we are given a function $\dag : \scS \to \{0,1\}$. Following~\cite{jmw} we consider the following definition.

\begin{defn}
An object $M$ in $\Db \cA$ will be called $(!,\dag)$-\emph{even} if it satisfies
\[
\Hom_{\Db \cA}(\dgr_s, M \langle n \rangle [m]) \neq 0 \quad \Rightarrow \quad m=n \text{ and } n \equiv \dag(s) \pmod 2. \]
It will be called $(*,\dag)$-\emph{even} if it satisfies
\[
\Hom_{\Db \cA}(M \langle n \rangle [m], \ngr_s) \neq 0 \quad \Rightarrow \quad m=n \text{ and } n \equiv \dag(s) \pmod 2.
\]
It will be called $\dag$-\emph{even} if it is both $(!,\dag)$-\emph{even} and $(*,\dag)$-\emph{even}. Finally, $M$
will be called \emph{parity} if $M \cong M_0 \oplus M_1$ where $M_0$ and $M_1 \langle 1 \rangle[1]$ are even.
\end{defn}

\begin{lem}
\label{lem:parity-vanishing}
Let $M$ be $(*,\dag)$-even and $N$ be $(!,\dag)$-even. Then
\[
\Hom_{\Db \cA}(M, N \langle n \rangle[m])=0
\]
unless $n=m$ and $n$ is even.
\end{lem}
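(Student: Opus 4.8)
The plan is to prove the vanishing by d\'evissage along the weight poset $\scS$, reducing to the ``semisimple'' case of a one-element poset. First, one reduces to the case where $\scS$ is finite: since $\cA$ has finite length and $M,N$ are bounded, only finitely many $s \in \scS$ are such that some shift $\Lgr_s\la k\ra$ occurs as a composition factor of a cohomology object of $M$ or of $N$; let $\scT_0 \subset \scS$ be the order ideal they generate, which is finite by Definition~\ref{defn:qhered}\eqref{it:qh-def-fin}. Then $M$ and $N$ lie in the triangulated subcategory of $\Db\cA$ generated by $\cA_{\scT_0}$, i.e.\ in the essential image of the fully faithful functor $\iota_{\scT_0} \colon \Db(\cA_{\scT_0}) \to \Db\cA$ of Lemma~\ref{lem:qher-closed-subset}(1). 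Since that lemma identifies the standard and costandard objects of $\cA_{\scT_0}$ with those of $\cA$, the objects $M$ and $N$ remain $(*,\dag)$-even and $(!,\dag)$-even in $\Db(\cA_{\scT_0})$, and the $\Hom$-space in question is unchanged. So one may replace $\cA$ by $\cA_{\scT_0}$; assuming $\scS$ finite, one inducts on $|\scS|$, the case $\scS = \varnothing$ being trivial.

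For the inductive step, choose $s \in \scS$ maximal, put $\scT = \scS \smallsetminus \{s\}$ (an order ideal, hence closed), and use the recollement of Lemma~\ref{lem:qher-closed-subset} relating $\Db(\cA_\scT)$, $\Db\cA$ and $\Db(\cA/\cA_\scT)$; note that $\cA/\cA_\scT$ has the single weight $s$. Applying $\Hom_{\Db\cA}(M,(-)\la n\ra[m])$ to the triangle $\iota_\scT\iota_\scT^R N \to N \to \Pi_\scT^R\Pi_\scT N \xrightarrow{[1]}$ and using the adjunctions together with~\eqref{eqn:adjoints-pi-d-n}, the two outer terms become
\[
\Hom_{\Db(\cA_\scT)}\bigl(\iota_\scT^L M, \iota_\scT^R N\la n\ra[m]\bigr) \quad\text{and}\quad \Hom_{\Db(\cA/\cA_\scT)}\bigl(\Pi_\scT M, \Pi_\scT N\la n\ra[m]\bigr).
\]
One then checks — by adjunction, using that $\iota_\scT$ carries standard (resp.\ costandard) objects of $\cA_\scT$ to standard (resp.\ costandard) objects of $\cA$, together with the identities $\Pi_\scT^L\Pi_\scT\dgr_s \cong \dgr_s$ and $\Pi_\scT^R\Pi_\scT\ngr_s \cong \ngr_s$ — that $\iota_\scT^L M$ and $\Pi_\scT M$ are $(*,\dag)$-even and $\iota_\scT^R N$, $\Pi_\scT N$ are $(!,\dag)$-even in the respective categories; for instance $\Hom_{\Db(\cA_\scT)}(\dgr_t, \iota_\scT^R N\la n\ra[m]) \cong \Hom_{\Db\cA}(\dgr_t, N\la n\ra[m])$, which vanishes unless $m = n \equiv \dag(t) \pmod 2$ since $N$ is $(!,\dag)$-even. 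Granting this, the inductive hypothesis applied in $\cA_\scT$ (which has $|\scS|-1$ weights) and the base case applied in $\cA/\cA_\scT$ force both outer terms to vanish unless $n = m$ is even, and the long exact sequence of the triangle yields the assertion for $\cA$.

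It remains to treat $\scS = \{s\}$. Here $\cA_{<s} = 0$, so Definition~\ref{defn:qhered}\eqref{it:qh-def-ker} forces the maps $\dgr_s \to \Lgr_s$ and $\Lgr_s \to \ngr_s$ to be isomorphisms, and the axiom that $\dgr_s \to \Lgr_s$ is a projective cover and $\Lgr_s \to \ngr_s$ an injective envelope in $\cA_\scT$, taken with $\scT = \scS$, shows that $\Lgr_s$ is projective and injective in $\cA$; since $\cA$ has finite length, $\cA$ is semisimple. Then every object of $\Db\cA$ is a finite direct sum of shifts $\Lgr_s\la a\ra[b]$, and $\Hom_{\Db\cA}(\Lgr_s\la a\ra[b], \Lgr_s\la a'\ra[b'])$ is nonzero exactly when $(a,b) = (a',b')$. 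Unwinding the definitions, $M$ is $(*,\dag)$-even precisely when $M \cong \bigoplus_i \Lgr_s\la a_i\ra[a_i]$ with all $a_i \equiv \dag(s) \pmod 2$, and $N$ is $(!,\dag)$-even precisely when $N \cong \bigoplus_j \Lgr_s\la c_j\ra[c_j]$ with all $c_j \equiv \dag(s) \pmod 2$. Hence $\Hom_{\Db\cA}(M, N\la n\ra[m])$ is a direct sum of terms $\Hom_{\Db\cA}(\Lgr_s\la a_i\ra[a_i], \Lgr_s\la c_j+n\ra[c_j+m])$, each of which is zero unless $n = m = a_i - c_j$, and such a difference is automatically even.

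The step requiring the most care is the inductive step: one must track precisely which (co)standard objects the restrictions $\iota_\scT^L M$, $\Pi_\scT M$, $\iota_\scT^R N$, $\Pi_\scT N$ need to be tested against, and invoke exactly the correct parts of Lemma~\ref{lem:qher-closed-subset} — in particular the identities in~\eqref{eqn:adjoints-pi-d-n} — to see that the two evenness conditions survive these functors. Once that bookkeeping is done, the d\'evissage and the semisimple base case are routine, and the argument runs in close parallel to the analogous statement for parity sheaves in~\cite{jmw}.
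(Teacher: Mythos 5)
Your proof is correct and follows essentially the same route as the paper's: an induction along the weight poset using the recollement of Lemma~\ref{lem:qher-closed-subset}, the adjunction identities~\eqref{eqn:adjoints-pi-d-n} to check that the four restricted objects stay even, and the semisimple one-weight case as the base. The only (immaterial) difference is that you run the d\'evissage on $N$ (via $\iota_\scT\iota_\scT^R N \to N \to \Pi_\scT^R\Pi_\scT N$) and induct on the size of a finite closed subset supporting both objects, whereas the paper decomposes $M$ and inducts on the size of the closed subset supporting $M$ alone.
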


\begin{proof}
We prove the claim by induction on the cardinality of the smallest finite closed subset $\scT \subset \scS$ such that $M$ belongs to the essential image of $\Db \cA_{\scT}$. If $\scT=\{s\}$, then $M$ is a direct sum of copies of $\Lgr_s\langle k\rangle[k] \cong \dgr_s \langle k \rangle[k]$ with $k \equiv \dag(s) \pmod 2$. Then the desired vanishing follows from the definitions.

Now, assume that $\#\scT \geq 2$, and let $s \in \scT$ be maximal. Let $\scT':=\scT \smallsetminus \{s\}$. We can consider the functors $\iota_{\scT}$ and $\iota_{\scT'}$, and their right adjoints as in Lemma~\ref{lem:qher-closed-subset}. It follows directly from the definition that
\begin{equation}
\label{eqn:claim-parity-1}
\text{$\iota_{\scT'}^R(N)$ is $(!,\dag)$-even.}
\end{equation}
(Here and below, in a minor abuse of notation, we write $\dag$ for the restriction of this function to $\scT$ or $\scT'$.)

Let $M' \in \Db \cA_{\scT}$ be such that $M=\iota_{\scT}(M')$. It is easy to see that $M'$ is $(*,\dag)$-even. We consider the functors
\[
\tilde{\iota}_{\scT'} : \Db \cA_{\scT'} \to \Db \cA_{\scT}, \quad \tilde{\Pi}_{\scT'} : \Db \cA_{\scT} \to \Db (\cA_{\scT}/\cA_{\scT'})
\]
and their adjoints as in Lemma~\ref{lem:qher-closed-subset}, and
the associated distinguished triangle
\begin{equation}
\label{eqn:triangle-parity-vanishing}
\tilde{\Pi}_{\scT'}^L \tilde{\Pi}_{\scT'} M' \to M' \to \tilde{\iota}_{\scT'} \tilde{\iota}_{\scT'}^L (M') \xrightarrow{[1]}.
\end{equation}
We claim that
\begin{gather}
\label{eqn:claim-parity-2}
\text{$\tilde{\iota}_{\scT'}^L(M')$ is $(*,\dag)$-even;}\\
\label{eqn:claim-parity-3}
\text{$\tilde{\Pi}_{\scT'}^L \tilde{\Pi}_{\scT'} M'$ is a direct sum of objects $\dgr_s \langle n \rangle [n]$ with $n \equiv \dag(s) \pmod 2$.}
\end{gather}
In fact,~\eqref{eqn:claim-parity-2} follows directly from the definitions (as for~\eqref{eqn:claim-parity-1} above). For~\eqref{eqn:claim-parity-3}, we observe that the category $\cA_{\scT} / \cA_{\scT'}$ is graded highest weight with a weight poset consisting of one element; hence it is semisimple, with simple objects of the form $\tilde{\Pi}_{\scT'}(\Lgr_s) \langle n \rangle$. Therefore, $\tilde{\Pi}_{\scT'} M'$ is a direct sum of objects of the form 
\[
\tilde{\Pi}_{\scT'}(\dgr_s) \langle n \rangle [m] = \tilde{\Pi}_{\scT'}(\ngr_s) \langle n \rangle [m] = \tilde{\Pi}_{\scT'}(\Lgr_s) \langle n \rangle [m].
\]
Since $M'$ is $(*,\dag)$-even, using adjunction and the second isomorphism in~\eqref{eqn:adjoints-pi-d-n}, we see that in fact $\tilde{\Pi}_{\scT'} M'$ is a direct sum of objects as above with $n=m$ and $n \equiv \dag(s) \pmod 2$. Using the first isomorphism in~\eqref{eqn:adjoints-pi-d-n}, we deduce~\eqref{eqn:claim-parity-3}.

Now we consider the image
\[
\iota_{\scT} \tilde{\Pi}_{\scT'}^L \tilde{\Pi}_{\scT'} M' \to M \to \iota_{\scT'} \tilde{\iota}_{\scT'}^L (M') \xrightarrow{[1]}
\]
of the distinguished triangle~\eqref{eqn:triangle-parity-vanishing} under $\iota_\scT$. Applying $\Hom(-,N\langle n \rangle [m])$ we deduce an exact sequence
\begin{multline*}
\Hom(\iota_{\scT'} \tilde{\iota}_{\scT'}^L (M'), N\langle n \rangle [m]) \to \Hom(M,N\langle n \rangle [m]) \\
\to \Hom(\iota_{\scT} \tilde{\Pi}_{\scT'}^L \tilde{\Pi}_{\scT'} M',N\langle n \rangle [m]).
\end{multline*}
By induction,~\eqref{eqn:claim-parity-1} and~\eqref{eqn:claim-parity-2}, the first term vanishes unless $n=m$ and $n$ is even. By~\eqref{eqn:claim-parity-3}, the last term also vanishes unless $n=m$ and $n$ is even. We deduce the same property for the middle term.
\end{proof}

\begin{cor}
\label{cor:parity-surjectivity}
Let $\scT \subset \scS$ be a closed subset in the order topology.
If $M$ and $N$ are $\dag$-even, then the morphism
\[
\Hom_{\Db \cA}(M,N) \to \Hom_{\Db(\cA/\cA_{\scT})}(\Pi_{\scT}(M), \Pi_{\scT}(N))
\]
induced by $\Pi_{\scT}$ is surjective.
\end{cor}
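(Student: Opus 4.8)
The plan is to rewrite the target $\Hom$-group via adjunction and to reduce the asserted surjectivity to a single vanishing statement that follows from Lemma~\ref{lem:parity-vanishing}, applied inside the graded highest weight category $\cA_\scT$.

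First I would use the adjunction $(\Pi_\scT,\Pi_\scT^R)$ from Lemma~\ref{lem:qher-closed-subset}(3) to identify
\[
\Hom_{\Db(\cA/\cA_\scT)}(\Pi_\scT M, \Pi_\scT N) \cong \Hom_{\Db \cA}(M, \Pi_\scT^R \Pi_\scT N),
\]
under which the map in the statement becomes post-composition with the adjunction unit $N \to \Pi_\scT^R \Pi_\scT N$. Applying $\Hom_{\Db \cA}(M,-)$ to the distinguished triangle $\iota_\scT \iota_\scT^R N \to N \to \Pi_\scT^R \Pi_\scT N \xrightarrow{[1]}$ of Lemma~\ref{lem:qher-closed-subset}(3), the associated long exact sequence shows that the desired surjectivity will follow once we know that $\Hom_{\Db \cA}(M, \iota_\scT \iota_\scT^R N [1]) = 0$. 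Since $\iota_\scT$ is triangulated with left adjoint $\iota_\scT^L$, this last group is in turn identified with $\Hom_{\Db \cA_\scT}(\iota_\scT^L M, \iota_\scT^R N [1])$.

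Next I would check two parity statements inside $\cA_\scT$, which is itself a graded highest weight category with weight poset $\scT$ by Lemma~\ref{lem:qher-closed-subset}(1): namely that $\iota_\scT^L M$ is $(*,\dag)$-even and that $\iota_\scT^R N$ is $(!,\dag)$-even. These follow from exactly the computation used for~\eqref{eqn:claim-parity-1} and~\eqref{eqn:claim-parity-2} in the proof of Lemma~\ref{lem:parity-vanishing}: since $\iota_\scT$ is fully faithful and carries the standard (resp.\ costandard) object of $\cA_\scT$ labelled by $t \in \scT$ to $\dgr_t$ (resp.\ $\ngr_t$), adjunction gives $\Hom_{\Db\cA_\scT}(\dgr_t, (\iota_\scT^R N)\la n\ra[m]) \cong \Hom_{\Db\cA}(\dgr_t, N\la n\ra[m])$ and $\Hom_{\Db\cA_\scT}((\iota_\scT^L M)\la n\ra[m], \ngr_t) \cong \Hom_{\Db\cA}(M\la n\ra[m], \ngr_t)$, and the right-hand sides obey the required parity constraints because $M$ and $N$ are $\dag$-even.

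Finally I would apply Lemma~\ref{lem:parity-vanishing} in $\cA_\scT$ to the pair $(\iota_\scT^L M, \iota_\scT^R N)$: it yields $\Hom_{\Db\cA_\scT}(\iota_\scT^L M, \iota_\scT^R N\la n\ra[m]) = 0$ unless $n=m$ with $n$ even, and specializing to $n=0$, $m=1$ gives the vanishing $\Hom_{\Db\cA_\scT}(\iota_\scT^L M, \iota_\scT^R N[1]) = 0$ needed above. There is no serious obstacle here; the only point requiring genuine care is the very first step — verifying that the map of the statement really is the one induced by the unit $N \to \Pi_\scT^R \Pi_\scT N$, so that the recollement triangle can legitimately be fed into a long exact sequence — while the rest is formal manipulation of the four adjoints of Lemma~\ref{lem:qher-closed-subset}(3) together with the parity bookkeeping already carried out in the proof of Lemma~\ref{lem:parity-vanishing}.
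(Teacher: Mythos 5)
Your argument is correct and follows the paper's proof essentially verbatim: the same adjunction identifying the map with composition along the unit $N \to \Pi_{\scT}^R \Pi_{\scT}(N)$, the same recollement triangle $\iota_{\scT}\iota_{\scT}^R N \to N \to \Pi_{\scT}^R\Pi_{\scT} N \xrightarrow{[1]}$, and the same reduction to a vanishing supplied by Lemma~\ref{lem:parity-vanishing}. The only (harmless) variation is at the last step, where you apply one further adjunction to work with $(\iota_{\scT}^L M, \iota_{\scT}^R N)$ inside $\Db \cA_{\scT}$, whereas the paper instead checks directly that $\iota_{\scT}\iota_{\scT}^R N$ is $(!,\dag)$-even in $\Db\cA$ and applies Lemma~\ref{lem:parity-vanishing} there; both give the required vanishing of $\Hom_{\Db\cA}(M,\iota_{\scT}\iota_{\scT}^R N[1])$.
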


\begin{proof}
By adjunction, the morphism under consider identifies with the morphism
\[
\Hom_{\Db \cA}(M,N) \to \Hom_{\Db \cA}(M,\Pi_{\scT}^R \Pi_{\scT}(N))
\]
induced by the adjunction morphism $N \to \Pi_{\scT}^R \Pi_{\scT}(N)$. Consider the distinguished triangle
\[
\iota_{\scT} \iota_{\scT}^R (N) \to N \to \Pi_\scT^R \Pi_\scT (N) \xrightarrow{[1]}
\]
from Lemma~\ref{lem:qher-closed-subset}. Applying $\Hom(M,-)$ we obtain an exact sequence
\[
\Hom(M,N) \to \Hom(M,\Pi_{\scT}^R \Pi_{\scT}(N)) \to \Hom(M,\iota_{\scT} \iota_{\scT}^R N[1]).
\]
Now one can easily check that $\iota_{\scT} \iota_{\scT}^R N$ is $(!,\dag)$-even. Hence by Lemma~\ref{lem:parity-vanishing} we have $\Hom(M,\iota_{\scT} \iota_{\scT}^R N[1])=0$, and the desired vanishing follows.
\end{proof}

%----------------------------------------------------------------------------
\subsection{Classification of parity objects}
\label{ss:unicity-parity}
%----------------------------------------------------------------------------

As in~\S\ref{ss:parity-objects}, we fix a graded highest weight category $\cA$ with weight poset $(\scS,\leq)$.  

\begin{prop}
\label{prop:classification-parity}
For any $s \in \scS$, there exists at most one indecomposable parity object $E_s$ in $\Db \cA_{\leq s}$ such that $\Pi_{<s}(E_s) \cong \Pi_{<s}(\dgr_s)$, where $\Pi_{<s} : \Db \cA_{\leq s} \to \Db (\cA_{\leq s}/\cA_{<s})$ is the quotient functor. Moreover, the objects
\[
\iota_{\leq s}(E_s) \langle n \rangle [n] \quad \text{for $s \in \scS$ such that $E_s$ exists and $n \in \Z$}
\]
form a set of representatives of the isomorphism classes of parity objets in $\Db \cA$.
\end{prop}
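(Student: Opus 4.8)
The plan is to follow the standard strategy used for classifying parity complexes in the topological setting (as in \cite{jmw}), transported to the derived category of a graded highest weight category via the tools of Lemma~\ref{lem:qher-closed-subset} and the vanishing results of~\S\ref{ss:parity-objects}. The argument splits into three parts: (i) uniqueness of $E_s$, (ii) decomposability of an arbitrary parity object into shifts of the $\iota_{\leq s}(E_s)$, and (iii) the fact that no two of these are isomorphic.

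For uniqueness, suppose $E_s$ and $E_s'$ are both indecomposable parity objects in $\Db\cA_{\leq s}$ with $\Pi_{<s}(E_s) \cong \Pi_{<s}(\dgr_s) \cong \Pi_{<s}(E_s')$. By Corollary~\ref{cor:parity-surjectivity} (applied with $\scT = \{t < s\}$ inside $\cA_{\leq s}$), the map $\Hom(E_s, E_s') \to \Hom_{\Db(\cA_{\leq s}/\cA_{<s})}(\Pi_{<s}E_s, \Pi_{<s}E_s')$ is surjective, so the chosen isomorphism in the quotient lifts to a morphism $f : E_s \to E_s'$; symmetrically we get $g : E_s' \to E_s$. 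Then $\Pi_{<s}(g f) = \id$, so $g f$ is an automorphism of $E_s$ modulo the subcategory $\iota_{<s}(\Db\cA_{<s})$. Since $E_s$ is indecomposable and $\Db\cA_{\leq s}$ is Krull--Schmidt, $\End(E_s)$ is local; an endomorphism that becomes invertible in the quotient cannot lie in the maximal ideal (an element of the radical stays in the radical of the quotient endomorphism ring, or more simply its image is not invertible), hence $g f$ is itself an automorphism of $E_s$. Likewise $f g$ is an automorphism of $E_s'$, so $f$ is an isomorphism.

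For the classification statement, let $M$ be a parity object in $\Db\cA$; writing $M = M_0 \oplus M_1$ and replacing $M_1$ by $M_1\langle 1\rangle[1]$ we may assume $M$ is $\dag$-even, and by Krull--Schmidt we may assume $M$ is indecomposable. Let $\scT$ be the smallest finite closed subset of $\scS$ with $M$ in the essential image of $\Db\cA_\scT$, and induct on $\#\scT$; pick $s \in \scT$ maximal and set $\scT' = \scT \smallsetminus \{s\}$. Working inside $\cA_\scT$, consider $M' \in \Db\cA_\scT$ with $\iota_\scT(M') = M$ and the triangle $\tilde\iota_{\scT'}\tilde\iota_{\scT'}^L M' \to$ (shift of $\tilde\Pi_{\scT'}^L\tilde\Pi_{\scT'}M'$)$[{-1}]$, i.e. the triangle $\tilde\Pi_{\scT'}^L\tilde\Pi_{\scT'}M' \to M' \to \tilde\iota_{\scT'}\tilde\iota_{\scT'}^L M' \xrightarrow{[1]}$ from~\eqref{eqn:triangle-parity-vanishing}. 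As established in the proof of Lemma~\ref{lem:parity-vanishing}, $\tilde\Pi_{\scT'}^L\tilde\Pi_{\scT'}M'$ is a direct sum of copies of $\dgr_s\langle n\rangle[n]$ with $n \equiv \dag(s)$, call it $P$, and $\tilde\iota_{\scT'}^L M' =: Q$ is $(*,\dag)$-even; a dual argument (using the injective-envelope side of the highest weight structure, or the analogous triangle for right adjoints) shows $Q$ is in fact $\dag$-even, so $Q$ is parity, supported on $\scT'$, and by induction $\iota_{\scT'}(Q)$ is a direct sum of shifts $\iota_{\leq t}(E_t)\langle n\rangle[n]$. Next I claim the triangle $P \to M' \to Q \xrightarrow{[1]}$ splits after applying $\iota_\scT$: the obstruction lies in $\Hom(\iota_\scT Q, \iota_\scT P[1])$, and since $\iota_\scT Q$ is $(*,\dag)$-even and $\iota_\scT P = \bigoplus \iota_{\leq s}(\dgr_s)\langle n\rangle[n]$ with each $\iota_{\leq s}(\dgr_s)$ being $(!,\dag)$-even (hence $P[1]$ built from $(!,\dag)$-even objects with a shift by $1$), Lemma~\ref{lem:parity-vanishing} forces this $\Hom$-group to vanish. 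Thus $M \cong \iota_\scT(P) \oplus \iota_{\scT'}(Q)$; as $M$ is indecomposable, either $M$ is a single shift $\iota_{\leq s}(\dgr_s)\langle n\rangle[n]$ — and then $\iota_{\leq s}(\dgr_s)$ is an indecomposable parity object with $\Pi_{<s}(\dgr_s) = \Pi_{<s}(\dgr_s)$, so $E_s$ exists and equals it — or $M$ is one of the summands $\iota_{\leq t}(E_t)\langle n\rangle[n]$ coming from $Q$. In all cases $M$ has the asserted form, and running the induction the other way shows every $\iota_{\leq s}(E_s)\langle n\rangle[n]$ is parity.

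Finally, to see this is a genuine set of representatives with no repetitions: if $\iota_{\leq s}(E_s)\langle n\rangle[n] \cong \iota_{\leq t}(E_t)\langle m\rangle[m]$, applying $\Pi_{<s}$ after $\Pi_{\{u \,:\, u \not\leq s\}}$ (or comparing the highest terms in the standard/costandard filtration of the cohomology) shows $s = t$; then comparing internal and cohomological degrees, using property~\eqref{it:qh-def-split} of Definition~\ref{defn:qhered} so that $\Pi_{<s}(\dgr_s)$ is a shift of a simple object with one-dimensional graded endomorphisms, forces $n = m$. The main obstacle I anticipate is the splitting step in the inductive argument — one must be careful that the relevant $\iota_\scT$-images of $\tilde\iota_{\scT'}^L M'$ and $\tilde\Pi_{\scT'}^L\tilde\Pi_{\scT'}M'$ really are, respectively, $(*,\dag)$-even and a sum of $(!,\dag)$-even $\dgr_s$-shifts, so that Lemma~\ref{lem:parity-vanishing} applies to kill the extension; this is exactly the bookkeeping carried out in the proof of that lemma, and it also requires the dual version of the argument there to upgrade $(*,\dag)$-evenness of $Q$ to full $\dag$-evenness before invoking the inductive hypothesis.
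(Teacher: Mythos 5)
Your uniqueness argument is fine and is essentially the paper's: lift the isomorphisms through the surjection of Corollary~\ref{cor:parity-surjectivity} and use locality of $\End(E_s)$. The classification half, however, has a genuine gap, concentrated in the splitting step. First, the claim that $Q:=\tilde{\iota}_{\scT'}^L M'$ is $\dag$-even ``by a dual argument'' is not available: the dual argument controls $\tilde{\iota}_{\scT'}^R M'$ (this is exactly~\eqref{eqn:claim-parity-1} in the proof of Lemma~\ref{lem:parity-vanishing}), which is a different object. The left adjoint $\tilde{\iota}_{\scT'}^L$ only preserves $(*,\dag)$-evenness, just as, for parity sheaves, $i^*$ of a parity complex is $*$-even but usually not parity; so you cannot feed $Q$ into the inductive hypothesis. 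Second, the vanishing of the obstruction in $\Hom(\tilde{\iota}_{\scT'}(Q), P[1])$ via Lemma~\ref{lem:parity-vanishing} requires $P$ (a sum of objects $\dgr_s\langle n\rangle[n]$) to be $(!,\dag)$-even, i.e.\ that $\Hom(\dgr_t,\dgr_s\langle a\rangle[b])$ obeys parity vanishing for $t<s$; standard objects satisfy no such constraint in a general graded highest weight category, so this step fails. A sanity check shows the approach cannot be patched as stated: if the triangle did split, then for indecomposable $M$ minimality of $\scT$ would force $Q=0$, and your induction would prove that every indecomposable parity object is a shift of some $\dgr_t$ --- contradicting, e.g., the fact that the parity objects $\PEx^\bk_\lambda$ of Section~\ref{sec:exotic-parity} are rarely standard (and that $\dgr_s$ itself is generally not $(!,\dag)$-even, hence not parity).

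The paper avoids any splitting of the recollement triangle. Given an indecomposable parity $E=\iota_\scT(E')$ with $\scT$ minimal, one first shows $\scT$ has a unique maximal element: if $s\neq t$ were both maximal, Corollary~\ref{cor:parity-surjectivity} makes $\End(E')\to\End(\Pi_{\scT\smallsetminus\{s,t\}}(E'))$ surjective, so the image is indecomposable in a semisimple quotient with two incomparable weights, hence concentrated on one of them, contradicting minimality of $\scT$. Thus $\scT=\{t\leq s\}$, and the same surjectivity argument shows $\Pi_{<s}(E')$ is an indecomposable object of the semisimple quotient, hence isomorphic to $\Pi_{<s}(\dgr_s)\langle n\rangle[m]$; parity of $E'$ forces $n=m$, and then the already-proved uniqueness identifies $E'\cong E_s\langle n\rangle[n]$. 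If you want to keep your outline, this is the replacement for your decomposition step; the rest of your write-up (reduction to indecomposables by Krull--Schmidt, and the distinctness of the representatives) is unproblematic.
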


\begin{proof}
We start with the uniqueness of $E_s$. Suppose that $E_s$ and $E_s'$ are two indecomposable parity objects in $\Db \cA_{\leq s}$ such that $\Pi_{<s}(E_s) \cong \Pi_{<s}(E_s') \cong \Pi_{<s}(\dgr_s)$. By Corollary~\ref{cor:parity-surjectivity}, the morphism
\[
\Hom_{\Db \cA_{\leq s}}(M,N) \to \Hom_{\Db(\cA_{\leq s} / \cA_{<s})}(\Pi_{<s}(M), \Pi_{<s}(N))
\]
is surjective, where $M$ and $N$ are either $E_s$ or $E_s'$. Hence there exist morphisms
\[
\varphi : E_s \to E_s', \qquad \psi : E_s' \to E_s
\]
such that $\Pi_{<s}(\varphi)$ and $\Pi_{<s}(\psi)$ are invertible. Then $\varphi \circ \psi$ does not belong to the maximal ideal of the local ring $\End(E_s')$, so it is invertible. Similarly $\psi \circ \varphi$ is invertible, so we conclude that $\varphi$ and $\psi$ are isomorphisms.

Now, let $E$ be an indecomposable parity object, and let $\scT$ be the smallest finite closed subset of $\scS$ such that $E$ belongs to the essential image of $\iota_{\scT}$. We denote by $E'$ the object of $\Db(\cA_{\scT})$ such that $E\cong \iota_\scT(E')$. 

We claim that $\scT$ admits a unique maximal element. Indeed, if $s$ and $t$ are distinct maximal elements in $\scT$, then setting $\scT':=\scT \smallsetminus \{s,t\}$ we can consider the quotient functor
\[
\Pi_{\scT'} : \Db(\cA_{\scT}) \to \Db(\cA_{\scT}/\cA_{\scT'}).
\]
By Corollary~\ref{cor:parity-surjectivity} the morphism
\[
\End_{\Db(\cA_{\scT})}(E') \to \End_{\Db(\cA_{\scT}/\cA_{\scT'})}(\Pi_{\scT'}(E'))
\]
is surjective, so that $\Pi_{\scT'}(E')$ is indecomposable. Now since $s$ and $t$ are not comparable, the category $\Db(\cA_{\scT}/\cA_{\scT'})$ is semisimple, so that $\Pi_{\scT'}(E')$ must be either isomorphic to some $\Pi_{\scT'}(\Lgr_s) \langle n \rangle [m]$ or some $\Pi_{\scT'}(\Lgr_s) \langle n \rangle [m]$. In any case, this contradicts the minimality of $\scT$.

Since $\scT$ admits a unique maximal element, it must be of the form $\{t \in \scS \mid t \leq s\}$ for some $s$. Moreover, the same argument as above shows that  $\Pi_{<s}(E') \cong \Pi_{<s}(\dgr_s) \langle n \rangle [m]$ for some $n,m \in \Z$. Since $E'$ is parity we must have $n=m$, and then by uniqueness, we have that $E_s$ exists and $E' \cong E_s \langle n \rangle [n]$.
\end{proof}

%%%%%%%%%%%%%%%%%%%%%%%%%%%%%%%%
\section{Exotic parity complexes}
\label{sec:exotic-parity}
%%%%%%%%%%%%%%%%%%%%%%%%%%%%%%%%

%----------------------------------------------------------------------
\subsection{Definitions}
\label{ss:def-exotic}
%----------------------------------------------------------------------

In this section we fix a split connected reductive group scheme $\Gp_\Z$ over $\Z$ with simply-connected derived subgroup, a Borel subgroup $\BGp_\Z \subset \Gp_\Z$, and a (split) maximal torus $\TGp_\Z \subset \BGp_\Z$. 
We also set $\bX:=X^*(\TGp_\Z)$.

We denote by $\fg_\Z$ and $\fb_\Z$ the respective Lie algebras of $\Gp_\Z$ and $\BGp_\Z$, and
consider the Springer resolution
\[
\tcN_\Z := \Gp_\Z \times^{\BGp_\Z} (\fg_\Z/\fb_\Z)^*,
\]
where $(-)^*$ means the dual $\Z$-module. This scheme admits a natural action of $\Gp_\Z \times (\Gm)_\Z$, where $\Gp_\Z$ acts via left multiplication on itself and $x \in (\Gm)_\Z$ acts by multiplication by $x^{-2}$ on $(\fg_\Z/\fb_\Z)^*$. We will then consider the derived category $\Db \Coh^{\Gp \times \Gm}(\tcN)_\Z$ of $(\Gp \times \Gm)_\Z$-equivariant coherent sheaves on $\tcN_\Z$; see~\cite[Appendix~A]{mr} for a review of equivariant coherent sheaves on such schemes. (Here and below, we will usually indicate coefficients only once as a subscript to simplify notations.) We denote by $\langle 1 \rangle$ the autoequivalence of $\Db \Coh^{\Gp \times \Gm}(\tcN)_\Z$ given by tensoring with the tautological rank-$1$ $(\Gm)_\Z$-module.

We will denote by $\Wf$ the Weyl group of $(\Gp_\Z, \TGp_\Z)$, and by $\Sf \subset \Wf$ the subset of simple reflections determined by $\BGp_\Z$. We also denote by $\Phi$ the root system of $(\Gp_\Z, \TGp_\Z)$, by $\Phi^+ \subset \Phi$ the system of positive roots defined as the $\TGp_\Z$-weights in $\fg_\Z/\fb_\Z$, and by $\Phis \subset \Phi$ the subset of simple roots. We denote by $\bX^+ \subset \bX$ the subset of dominant weights, by $\bY \subset \bX$ the root lattice, and set $\bY^+ := \bY \cap \bX^+$.  Note that our conventions make $\BGp_\Z$ the ``negative'' Borel subgroup.

We consider the affine Weyl group $W:=\Wf \ltimes \bY$, and the extended affine Weyl group $\Wext:=\Wf \ltimes \bX$. There exists a natural length function $\ell : \Wext \to \Z$ determined by
\begin{equation}\label{eqn:wext-length}
\ell(w \cdot t_\lambda) = \sum_{\substack{\alpha \in \Phi^+ \\ w(\alpha) \in \Phi^+}} |\langle \lambda, \alpha^\vee \rangle| + \sum_{\substack{\alpha \in \Phi^+ \\ w(\alpha) \in -\Phi^+}} |1 + \langle \lambda, \alpha^\vee \rangle|
\end{equation}
for $w \in \Wf$ and $\lambda \in \bX$ (where $t_\lambda$ denotes the image of $\lambda$ in $\Wext$).
Then $\Omega := \{w \in \Wext \mid \ell(w)=0\}$ is a subgroup of $\Wext$, and we have $\Wext = W \rtimes \Omega$. Moreover, if $S=\{w \in W \mid \ell(w)=1\}$, then $(W,S)$ is Coxeter system, with length function again given by~\eqref{eqn:wext-length}. Finally, the restriction of~\eqref{eqn:wext-length} to $\Wf$ is the length function for the Coxeter system $(\Wf,\Sf)$. We will denote by $\leq$ the Bruhat order on $W$ (which restricts to the Bruhat order on $\Wf$).

We will denote by $B_{\mathrm{ext}}$ the braid group associated with $\Wext$, i.e.~the group generated by symbols $T_w$ for $w \in \Wext$, and with relations $T_v T_w = T_{vw}$ if $\ell(vw)=\ell(v)+\ell(w)$. The main result of~\cite{br}  (see also~\cite[\S 3.3]{mr}) provides a (weak) right action of the group $B_{\mathrm{ext}}$ on $\Db \Coh^{\Gp \times \Gm}(\tcN)_\Z$; we will denote by
\[
\mathscr{J}_{b} : \Db \Coh^{\Gp \times \Gm}(\tcN)_\Z \simto \Db \Coh^{\Gp \times \Gm}(\tcN)_\Z
\]
the action of an element $b \in B_{\mathrm{ext}}$. (This functor is well defined up to isomorphism.)

For $\lambda \in \bX$, we will denote by $w_\lambda$ the unique shortest element in $\Wf t_\lambda \subset \Wext$. We also denote by $\fW \subset W$, resp.~$\fWext \subset \Wext$, the subset consisting of elements $w$ which are of minimal length in $\Wf w$. Then $\fWext = \{w_\lambda : \lambda \in \bX\}$ and $\fW = \{w_\lambda : \lambda \in \bY\}$. Finally, we will denote by $\fWf \subset W$ the subset of elements $w$ which are of minimal length in $\Wf w \Wf$. Then $\fWf = \{w_\lambda : \lambda \in -\bY^+\}$.

We fix once and for all a weight $\varsigma \in \bX$ such that $\langle \varsigma, \alpha^\vee \rangle = 1$ for any $\alpha \in \Phis$. (Our assumptions guarantee that such a weight exists, but it might not be unique.)

%----------------------------------------------------------------------
\subsection{Exotic standard and costandard sheaves}
%----------------------------------------------------------------------

In~\cite[\S 5.2]{mr2}, C.~Mautner and the third author consider certain objects
\[
\Delta_\Z(\lambda), \qquad \nabla_\Z(\lambda)
\]
in $\Db \Coh^{\Gp \times \Gm}(\tcN)_\Z$, for $\lambda \in \bX$. (In~\cite{mr2} these objects are in fact considered in a more limited setting, but these restrictions are not needed for the results we will use in the present paper.) By~\cite[Proposition~5.4]{mr2}, for $\lambda, \mu \in \bX$ and $m,n \in \Z$ we have
\begin{equation}
\label{eqn:vanishing-DN-Z}
\Hom_{\Db \Coh^{\Gp \times \Gm}(\tcN)_\Z}(\Delta_\Z(\lambda), \nabla_\Z(\mu) \langle n \rangle [m]) = \begin{cases}
\Z & \text{if $\lambda=\mu$ and $m=n=0$;}\\
0 & \text{otherwise.}
\end{cases}
\end{equation}

For any Noetherian commutative ring $\fR$ of finite global dimension we can also consider the base change $\Gp_\fR$, $\BGp_\fR$, $\tcN_\fR$ of $\Gp_\Z$, $\BGp_\Z$, $\tcN_\Z$ to $\fR$. 
Let
\[
\varphi_{\Z}^\fR : \Db \Coh^{\Gp \times \Gm}(\tcN)_\Z \to \Db \Coh^{\Gp \times \Gm}(\tcN)_\fR
\]
be the functor given by coherent pullback along $\tcN_\fR \to \tcN_\Z$.
We set
\[
\Delta_\fR(\lambda) := \varphi_\Z^\fR(\Delta_\Z(\lambda)), \qquad \nabla_\fR(\lambda) := \varphi_\Z^\fR(\nabla_\Z(\lambda)).
\]
From~\eqref{eqn:vanishing-DN-Z} it is not difficult to check (using e.g.~the same kind of arguments as in~\cite[Proof of Lemma~4.11]{mr}) that we have
\begin{equation}
\label{eqn:vanishing-DN-R}
\Hom_{\Db \Coh^{\Gp \times \Gm}(\tcN)_\fR}(\Delta_\fR(\lambda), \nabla_\fR(\mu) \langle n \rangle [m]) = \begin{cases}
\fR & \text{if $\lambda=\mu$, $m=n=0$;}\\
0 & \text{otherwise.}
\end{cases}
\end{equation}
If $\fS$ is a Noetherian commutative $\fR$-algebra of finite global dimension, we also have a functor
\[
\varphi_{\fR}^{\fS} : \Db \Coh^{\Gp \times \Gm}(\tcN)_\fR \to \Db \Coh^{\Gp \times \Gm}(\tcN)_\fS
\]
that sends $\Delta_\fR(\lambda)$ and $\nabla_\fR(\lambda)$ to $\Delta_\fS(\lambda)$ and $\nabla_\fS(\lambda)$ respectively. Moreover, the morphism
\begin{multline*}
\fS \otimes_\fR \Hom_{\Db \Coh^{\Gp \times \Gm}(\tcN)_\fR}(\Delta_\fR(\lambda), \nabla_\fR(\mu) \langle n \rangle [m]) \\
\to \Hom_{\Db \Coh^{\Gp \times \Gm}(\tcN)_\fS}(\Delta_\fS(\lambda), \nabla_\fS(\mu) \langle n \rangle [m])
\end{multline*}
induced by this functor is an isomorphism for any $\lambda, \mu \in \bX$ and $m,n \in \Z$.

In the following lemma, we say that an object $X$ in a triangulated category $\mathsf{D}$ admits a ``filtration'' with ``subquotients'' $X_1, \ldots, X_r$ if $X$ belongs to $\{[X_1]\} * \cdots * \{[X_r]\}$ in the notation of~\cite[\S 1.3.9]{bbd}.  In this situation, the number $r$ is called the \emph{length} of the ``filtration.''

\begin{lem}
\label{lem:Hom-filtrations-DN}
Let $\cF,\cG$ in $\Db \Coh^{\Gp \times \Gm}(\tcN)_\fR$. Assume that $\cF$ admits a ``filtration'' 
with ``subquotients'' of the form $\Delta_\fR(\lambda) \langle n \rangle [n]$ with $n \equiv \ell(w_\lambda) \pmod 2$, and that $\cG$ admits a ``filtration'' with ``subquotients'' of the form $\nabla_\fR(\lambda) \langle n \rangle [n]$ with $n \equiv \ell(w_\lambda) \pmod 2$. Then the $\fR$-module
\[
\Hom_{\Db\Coh^{\Gp \times \Gm}(\tcN)_\fR}(\cF,\cG \langle n \rangle [m])
\]
is free of finite rank, and vanishes
unless $m=n$ and $n$ is even. Moreover, for any $\fR$-algebra $\fS$ as above and any $n,m \in \Z$ the natural morphism
\begin{multline*}
\fS \otimes_\fR \Hom_{\Db\Coh^{\Gp \times \Gm}(\tcN)_\fR}(\cF,\cG \langle n \rangle [m]) \\
\to \Hom_{\Db\Coh^{\Gp \times \Gm}(\tcN)_\fS}(\varphi_{\fR}^{\fS}(\cF),\varphi_{\fR}^{\fS}(\cG) \langle n \rangle [m])
\end{multline*}
is an isomorphism.
\end{lem}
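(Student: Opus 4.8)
The plan is to prove all three assertions together by a standard dévissage, reducing everything to the vanishing statement~\eqref{eqn:vanishing-DN-R} and to the single-object base-change isomorphism recalled just before the lemma. I would induct on the sum of the lengths of the given ``filtrations'' of $\cF$ and $\cG$, the case in which one of these objects vanishes being trivial. For the base of the induction, take $\cF = \Delta_\fR(\lambda)\langle a\rangle[a]$ and $\cG = \nabla_\fR(\mu)\langle b\rangle[b]$ with $a \equiv \ell(w_\lambda)$ and $b \equiv \ell(w_\mu) \pmod 2$: then $\Hom_{\Db\Coh^{\Gp \times \Gm}(\tcN)_\fR}(\cF,\cG\langle n\rangle[m])$ is identified with $\Hom_{\Db\Coh^{\Gp \times \Gm}(\tcN)_\fR}(\Delta_\fR(\lambda), \nabla_\fR(\mu)\langle b+n-a\rangle[b+m-a])$, which by~\eqref{eqn:vanishing-DN-R} equals $\fR$ when $\lambda=\mu$ and $n=m=a-b$, and vanishes otherwise; and if $\lambda=\mu$ then $\ell(w_\lambda)=\ell(w_\mu)$, so $a-b$ is even. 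This gives the vanishing and freeness assertions here, and the base-change assertion is then exactly the stated isomorphism.

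For the inductive step, peel off the top ``subquotient'' of whichever of $\cF$, $\cG$ has the longer ``filtration''. Say it is $\cG$, so that by associativity of the operation $*$ there is a distinguished triangle $\cG' \to \cG \to \nabla_\fR(\mu)\langle b\rangle[b] \xrightarrow{[1]}$ in which $\cG'$ admits a shorter ``filtration'' of the required type and $b \equiv \ell(w_\mu) \pmod 2$ (the case where $\cF$ is the long one is formally identical, using the triangle $\cF' \to \cF \to \Delta_\fR(\lambda)\langle a\rangle[a] \xrightarrow{[1]}$ and the functor $\Hom(-,\cG\langle n\rangle[m])$). Applying $\Hom(\cF,-)$ yields a long exact sequence involving $\Hom(\cF,\cG\langle n\rangle[m])$, the groups $\Hom(\cF,\cG'\langle n\rangle[m'])$ for $m'\in\{m,m+1\}$, and the groups $\Hom(\cF,\nabla_\fR(\mu)\langle b+n\rangle[b+m'])$ for $m'\in\{m-1,m\}$. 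By the inductive hypothesis (for $\cF$ and $\cG'$) and the base case (for $\cF$ and $\nabla_\fR(\mu)\langle b\rangle[b]$), each of these vanishes unless its two shift indices agree and are even; a check of the degrees then shows that the maps adjacent to $\Hom(\cF,\cG\langle n\rangle[m])$ in the long exact sequence have vanishing source or target unless $n=m$ is even, in which case the sequence collapses to a short exact sequence
\[
0 \to \Hom(\cF,\cG'\langle n\rangle[m]) \to \Hom(\cF,\cG\langle n\rangle[m]) \to \Hom(\cF,\nabla_\fR(\mu)\langle b+n\rangle[b+m]) \to 0,
\]
and forces $\Hom(\cF,\cG\langle n\rangle[m])=0$ otherwise. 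The outer terms are free of finite rank by induction and the base case, and since the right-hand one is free (hence projective) the sequence splits, so the middle term is free of finite rank as well; this proves the first two assertions.

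For the base-change assertion I would apply $\varphi_\fR^\fS$ to the triangle above (using $\varphi_\fR^\fS(\nabla_\fR(\mu)) = \nabla_\fS(\mu)$) and run the same analysis over $\fS$ to produce the corresponding short exact sequence of $\fS$-modules, the vanishing needed over $\fS$ coming again from the inductive hypothesis and the base case applied over $\fS$. Since the $\fR$-sequence splits, it remains short exact after $-\otimes_\fR\fS$, and the natural base-change morphisms give a map of short exact sequences from it to the $\fS$-sequence; the two outer base-change maps are isomorphisms by the inductive hypothesis and the base case, so the five lemma gives the same for the middle term.

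I do not expect a genuine obstacle: the argument is a formal dévissage. The one point that must be handled carefully is the bookkeeping of the two shift gradings $\langle\cdot\rangle$ and $[\cdot]$, arranged exactly so that the long exact sequences degenerate to \emph{short} exact sequences of \emph{free} modules. It is this freeness that simultaneously propagates the induction and keeps the sequences exact after extension of scalars, which is precisely what the base-change clause of the statement requires.
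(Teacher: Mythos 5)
Your argument is correct and is essentially the paper's own proof: the same induction on the total length of the two ``filtrations'', reducing to~\eqref{eqn:vanishing-DN-R} by peeling off a single (co)standard ``subquotient'', noting that the resulting long exact sequence degenerates (for parity reasons) to a short exact sequence of finite free modules, and deducing the base-change statement from this together with the five lemma. The only cosmetic differences are that the paper peels the bottom subquotient of $\cF$ (treating $\cG$ symmetrically) rather than the top of whichever object has the longer ``filtration'', and that it leaves the splitting of the short exact sequence implicit.
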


\begin{proof}
We prove the lemma by induction on the sum of the shortest lengths of ``filtrations'' of $\cF$ and $\cG$ as in the lemma. If this sum is at most $2$, then either the claim is obvious, or it follows from~\eqref{eqn:vanishing-DN-R}.

Otherwise, at least one of $\cF$ and $\cG$ admits no ``filtration'' of length less than $2$.  We will treat the case where this holds for $\cF$; the argument for $\cG$ is very similar.
Choose a ``filtration'' of $\cF$ of minimal length, and consider the first morphism
\[
\Delta_\fR(\lambda) \langle k \rangle [k] \to \cF
\]
in this ``filtration.'' (Here $\lambda \in \bX$ and $k\in \Z$ are such that $k \equiv \ell(w_\lambda) \pmod 2$.) Let $\cF'$ be the cone of this morphism, so that we have a distinguished triangle
\[
\Delta_\fR(\lambda) \langle k \rangle [k] \to \cF \to \cF' \xrightarrow{[1]}.
\]
Applying $\Hom(-,\cG \langle n \rangle [m])$ we obtain an exact sequence
\begin{multline*}
\Hom(\Delta_\fR(\lambda) \langle k \rangle [k], \cG \langle n \rangle [m-1]) \to
\Hom(\cF',\cG \langle n \rangle [m]) \to \Hom(\cF, \cG \langle n \rangle [m]) \\
\to \Hom(\Delta_\fR(\lambda) \langle k \rangle [k], \cG \langle n \rangle [m]) \to \Hom(\cF',\cG \langle n \rangle [m+1]).
\end{multline*}
By induction, the second and fourth terms vanish unless $n=m$ and $n$ is even. We deduce the same claim for the third term. If $n=m$ and $n$ is even, we also deduce that in the sequence above the first and last terms vanish, so that the second arrow is injective and the fourth one is surjective. Since the second and fourth terms are free of finite rank over $\fR$, this implies that the third term has the same property. Finally, the last assertion of the lemma follows from similar considerations and the 5-lemma.
\end{proof}

%----------------------------------------------------------------------
\subsection{Some ``wall-crossing'' functors}
\label{ss:wall-crossing}
%----------------------------------------------------------------------

For $s \in \Sf$, with associated simple root $\alpha_s$, we can consider the minimal standard parabolic subgroup $\PGp_{s,\Z} \subset \Gp_\Z$ associated with $s$, denote by $\fp_{s,\Z}$ its Lie algebra, and consider the ``parabolic Springer resolution'' $\tcN_{s,\Z}:= \Gp_\Z \times^{\PGp_{s,\Z}} (\fg_\Z / \fp_{s,\Z})^*$. There are natural maps
\[
\mathsf{e}_s : (\Gp_\Z / \BGp_\Z) \times_{\Gp_\Z / \PGp_{s,\Z}} \tcN_{s,\Z} \hookrightarrow \tcN_\Z, \quad \mu_s : (\Gp_\Z / \BGp_\Z) \times_{\Gp_\Z / \PGp_{s,\Z}} \tcN_{s,\Z} \twoheadrightarrow \tcN_{s,\Z}.
\]
Let us denote by
\begin{align*}
\Pi_s &: \Db \Coh^{\Gp \times \Gm}(\tcN)_\Z \to \Db \Coh^{\Gp \times \Gm}(\tcN_s)_\Z, \\
\Pi^s &: \Db \Coh^{\Gp \times \Gm}(\tcN_s)_\Z \to \Db \Coh^{\Gp \times \Gm}(\tcN)_\Z
\end{align*}
the functors defined by
\begin{align*}
\Pi_s(\cF) &= R(\mu_s)_* L(\mathsf{e}_s)^*(\cF \otimes_{\cO_{\tcN_\Z}} \cO_{\tcN_\Z}(-\varsigma)), \\
\Pi^s(\cG) &= R(\mathsf{e}_s)_* L(\mu_s)^*(\cG) \otimes_{\cO_{\tcN_\Z}} \cO_{\tcN_\Z}(\varsigma-\alpha_s) \langle -1 \rangle.
\end{align*}
Then we set
\[
\Psi_s := \Pi^s \Pi_s : \Db \Coh^{\Gp \times \Gm}(\tcN)_\Z \to \Db \Coh^{\Gp \times \Gm}(\tcN)_\Z.
\]

If $s \in S \smallsetminus \Sf$, then there exists $b \in B_\ext$ and $t \in \Sf$ such that $T_s = b T_t b^{-1}$ in $B_\ext$ (see~\cite[Lemma~6.1.2]{riche}). We fix such a pair once and for all, and set
\[
 \Psi_s := \mathscr{J}_{b^{-1}} \Psi_t \mathscr{J}_b : \Db \Coh^{\Gp \times \Gm}(\tcN)_\Z \to \Db \Coh^{\Gp \times \Gm}(\tcN)_\Z.
\]

If $s \in S$, the functor $\Psi_s$ fits into distinguished triangles of functors
\begin{gather}
\label{eqn:triangle-Psi-J}
\id \langle -1 \rangle [-1] \to \Psi_s \to \mathscr{J}_{T_s} \xrightarrow{[1]}, \\
\label{eqn:triangle-Psi-J-2}
\mathscr{J}_{(T_s)^{-1}} \to \Psi_s \to \id \langle 1 \rangle [1] \xrightarrow{[1]}.
\end{gather}
(See~\cite[(4.2)]{mr2} for the case $s \in \Sf$, and conjugate for the non-finite simple reflections.)

If $\fR$ is a commutative Noetherian ring of finite global dimension, then these constructions have obvious analogues for $\tcN_\fR$, and we will use similar notations in this setting.

\begin{rmk}
\label{rmk:Psi-adjoint}
It follows from~\cite[Lemma~9.4]{prinblock} that the functor $\Psi_s$ is self-adjoint, for any $s \in S$. The same remark applies to the endofunctor of $\Db \Coh^{\Gp \times \Gm}(\tcN)$ defined by the same formula.
\end{rmk}

\begin{lem}
\label{lem:filtration-Psi}
Let $\cF \in \Db \Coh^{\Gp \times \Gm}(\tcN)_\fR$, and assume that $\cF$ admits a ``filtration'' with ``subquotients'' of the form $\Delta_\fR(\lambda) \langle n \rangle [n]$ with $n \equiv \ell(w_\lambda) \pmod 2$ (resp.~of the form $\nabla_\fR(\lambda) \langle n \rangle [n]$ with $n \equiv \ell(w_\lambda) \pmod 2$). Then for any $s \in S$, $\Psi_s(\cF)\langle 1 \rangle [1]$ admits a ``filtration'' with ``subquotients'' of the form $\Delta_\fR(\lambda) \langle n \rangle [n]$ with $n \equiv \ell(w_\lambda) \pmod 2$ (resp.~of the form $\nabla_\fR(\lambda) \langle n \rangle [n]$ with $n \equiv \ell(w_\lambda) \pmod 2$).
\end{lem}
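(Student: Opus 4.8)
The plan is to reduce immediately to the case $s \in \Sf$: for non-finite simple reflections $s = b T_t b^{-1}$, the functor $\Psi_s = \mathscr{J}_{b^{-1}} \Psi_t \mathscr{J}_b$ differs from $\Psi_t$ by conjugation by invertible functors $\mathscr{J}_b$ which permute the standard (resp.\ costandard) objects $\Delta_\fR(\lambda)$ (resp.\ $\nabla_\fR(\lambda)$) among themselves up to shifts, by the known compatibility of the braid group action with exotic standard/costandard objects (as in~\cite{mr2}). So it suffices to check that $\mathscr{J}_b$ takes a ``filtration'' of the prescribed parity type to another such, which amounts to tracking how $\ell(w_\lambda)$ changes under $\lambda \mapsto b \cdot \lambda$; this is a combinatorial bookkeeping step using~\eqref{eqn:wext-length}. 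Thus I focus on $\Psi_s = \Pi^s \Pi_s$ for $s \in \Sf$.

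Next, by the octahedral axiom / standard dévissage, it is enough to handle a single subquotient: if $\cF$ sits in a triangle $\cF' \to \cF \to \cF'' \xrightarrow{[1]}$ with $\cF'$, $\cF''$ admitting shorter filtrations of the required type, then $\Psi_s(\cF)\langle 1 \rangle[1]$ sits in the corresponding triangle, and filtrations of $\Psi_s(\cF')\langle 1\rangle[1]$ and $\Psi_s(\cF'')\langle 1 \rangle[1]$ splice to give one for $\Psi_s(\cF)\langle 1 \rangle[1]$. So by induction on the length of the filtration, reduce to $\cF = \Delta_\fR(\lambda)\langle n \rangle[n]$ (resp.\ $\cF = \nabla_\fR(\lambda)\langle n \rangle[n]$) with $n \equiv \ell(w_\lambda) \pmod 2$. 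Since $\langle n \rangle[n]$ commutes with $\Psi_s$, it remains to show: $\Psi_s(\Delta_\fR(\lambda))\langle 1\rangle[1]$ admits a ``filtration'' with subquotients $\Delta_\fR(\mu)\langle m \rangle[m]$, $m \equiv \ell(w_\mu) \pmod 2$ (and the costandard analogue).

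This last claim is essentially the computation of wall-crossing functors on exotic standard objects, which is (over $\Z$, then base-changed to $\fR$ via $\varphi_\Z^\fR$ and the fact that the latter is monoidal for $\otimes \cO(\lambda)$ and commutes with the relevant push/pull) carried out in~\cite[\S 5]{mr2}: one gets, depending on whether $w_\lambda s > w_\lambda$ or $w_\lambda s < w_\lambda$ (i.e.\ whether $\langle \lambda + \varsigma, \alpha_s^\vee\rangle$ is positive or negative), either $\Psi_s \Delta_\fR(\lambda) \cong \Delta_\fR(\lambda)\langle -1\rangle[-1]$-type behavior or a two-step triangle with subquotients $\Delta_\fR(\lambda)$ and $\Delta_\fR(s\cdot_? \lambda)$ (with appropriate shifts). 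The point to verify is the parity bookkeeping: in each case, the shift appearing on a subquotient $\Delta_\fR(\mu)$ differs from the original shift $n$ by exactly $\ell(w_\mu) - \ell(w_\lambda) \pm 1$, so after applying $\langle 1\rangle[1]$ the congruence $m \equiv \ell(w_\mu) \pmod 2$ holds; this follows from $\ell(w_{\mu}) = \ell(w_\lambda) \pm 1$ when $w_\mu$ and $w_\lambda$ are related by multiplication by $s$ on the right (using that $\fWext$-representatives and the length formula~\eqref{eqn:wext-length} are compatible with right multiplication by simple reflections up to the $\Omega$-correction, which does not affect lengths). The costandard case is dual, using that $\Psi_s$ is self-adjoint (Remark~\ref{rmk:Psi-adjoint}) and exchanges the roles of $\Delta$ and $\nabla$ appropriately, or directly by the dual computation in~\cite{mr2}.

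The main obstacle I expect is bookkeeping the shifts precisely: making sure that the grading shift $\langle 1 \rangle$ built into the definition of $\Pi^s$ (and the $\mathcal{O}(\varsigma - \alpha_s)$ twist) combines with the homological shifts in the defining triangles of $\Psi_s$ so that the net parity of $\langle m \rangle[m]$ on each $\Delta_\fR(\mu)$-subquotient matches $\ell(w_\mu) \bmod 2$ after the overall $\langle 1 \rangle[1]$ twist. This is purely a matter of carefully transcribing the formulas of~\cite[\S 5.2]{mr2} and comparing with~\eqref{eqn:wext-length}; there is no conceptual difficulty, but it is the step where a sign error would hide.
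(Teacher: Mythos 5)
Your dévissage to a single subquotient $\Delta_\fR(\lambda)$ (resp.\ $\nabla_\fR(\lambda)$) is exactly how the paper proceeds, but two of your reductions do not survive scrutiny. First, the reduction to $s \in \Sf$ rests on the claim that the functors $\mathscr{J}_b$ ``permute the standard objects among themselves up to shifts.'' This is false: for a general braid group element $b$ (and already for $b = T_t$ with $t$ simple, outside the two special situations used in the actual proof), $\mathscr{J}_b(\Delta_\fR(\lambda))$ is only standard-\emph{filtered}, and the shifts that appear are of the form $\langle m \rangle$ without the matching $[m]$, so conjugation by $\mathscr{J}_b$ does not preserve the class of objects filtered by $\Delta_\fR(\mu)\langle n \rangle[n]$ with $n \equiv \ell(w_\mu) \pmod 2$. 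The paper avoids this entirely: the triangles~\eqref{eqn:triangle-Psi-J} and~\eqref{eqn:triangle-Psi-J-2} hold for every $s \in S$, so the whole case analysis is carried out uniformly, and conjugation by $\mathscr{J}_b$ is invoked only at the very end, for a statement (nonvanishing of the restriction to the preimage of the regular orbit) that \emph{is} preserved by braid functors.

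Second, and more seriously, your dichotomy ``$w_\lambda s > w_\lambda$ or $w_\lambda s < w_\lambda$'' misses the case that carries all the content: $w_\lambda s > w_\lambda$ but $w_\lambda s$ not minimal in $\Wf w_\lambda s$, i.e.\ $w_\lambda s = r w_\lambda$ for some $r \in \Sf$. There one has $\mathscr{J}_{(T_s)^{-1}}(\Delta_\fR(\lambda)) \cong \Delta_\fR(\lambda)\langle 1 \rangle$, and~\eqref{eqn:triangle-Psi-J-2} yields a triangle $\Delta_\fR(\lambda)\langle 1 \rangle \to \Psi_s(\Delta_\fR(\lambda)) \to \Delta_\fR(\lambda)\langle 1 \rangle[1]$ whose subquotients violate the required $\langle n\rangle[n]$-pattern; the lemma holds only because $\Psi_s(\Delta_\fR(\lambda)) = 0$ in this case. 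Proving that is not ``bookkeeping'': one must show the connecting morphism $a \cdot \id$ is invertible in $\fR$, which the paper does by base change to an algebraically closed field and a support argument (if $a = 0$ then $\Psi_s(\Delta_\fR(\lambda))$ would split as $\Delta_\fR(\lambda)\langle 1\rangle \oplus \Delta_\fR(\lambda)\langle 1\rangle[1]$, contradicting that objects in the image of $\Psi_s$ restrict to zero over the regular orbit while $\Delta_\fR(\lambda)$ does not — with a conjugation by $\mathscr{J}_b$ to handle $s \in S \smallsetminus \Sf$). Your proposal never confronts this vanishing, and your alternative reading of the case $w_\lambda s > w_\lambda$ as ``$\Psi_s\Delta_\fR(\lambda) \cong \Delta_\fR(\lambda)\langle -1\rangle[-1]$-type behavior'' is not what happens in either subcase. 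In the remaining two cases your parity bookkeeping via $\ell(w_\mu) = \ell(w_\lambda) \pm 1$ is correct and matches the paper.
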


\begin{proof}
We treat the case of the objects $\Delta_\fR(\lambda)$; the other case is similar.

It follows from the associativity of the operation ``$*$'' (see~\cite[Lemme~1.3.10]{bbd})
that if $\cG$ and $\cK$ admit a ``filtration'' as in the lemma and if $\cH$ fits in a distinguished triangle
\[
\cG \to \cH \to \cK \xrightarrow{[1]},
\]
then $\cH$ also admits a ``filtration'' of the same form. This observation reduces the claim to the case $\cF=\Delta_\fR(\lambda)$.

Assume first that $w_\lambda s < w_\lambda$. Then $w_\lambda s$ is minimal in $\Wf w_\lambda s$, and hence of the form $w_\mu$ for some $\mu \in \bX$ with $\ell(w_\mu) = \ell(w_\lambda)-1$. We have 
 \[
  (T_{w_\mu^{-1}})^{-1} = (T_{s w_\lambda^{-1}})^{-1} = (T_{w_\lambda^{-1}})^{-1} \cdot T_s,
 \]
so by the definition of $\Delta_\fR(\lambda)$ we have
 \[
  \mathscr{J}_{T_s}(\Delta_\fR(\lambda)) \cong \mathscr{J}_{T_s} \circ \mathscr{J}_{(T_{w_\lambda^{-1}})^{-1}}(\cO_{\tcN_\fR}) \cong \mathscr{J}_{(T_{w_\mu^{-1}})^{-1}}(\cO_{\tcN_\fR}) \cong \Delta_\fR(\mu).
 \]
Using the triangle obtained by applying~\eqref{eqn:triangle-Psi-J} to $\Delta_\fR(\lambda)$, we deduce a distinguished triangle
\[
\Delta_\fR(\lambda) \to \Psi_s(\Delta_\fR(\lambda)) \langle 1 \rangle [1] \to \Delta_\fR(\mu) \langle 1 \rangle [1] \xrightarrow{[1]}.
\]
The claim follows.

Assume now that $w_\lambda s > w_\lambda$. In this case we have
 \[
  \mathscr{J}_{(T_s)^{-1}}(\Delta_\fR(\lambda)) \cong \mathscr{J}_{(T_{(w_\lambda s)^{-1}})^{-1}}(\cO_{\tcN_\fR}).
 \]
 If $w_\lambda s$ is minimal in $\Wf w_\lambda s$, then we deduce that
 \[
  \mathscr{J}_{(T_s)^{-1}}(\Delta_\fR(\lambda)) \cong \Delta_\fR(\mu)
 \]
 for some $\mu \in \bX$ with $\ell(w_\mu)=\ell(w_\lambda)+1$, so that we conclude as above using~\eqref{eqn:triangle-Psi-J-2} applied to $\Delta_\fR(\lambda)$.
 
 Finally, assume that $w_\lambda s > w_\lambda$ and that $w_\lambda s$ is not minimal in $\Wf w_\lambda s$. Then there exists $r \in \Sf$ such that $w_\lambda s = rw_\lambda$, so that
 \[
  \mathscr{J}_{(T_s)^{-1}}(\Delta_\fR(\lambda)) \cong \mathscr{J}_{(T_{(r w_\lambda)^{-1}})^{-1}}(\cO_{\tcN_\fR}) \cong \Delta_\fR(\lambda) \langle 1 \rangle,
 \]
 as in~\cite[(3.10)]{mr}.
 Hence the triangle obtained by applying~\eqref{eqn:triangle-Psi-J-2} to $\Delta_\fR(\lambda)$ takes the form
 \[
  \Delta_\fR(\lambda) \langle 1 \rangle \to \Psi_s(\Delta_\fR(\lambda)) \to \Delta_\fR(\lambda) \langle 1 \rangle [1] \xrightarrow{[1]}.
 \]
It is easy to check from the definitions that
\[
\End_{\Db \Coh^{\Gp \times \Gm}(\tcN)_\fR}(\Delta_\fR(\lambda)) = \fR,
\]
so that the connecting morphism in this triangle is $a \cdot \id$ for some $a \in \fR$.
 We claim that $a$ is invertible, so that $\Psi_s(\Delta_\fR(\lambda)) = 0$ in this case (which clearly implies the desired result).
 
 In fact, by compatibility of all our constructions with change of scalars, it suffices to prove this when $\fR$ is 
%a field. 
an algebraically closed field.
In this case, if $a$ is not invertible, then $a = 0$, so that we obtain an isomorphism
 \[
  \Psi_s(\Delta_\fR(\lambda)) \cong \Delta_\fR(\lambda) \langle 1 \rangle \oplus \Delta_\fR(\lambda) \langle 1 \rangle [1].
 \]
If $s \in \Sf$, then this is absurd since the restriction of any object $\Delta_\fR(\lambda)$ to the inverse image of the open orbit in the nilpotent cone $\cN_\fR$ is nonzero (because the functors in the braid group action send objects with nonzero restriction to this inverse image to objects with the same property), while the restriction of $\Psi_s(\Delta_\fR(\lambda))$ is $0$. If $s \in S \smallsetminus \Sf$, let $b$ and $t$ be as above. Then we obtain that
 \[
  \Psi_t \circ \mathscr{J}_b(\Delta_\fR(\lambda)) \cong \mathscr{J}_b(\Delta_\fR(\lambda)) \langle 1 \rangle \oplus \mathscr{J}_b(\Delta_\fR(\lambda)) \langle 1 \rangle [1],
 \]
 and we obtain a contradiction as above.
\end{proof}

We will use the term \emph{expression} to mean any word in $S$.
Given an expression $\uw=(s_1, \ldots, s_r)$ and an element $\omega \in \Omega$ we set
\[
 \PEx^\fR(\omega, \uw) := \Psi_{s_r} \circ \cdots \circ \Psi_{s_1} \circ \mathscr{J}_{T_\omega}(\cO_{\tcN_\fR}).
\]
It is clear that if $\fS$ is a commutative Noetherian $\fR$-algebra of finite global dimension, we have
\begin{equation}
\label{eqn:isom-varphi-E}
\varphi_{\fR}^{\fS}(\PEx^\fR(\omega, \uw)) \cong \PEx^\fS(\omega, \uw).
\end{equation}
(Here and several times below we use the fact that the functor $\varphi_{\fR}^{\fS}$ is compatible in the appropriate way with pullback and pushforward functors; this is obvious for pullback functors and follows from the general form of the flat base change theorem---as in~\cite[Theorem~3.10.3]{lipman}---for pushforward functors.)

\begin{cor}
\label{cor:Hom-E-R}
\leavevmode
\begin{enumerate}
\item
Let $\uw$ be an expression, let $\omega \in \Omega$, and let $\lambda \in \bX$. For any $n,m \in \Z$, the $\fR$-module
\[
\Hom(\Delta_\fR(\lambda), \PEx^\fR(\omega, \uw) \langle n \rangle [m])
\]
is free of finite rank, and
vanishes unless $n=m$ and $n \equiv \ell(\uw) + \ell(w_\lambda) \pmod 2$. Moreover, for any $\fS$ as above, the natural morphism
\[
\fS \otimes_{\fR} \Hom(\Delta_\fR(\lambda), \PEx^\fR(\omega, \uw) \langle n \rangle [m]) \to \Hom(\Delta_\fS(\lambda), \PEx^\fS(\omega, \uw) \langle n \rangle [m])
\]
is an isomorphism.
\item
Let $\uw$ be an expression, let $\omega \in \Omega$, and let $\lambda \in \bX$. For any $n,m \in \Z$, the $\fR$-module
\[
\Hom(\PEx^\fR(\omega, \uw), \nabla_\fR(\lambda) \langle n \rangle [m])
\]
is free of finite rank, and
vanishes unless $n=m$ and $n \equiv \ell(\uw) + \ell(w_\lambda) \pmod 2$. Moreover, for any $\fS$ as above, the natural morphism
\[
\fS \otimes_{\fR} \Hom(\PEx^\fR(\omega, \uw), \nabla_\fR(\lambda) \langle n \rangle [m]) \to \Hom(\PEx^\fS(\omega, \uw), \nabla_\fS(\lambda) \langle n \rangle [m])
\]
is an isomorphism.
\item
Let $\uw,\uw'$ be expressions, and let $\omega, \omega' \in \Omega$. For any $n,m \in \Z$, the $\fR$-module
\[
\Hom(\PEx^\fR(\omega, \uw), \PEx^\fR(\omega', \uw') \langle n \rangle [m])
\]
is free of finite rank. Moreover, for any $\fS$ as above, the natural morphism
\[
\fS \otimes_{\fR} \Hom(\PEx^\fR(\omega, \uw), \PEx^\fR(\omega', \uw') \langle n \rangle [m]) \to \Hom(\PEx^\fS(\omega, \uw), \PEx^\fS(\omega', \uw') \langle n \rangle [m])
\]
is an isomorphism.
\end{enumerate}
\end{cor}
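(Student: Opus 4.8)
The plan is to show that, after applying the shift $\langle \ell(\uw) \rangle[\ell(\uw)]$, the object $\PEx^\fR(\omega,\uw)$ admits \emph{both} a ``filtration'' with ``subquotients'' of the form $\Delta_\fR(\lambda)\langle n \rangle[n]$ with $n \equiv \ell(w_\lambda) \pmod 2$ \emph{and} a ``filtration'' with ``subquotients'' of the form $\nabla_\fR(\lambda)\langle n \rangle[n]$ with $n \equiv \ell(w_\lambda) \pmod 2$. Granting this, all three assertions reduce to Lemma~\ref{lem:Hom-filtrations-DN} by a routine bookkeeping of the shifts $\langle 1 \rangle$ and $[1]$, the base-change statements using in addition~\eqref{eqn:isom-varphi-E} and the compatibility of $\varphi_\fR^\fS$ with $\Delta_\fR(\lambda)$ and $\nabla_\fR(\lambda)$.

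To prove the claim about filtrations, I would start from the base case. Since $\ell(\omega)=0$ we have $T_{\omega^{-1}}=(T_\omega)^{-1}$ in $B_\ext$, so the formula for standard objects used in the proof of Lemma~\ref{lem:filtration-Psi} gives $\mathscr{J}_{T_\omega}(\cO_{\tcN_\fR}) = \mathscr{J}_{(T_{w_\mu^{-1}})^{-1}}(\cO_{\tcN_\fR}) = \Delta_\fR(\mu)$, where $\mu \in \bX$ is the (unique) weight with $w_\mu=\omega$ (so $\ell(w_\mu)=0$); by the analogous description of exotic costandard objects from~\cite{mr2}, this object is also a costandard object $\nabla_\fR(\mu')$ with $\ell(w_{\mu'})=0$. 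In either guise it carries a ``filtration'' of length $1$ of the type appearing in Lemma~\ref{lem:filtration-Psi} (with ``subquotient'' $\Delta_\fR(\mu)\langle 0\rangle[0]$, resp.\ $\nabla_\fR(\mu')\langle 0\rangle[0]$). Writing $\uw=(s_1,\dots,s_r)$ and using $\PEx^\fR(\omega,\uw)=\Psi_{s_r}\circ\cdots\circ\Psi_{s_1}(\mathscr{J}_{T_\omega}(\cO_{\tcN_\fR}))$, I would then apply Lemma~\ref{lem:filtration-Psi} $r$ times, using at each step that $\Psi_{s_i}$ commutes with $\langle 1\rangle$ and $[1]$; this shows that $\PEx^\fR(\omega,\uw)\langle r\rangle[r]$ admits a ``filtration'' with ``subquotients'' $\Delta_\fR(\lambda)\langle n\rangle[n]$, $n\equiv\ell(w_\lambda)\pmod2$, and likewise one with ``subquotients'' $\nabla_\fR(\lambda)\langle n\rangle[n]$, $n\equiv\ell(w_\lambda)\pmod2$. (Here $r=\ell(\uw)$.)

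The three statements then follow. For~(1), write $r=\ell(\uw)$ and pick $\epsilon\in\{0,1\}$ with $\epsilon\equiv\ell(w_\lambda)\pmod2$; the $\fR$-module $\Hom(\Delta_\fR(\lambda),\PEx^\fR(\omega,\uw)\langle n\rangle[m])$ is canonically isomorphic to $\Hom(\cF,\cG\langle n+\epsilon-r\rangle[m+\epsilon-r])$ with $\cF=\Delta_\fR(\lambda)\langle\epsilon\rangle[\epsilon]$ and $\cG=\PEx^\fR(\omega,\uw)\langle r\rangle[r]$, where $\cF$ is ``$\Delta$-filtered'' of length $1$ with the required parity and $\cG$ is ``$\nabla$-filtered'' with the required parity. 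Lemma~\ref{lem:Hom-filtrations-DN} then gives at once that this module is free of finite rank, that it vanishes unless $n+\epsilon-r=m+\epsilon-r$ and $n+\epsilon-r$ is even --- i.e.\ unless $n=m$ and $n\equiv r+\epsilon\equiv\ell(\uw)+\ell(w_\lambda)\pmod2$ --- and that the base-change morphism is an isomorphism. Part~(2) is symmetric, using the ``$\Delta$-filtration'' of $\PEx^\fR(\omega,\uw)\langle r\rangle[r]$ and taking $\cG=\nabla_\fR(\lambda)\langle\epsilon\rangle[\epsilon]$. For~(3), with $r=\ell(\uw)$ and $r'=\ell(\uw')$, the module $\Hom(\PEx^\fR(\omega,\uw),\PEx^\fR(\omega',\uw')\langle n\rangle[m])$ is canonically isomorphic to $\Hom(\cF,\cG\langle n+r-r'\rangle[m+r-r'])$ with $\cF=\PEx^\fR(\omega,\uw)\langle r\rangle[r]$ (``$\Delta$-filtered'', right parity) and $\cG=\PEx^\fR(\omega',\uw')\langle r'\rangle[r']$ (``$\nabla$-filtered'', right parity), so Lemma~\ref{lem:Hom-filtrations-DN} yields freeness of finite rank and the base-change isomorphism.

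There is no genuine obstacle here: the real content lies in Lemma~\ref{lem:filtration-Psi} (the wall-crossing functors preserve these classes of filtered objects up to the shift $\langle1\rangle[1]$) and Lemma~\ref{lem:Hom-filtrations-DN} (control of $\Hom$'s between $\Delta$- and $\nabla$-filtered objects, with base change and parity vanishing), and the rest is shift bookkeeping. The only delicate point is the base case, namely that $\mathscr{J}_{T_\omega}(\cO_{\tcN_\fR})$ is simultaneously a standard and a costandard object labelled by a weight $\mu$ with $\ell(w_\mu)=0$, which is exactly what allows $\PEx^\fR(\omega,\uw)$ (after shifting) to admit filtrations of both kinds at once. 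Alternatively, one can deduce~(3) from~(1): since each $\Psi_s$ is self-adjoint (Remark~\ref{rmk:Psi-adjoint}), applying self-adjointness $\ell(\uw)$ times rewrites $\Hom(\PEx^\fR(\omega,\uw),\PEx^\fR(\omega',\uw')\langle n\rangle[m])$ as $\Hom(\mathscr{J}_{T_\omega}(\cO_{\tcN_\fR}),\PEx^\fR(\omega',\uw'')\langle n\rangle[m])$ for a suitable expression $\uw''$ (one checks that $\Psi_{s_1}\circ\cdots\circ\Psi_{s_r}\circ\PEx^\fR(\omega',\uw')$ is again of the form $\PEx^\fR(\omega',-)$), and $\mathscr{J}_{T_\omega}(\cO_{\tcN_\fR})=\Delta_\fR(\mu)$ reduces this to~(1).
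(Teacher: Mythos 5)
Your proposal is correct and follows essentially the same route as the paper: show that $\PEx^\fR(\omega,\uw)\langle \ell(\uw)\rangle[\ell(\uw)]$ admits both a ``filtration'' by shifted standard objects and one by shifted costandard objects with the required parities (base case $\mathscr{J}_{T_\omega}(\cO_{\tcN_\fR})=\Delta_\fR(\lambda)=\nabla_\fR(\lambda)$ for $\omega=w_\lambda$, then iterate Lemma~\ref{lem:filtration-Psi}), and conclude by Lemma~\ref{lem:Hom-filtrations-DN}. Your shift and parity bookkeeping, as well as the base-change argument via~\eqref{eqn:isom-varphi-E}, match the intended proof; the alternative derivation of~(3) via self-adjointness of $\Psi_s$ is a valid extra remark but not needed.
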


\begin{proof}
Fix an expression $\uw$ of length $r$ and $\omega \in \Omega$.
If $\lambda \in \bX$ is the unique weight such that $\omega=w_\lambda$, then $\mathscr{J}_{T_\omega}(\cO_{\tcN_\fR}) = \Delta_\fR(\lambda) = \nabla_\fR(\lambda)$. Then, Lemma~\ref{lem:filtration-Psi} implies that $\PEx^\fR(\omega, \uw) \langle r \rangle [r]$ admits both a ``filtration'' with ``subquotients'' of the form $\Delta_\fR(\lambda) \langle n \rangle [n]$ with $n \equiv \ell(w_\lambda) \pmod 2$ and a ``filtration'' with ``subquotients'' of the form $\nabla_\fR(\lambda) \langle n \rangle [n]$ with $n \equiv \ell(w_\lambda) \pmod 2$. We deduce the desired claims using Lemma~\ref{lem:Hom-filtrations-DN}.
\end{proof}

%----------------------------------------------------------------------
\subsection{Parity exotic sheaves}
\label{ss:parity-exotic}
%----------------------------------------------------------------------

In this subsection we assume that $\fR$ is field, that we will denote $\bk$ to avoid confusion. We choose an order $\leq'$ on $\bX$ as in~\cite[\S 2.5]{mr}. Recall that this order satisfies in particular
\[
w_{\lambda} \leq w_\mu \quad \Rightarrow \quad \lambda \leq' \mu.
\]

In case $\bk$ is algebraically closed, it is proved in~\cite{mr} that the objects $\{\nabla_\bk(\lambda) : \lambda \in \bX\}$ form a graded exceptional sequence in the sense of~\cite[\S 2.1.5]{bezru}, for the order $\leq'$ on $\bX$, with dual exceptional sequence $\{\Delta_\bk(\lambda) : \lambda \in \bX\}$. From this case it is not difficult to deduce that this property holds for any field of coefficients. Using~\cite[Proposition~4]{bezru}, we deduce that if we define $D^{\leq 0}$, resp.~$D^{\geq 0}$, as the full subcategory of $\Db \Coh^{\Gp \times \Gm}(\tcN)_\bk$ generated under extension by the objects of the form $\Delta_\bk(\lambda) \langle n \rangle [m]$ with $m \geq 0$, resp.~by the objects of the form $\nabla_\bk(\lambda) \langle n \rangle [m]$ with $m \leq 0$, then $(D^{\leq 0}, D^{\geq 0})$ is a t-structure on $\Db \Coh^{\Gp \times \Gm}(\tcN)_\bk$. This t-structure will be called the \emph{exotic t-structure}, and its heart will be denoted $\Ex^{\Gp \times \Gm}(\tcN)_\bk$. By~\cite[Corollary~3.11]{mr}, the objects $\Delta_\bk(\lambda)$ and $\nabla_\bk(\lambda)$ belong to $\Ex^{\Gp \times \Gm}(\tcN)_\bk$. (Again, in~\cite{mr} it is assumed that $\bk$ is algebraically closed, but the general case follows.) It follows that $\Ex^{\Gp \times \Gm}(\tcN)_\bk$ is a graded highest weight category in the sense of Definition~\ref{defn:qhered} (with weight poset $(\bX,\leq')$), and that the realization functor
\[
\Db \Ex^{\Gp \times \Gm}(\tcN)_\bk \to \Db \Coh^{\Gp \times \Gm}(\tcN)_\bk
\]
is an equivalence of categories. (See also~\cite[\S8]{arider} for another approach to these claims.)   For $\lambda \in \bX$, we will denote by $\fL^\bk(\lambda)$ the simple object of $\Ex^{\Gp \times \Gm}(\tcN)_\bk$ parametrized by $\lambda$, i.e.~the image of the only nonzero morphism $\Delta_\bk(\lambda) \to \nabla_\bk(\lambda)$ (up to scalar).

Using this equivalence, we can consider parity objects in this category, in the sense of~\S\ref{ss:parity-objects}, for the function $\dag : \bX \to \{0,1\}$ defined by the property that $\dag(\lambda) \equiv \ell(w_\lambda) \pmod 2$. (For simplicity, this function will be dropped from the notation.) The indecomposable parity object associated with $\lambda$ as in Proposition~\ref{prop:classification-parity} will be denoted by $\PEx^\bk_\lambda$.

\begin{prop}
\label{prop:parity-BS}
\leavevmode
 \begin{enumerate}
  \item 
  \label{it:Psi-parity}
  For any $\omega$ and $\uw$, the object $\PEx^\bk(\omega, \uw)$ is a parity object in $\Db \Ex^{\Gp \times \Gm}(\tcN)_\bk$.
  \item
  \label{it:Psi-parity-2}
  If $\lambda \in \bX$ and if $w_\lambda = \omega s_1 \cdots s_r$ is a reduced expression, then $\PEx^\bk_\lambda$ is a direct summand of $\PEx^\bk(\omega, (s_1, \ldots, s_r))$. Moreover, all the other direct summands of this object are the form $\PEx^\bk_\mu \langle m \rangle[m]$ with $m \in \Z$ and $\mu <' \lambda$.
 \end{enumerate}
\end{prop}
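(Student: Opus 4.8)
The plan is to prove the two parts of Proposition~\ref{prop:parity-BS} in sequence, using the Bott--Samelson type construction $\PEx^\bk(\omega,\uw)$ as a bridge between the geometry (the wall-crossing functors $\Psi_s$) and the abstract classification of parity objects from~\S\ref{ss:unicity-parity}.

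For part~\eqref{it:Psi-parity}, I would show directly from the definitions that $\PEx^\bk(\omega,\uw)$ satisfies the parity-vanishing conditions of~\S\ref{ss:parity-objects}. The key input is Corollary~\ref{cor:Hom-E-R}: parts~(1) and~(2) of that corollary give, for any $\lambda \in \bX$, that $\Hom(\Delta_\bk(\lambda), \PEx^\bk(\omega,\uw)\langle n\rangle[m])$ and $\Hom(\PEx^\bk(\omega,\uw), \nabla_\bk(\lambda)\langle n\rangle[m])$ vanish unless $n=m$ and $n \equiv \ell(\uw) + \ell(w_\lambda) \pmod 2$. Since $\dag(\lambda) \equiv \ell(w_\lambda) \pmod 2$ by definition, this says precisely that $\PEx^\bk(\omega,\uw)\langle -\ell(\uw)\rangle[-\ell(\uw)]$ is $\dag$-even (when $\ell(\uw)$ is even), or that $\PEx^\bk(\omega,\uw)$ is parity in the sense that it is a shift of an even object. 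Either way, $\PEx^\bk(\omega,\uw)$ is a parity object in $\Db\Ex^{\Gp\times\Gm}(\tcN)_\bk$ under the equivalence $\Db\Ex^{\Gp\times\Gm}(\tcN)_\bk \xrightarrow{\sim} \Db\Coh^{\Gp\times\Gm}(\tcN)_\bk$. This part is essentially a bookkeeping exercise once Corollary~\ref{cor:Hom-E-R} is in hand.

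For part~\eqref{it:Psi-parity-2}, I would argue by induction on $r = \ell(w_\lambda)$, using the reduced expression $w_\lambda = \omega s_1 \cdots s_r$. When $r=0$ we have $w_\lambda = \omega$, so $\PEx^\bk(\omega,\varnothing) = \Delta_\bk(\lambda) = \nabla_\bk(\lambda) = \fL^\bk(\lambda)$, which is exactly $\PEx^\bk_\lambda$ (up to the normalization in Proposition~\ref{prop:classification-parity}), and there is nothing more to check. For the inductive step, write $\uw = (s_1,\dots,s_r)$, $\uw' = (s_1,\dots,s_{r-1})$, and $\mu \in \bX$ the weight with $w_\mu = \omega s_1\cdots s_{r-1}$, so $\ell(w_\mu) = r-1$ and by induction $\PEx^\bk(\omega,\uw') \cong \PEx^\bk_\mu \oplus (\text{summands } \PEx^\bk_\nu\langle m\rangle[m] \text{ with } \nu <' \mu)$. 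Now $\PEx^\bk(\omega,\uw) = \Psi_{s_r}(\PEx^\bk(\omega,\uw'))$, so it suffices to understand $\Psi_{s_r}(\PEx^\bk_\nu)$ for each summand $\nu$ appearing. The crucial point is that $w_\mu s_r > w_\mu$ and $w_\mu s_r = w_\lambda$ is still minimal in $\Wf w_\lambda$ (since $\omega s_1 \cdots s_r$ is reduced, hence length-additive, and the $s_i$ belong to the relevant parabolic $\fWf$-subword); thus from the distinguished triangle~\eqref{eqn:triangle-Psi-J-2} applied to the parity object $\PEx^\bk_\mu$, combined with the fact (from Lemma~\ref{lem:filtration-Psi}) that $\Psi_{s_r}$ takes exotic-$\Delta$-filtered objects to exotic-$\Delta$-filtered objects (after the shift $\langle 1\rangle[1]$), one deduces that $\Psi_{s_r}(\PEx^\bk_\mu)\langle 1\rangle[1]$ is a parity object whose image under $\Pi_{<\lambda}$ is $\Pi_{<\lambda}(\dgr_\lambda)$ — hence, by the uniqueness in Proposition~\ref{prop:classification-parity}, it has $\PEx^\bk_\lambda$ as a direct summand, with all other summands of the form $\PEx^\bk_\kappa\langle m\rangle[m]$ with $\kappa <' \lambda$. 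The remaining summands $\Psi_{s_r}(\PEx^\bk_\nu\langle m\rangle[m])$ with $\nu <' \mu$ are again parity objects supported (in the exotic t-structure sense) on weights $\le' \nu <' \mu <' \lambda$ — here I would invoke Lemma~\ref{lem:filtration-Psi} to see the $\Delta$-flag of $\Psi_{s_r}$ applied to $\Delta$-flagged objects stays within $\{\kappa : \kappa \le' \nu'\}$ for the relevant $\nu'$, combined with the compatibility $w_{\kappa} \le w_{\lambda} \Rightarrow \kappa \le' \lambda$ — so by the classification their indecomposable summands are of the permitted form $\PEx^\bk_\kappa\langle m\rangle[m]$ with $\kappa <' \lambda$.

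I expect the main obstacle to be the precise control of the ``highest weight'' behaviour of $\Psi_{s_r}$, i.e. verifying that $\Pi_{<\lambda}(\Psi_{s_r}(\PEx^\bk_\mu)\langle 1\rangle[1]) \cong \Pi_{<\lambda}(\dgr_\lambda)$ so that the uniqueness clause of Proposition~\ref{prop:classification-parity} applies and pins down the summand $\PEx^\bk_\lambda$. Concretely this requires: (i) identifying the ``top'' standard subquotient in the $\Delta$-filtration of $\Psi_{s_r}(\PEx^\bk_\mu)\langle 1\rangle[1]$ supplied by Lemma~\ref{lem:filtration-Psi} as $\Delta_\bk(\lambda)$ appearing with multiplicity one in the appropriate degree, using the explicit triangle in the proof of Lemma~\ref{lem:filtration-Psi} for the case $w_\lambda s_r < w_\lambda$ read in reverse, i.e. for $w_\mu s_r > w_\mu$; and (ii) checking that passing to the Serre quotient $\cA_{\le\lambda}/\cA_{<\lambda}$ (which is semisimple on the single weight $\lambda$) kills all the lower terms and leaves exactly $\Pi_{<\lambda}(\dgr_\lambda)$. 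Once this is established, indecomposability of the $\PEx^\bk_\lambda$-summand and the identification of the other summands follow formally from the Krull--Schmidt property of $\Db\cA$ and Proposition~\ref{prop:classification-parity}. A secondary point requiring care is bookkeeping the $\Z$-grading shifts $\langle m\rangle[m]$ consistently (the self-adjointness of $\Psi_s$ noted in Remark~\ref{rmk:Psi-adjoint} may help symmetrize the argument), but this is routine once the structural statement is in place.
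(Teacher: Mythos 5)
Your treatment of part~\eqref{it:Psi-parity} is the paper's own argument (Corollary~\ref{cor:Hom-E-R} gives that a normalizing shift of $\PEx^\bk(\omega,\uw)$ is even; the sign of the shift is irrelevant modulo $2$). For part~\eqref{it:Psi-parity-2} you take a genuinely different route: the paper never decomposes any intermediate object, but instead reads off from the proof of Lemma~\ref{lem:filtration-Psi} that $\Hom(\PEx^\bk(\omega,(s_1,\ldots,s_r)),\nabla_\bk(\kappa)\langle n\rangle[m])\neq 0$ forces $w_\kappa\leq w_\lambda$, hence $\kappa\leq'\lambda$, with the $\kappa=\lambda$ space one-dimensional and concentrated in $n=m=0$ (because $w_\lambda$ occurs exactly once, unshifted, among the subword products of the reduced expression); combined with part~\eqref{it:Psi-parity} and Proposition~\ref{prop:classification-parity} this identifies all the summands at once. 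Your induction on $r$, with a per-summand analysis of $\Psi_{s_r}$, can be made to work, but it is longer and the two places you flag as delicate are exactly where your write-up currently goes wrong.

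First, the shift claim is off: by the triangle~\eqref{eqn:triangle-Psi-J-2} in the case $w_\mu s_r>w_\mu$ minimal, the new piece $\Delta_\bk(\lambda)$ appears \emph{unshifted} in $\Psi_{s_r}(\Delta_\bk(\mu))$ while the old top becomes $\Delta_\bk(\mu)\langle 1\rangle[1]$; so it is $\Psi_{s_r}(\PEx^\bk_\mu)$ itself, not its $\langle 1\rangle[1]$-twist, whose image under $\Pi_{<'\lambda}$ should be $\Pi_{<'\lambda}(\Delta_\bk(\lambda))$. Taken literally, your version would produce $\PEx^\bk_\lambda\langle -1\rangle[-1]$ as a summand of $\PEx^\bk(\omega,\uw)$, contradicting the statement being proved. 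Second, for the lower summands you invoke a $\Delta$-``filtration'' of $\PEx^\bk_\nu$, which is not available at this point of the paper (being a summand of a filtered object does not provide one); and even with weight control your argument only yields $\kappa\leq'\lambda$, whereas the statement needs strict inequality, i.e.\ you must rule out that $\Psi_{s_r}(\PEx^\bk_\nu)$ contributes a shifted copy of $\PEx^\bk_\lambda$. This is fixable: since $\PEx^\bk_\nu$ lies in the subcategory generated by the $\Delta_\bk(\kappa)\langle n\rangle$ with $\kappa\leq'\nu$, and $w_\kappa s_r=w_\lambda$ would force $w_\kappa=w_\mu$ while $\mu\not\leq'\nu$ and $\lambda\not\leq'\nu$, the weight $\lambda$ cannot occur; alternatively, by self-adjointness of $\Psi_{s_r}$ (Remark~\ref{rmk:Psi-adjoint}), $\Hom(\Psi_{s_r}\PEx^\bk_\nu,\nabla_\bk(\lambda)\langle n\rangle[m])\cong\Hom(\PEx^\bk_\nu,\Psi_{s_r}\nabla_\bk(\lambda)\langle n\rangle[m])=0$, since $\Psi_{s_r}\nabla_\bk(\lambda)$ is ``filtered'' by shifts of $\nabla_\bk(\lambda)$ and $\nabla_\bk(\mu)$, neither of whose weights is $\leq'\nu$. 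With these two repairs (and the multiplicity-one computation at the top, which you correctly identify as the crux), your inductive argument is sound; the paper's global Hom computation simply avoids all of this per-summand bookkeeping.
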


\begin{proof}
\eqref{it:Psi-parity}
It follows from Corollary~\ref{cor:Hom-E-R} that $\PEx^\bk(\omega, \uw) \langle \ell(\uw) \rangle [\ell(\uw)]$ is even, and hence that $\PEx^\bk(\omega, \uw)$ is parity.
 
 \eqref{it:Psi-parity-2}
 It follows from the proof of Lemma~\ref{lem:filtration-Psi} that
 \[
 \Hom(\PEx(\omega,(s_1, \ldots, s_r)), \nabla(\mu) \langle n \rangle [m]) \neq 0 \quad \Rightarrow \quad w_\mu \leq w_\lambda,
 \]
 which implies that $\mu \leq' \lambda$. Moreover, if $\mu=\lambda$, this space is $1$-dimensional if $n=m=0$ and $0$ otherwise. The claim follows.
\end{proof}

\begin{cor}
\label{cor:parities-extension-scalars}
Let $\bk'$ be a field extension of $\bk$.
\begin{enumerate}
\item
\label{it:parities-extension-scalars-1}
The functor $\varphi_{\bk}^{\bk'}$ sends parity objects to parity objects. Moreover,
for any parity objects $\cF$ and $\cG$ in $\Db \Coh^{\Gp \times \Gm}(\tcN)_\bk$, the natural morphism
\[
\bk' \otimes_{\bk} \Hom_{\Db \Coh^{\Gp \times \Gm}(\tcN)_\bk}(\cF,\cG) \to \Hom_{\Db \Coh^{\Gp \times \Gm}(\tcN)_{\bk'}}(\varphi_{\bk}^{\bk'}(\cF),\varphi_{\bk}^{\bk'}(\cG))
\]
is an isomorphism.
\item
\label{it:parities-extension-scalars-2}
For any $\lambda \in \bX$, we have
$\varphi_{\bk}^{\bk'}(\PEx_\lambda^\bk) \cong \PEx_\lambda^{\bk'}$.
\end{enumerate}
\end{cor}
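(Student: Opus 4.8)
The plan is to prove the two parts of Corollary~\ref{cor:parities-extension-scalars} in turn, using the Bott--Samelson-type description of parity exotic sheaves from Proposition~\ref{prop:parity-BS} to reduce everything to the change-of-scalars statements already established for the objects $\PEx(\omega,\uw)$.

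For part~\eqref{it:parities-extension-scalars-1}, I would first observe that the functor $\varphi_\bk^{\bk'}$ commutes with the autoequivalences $\langle 1 \rangle$ and $[1]$, and sends $\Delta_\bk(\lambda)$ to $\Delta_{\bk'}(\lambda)$ and $\nabla_\bk(\lambda)$ to $\nabla_{\bk'}(\lambda)$, as recalled in \S\ref{ss:parity-exotic}. Combining this with the change-of-scalars isomorphism for $\Hom$-spaces between the objects $\Delta_\fR(\lambda)$ and $\nabla_\fR(\mu)$ (stated just after~\eqref{eqn:vanishing-DN-R}), one sees that if $\cF$ is $\dag$-even in $\Db\Coh^{\Gp\times\Gm}(\tcN)_\bk$, then $\varphi_\bk^{\bk'}(\cF)$ is $\dag$-even in $\Db\Coh^{\Gp\times\Gm}(\tcN)_{\bk'}$: indeed the $\Hom$-vanishing conditions defining $(!,\dag)$-even and $(*,\dag)$-even are preserved because a $\Hom$-space that is zero over $\bk$ base-changes to a $\Hom$-space that is zero over $\bk'$, and conversely a nonzero one stays nonzero after the faithfully flat base change $\bk \to \bk'$ (one needs the $\Hom$-spaces in question to be free of finite rank, which follows from Lemma~\ref{lem:Hom-filtrations-DN} once one knows $\cF$ and $\cG$ have suitable $\Delta$- and $\nabla$-filtrations — and every parity object does, being by Proposition~\ref{prop:classification-parity} a summand of a shift of some $\PEx(\omega,\uw)$, which has such filtrations by the proof of Corollary~\ref{cor:Hom-E-R}). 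The parity condition, being a direct-sum decomposition into an even part and a shift of an even part, is then also preserved. For the $\Hom$-space isomorphism, I would reduce via Proposition~\ref{prop:classification-parity} (and additivity of all functors involved) to the case where $\cF$ and $\cG$ are the explicit objects $\PEx^\bk(\omega,\uw)$ and $\PEx^\bk(\omega',\uw')$, and then invoke part~(3) of Corollary~\ref{cor:Hom-E-R} together with the isomorphism~\eqref{eqn:isom-varphi-E} identifying $\varphi_\bk^{\bk'}(\PEx^\bk(\omega,\uw))$ with $\PEx^{\bk'}(\omega,\uw)$.

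For part~\eqref{it:parities-extension-scalars-2}, I would argue as follows. Fix $\lambda \in \bX$ and a reduced expression $w_\lambda = \omega s_1 \cdots s_r$. By Proposition~\ref{prop:parity-BS}\eqref{it:Psi-parity-2}, over $\bk$ we have a decomposition
\[
\PEx^\bk(\omega,(s_1,\ldots,s_r)) \cong \PEx^\bk_\lambda \oplus \bigoplus_i \PEx^\bk_{\mu_i}\langle m_i\rangle[m_i]
\]
with each $\mu_i <' \lambda$, and similarly over $\bk'$. Applying $\varphi_\bk^{\bk'}$ and using~\eqref{eqn:isom-varphi-E} gives
\[
\PEx^{\bk'}(\omega,(s_1,\ldots,s_r)) \cong \varphi_\bk^{\bk'}(\PEx^\bk_\lambda) \oplus \bigoplus_i \varphi_\bk^{\bk'}(\PEx^\bk_{\mu_i})\langle m_i\rangle[m_i].
\]
By part~\eqref{it:parities-extension-scalars-1}, $\varphi_\bk^{\bk'}(\PEx^\bk_\lambda)$ is a parity object, hence a direct sum of shifts of the indecomposable parity objects $\PEx^{\bk'}_\nu$ by Proposition~\ref{prop:classification-parity}; and similarly for each $\varphi_\bk^{\bk'}(\PEx^\bk_{\mu_i})$. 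Now I would run an induction on $\lambda$ for the order $\leq'$: the inductive hypothesis gives $\varphi_\bk^{\bk'}(\PEx^\bk_{\mu_i}) \cong \PEx^{\bk'}_{\mu_i}$ for each $i$ (note $\{\nu \mid \nu <' \lambda\}$ is finite since $\leq'$ refines the order topology data, or one inducts on the length of the reduced expression). Comparing the two decompositions of $\PEx^{\bk'}(\omega,(s_1,\ldots,s_r))$ and using Krull--Schmidt in the Krull--Schmidt category $\Db\Ex^{\Gp\times\Gm}(\tcN)_{\bk'}$, the summand $\varphi_\bk^{\bk'}(\PEx^\bk_\lambda)$ must be the complement of $\bigoplus_i \PEx^{\bk'}_{\mu_i}\langle m_i\rangle[m_i]$, which by Proposition~\ref{prop:parity-BS}\eqref{it:Psi-parity-2} over $\bk'$ is forced to be $\PEx^{\bk'}_\lambda$ (possibly a priori with multiplicity, but $\varphi_\bk^{\bk'}(\PEx^\bk_\lambda)$ is indecomposable since its endomorphism ring is $\bk' \otimes_\bk \End(\PEx^\bk_\lambda)$ by part~\eqref{it:parities-extension-scalars-1}, a local ring — being the base change of the local ring $\End(\PEx^\bk_\lambda)$, one should note that in general the base change of a local ring need not be local, so instead I would note directly that $\varphi_\bk^{\bk'}(\PEx^\bk_\lambda)$ has $\Pi_{<\lambda}$-image isomorphic to $\Pi_{<\lambda}(\Delta_{\bk'}(\lambda))$ and then apply the uniqueness in Proposition~\ref{prop:classification-parity} after extracting any one indecomposable parity summand with this property).

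The main obstacle I expect is exactly this last point: controlling the indecomposability of $\varphi_\bk^{\bk'}(\PEx^\bk_\lambda)$ and pinning it down as $\PEx^{\bk'}_\lambda$ rather than merely ``some sum of $\PEx^{\bk'}_\lambda$ and smaller terms.'' The clean way around it is not to prove indecomposability directly but to use the characterization in Proposition~\ref{prop:classification-parity}: $\varphi_\bk^{\bk'}(\PEx^\bk_\lambda)$ lies in $\Db\Ex^{\Gp\times\Gm}(\tcN_{\le\lambda})_{\bk'}$ (since $\varphi_\bk^{\bk'}$ commutes with the inclusions $\iota_{\le\lambda}$, which follows from compatibility of pullback with Serre subcategory inclusions at the level of the underlying abelian categories, or more robustly from the fact that its $\Delta$-filtration involves only weights $\le'\lambda$), and applying $\Pi_{<\lambda}$ and using~\eqref{eqn:adjoints-pi-d-n}-type compatibilities gives $\Pi_{<\lambda}\varphi_\bk^{\bk'}(\PEx^\bk_\lambda) \cong \varphi_\bk^{\bk'}\Pi_{<\lambda}(\PEx^\bk_\lambda) \cong \varphi_\bk^{\bk'}\Pi_{<\lambda}(\Delta_\bk(\lambda)) \cong \Pi_{<\lambda}(\Delta_{\bk'}(\lambda))$. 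Then any indecomposable parity summand $E$ of $\varphi_\bk^{\bk'}(\PEx^\bk_\lambda)$ with $\Pi_{<\lambda}(E) \ne 0$ must, by the argument in the proof of Proposition~\ref{prop:classification-parity}, satisfy $\Pi_{<\lambda}(E)\cong\Pi_{<\lambda}(\Delta_{\bk'}(\lambda))$ (up to shift, killed by parity), hence $E \cong \PEx^{\bk'}_\lambda$; a rank count using the $\Hom$-space isomorphism of part~\eqref{it:parities-extension-scalars-1} (comparing $\dim_\bk \Hom(\Delta_\bk(\lambda),\PEx^\bk_\lambda\langle n\rangle[n])$ with its $\bk'$-counterpart) then shows there is exactly one such summand and no other summands at all, giving $\varphi_\bk^{\bk'}(\PEx^\bk_\lambda)\cong\PEx^{\bk'}_\lambda$.
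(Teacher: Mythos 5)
Your treatment of part~\eqref{it:parities-extension-scalars-1} is fine and is essentially the paper's argument: reduce, via Proposition~\ref{prop:classification-parity} and Proposition~\ref{prop:parity-BS}, to direct summands of the objects $\PEx^\bk(\omega,\uw)\langle n\rangle[n]$, and then invoke~\eqref{eqn:isom-varphi-E} and Corollary~\ref{cor:Hom-E-R}.

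Part~\eqref{it:parities-extension-scalars-2}, however, has a genuine gap, and it sits exactly at the point you flagged. Your argument shows that $\varphi_{\bk}^{\bk'}(\PEx^\bk_\lambda)$ contains exactly one summand isomorphic to $\PEx^{\bk'}_\lambda$ (unshifted), and that any further indecomposable summand would be of the form $\PEx^{\bk'}_\nu\langle m\rangle[m]$ with $\nu <' \lambda$. But nothing you invoke excludes such summands: they are killed by $\Pi_{<'\lambda}$ and contribute nothing to $\Hom(\Delta_{\bk'}(\lambda),-)$, so neither the uniqueness statement in Proposition~\ref{prop:classification-parity} nor the rank count against $\Delta(\lambda)$ sees them. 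Comparing dimensions of $\Hom(\Delta_{\bk'}(\mu),-\langle n\rangle[n])$ for $\mu <' \lambda$ does not close the argument either: to extract anything from those counts you would need to know that $\dim\Hom(\Delta_{\bk'}(\mu),\PEx^{\bk'}_\lambda\langle n\rangle[n])$ equals $\dim\Hom(\Delta_{\bk}(\mu),\PEx^{\bk}_\lambda\langle n\rangle[n])$, which is equivalent to the statement being proved; and the Krull--Schmidt comparison with the decomposition of $\PEx^{\bk'}(\omega,(s_1,\ldots,s_r))$, even with your induction on lower weights, only gives an inequality of multiplicities for the lower terms, in the useless direction. What is really needed is the indecomposability of $\varphi_{\bk}^{\bk'}(\PEx^\bk_\lambda)$, and the endomorphism-ring route you abandoned does work once refined as in the paper (following Williamson): by Corollary~\ref{cor:parity-surjectivity}, the functor $\Pi_{<'\lambda}$ induces a surjection $\End(\PEx^\bk_\lambda)\twoheadrightarrow\bk$; since the source is a finite-dimensional local ring, the kernel of this surjection is its Jacobson radical, hence nilpotent; applying $\bk'\otimes_\bk(-)$ and the $\Hom$-isomorphism of part~\eqref{it:parities-extension-scalars-1} yields a surjection $\End(\varphi_\bk^{\bk'}(\PEx^\bk_\lambda))\twoheadrightarrow\bk'$ with nilpotent kernel, so this ring is local and the object is indecomposable. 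Your worry that a base change of a local ring need not be local is correct in general; the point you missed is that here the residue field of $\End(\PEx^\bk_\lambda)$ is $\bk$ itself, which is precisely what makes the base change behave. A secondary remark: the commutation $\Pi_{<'\lambda}\circ\varphi_\bk^{\bk'}\cong\varphi_\bk^{\bk'}\circ\Pi_{<'\lambda}$ you invoke is not established in the paper and would itself require an argument; the paper's proof avoids it altogether.
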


\begin{proof}
\eqref{it:parities-extension-scalars-1}
It follows from Proposition~\ref{prop:classification-parity} and Proposition~\ref{prop:parity-BS} that $\cF$ and $\cG$ are direct sums of direct summands of objects of the form $\PEx^\bk(\omega, \uw) \langle n \rangle [n]$. Then the claim follows from~\eqref{eqn:isom-varphi-E} and Corollary~\ref{cor:Hom-E-R}.

\eqref{it:parities-extension-scalars-2}
This proof is copied from~\cite[Lemma~3.8]{williamson}.
By~\eqref{it:parities-extension-scalars-1}, $\varphi_{\bk}^{\bk'}(\PEx_\lambda^\bk)$ is a parity object. Hence to conclude it suffices to prove that it is indecomposable. Now we remark that the functor $\Pi_{<'\lambda}$ considered in Proposition~\ref{prop:classification-parity} induces a surjection
\[
\End_{\Db \Coh^{\Gp \times \Gm}(\tcN)_\bk}(\PEx_\lambda^\bk) \twoheadrightarrow \bk.
\]
Since the left-hand side is a local ring, the kernel of this surjection is the Jacobson radical of this algebra, so it is nilpotent. Applying $\bk' \otimes_{\bk} (-)$ and using~\eqref{it:parities-extension-scalars-1} we obtain a surjective morphism
\[
\End_{\Db \Coh^{\Gp \times \Gm}(\tcN)_{\bk'}}(\varphi_{\bk}^{\bk'}(\PEx_\lambda^\bk)) \to \bk'
\]
whose kernel is nilpotent. Hence the left-hand side is again a local ring, proving that $\varphi_{\bk}^{\bk'}(\PEx_\lambda^\bk)$ is indecomposable.
\end{proof}

%----------------------------------------------------------------------
\subsection{Integral form of parity exotic sheaves}
%----------------------------------------------------------------------

\begin{prop}
\label{prop:indec-parity-integers}
For any $\lambda \in \bX$,
there exists $M \in \Z_{\geq 1}$ (depending on $\lambda$) and an object
\[
\PEx_\lambda^{\Z[\frac{1}{M!}]} \quad \in \Db \Coh^{\Gp \times \Gm}(\tcN)_{\Z[\frac{1}{M!}]}
\]
such that for any field $\bk$ of characteristic either $0$ or greater than $M$ we have
\[
\varphi_{\Z[\frac{1}{M!}]}^\bk \left( \PEx_\lambda^{\Z[\frac{1}{M!}]} \right) \cong \PEx_\lambda^\bk.
\]
\end{prop}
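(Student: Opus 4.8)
The plan is to fix $\lambda \in \bX$, choose a reduced expression $w_\lambda = \omega s_1 \cdots s_r$, and work with the integral Bott--Samelson-type object $\PEx^{\Z}(\omega, (s_1, \ldots, s_r)) \in \Db \Coh^{\Gp \times \Gm}(\tcN)_\Z$. Over any field $\bk$ of characteristic $0$ or large enough, Proposition~\ref{prop:parity-BS}\eqref{it:Psi-parity-2} tells us that $\PEx^\bk_\lambda$ is a direct summand of $\varphi_\Z^\bk(\PEx^\Z(\omega, \uw)) \cong \PEx^\bk(\omega, \uw)$, cut out by an idempotent in the endomorphism ring $\End(\PEx^\bk(\omega, \uw))$ which projects onto the $\PEx^\bk_\lambda$-isotypic part. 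The strategy is to produce a single idempotent $e$ in $\End_{\Db \Coh^{\Gp \times \Gm}(\tcN)_{\Z[1/M!]}}(\PEx^{\Z[1/M!]}(\omega, \uw))$, for a suitable $M$, whose base change to each such $\bk$ is exactly the relevant idempotent; then $\PEx_\lambda^{\Z[1/M!]}$ is defined as the image of $e$ (using that $\Db \Coh^{\Gp \times \Gm}(\tcN)_{\Z[1/M!]}$ is idempotent-complete, being the bounded derived category of a Noetherian abelian category, or by a standard argument realizing the image as a direct summand).

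The key technical input is Corollary~\ref{cor:Hom-E-R}\eqref{it:Psi-parity-2}... more precisely Corollary~\ref{cor:Hom-E-R}(3): the $\fR$-module $\Hom(\PEx^\fR(\omega,\uw), \PEx^\fR(\omega,\uw)\langle n\rangle[m])$ is free of finite rank and compatible with arbitrary base change $\fR \to \fS$. Set $A_\fR := \End_{\Db \Coh^{\Gp \times \Gm}(\tcN)_\fR}(\PEx^\fR(\omega,\uw))$; this is a finitely generated, free $\fR$-module (as an $\fR$-module) by that corollary, hence an $\fR$-algebra which is finite free as a module, and $\fS \otimes_\fR A_\fR \xrightarrow{\sim} A_\fS$ for any $\fS$ as in the corollary. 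The first step is therefore: over a sufficiently large field $\bk_0$ of characteristic $0$ (e.g.\ $\Q$, or its algebraic closure), choose the idempotent $e_{\bk_0} \in A_{\bk_0}$ corresponding to $\PEx^{\bk_0}_\lambda$. Since $e_{\bk_0}$ involves only finitely many structure constants and finitely many coordinates, it is defined over $\Z[1/N_0]$ for some integer $N_0$; enlarging, we get $e \in A_{\Z[1/N_0]}$ which is idempotent (the relation $e^2 = e$ holds after clearing finitely many denominators) and which base-changes to $e_{\bk_0}$.

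The main obstacle is to ensure that the base change $e_\bk := \bk \otimes_{\Z[1/M!]} e$ is, for every field $\bk$ of characteristic $0$ or $> M$, precisely the idempotent projecting onto a copy of $\PEx^\bk_\lambda$ --- not, say, a projector onto several copies or onto $\PEx^\bk_\lambda$ plus lower terms. Here I would argue as follows. By Corollary~\ref{cor:parities-extension-scalars}\eqref{it:parities-extension-scalars-2}, $\varphi_\bk^{\bk'}(\PEx^\bk_\lambda) \cong \PEx^{\bk'}_\lambda$ for any field extension; so it suffices to treat the case $\bk$ algebraically closed. For such $\bk$, by Krull--Schmidt $\PEx^\bk(\omega,\uw) \cong (\PEx^\bk_\lambda)^{\oplus a_\lambda} \oplus \bigoplus_{\mu <' \lambda} (\PEx^\bk_\mu\langle m\rangle[m])^{\oplus a_{\mu,m}}$, and Proposition~\ref{prop:parity-BS}\eqref{it:Psi-parity-2} gives $a_\lambda = 1$. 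One then shows that the decomposition of $1 \in A_\bk$ into primitive orthogonal idempotents, and in particular the one projecting onto the unique $\PEx^\bk_\lambda$-summand, is rigid under the specialization $A_{\Z[1/M!]} \to A_\bk$ after inverting enough primes: the obstruction to lifting/comparing idempotents is controlled by nilpotents in $A_\bk$ and by the finitely many primes at which the semisimplification of $A_{\overline{\Q}}$ differs from that of $A_{\overline{\F_p}}$. Concretely, $A_{\Z[1/M!]}$ is a finite free algebra; the locus of primes $p$ where $e_p$ fails to be primitive-with-the-correct-image is closed and does not contain the generic point (since $e_{\bk_0}$ is correct), hence is a finite set of primes; inverting those (i.e.\ enlarging $M$) gives the result. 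Finally, with this $M$, define $\PEx_\lambda^{\Z[1/M!]}$ as the image of $e$; Corollary~\ref{cor:parities-extension-scalars}\eqref{it:parities-extension-scalars-1} together with the base-change isomorphism for $A$ shows $\varphi_{\Z[1/M!]}^\bk(\PEx_\lambda^{\Z[1/M!]})$ is the image of $e_\bk$, which is $\PEx_\lambda^\bk$. I expect the delicate point to be the "finitely many bad primes" claim: one must argue carefully that both indecomposability of the summand and the absence of extra lower-$\mu$ summands in the image of $e_\bk$ can fail only on a closed subscheme of $\Spec \Z[1/N_0]$ missing the generic point --- this is where the finiteness and freeness in Corollary~\ref{cor:Hom-E-R}(3), and upper-semicontinuity of $\dim_{\kappa(p)} \Hom(e_p \cdot X, \PEx^{\kappa(p)}_\mu)$, do the real work.
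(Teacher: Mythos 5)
Your construction of the integral object is essentially the paper's: take the idempotent cutting out $\PEx^\Q_\lambda$ inside $\PEx^\Q(\omega,\uw)$, use the freeness/base-change statement of Corollary~\ref{cor:Hom-E-R} to clear denominators and realize it in $\End(\PEx^{\Z[\frac{1}{M!}]}(\omega,\uw))$, and split it using idempotent-completeness of $\Db\Coh^{\Gp\times\Gm}(\tcN)_{\Z[\frac{1}{M!}]}$ (the paper cites Balmer--Schlichting for this). Where you diverge is the step that actually carries the content of the proposition: showing that the reduction $e_\bk$ still cuts out $\PEx^\bk_\lambda$ in large positive characteristic. Here your argument has a genuine gap. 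The claim that ``the locus of primes where $e_p$ fails to be primitive-with-the-correct-image is closed, hence finite'' is asserted, not proved, and the tool you invoke for it does not apply as stated: upper-semicontinuity of $\dim_{\kappa(p)}\Hom(e_pX,\PEx^{\kappa(p)}_\mu)$ presupposes that the objects $\PEx^{\kappa(p)}_\mu$ are fibers of a single object over (an open part of) $\Spec\Z$, which is exactly what is \emph{not} known --- these indecomposable parity objects vary with $p$ in an uncontrolled way, and their characters can genuinely differ from the characteristic-$0$ ones. Moreover, for fibral algebras of a finite flat $\Z$-algebra the ``bad set of primes'' for idempotent behaviour need not be closed (splitting is governed by Chebotarev-type phenomena), so some argument specific to geometric fibers is required; your sketch could be completed, e.g.\ by spreading out the nilradical of $\End(\PEx^{\overline{\Q}}_\lambda)$ to deduce that the special fibers of $\End(e X)$ are local for almost all $p$, and then identifying the resulting indecomposable parity summand with $\PEx^{\F_p}_\lambda$ via the (base-change-compatible) Homs from $\Delta(\lambda)$ --- but none of this is in your text.

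For comparison, the paper closes this step differently and more cheaply, and in fact shows the original $M$ (the one needed only to clear denominators) already works for every $p>M$, with no further primes inverted and no semicontinuity: by Corollary~\ref{cor:parities-extension-scalars}\eqref{it:parities-extension-scalars-2} it suffices to treat $\bk=\F_p$; if $\varphi^{\F_p}_{\Z[\frac{1}{M!}]}(\PEx^{\Z[\frac{1}{M!}]}_\lambda)$ were decomposable, a nontrivial idempotent of its endomorphism ring would lift, via the base-change isomorphism $\F_p\otimes_{\Z_p}\End(\varphi^{\Z_p}_{\Z[\frac{1}{M!}]}(\cdot))\cong\End(\varphi^{\F_p}_{\Z[\frac{1}{M!}]}(\cdot))$ and idempotent lifting over the complete ring $\Z_p$, to a nontrivial idempotent over $\Z_p$, hence over $\Q_p$; but the characteristic-$0$ case (applied to $\bk=\Q_p$) identifies that object with the indecomposable $\PEx^{\Q_p}_\lambda$, a contradiction. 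You should either import this $\Z_p$/$\Q_p$ lifting argument or supply a complete proof of your finiteness claim; as written, the decisive step is missing.
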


\begin{proof}
Choose a reduced expression $w_\lambda = \omega s_1 \cdots s_r$. Then $\PEx^\Q_\lambda$ is a direct summand in $\PEx^\Q(\omega, (s_1, \ldots, s_r))$ by Proposition~\ref{prop:parity-BS}\eqref{it:Psi-parity-2}. We denote by $f$ the idempotent in $\End ( \PEx^\Q(\omega, (s_1, \ldots, s_r)))$ given by projecting to $\PEx^\Q_\lambda$. By Corollary~\ref{cor:Hom-E-R} we have a canonical isomorphism
\[
\Q \otimes_\Z \End \left( \PEx^\Z(\omega, (s_1, \ldots, s_r)) \right) \cong \End \left( \PEx^\Q(\omega, (s_1, \ldots, s_r)) \right).
\]
Hence there exists an $M$ such that $f$ belongs to
\[
\Z \left[ \frac{1}{M!} \right] \otimes_\Z \End \left( \PEx^\Z(\omega, (s_1, \ldots, s_r)) \right) \cong \End \left( \PEx^{\Z[\frac{1}{M!}]}(\omega, (s_1, \ldots, s_r)) \right).
\]
Now the category $\Db \Coh^{\Gp \times \Gm}(\tcN)_{\Z[\frac{1}{M!}]}$ is Karoubian by \cite[Corollary~2.10]{balmer-schlichting}. In particular, the object $\PEx^{\Z[\frac{1}{M!}]}(\omega, (s_1, \ldots, s_r))$ admits a direct summand $\PEx_\lambda^{\Z[\frac{1}{M!}]}$ such that the projection to $\PEx_\lambda^{\Z[\frac{1}{M!}]}$ is $f$. It is clear that
\[
\varphi_{\Z[\frac{1}{M!}]}^\Q(\PEx_\lambda^{\Z[\frac{1}{M!}]}) \cong \PEx_\lambda^{\Q}.
\]
Using Corollary~\ref{cor:parities-extension-scalars}\eqref{it:parities-extension-scalars-2}, we deduce that
\begin{equation}
\label{eqn:extension-scalars-Elambda}
\varphi_{\Z[\frac{1}{M!}]}^\bk \left( \PEx_\lambda^{\Z[\frac{1}{M!}]} \right) \cong \PEx_\lambda^\bk.
\end{equation}
for any field $\bk$ of characteristic $0$.

Using Corollary~\ref{cor:parities-extension-scalars}\eqref{it:parities-extension-scalars-2} again, to conclude the proof it suffices to prove that for any $p>M$ we have
\[
\varphi_{\Z[\frac{1}{M!}]}^{\F_p} \left( \PEx_\lambda^{\Z[\frac{1}{M!}]} \right) \cong \PEx_\lambda^{\F_p}.
\]
And it is not difficult to see that for this it suffices to prove that the left-hand side is indecomposable. Assume for a contradiction that this is not the case. We have
\[
\F_p \otimes_{\Z_p} \End \left( \varphi_{\Z[\frac{1}{M!}]}^{\Z_p} \Bigl( \PEx_\lambda^{\Z[\frac{1}{M!}]} \Bigr) \right) \cong \End \left( \varphi_{\Z[\frac{1}{M!}]}^{\F_p} \Bigl( \PEx_\lambda^{\Z[\frac{1}{M!}]} \Bigr) \right)
\]
by Corollary~\ref{cor:parities-extension-scalars}. By assumption the right-hand side admits a nontrivial idempotent, which can then be lifted to $\End \left( \varphi_{\Z[\frac{1}{M!}]}^{\Z_p} \Bigl( \PEx_\lambda^{\Z[\frac{1}{M!}]} \Bigr) \right)$, by, say,~\cite[Theorem~21.31]{lam}. We then obtain a nontrivial idempotent in
\[
\Q_p \otimes_{\Z_p} \End \left( \varphi_{\Z[\frac{1}{M!}]}^{\Z_p} \Bigl( \PEx_\lambda^{\Z[\frac{1}{M!}]} \Bigr) \right) \cong \End \left( \varphi_{\Z[\frac{1}{M!}]}^{\Q_p} \Bigl( \PEx_\lambda^{\Z[\frac{1}{M!}]} \Bigr) \right),
\]
contradicting the case $\bk=\Q_p$ of~\eqref{eqn:extension-scalars-Elambda}.
\end{proof}

%----------------------------------------------------------------------
\subsection{Relation with tilting perverse sheaves on affine Grassmannians}
\label{ss:relation-Gr}
%----------------------------------------------------------------------

From now on we assume that $\Gp_\Z$ satisfies the conditions of~\cite[\S 4.2]{mr2}. In other words, $\Gp_\Z$ is a product of (simply connected) quasi-simple groups and general linear groups $\mathrm{GL}_{n,\Z}$. We denote by $N$ the product of all the prime numbers that are not very good for some quasi-simple factor of $\Gp_\Z$, and set $\fR:=\Z[\frac{1}{N}]$.

Let $\TGp^\vee$ be the complex torus which is dual to $\TGp_\C$, and let
$\Gp^\vee$ be the complex connected reductive group with maximal torus $\TGp^\vee$ which is Langlands dual to $\Gp_\Z$. We let $\BGp^\vee_+$ be the Borel subgroup of $\Gp^\vee$ containing $\TGp^\vee$ whose roots are the positive coroots of $(\Gp_\Z, \TGp_\Z)$, and denote by $\Iw \subset \Gp^\vee(\scO)$ the corresponding Iwahori subgroup, where $\scO:=\C[ \hspace{-1pt} [t] \hspace{-1pt} ]$. Then if $\bk$ is a field we can consider the affine Grassmannian $\Gr$ of $\Gp^\vee$, and the category $\Par_{(\Iw)}(\Gr,\bk)$ of $\Iw$-constructible parity complexes on $\Gr$ with coefficients in $\bk$ in the sense of~\cite{jmw}. The \emph{mixed derived category} $\Dmix_{(\Iw)}(\Gr,\bk)$ is defined as the bounded homotopy category of the additive category $\Par_{(\Iw)}(\Gr,\bk)$ (see~\cite{modrap2}). The Tate twist $\langle 1 \rangle$ is defined as $\{-1\}[1]$, where $\{n\}$ is the cohomological shift by $n$ in $\Par_{(\Iw)}(\Gr,\bk)$ (viewed as a subcategory of $\Db_{(\Iw)}(\Gr, \bk)$).

The main result of~\cite{mr2} is the construction of an equivalence of additive categories between $\Par_{(\Iw)}(\Gr,\bk)$ and the category of tilting objects in $\Ex^{\Gp \times \Gm}(\tcN)_\bk$ which intertwines the functors $\{1\}$ and $\langle -1 \rangle$, under the assumption that there exists a ring morphism $\fR \to \bk$. (See~\cite[Remark~11.3]{prinblock} for comments on the difference of conventions between the present paper---where the conventions are similar to those in~\cite{prinblock}---and~\cite{mr2}. Note also that in~\cite{mr2} the field is assumed to be algebraically closed; but the results above make it easy to generalize the theorem to any field.) Passing to homotopy categories we deduce an equivalence of triangulated categories
\[
\Upsilon : \Dmix_{(\Iw)}(\Gr,\bk) \simto \Db \Coh^{\Gp \times \Gm}(\tcN)_\bk
\]
which satisfies $\Upsilon \circ \langle 1 \rangle \cong \langle 1 \rangle [1] \circ \Upsilon$.  (See also~\cite{arider} for a different construction of this equivalence.)

Consider the perverse t-structure on $\Dmix_{(\Iw)}(\Gr,\bk)$ (see~\cite{modrap2}). Its heart, which we will denote $\Perv^\mix_{(\Iw)}(\Gr,\bk)$, has a natural structure of graded highest weight category, and the realization functor
\[
\Db \Perv^\mix_{(\Iw)}(\Gr,\bk) \to \Dmix_{(\Iw)}(\Gr,\bk)
\]
is an equivalence of categories. If $\cJ_!(\lambda)$ and $\cJ_*(\lambda)$ denote the (normalized) standard and costandard objects associated with $\lambda$ respectively, we have
\begin{equation}
\label{eqn:Phi-standard}
\Upsilon(\cJ_!(\lambda)) \cong \Delta_\bk(\lambda), \quad \Upsilon(\cJ_*(\lambda)) \cong \nabla_\bk(\lambda)
\end{equation}
for any $\lambda \in \bX$ (see~\cite[Theorem~1.2]{mr2} or~\cite[Theorem~8.3]{arider}).

The indecomposable tilting objects in $\Perv^\mix_{(\Iw)}(\Gr,\bk)$ are parametrized in a natural way by $\bX \times \Z$; we denote by $\cT(\lambda)$ the indecomposable object associated with $(\lambda,0)$ (so that the indecomposable object associated with $(\lambda,n)$ is $\cT(\lambda) \langle n \rangle$).

\begin{prop}
\label{prop:Phi-Tilt-Par}
For any $\lambda \in \bX$, we have $\Upsilon(\cT(\lambda)) \cong \PEx^\bk_\lambda$.
\end{prop}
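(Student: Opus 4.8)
The plan is to realize $\Upsilon(\cT(\lambda))$ as an indecomposable direct summand of a Bott--Samelson parity exotic sheaf, and then to identify it using Proposition~\ref{prop:classification-parity}. Fix a reduced expression $w_\lambda=\omega s_1\cdots s_r$ with $\omega\in\Omega$ and $s_1,\dots,s_r\in S$, and let $\lambda_\omega\in\bX$ be the weight with $w_{\lambda_\omega}=\omega$; since $\ell(w_{\lambda_\omega})=0$, the corresponding $\Iw$-orbit in $\Gr$ is a point, so $\cJ_!(\lambda_\omega)=\cJ_*(\lambda_\omega)$ is an indecomposable tilting object of $\Perv^\mix_{(\Iw)}(\Gr,\bk)$, namely $\cT(\lambda_\omega)$, and by~\eqref{eqn:Phi-standard} we have $\Upsilon(\cJ_!(\lambda_\omega))\cong\mathscr{J}_{T_\omega}(\cO_{\tcN_\bk})$.

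The first step is to recall from the construction of the equivalence in~\cite{mr2} that $\Upsilon$ intertwines, for each $s\in S$, the wall-crossing functor $\Xi_s$ on $\Dmix_{(\Iw)}(\Gr,\bk)$ (the action of the wall-crossing functor attached to $s$ in the affine Hecke category) with the functor $\Psi_s$ on $\Db\Coh^{\Gp\times\Gm}(\tcN)_\bk$ --- the functors $\Psi_s$ having been introduced in~\cite{mr2} precisely so as to match these geometric operations (some bookkeeping with the twist $\langle1\rangle[1]$ is required here). Combining this with the identification above, we obtain
\[
\Upsilon\bigl(\Xi_{s_r}\cdots\Xi_{s_1}(\cJ_!(\lambda_\omega))\bigr)\cong\PEx^\bk(\omega,(s_1,\dots,s_r)).
\]
On the left-hand side, the functors $\Xi_s$ preserve the class of tilting objects of $\Perv^\mix_{(\Iw)}(\Gr,\bk)$ (they send $\Delta$-filtered objects to $\Delta$-filtered objects, and likewise on the $\nabla$ side), so $\Xi_{s_r}\cdots\Xi_{s_1}(\cJ_!(\lambda_\omega))$ is tilting; and since $w_\lambda=\omega s_1\cdots s_r$ is reduced, $\cT(\lambda)$ occurs in it as a direct summand with multiplicity one, all other summands being of the form $\cT(\mu)\langle n\rangle$ with $\mu<'\lambda$. (This is the exact analogue on $\Gr$ of Proposition~\ref{prop:parity-BS}\eqref{it:Psi-parity-2}, and is standard in the theory of tilting objects.)

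Applying $\Upsilon$ and comparing with Proposition~\ref{prop:parity-BS}\eqref{it:Psi-parity-2}, we deduce that $\Upsilon(\cT(\lambda))$ is a direct summand of $\PEx^\bk(\omega,(s_1,\dots,s_r))$, hence $\Upsilon(\cT(\lambda))\cong\PEx^\bk_\nu\langle m\rangle[m]$ for some $\nu\le'\lambda$ and $m\in\Z$. To see that $\nu=\lambda$ and $m=0$, note that $\cT(\lambda)$ admits a $\Delta$-filtration in $\Perv^\mix_{(\Iw)}(\Gr,\bk)$ in which $\cJ_!(\lambda)$ occurs once and every other subquotient has the form $\cJ_!(\mu)\langle n\rangle$ with $\mu<'\lambda$; by~\eqref{eqn:Phi-standard} and $\Upsilon\circ\langle1\rangle\cong\langle1\rangle[1]\circ\Upsilon$, the object $\Upsilon(\cT(\lambda))$ therefore admits a ``filtration'' with subquotients $\Delta_\bk(\mu)\langle n\rangle[n]$ ($\mu\le'\lambda$) in which $\Delta_\bk(\lambda)$ occurs once with zero shift. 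Applying the quotient functor $\Pi_{<'\lambda}$, which annihilates every $\Delta_\bk(\mu)$ with $\mu<'\lambda$, gives $\Pi_{<'\lambda}(\Upsilon(\cT(\lambda)))\cong\Pi_{<'\lambda}(\Delta_\bk(\lambda))$. Since $\Pi_{<'\lambda}(\PEx^\bk_\nu)=0$ for $\nu<'\lambda$ while $\Pi_{<'\lambda}(\Delta_\bk(\lambda))\neq0$, this forces $\nu=\lambda$; and then $\Pi_{<'\lambda}(\Delta_\bk(\lambda))\cong\Pi_{<'\lambda}(\Delta_\bk(\lambda))\langle m\rangle[m]$ forces $m=0$. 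Hence $\Upsilon(\cT(\lambda))\cong\PEx^\bk_\lambda$, as desired.

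I expect the main obstacle to be the first step, i.e. establishing with the correct normalization of shifts that $\Upsilon$ intertwines the geometric wall-crossing functors on $\Gr$ with the functors $\Psi_s$; although this is essentially built into~\cite{mr2} (where the equivalence is assembled from matching Bott--Samelson parity complexes with Bott--Samelson tilting exotic sheaves), extracting precisely the compatibility needed here requires some care. An alternative that sidesteps the wall-crossing functors is to establish directly the parity property $(\cT(\lambda):\cJ_!(\mu)\langle n\rangle)\neq0\ \Rightarrow\ n\equiv\ell(w_\lambda)+\ell(w_\mu)\pmod2$ (and its $\cJ_*$-analogue) for tilting objects of $\Dmix_{(\Iw)}(\Gr,\bk)$: via~\eqref{eqn:Phi-standard}, Lemma~\ref{lem:Hom-filtrations-DN} and the definitions this shows directly that $\Upsilon(\cT(\lambda))$ is a parity object, and one then concludes from Proposition~\ref{prop:classification-parity} together with the leading-term computation of the previous paragraph.
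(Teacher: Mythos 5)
Your closing ``alternative'' is, in essence, the paper's actual proof: the authors transport $\Hom(\Delta_\bk(\mu),\Upsilon(\cT(\lambda))\langle n\rangle[m])$ through $\Upsilon$ via~\eqref{eqn:Phi-standard}, use the fact that $\cT(\lambda)$ is tilting in $\Perv^\mix_{(\Iw)}(\Gr,\bk)$ to kill everything with $m\neq n$, and quote~\cite[Lemma~3.17]{modrap2} for the parity of the Tate twist (rather than re-deriving the parity of multiplicities in a $\Delta$-filtration, which is the same content); together with the analogous computation against $\nabla_\bk(\mu)$ this shows $\Upsilon(\cT(\lambda))$ is an indecomposable parity object, hence of the form $\PEx^\bk_\mu\langle n\rangle[n]$ by Proposition~\ref{prop:classification-parity}, and one then checks $(\mu,n)=(\lambda,0)$. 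Your leading-term argument with $\Pi_{<'\lambda}$ is a perfectly good way of carrying out that last check.

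Your primary route, however, has a genuine gap exactly where you suspect it: the intertwining $\Upsilon\circ\Xi_s\cong\Psi_s\circ\Upsilon$ (for every $s\in S$, with the correct normalization of shifts and twists) is neither proved in this paper nor available off the shelf in the form you need. For $s\in S\smallsetminus\Sf$ the coherent functor $\Psi_s$ is defined here by conjugating $\Psi_t$ ($t\in\Sf$) by a braid-group equivalence $\mathscr{J}_b$ for a chosen $b$, so even formulating the claimed compatibility requires care, and extracting it from the construction of~\cite{mr2} would mean reopening that construction rather than citing it. Note that the paper is deliberately structured so as never to need this statement: in \S\ref{ss:proof-support-bezru} only its Grothendieck-group shadow (the action of $\uH_s$ versus the map induced by $\Psi_s$) is used, and in the proof of Proposition~\ref{prop:supp-tilt-cN} the tilting property of $\Xi(\PEx^\bk_\lambda)$ is again obtained from $\Hom$-computations rather than from a functor-level compatibility. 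Granting the intertwining, the rest of your argument is sound: the decomposition of the Bott--Samelson tilting object on $\Gr$, the comparison with Proposition~\ref{prop:parity-BS}\eqref{it:Psi-parity-2}, and the $\Pi_{<'\lambda}$ argument correctly force $\nu=\lambda$ and $m=0$. But as written the first displayed isomorphism is an assertion, not a proof, so you should either supply that compatibility or switch to your second route, which is what the paper does.
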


\begin{proof}
For $\mu \in \bX$ we have
\begin{multline*}
\Hom_{\Db \Coh^{\Gp \times \Gm}(\tcN)_\bk}(\Delta_\bk(\mu), \Upsilon(\cT(\lambda)) \langle n \rangle [m]) \\
\overset{\eqref{eqn:Phi-standard}}{=} \Hom_{\Db \Coh^{\Gp \times \Gm}(\tcN)_\bk}(\Upsilon(\cJ_!(\mu)), \Upsilon(\cT(\lambda)\langle n \rangle) [m-n]) \\
\cong \Hom_{\Dmix_{(\Iw)}(\Gr,\bk)}(\cJ_!(\mu), \cT(\lambda) \langle n \rangle [m-n]).
\end{multline*}
Hence this space vanishes unless $m=n$. Using~\cite[Lemma~3.17]{modrap2}, it also vanishes unless $n \equiv \ell(w_\lambda)-\ell(w_\mu) \pmod 2$. The same arguments can be used to show that
\[
\Hom_{\Db \Coh^{\Gp \times \Gm}(\tcN)_\bk}(\Upsilon(\cT(\lambda)) \langle n \rangle [m], \nabla_\bk(\mu))
\]
vanishes unless $m=n$ and $n \equiv \ell(w_\lambda)-\ell(w_\mu) \pmod 2$. Thus $\Upsilon(\cT(\lambda))$ is a parity object. It is clearly indecomposable, and hence of the form $\PEx^\bk_\mu \langle n \rangle [n]$ for some $(\mu,n) \in \bX \times \Z$. It is easily checked that $\mu=\lambda$ and $n=0$, and the claim follows.
\end{proof}

%----------------------------------------------------------------------
\subsection{The case of characteristic \texorpdfstring{$0$}{0}}
%----------------------------------------------------------------------

We continue with the assumptions of~\S\ref{ss:relation-Gr}.

\begin{thm}
\label{thm:parities-char-0}
Assume that $\bk$ has characteristic $0$. Then
for any $\lambda \in \bX$, we have $\PEx^\bk_\lambda \cong \fL^\bk(\lambda)$.
\end{thm}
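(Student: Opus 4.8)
The plan is to show that in characteristic $0$ the indecomposable parity exotic object $\PEx^\bk_\lambda$ is actually concentrated in a single perverse (exotic) degree, so that it must coincide with an indecomposable object of the exotic heart, and then identify that object with the simple $\fL^\bk(\lambda)$. The key input is that in characteristic $0$ the exotic t-structure is ``pure'' in a suitable sense: the simple exotic sheaves $\fL^\bk(\lambda)$ satisfy parity-vanishing conditions with respect to the standard and costandard objects. Concretely, via the equivalence $\Upsilon$ of~\S\ref{ss:relation-Gr}, $\fL^\bk(\lambda)$ corresponds to the simple object $\cIC(\lambda)$ in $\Perv^\mix_{(\Iw)}(\Gr,\bk)$, and in characteristic $0$ the decomposition theorem (or parity-vanishing for $\cIC$ sheaves over a field of characteristic $0$) shows that $\cIC(\lambda)$ is itself a parity complex. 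Hence $\Upsilon(\cIC(\lambda)) = \fL^\bk(\lambda)$ is a parity object in the sense of~\S\ref{ss:parity-objects}, and being indecomposable it is of the form $\PEx^\bk_\mu\langle n\rangle[n]$. A weight/support computation (comparing with $\Pi_{<'\lambda}$, exactly as in the proof of Proposition~\ref{prop:Phi-Tilt-Par}) forces $\mu=\lambda$ and $n=0$.

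First I would recall, using Proposition~\ref{prop:Phi-Tilt-Par}, that $\PEx^\bk_\lambda \cong \Upsilon(\cT(\lambda))$, so the statement is equivalent to $\cT(\lambda) \cong \cIC(\lambda)$ in $\Perv^\mix_{(\Iw)}(\Gr,\bk)$ when $\operatorname{char}\bk = 0$, i.e.\ that all indecomposable tilting perverse sheaves on $\Gr$ are simple in characteristic $0$. Alternatively, and more in the spirit of the surrounding sections, I would argue directly on the exotic side: it is classical that in characteristic $0$ the exotic t-structure on $\Db\Coh^{\Gp\times\Gm}(\tcN)_\bk$ admits a ``Koszul-type'' or ``mixed'' purity property ensuring that $\Ext^i(\fL^\bk(\lambda), \fL^\bk(\mu)\langle n\rangle) \ne 0$ forces $i \equiv \dag(\lambda) - \dag(\mu) + n \pmod 2$ — but since $\fL^\bk(\lambda)$ lies in the heart this is not automatic and does genuinely use characteristic $0$. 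The cleanest route is therefore to transport the question to $\Gr$ and invoke the characteristic-$0$ decomposition theorem: over a field of characteristic $0$, every $\cIC(\lambda)$ is a parity complex because the pushforward of a shifted constant sheaf from a resolution is a direct sum of $\cIC$'s with no odd-to-even extensions occurring.

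The next step is then purely formal. Having established that $\fL^\bk(\lambda)$ is a parity object, Proposition~\ref{prop:classification-parity} tells us that it is isomorphic to $\iota_{\le \nu}(E_\nu)\langle n\rangle[n]$ for a unique $\nu$ with $E_\nu$ existing, and $n \in \Z$; since $\fL^\bk(\lambda) \in \Ex^{\Gp\times\Gm}(\tcN)_\bk$ is indecomposable and its ``leading term'' with respect to the order $\le'$ is $\lambda$ — which one checks by applying $\Pi_{<'\lambda}$ and using that $\Pi_{<'\lambda}(\fL^\bk(\lambda)) \cong \Pi_{<'\lambda}(\Delta_\bk(\lambda))$, exactly as $\Pi_{<s}(E_s) \cong \Pi_{<s}(\dgr_s)$ in that proposition — we get $\nu = \lambda$ and $n = 0$, so $\fL^\bk(\lambda) \cong \PEx^\bk_\lambda$.

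The main obstacle is the first step: proving that $\fL^\bk(\lambda)$ is a parity object genuinely requires a characteristic-$0$ input and is false in positive characteristic (this is precisely why $\PEx^\bk_\lambda$ is introduced as a substitute for $\fL^\bk(\lambda)$ in general). I expect the cleanest argument to go through the equivalence $\Upsilon$ and the category $\Perv^\mix_{(\Iw)}(\Gr,\bk)$, where one can cite the decomposition theorem for $\cIC$ sheaves over characteristic-$0$ coefficients (equivalently, that in characteristic $0$ parity complexes coincide with semisimple complexes, up to shift), so that each $\cIC(\lambda)$ is parity; then $\Upsilon(\cIC(\lambda)) \cong \fL^\bk(\lambda)$ is parity by Corollary~\ref{cor:parities-extension-scalars} / Proposition~\ref{prop:Phi-Tilt-Par}-type arguments, and everything else is the bookkeeping described above.
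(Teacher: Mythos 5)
Your strategy is sound and reaches the theorem, but it differs from the paper's proof at the decisive characteristic-zero input, so let me compare. Both arguments begin with Proposition~\ref{prop:Phi-Tilt-Par}, i.e.\ $\PEx^\bk_\lambda \cong \Upsilon(\cT(\lambda))$. The paper then verifies directly that $\PEx^\bk_\lambda$ satisfies the stalk/costalk conditions characterizing $\fL^\bk(\lambda)$ (concentration in the heart, unique simple sub- and quotient): after a change-of-scalars reduction to $\bk=\overline{\Q_p}$, it invokes Yun's ``condition (W)'' for the tilting objects $\cT(\lambda)$ (via \cite{ar-koszul} and \cite{yun}), which yields the strict vanishing of $\Hom_{\Dmix_{(\Iw)}(\Gr,\bk)}(\cJ_!(\mu),\cT(\lambda)\langle m\rangle)$ for $\mu\neq\lambda$ unless $m>0$. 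You instead propose to show that $\fL^\bk(\lambda)$, which is $\Upsilon$ of the simple mixed perverse sheaf attached to $\lambda$, is a parity object in the sense of \S\ref{ss:parity-objects}, and then to conclude by the uniqueness statement of Proposition~\ref{prop:classification-parity} together with the $\Pi_{<'\lambda}$ leading-term check; your characteristic-zero input is parity of the ordinary intersection cohomology complexes on $\Gr$ (Kazhdan--Lusztig/Springer, or decomposition theorem plus affine pavings) rather than Yun's weight result for tilting sheaves. The formal endgame (indecomposability, leading term, uniqueness of indecomposable parity objects) is correct, and this inversion --- proving ``simple is parity'' instead of ``tilting is simple'' --- is a legitimate alternative.

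The step you leave at the level of assertion, however, is exactly where the content of the theorem sits: passing from ``the ordinary $\mathcal{IC}$ on $\Gr$ is a parity complex'' to ``$\fL^\bk(\lambda)$ is a parity object in the abstract sense.'' The decomposition theorem by itself is not the right citation here. What is needed is that the simple object of $\Perv^\mix_{(\Iw)}(\Gr,\bk)$ attached to $\lambda$ coincides in $\Dmix_{(\Iw)}(\Gr,\bk)=K^{\mathrm{b}}(\Par_{(\Iw)}(\Gr,\bk))$ with the corresponding indecomposable parity complex placed in degree $0$, together with the strict vanishing of $\Hom_{\Dmix_{(\Iw)}(\Gr,\bk)}(\cJ_!(\mu), -\langle n\rangle[m])$ and the dual costalk statement for $\mu\neq\lambda$ outside the expected range. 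This is true, but it follows from pointwise parity/purity of stalks and costalks of intersection cohomology complexes on Schubert varieties combined with the dictionary of \cite[Lemma~3.17]{modrap2} translating mixed Hom-groups into ordinary stalk cohomology --- a verification of exactly the same shape as the one the paper carries out with condition (W). So your route works once this computation is written down, and it would make the proof rest on \cite{kl, springer} instead of \cite{ar-koszul, yun}; as it stands, the crucial transfer is only gestured at.
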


\begin{proof}
By definition of $\PEx^\bk_\lambda$, all its cohomology objects have their composition factors of the form $\fL^\bk(\mu) \langle n \rangle$ with $\mu \leq' \lambda$, and we have
\begin{multline*}
\Hom_{\Db \Coh^{\Gp \times \Gm}(\tcN)_\bk}(\Delta_\bk(\lambda), \PEx^\bk_\lambda \langle n \rangle [m]) = \\
\Hom_{\Db \Coh^{\Gp \times \Gm}(\tcN)_\bk}(\PEx^\bk_\lambda, \nabla_\bk(\lambda) \langle n \rangle [m])
= \begin{cases}
\bk & \text{if $n=m=0$;} \\
0 & \text{otherwise.}
\end{cases}
\end{multline*}
Hence $\fL^\bk(\lambda) \langle n \rangle$ appears once as a composition factor of a cohomology object of $\PEx^\bk_\lambda$ if $n=0$ (in degree $0$), and never if $n \neq 0$.
Below we will prove that if $\mu \neq \lambda$ we have
\[
\Hom_{\Db \Coh^{\Gp \times \Gm}(\tcN)_\bk}(\Delta_\bk(\mu), \PEx^\bk_\lambda \langle n \rangle [m]) = \Hom_{\Db \Coh^{\Gp \times \Gm}(\tcN)_\bk}(\PEx^\bk_\lambda, \nabla_\bk(\mu) \langle n \rangle [m])=0
\]
unless $m > 0$. 
This will imply that $\PEx^\bk_\lambda$ is in the heart of the exotic t-structure, and that it has no simple subobject or quotient of the form $\fL^\bk(\mu) \langle n \rangle$ with $\mu \neq \lambda$. 
This will conclude the proof.

So, we fix $\mu \neq \lambda$.
By a simple change-of-scalars arguments based on Lemma~\ref{lem:Hom-filtrations-DN} and Corollary~\ref{cor:parities-extension-scalars}, we can assume that $\bk=\overline{\Q_p}$ for some prime number $p$. As in the proof of Proposition~\ref{prop:Phi-Tilt-Par}, we have
\[
\Hom_{\Db \Coh^{\Gp \times \Gm}(\tcN)_\bk}(\Delta_\bk(\mu), \PEx^\bk_\lambda \langle n \rangle [m]) \cong \Hom_{\Dmix_{(\Iw)}(\Gr,\bk)}(\cJ_!(\mu), \cT(\lambda) \langle n \rangle [m-n]),
\]
and this space vanishes unless $m=n$. The objects $\cT(\lambda)$ are studied in the case of coefficients $\overline{\Q_p}$ in~\cite{ar-koszul}, based on the results of~\cite{yun}. In particular, it is explained in~\cite[\S 10.2]{ar-koszul} that these objects satisfy Yun's ``condition (W).'' This condition implies that if $j_\mu$ is the inclusion of the $\Iw$-orbit on $\Gr$ associated with $\mu$, and $j_\mu^!$ is the corresponding functor constructed in~\cite{modrap2}, then $j_\mu^! \cT(\lambda)$ is a direct sum of objects of the form $\underline{\bk} \langle i \rangle \{k\}$ with $i<0$, and then (using adjunction and the fact that $\cJ_!(\mu) = (j_{\mu})_! \underline{\bk} \{\ell(w_\mu)\}$) that
\[
\Hom_{\Db \Coh^{\Gp \times \Gm}(\tcN)_\bk}(\Delta_\bk(\mu), \PEx^\bk_\lambda \langle m \rangle [m]) \cong \Hom_{\Dmix_{(\Iw)}(\Gr,\bk)}(\cJ_!(\mu), \cT(\lambda) \langle m \rangle)
\]
vanishes unless $m>0$. One proves similarly that
\[
 \Hom_{\Db \Coh^{\Gp \times \Gm}(\tcN)_\bk}(\PEx^\bk_\lambda, \nabla_\bk(\mu) \langle n \rangle [m])=0
 \]
unless $m>0$ (and $n=m$), and the proof is complete.
\end{proof}

\begin{cor}
\label{cor:parity-simple}
Let $\lambda \in \bX$, and let $M$ be as in Proposition~{\rm \ref{prop:indec-parity-integers}}. If $\bk$ is a field such that $\mathrm{char}(\bk)>M$, we have $\PEx^\bk_\lambda \cong \fL^\bk(\lambda)$.
\end{cor}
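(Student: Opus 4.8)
The plan is to re-run the argument of Theorem~\ref{thm:parities-char-0}, but to import the required vanishing statements from characteristic~$0$ by a change-of-scalars argument, using the integral model supplied by Proposition~\ref{prop:indec-parity-integers}. Set $\fR := \Z[\frac{1}{M!}]$, fix a reduced expression $w_\lambda = \omega s_1 \cdots s_r$, and recall from the proof of Proposition~\ref{prop:indec-parity-integers} that $\PEx_\lambda^\fR$ is cut out of $\PEx^\fR(\omega,(s_1,\ldots,s_r))$ by an idempotent $f$ defined over $\fR$, in such a way that for any field $\bk$ of characteristic $0$ or $>M$ the base change $\varphi_\fR^\bk(f)$ cuts $\PEx_\lambda^\bk$ out of $\PEx^\bk(\omega,(s_1,\ldots,s_r))$. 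The heart of the matter will be the claim that for any $\mu \in \bX$, any $n,m \in \Z$, and any field $\bk$ with $\mathrm{char}(\bk) > M$, the spaces $\Hom(\Delta_\bk(\mu), \PEx_\lambda^\bk \langle n \rangle [m])$ and $\Hom(\PEx_\lambda^\bk, \nabla_\bk(\mu) \langle n \rangle [m])$ vanish when $m < 0$, and when $m = 0$ vanish unless $\mu = \lambda$ and $n = 0$, in which case each is one-dimensional.

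To prove this claim I would first invoke Corollary~\ref{cor:Hom-E-R}(1)--(2) to see that the $\fR$-modules $\Hom(\Delta_\fR(\mu), \PEx^\fR(\omega,(s_1,\ldots,s_r)) \langle n \rangle [m])$ and $\Hom(\PEx^\fR(\omega,(s_1,\ldots,s_r)), \nabla_\fR(\mu) \langle n \rangle [m])$ are free of finite rank, with formation commuting with base change along any ring morphism $\fR \to \bk$. Applying the idempotent $f$, and using that $f$ is defined over $\fR$ together with $\varphi_\fR^\bk(\PEx_\lambda^\fR) \cong \PEx_\lambda^\bk$ (Proposition~\ref{prop:indec-parity-integers}), one deduces that the analogous $\Hom$-modules with $\PEx^\fR(\omega,(s_1,\ldots,s_r))$ replaced by $\PEx_\lambda^\fR$ are direct summands of free modules over the principal ideal domain $\fR$, hence again free of finite rank, and that base change is still preserved. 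Consequently the ranks of these $\fR$-modules may be computed after tensoring with a field $\bk_0$ of characteristic $0$. For such a $\bk_0$, Theorem~\ref{thm:parities-char-0} gives $\PEx_\lambda^{\bk_0} \cong \fL^{\bk_0}(\lambda)$, which lies in the heart of the exotic t-structure; hence $\Hom(\Delta_{\bk_0}(\mu), \fL^{\bk_0}(\lambda) \langle n \rangle [m])$ and $\Hom(\fL^{\bk_0}(\lambda), \nabla_{\bk_0}(\mu) \langle n \rangle [m])$ vanish for $m < 0$, and for $m = 0$ vanish unless $\mu = \lambda$, $n = 0$ (the head of $\Delta_{\bk_0}(\mu)$ and the socle of $\nabla_{\bk_0}(\mu)$ being $\fL^{\bk_0}(\mu)$), and are one-dimensional in that case. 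This yields the claim.

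Granting the claim, the conclusion follows exactly as in Theorem~\ref{thm:parities-char-0}. The vanishing for $m < 0$ places $\PEx_\lambda^\bk$ in the heart $\Ex^{\Gp \times \Gm}(\tcN)_\bk$. Combining the $m = 0$ statements with the surjection $\Delta_\bk(\mu) \twoheadrightarrow \fL^\bk(\mu)$ and the inclusion $\fL^\bk(\mu) \hookrightarrow \nabla_\bk(\mu)$ shows that $\PEx_\lambda^\bk$ has simple socle and simple head, both isomorphic to $\fL^\bk(\lambda)$. Finally, since $\PEx_\lambda^\bk$ is a parity object with $\Pi_{<'\lambda}(\PEx_\lambda^\bk) \cong \Pi_{<'\lambda}(\Delta_\bk(\lambda)) \cong \Pi_{<'\lambda}(\fL^\bk(\lambda))$ and the target of $\Pi_{<'\lambda}$ is semisimple, the composition factor $\fL^\bk(\lambda)$ occurs in $\PEx_\lambda^\bk$ with multiplicity exactly one, and $\fL^\bk(\lambda) \langle n \rangle$ with $n \neq 0$ not at all. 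An object of a finite-length abelian category with simple socle equal to its simple head $S$ and with $[\,\cdot : S] = 1$ must be isomorphic to $S$; hence $\PEx_\lambda^\bk \cong \fL^\bk(\lambda)$. The only real work in this argument is the bookkeeping in the change-of-scalars step---checking that the relevant $\Hom$-modules over $\fR$ are genuinely free (automatic over a PID once they are seen to be finitely generated and projective) and that passing to the idempotent summand remains compatible with base change; everything else is the highest-weight-category argument already carried out in characteristic~$0$ in the proof of Theorem~\ref{thm:parities-char-0}.
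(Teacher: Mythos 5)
Your proposal is correct and follows essentially the same route as the paper: realize $\PEx_\lambda^{\Z[\frac{1}{M!}]}$ as a summand of $\PEx^{\Z[\frac{1}{M!}]}(\omega,\uw)$, use the freeness and base-change statements of Corollary~\ref{cor:Hom-E-R} to transfer the characteristic-$0$ Hom computations (Theorem~\ref{thm:parities-char-0}) to characteristic $>M$, and then conclude as in the proof of that theorem. The extra detail you supply (treating both the $\Delta$- and $\nabla$-sides, freeness of summands over the PID $\Z[\frac{1}{M!}]$, and the socle/head/multiplicity argument) is just an expansion of the paper's closing phrase ``as in the proof of Theorem~\ref{thm:parities-char-0}.''
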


\begin{proof}
Recall the object $\PEx_\lambda^{\Z[\frac{1}{M!}]}$ from Proposition~\ref{prop:indec-parity-integers}. By construction this object is a direct summand of an object of the form $\PEx^{\Z[\frac{1}{M!}]}(\omega, \uw)$. Hence, by Corollary~\ref{cor:Hom-E-R}, for any $\mu \in \bX$ and $n,m \in \Z$ the $\Z[\frac{1}{M!}]$-module $\Hom(\Delta_{\Z[\frac{1}{M!}]}(\mu), \PEx^{\Z[\frac{1}{M!}]}_\lambda \langle n \rangle [m])$ is free, and if $\mathrm{char}(\bk)$ is $0$ or greater than $M$, we have
\[
\bk \otimes_{\Z[\frac{1}{M!}]} \Hom \left( \Delta_{\Z[\frac{1}{M!}]}(\mu), \PEx^{\Z[\frac{1}{M!}]}_\lambda \langle n \rangle [m] \right) \cong \Hom(\Delta_{\bk}(\mu), \PEx^{\bk}_\lambda \langle n \rangle [m]).
\]
By Theorem~\ref{thm:parities-char-0}, if $\mathrm{char}(\bk)=0$ the right-hand side vanishes unless $\lambda=\mu$ and $n=m=0$ or $m=n$ and $m>0$. It follows that the same property holds if $\mathrm{char}(\bk)>M$, and as in the proof of Theorem~\ref{thm:parities-char-0} this implies that $\PEx^\bk_\lambda \cong \fL^\bk(\lambda)$.
\end{proof}

%----------------------------------------------------------------------
\subsection{Comparison between characteristic \texorpdfstring{$0$}{0} and positive characteristic}
%----------------------------------------------------------------------

Co\-rollary~\ref{cor:parity-simple} and its proof show that in characteristic larger than some bound (that is not explicitly known), the object $\PEx^\bk_\lambda$ behaves ``as in characteristic $0$,'' and in particular coincides with $\fL^\bk(\lambda)$. With additional information it is sometimes possible to prove this property for a given characteristic.
We assume that $\mathrm{char}(\bk)>0$.

\begin{lem}
\label{lem:criterion-E-L}
For any $\lambda,\mu \in \bX$ and $m \in \Z$ we have
\[
\dim_\bk \bigl( \Hom(\Delta_\bk(\mu), \PEx^\bk_\lambda \langle m \rangle [m] ) \bigr) \geq \dim_\C \bigl( \Hom(\Delta_\C(\mu), \PEx^\C_\lambda \langle m \rangle [m] ) \bigr).
\]
Moreover, given $\lambda \in \bX$, if this inequality is an equality for all $\mu \in \bX$ and $m \in \Z$, then $\PEx^\F_\lambda \cong \fL^\F(\lambda)$.
\end{lem}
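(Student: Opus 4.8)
The plan is to realise $\PEx^\bk_\lambda$ as a direct summand of a Bott--Samelson-type object defined over $\Z$, and then, in the spirit of the proofs of Proposition~\ref{prop:indec-parity-integers} and Corollary~\ref{cor:parity-simple}, to compare its reduction modulo $p=\mathrm{char}(\bk)$ with a $p$-adic integral model whose generic fibre is governed by the characteristic-$0$ computation of Theorem~\ref{thm:parities-char-0}. By Corollary~\ref{cor:parities-extension-scalars} applied to the field extension $\F_p\hookrightarrow\bk$, we have $\varphi_{\F_p}^\bk(\PEx^{\F_p}_\lambda)\cong\PEx^\bk_\lambda$ and $\Hom$-spaces change scalars freely, so it suffices to prove both assertions for $\bk=\F_p$. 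Fix a reduced expression $w_\lambda=\omega s_1\cdots s_r$ and set $B^A:=\PEx^A(\omega,(s_1,\dots,s_r))$ for a commutative Noetherian ring $A$ of finite global dimension. By Proposition~\ref{prop:parity-BS}\eqref{it:Psi-parity-2}, over any field $B^A$ is a direct sum of $\PEx^A_\lambda$ (occurring exactly once, with trivial twist and shift) and of objects $\PEx^A_\nu\langle k\rangle[k]$ with $\nu<'\lambda$; and by Theorem~\ref{thm:parities-char-0} these constituents are the simple exotic sheaves $\fL^A(\nu)$ when $A$ has characteristic $0$.

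The key step is to relate $\PEx^{\F_p}_\lambda$ to a $p$-adic model. By Corollary~\ref{cor:Hom-E-R}, $\End(B^{\Z_p})$ is a free $\Z_p$-algebra of finite rank, hence semiperfect, so $B^{\Z_p}$ has a Krull--Schmidt decomposition; moreover, since $\Z_p$ is complete, idempotents of $\End(B^{\F_p})\cong\F_p\otimes_{\Z_p}\End(B^{\Z_p})$ lift to $\End(B^{\Z_p})$. It follows that there is an indecomposable summand $Q$ of $B^{\Z_p}$ with $\F_p\otimes_{\Z_p}Q\cong\PEx^{\F_p}_\lambda$, and its generic fibre $\Q_p\otimes_{\Z_p}Q$, being a direct summand of $B^{\Q_p}$, is a direct sum of objects $\fL^{\Q_p}(\nu)\langle k\rangle[k]$ with $\nu\leq'\lambda$. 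To determine it, I would use the identity $\Hom(\Delta_A(\lambda),\PEx^A_\lambda\langle m\rangle[m])=A$ if $m=0$ and $=0$ otherwise (valid over any field $A$: it follows from $\Pi_{<'\lambda}(\PEx^A_\lambda)\cong\Pi_{<'\lambda}(\Delta_A(\lambda))$, the projectivity of $\Delta_A(\lambda)$ in the highest weight subcategory of $\Ex^{\Gp \times \Gm}(\tcN)_A$ with weights $\leq'\lambda$, and the vanishing of $\Hom(\Delta_A(\lambda),-)$ on the essential image of $\iota_{<'\lambda}$), together with the fact that $\Hom(\Delta_{\Z_p}(\lambda),Q\langle m\rangle[m])$ is free (a direct summand of the free $\Z_p$-module $\Hom(\Delta_{\Z_p}(\lambda),B^{\Z_p}\langle m\rangle[m])$ of Corollary~\ref{cor:Hom-E-R}) and compatible with base change. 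This pins down $\Q_p\otimes_{\Z_p}Q\cong\fL^{\Q_p}(\lambda)\oplus R$, where $R$ is a direct sum of objects $\fL^{\Q_p}(\nu)\langle k\rangle[k]$ with $\nu<'\lambda$.

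Granting this, the lemma follows by bookkeeping. For any $\mu\in\bX$ and $m\in\Z$ the $\Z_p$-module $\Hom(\Delta_{\Z_p}(\mu),Q\langle m\rangle[m])$ is free (same argument), so base change gives
\[
\dim_{\F_p}\Hom(\Delta_{\F_p}(\mu),\PEx^{\F_p}_\lambda\langle m\rangle[m])=\dim_{\Q_p}\Hom(\Delta_{\Q_p}(\mu),\fL^{\Q_p}(\lambda)\langle m\rangle[m])+c_{\mu,m},
\]
with $c_{\mu,m}:=\dim_{\Q_p}\Hom(\Delta_{\Q_p}(\mu),R\langle m\rangle[m])\geq 0$. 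The first term on the right equals $\dim_\C\Hom(\Delta_\C(\mu),\PEx^\C_\lambda\langle m\rangle[m])$ by change of scalars (using $\fL^{\Q_p}(\lambda)\cong\PEx^{\Q_p}_\lambda$ from Theorem~\ref{thm:parities-char-0} and Corollary~\ref{cor:parities-extension-scalars}), which gives the asserted inequality. If it is an equality for all $\mu$ and $m$, then $c_{\mu,m}=0$ identically; since taking $\mu=\nu$ and $m=-k$ for a summand $\fL^{\Q_p}(\nu)\langle k\rangle[k]$ of $R$ would force $c_{\nu,-k}\geq\dim_{\Q_p}\Hom(\Delta_{\Q_p}(\nu),\fL^{\Q_p}(\nu))=1$, we conclude $R=0$, i.e.\ $\Q_p\otimes_{\Z_p}Q\cong\fL^{\Q_p}(\lambda)$. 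Then freeness and base change over $\Z_p$ (for $\Hom$ out of $\Delta_{\Z_p}(\mu)$, and, using the analogue of Corollary~\ref{cor:Hom-E-R} for maps into $\nabla_{\Z_p}(\mu)$) together with Theorem~\ref{thm:parities-char-0} show that $\Hom(\Delta_\bk(\mu),\PEx^\bk_\lambda\langle m\rangle[m])$ and $\Hom(\PEx^\bk_\lambda,\nabla_\bk(\mu)\langle m\rangle[m])$ vanish unless $m>0$ or $(\mu,m)=(\lambda,0)$; the argument in the proof of Theorem~\ref{thm:parities-char-0} then yields $\PEx^\bk_\lambda\cong\fL^\bk(\lambda)$.

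The main obstacle is the second paragraph: constructing the $p$-adic model $Q$ of $\PEx^{\F_p}_\lambda$ and proving that its generic fibre contains exactly one copy of $\fL^{\Q_p}(\lambda)$ at the trivial twist and all other constituents strictly lower. This rests on idempotent lifting over the complete discrete valuation ring $\Z_p$, on the multiplicity-one normalisation of Proposition~\ref{prop:parity-BS}\eqref{it:Psi-parity-2}, and on the characterisation of $\PEx^A_\lambda$ through $\Pi_{<'\lambda}$; once it is in place, everything else reduces to the base-change isomorphisms of Section~\ref{sec:exotic-parity} and the known structure of $\PEx^\C_\lambda$.
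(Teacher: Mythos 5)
Your proposal is correct and follows essentially the same route as the paper's own proof: reduce to the prime field, lift $\PEx^{\F_p}_\lambda$ by idempotent lifting to a direct summand of a Bott--Samelson object over $\Z_p$, and use the freeness and base-change properties of Corollary~\ref{cor:Hom-E-R} to compare the special fibre with the characteristic-$0$ fibre, which contains $\PEx_\lambda$ as a summand with multiplicity one; equality of dimensions then forces the characteristic-$0$ fibre to be exactly $\PEx^\C_\lambda\cong\fL^\C(\lambda)$, and the stalk/costalk criterion from the proof of Theorem~\ref{thm:parities-char-0} gives simplicity. The only (cosmetic) differences are that you work over $\Q_p$ before extending scalars to $\C$ and you invoke Theorem~\ref{thm:parities-char-0} to decompose the generic fibre into simples, whereas the paper argues directly over $\C$ using maximality of $\lambda$ and multiplicity one to exhibit $\PEx^\C_\lambda$ as a summand.
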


\begin{proof}
Using Corollary~\ref{cor:parities-extension-scalars}\eqref{it:parities-extension-scalars-2} we can assume that $\bk=\F_p$ for some prime number $p$.
Then, consider some reduced expression $w_\lambda = \omega s_1 \cdots s_r$. By Proposition~\ref{prop:parity-BS}\eqref{it:Psi-parity-2} $\PEx_\lambda^\bk$ is a direct summand of $\PEx^\bk(\omega,(s_1, \ldots, s_r))$. Using Corollary~\ref{cor:Hom-E-R}, an idempotent-lifting argument (as in the proof of Proposition~\ref{prop:indec-parity-integers}) shows that there exists a direct summand $\PEx_\lambda^{\Z_p}$ of $\PEx^{\Z_p}(\omega,(s_1, \ldots, s_r))$ such that $\varphi_{\Z_p}^\bk(\PEx_\lambda^{\Z_p}) \cong \PEx_\lambda^\bk$. Then, choosing some ring morphism $\Z_p \to \C$, the object $\varphi_{\Z_p}^{\C}(\PEx_\lambda^{\Z_p})$ is a parity object, which by Corollary~\ref{cor:Hom-E-R} satisfies
\[
\dim_\bk \bigl( \Hom(\Delta_\F(\mu), \PEx^\F_\lambda \langle m \rangle [m] ) \bigr) = \dim_\C \bigl( \Hom(\Delta_\C(\mu), \varphi_{\Z_p}^{\C}(\PEx_\lambda^{\Z_p}) \langle m \rangle [m] ) \bigr)
\]
for any $\mu \in \bX$ and $m \in \Z$. From this we see in particular that $\lambda$ is maximal among the weights such that the right-hand side is nonzero for some $m$, and that
\[
\dim_\C \bigl( \Hom(\Delta_\C(\lambda), \varphi_{\Z_p}^{\C}(\PEx_\lambda^{\Z_p})) \bigr) = 1.
\]
Hence $\PEx^\C_\lambda$ is a direct summand in $\varphi_{\Z_p}^\C(\PEx^{\Z_p}_\lambda)$, and we deduce the desired inequality.

If all these inequalities are equalities, then $\varphi_{\Z_p}^\C(\PEx^{\Z_p}_\lambda) = \PEx^\C_\lambda$. In view of Theorem~\ref{thm:parities-char-0}, it follows that $\PEx_\lambda^\bk = \varphi_{\Z_p}^\bk(\PEx_\lambda^{\Z_p})$ satisfies the ``stalks and costalks'' conditions that characterize $\fL^\bk(\lambda)$ (as in the proof of Theorem~\ref{thm:parities-char-0}); therefore we have $\PEx_\lambda^\bk \cong \fL^\bk(\lambda)$.
\end{proof}

%%%%%%%%%%%%%%%%%%%%%%%%%%%%%%%%%%%
\section{Support computations}
\label{sec:support}
%%%%%%%%%%%%%%%%%%%%%%%%%%%%%%%%%%%

%-----------------------------------------------------------------------
\subsection{Support of coherent sheaves on \texorpdfstring{$\cN_\bk$}{Nk}}
\label{ss:support-cN}
%-----------------------------------------------------------------------

In this section we assume that $\Gp_\Z$ is a (split) simply-connected semi-simple group, and let $\fR$ be as in~\S\ref{ss:relation-Gr}. 

Let $\bk$ be an algebraically closed field whose characteristic is very good for $\Gp_\Z$. Then if $\cN_\bk$ is the nilpotent cone of $\Gp_\bk$, the group $\Gp_\bk$ has a finite number of orbits on $\cN_\bk$. We now recall the Bala--Carter parametrization of these orbits, following~\cite{premet}.  For any $I \subset \Sf$, we have an associated parabolic subgroup $\PGp_{I,\Z} \subset \Gp_\Z$ containing $\BGp_\Z$, and a Levi factor $\LGp_{I,\Z}$ of $\PGp_{I,\Z}$ containing $\TGp_\Z$. If $J \subset I \subset \Sf$, then we also have a parabolic subgroup $\PGp_{I,J,\Z} \subset \LGp_{I,\Z}$ containing $\BGp_\Z \cap \LGp_{I,\Z}$ associated with $J$. We will denote by $\mathfrak{P}(\Sf)$ the quotient of the set of pairs $(I,J)$ of subsets of $\Sf$ such that $J \subset I$ and $\PGp_{I,J,\Z}$ is a distinguished parabolic subgroup of $\LGp_{I,\Z}$ by the relation
\[
(I,J) \sim (I',J') \quad \text{if there exists $w \in W$ such that $w(I)=I'$ and $w(J)=J'$.}
\]
We also denote by $\PGp_{I,\bk}$, $\LGp_{I,\bk}$, $\PGp_{I,J,\bk}$ the base change of $\PGp_{I,\Z}$, $\LGp_{I,\Z}$, $\PGp_{I,J,\Z}$ to $\bk$.

To any pair $(I,J)$ as above we associate the $\Gp_\bk$-orbit containing the Richardson orbit associated with $\PGp_{I,J,\bk}$ in $\LGp_{I,\bk}$. Then this assignment factors through $\mathfrak{P}(\Sf)$, and under our assumptions the main result of~\cite{premet} implies that it induces a bijection between $\mathfrak{P}(\Sf)$ and the set of orbits $\cN_\bk/\Gp_\bk$. Moreover, every orbit $\scO$ is separable (i.e., the map $G \to \scO$ given by $g \mapsto g\cdot x$ for some $x \in \scO$ is separable); see e.g.~\cite[Proposition~6]{mcninch}.

It follows in particular from these remarks that there exists a canonical bijection 
\[
\iota_\bk : \cN_\C/\Gp_\C \simto \cN_\bk/\Gp_\bk,
\]
defined by composing the bijection $\mathfrak{P}(\Sf) \simto \cN_\bk/\Gp_\bk$ with the inverse of the bijection $\mathfrak{P}(\Sf) \simto \cN_\C/\Gp_\C$. By~\cite[Th\'eor\`eme~III.5.2]{spaltenstein}, $\iota_\bk$ is an isomorphism of posets, for the order induced by inclusions of closures of orbits.

Recall that the \emph{support} of an object $\cF$ in $\Db \Coh^{\Gp}(\cN)_\bk$, denoted by $\supp(\cF)$, is the set of points $x$ such that the stalk $\cF_x$ is nonzero.  This is a closed $\Gp_\bk$-stable subset (by, say,~\cite[Exercise~II.5.6(c)]{hartshorne}); so it is a finite union of closures of $\Gp_\bk$-orbits.  It is clear that
\[
\supp(\cF) = \supp \left( \bigoplus_{n \in \Z} \cH^n(\cF) \right).
\]
It is sometimes useful to consider the supports of objects in $\Db \Coh^{\Gp}(\fg)_\bk$ as well.  Note that if $i : \cN_\bk \to \fg_\bk$ denotes the inclusion map, then for any $\cF \in \Db \Coh^{\Gp}(\cN)_\bk$, the objects $\cF$ and $i_*\cF$ have the same support.

We also observe that for $\cF$ in $\Db \Coh^{\Gp}(\cN)_\bk$, resp.~$\Db \Coh^{\Gp}(\fg)_\bk$, if $f: \{x \} \hookrightarrow \cN$, resp. $f: \{x\} \hookrightarrow \fg$ denotes the inclusion map of a point $x \in \supp(\cF)$, then
\begin{equation}\label{eqn:support-fiber}
Lf^*\cF \ne 0.
\end{equation}
If $\cF$ is a coherent sheaf, this follows from Nakayama's lemma; otherwise, if $k$ is the largest integer such that $\cH^k(\cF)_x \ne 0$, then~\eqref{eqn:support-fiber} follows from the fact that $\cH^k(Lf^*\cF) \cong f^*\cH^k(\cF)$.

%-----------------------------------------------------------------------
\subsection{Statement}
\label{ss:statement-support}
%-----------------------------------------------------------------------

\newcommand{\reg}{\mathrm{reg}}

We denote by $\pi : \tcN_\bk \to \cN_\bk$ the natural morphism (the ``Springer resolution'').
In this section we are interested in describing
\[
\supp(R\pi_*\PEx^\bk_\lambda) \quad \text{and} \quad \supp \bigl( \Ext^\bullet_{\Db \Coh(\tcN)_\bk}(\PEx^\bk_\lambda, \PEx^\bk_\lambda) \bigr)
\]
for $\lambda \in -\bX^+$ and $\lambda \in \bX$ respectively. (It can be shown that $R\pi_*\PEx^\bk_\lambda=0$ if $\lambda \in \bX \smallsetminus (-\bX^+)$---compare e.g.~Theorem~\ref{thm:parities-char-0} with~\cite[Proposition~2.6] {achar} in the case $\mathrm{char}(\bk)=0$---which justifies the restriction in the first case. For the second case, $\Ext^\bullet_{\Db \Coh(\tcN)_\bk}(\PEx^\bk_\lambda, \PEx^\bk_\lambda)$ is considered as a coherent sheaf on $\cN_\bk$ via the natural action of $\cO(\tcN_\bk)$ and the morphism $\cO(\cN_\bk) \to \cO(\tcN_\bk)$ induced by $\pi$.)

We start with the following easy case.

\begin{lem}
\label{lem:full-support}
For any $\lambda \in \bX$, we have
\[
\supp \bigl( \Ext^\bullet_{\Db \Coh(\tcN_\bk)}(\PEx^\bk_\lambda, \PEx^\bk_\lambda) \bigr) = \cN_\bk \quad \Leftrightarrow \quad \ell(w_\lambda)=0.
\]
If $\lambda \in -\bX^+$, this condition is also equivalent to $\supp(R\pi_*\PEx^\bk_\lambda) = \cN_\bk$.
\end{lem}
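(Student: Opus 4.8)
For any $\lambda \in \bX$,
\[
\supp \bigl( \Ext^\bullet_{\Db \Coh(\tcN_\bk)}(\PEx^\bk_\lambda, \PEx^\bk_\lambda) \bigr) = \cN_\bk \quad \Leftrightarrow \quad \ell(w_\lambda)=0,
\]
and when $\lambda \in -\bX^+$ this is also equivalent to $\supp(R\pi_*\PEx^\bk_\lambda) = \cN_\bk$.

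The plan is to reduce both assertions to a single vanishing statement over the regular locus of $\cN_\bk$, and then to settle that statement by exploiting the Bott--Samelson-type construction of $\PEx^\bk_\lambda$.

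First I would forget equivariance and observe that $\supp(R\pi_*\PEx^\bk_\lambda)$ and $\supp\bigl(\Ext^\bullet_{\Db\Coh(\tcN_\bk)}(\PEx^\bk_\lambda,\PEx^\bk_\lambda)\bigr)$ are closed $\Gp_\bk$-stable subsets of $\cN_\bk$; since $\cN_\bk$ is irreducible with dense orbit the regular orbit $\cN^{\reg}_\bk$, such a subset equals $\cN_\bk$ if and only if it contains $\cN^{\reg}_\bk$. Write $\jmath : \tcN^{\reg}_\bk := \pi^{-1}(\cN^{\reg}_\bk) \hookrightarrow \tcN_\bk$ for the open inclusion; under the running assumption on $\mathrm{char}(\bk)$, the map $\pi$ restricts to an isomorphism $\tcN^{\reg}_\bk \simto \cN^{\reg}_\bk$. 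Using that $\pi$ is proper and $\cN_\bk$ is affine, flat base change together with the compatibility of derived sheaf-$\Hom$ with restriction to open subschemes yields that $(R\pi_*\PEx^\bk_\lambda)|_{\cN^{\reg}_\bk}$ corresponds to $\jmath^*\PEx^\bk_\lambda$ under the isomorphism, and that the localization of $\Ext^\bullet_{\Db\Coh(\tcN_\bk)}(\PEx^\bk_\lambda,\PEx^\bk_\lambda)$ over $\cN^{\reg}_\bk$ is $\bigoplus_n \Hom_{\Db\Coh(\tcN^{\reg}_\bk)}(\jmath^*\PEx^\bk_\lambda, \jmath^*\PEx^\bk_\lambda[n])$. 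The latter contains the identity morphism when $\jmath^*\PEx^\bk_\lambda \neq 0$ and vanishes otherwise, so both support conditions become equivalent to the single condition $\jmath^*\PEx^\bk_\lambda \neq 0$; it then remains to prove that this holds precisely when $\ell(w_\lambda) = 0$.

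For the direction $\ell(w_\lambda)=0\Rightarrow\jmath^*\PEx^\bk_\lambda\neq0$, I would use that in this case $w_\lambda=\omega\in\Omega$ and that $\PEx^\bk_\lambda \cong \mathscr{J}_{T_\omega}(\cO_{\tcN_\bk})$ (this object is the indecomposable $\Delta_\bk(\lambda)$, by the proof of Proposition~\ref{prop:parity-BS}\eqref{it:Psi-parity-2} with empty expression and indecomposability of $\cO_{\tcN_\bk}$), combined with the fact recalled in the proof of Lemma~\ref{lem:filtration-Psi} that the braid group functors preserve the class of objects with nonzero restriction to $\tcN^{\reg}_\bk$, applied to $\cO_{\tcN_\bk}$. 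For the converse, the key claim I would isolate is that $\jmath^*\Psi_s(\cF) = 0$ for every $s \in S$ and every $\cF \in \Db\Coh^{\Gp \times \Gm}(\tcN)_\bk$. For $s \in \Sf$ this is because $\Psi_s = \Pi^s\Pi_s$ and $\Pi^s(\cG)$ is obtained from $R(\mathsf{e}_s)_* L(\mu_s)^*(\cG)$ by twisting with a line bundle, hence supported on $\mathsf{e}_s(Y)$ with $Y = (\Gp_\bk/\BGp_\bk) \times_{\Gp_\bk/\PGp_{s,\bk}} \tcN_{s,\bk} \cong \Gp_\bk \times^{\BGp_\bk} (\fg_\bk/\fp_{s,\bk})^*$; since $\mathsf{e}_s$ is a closed embedding of codimension one, $\mathsf{e}_s(Y)$ has dimension $\dim\tcN_\bk - 1 = \dim\cN_\bk - 1$, and as it is $\Gp_\bk$-stable and $\pi$ is equivariant, $\pi(\mathsf{e}_s(Y))$ is a closed $\Gp_\bk$-stable subset of $\cN_\bk$ of dimension $< \dim\cN^{\reg}_\bk$, hence disjoint from the single orbit $\cN^{\reg}_\bk$; therefore $\mathsf{e}_s(Y) \cap \tcN^{\reg}_\bk = \varnothing$ and $\jmath^*\Pi^s(\cG) = 0$ for all $\cG$, whence $\jmath^*\Psi_s(\cF)=0$. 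For $s \in S \smallsetminus \Sf$, writing $\Psi_s = \mathscr{J}_{b^{-1}} \Psi_t \mathscr{J}_b$ with $t \in \Sf$ and using that $\mathscr{J}_{b^{-1}}$ is an equivalence (so the braid functors preserve the class of objects with nonzero restriction to $\tcN^{\reg}_\bk$ and therefore also its complement), the same vanishing follows.

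Finally, if $\ell(w_\lambda) = r \geq 1$, I would take a reduced expression $w_\lambda = \omega s_1 \cdots s_r$, so that $\PEx^\bk(\omega,(s_1,\dots,s_r)) = \Psi_{s_r}\bigl(\Psi_{s_{r-1}} \cdots \Psi_{s_1} \mathscr{J}_{T_\omega}(\cO_{\tcN_\bk})\bigr)$ and hence $\jmath^*\PEx^\bk(\omega,(s_1,\dots,s_r)) = 0$ by the previous step; since $\PEx^\bk_\lambda$ is a direct summand of $\PEx^\bk(\omega,(s_1,\dots,s_r))$ by Proposition~\ref{prop:parity-BS}\eqref{it:Psi-parity-2} and $\jmath^*$ is additive, this forces $\jmath^*\PEx^\bk_\lambda = 0$, completing the argument (and, incidentally, covering all of $\bX$, not just $-\bX^+$, since every $\lambda$ with $\ell(w_\lambda)=0$ automatically lies in $-\bX^+$). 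I expect the main obstacle to be the localization bookkeeping of the second paragraph---correctly matching up the restrictions of $R\pi_*\PEx^\bk_\lambda$ and of the $\Ext$-sheaf to $\cN^{\reg}_\bk$; by contrast, the geometric input, that $\pi(\mathsf{e}_s(Y))$ misses the regular orbit, is handled cleanly by the dimension-plus-homogeneity count above, so one never needs to identify $\pi(\mathsf{e}_s(Y))$ (it is the closure of the codimension-two Richardson orbit of $\PGp_{s,\bk}$, but this is not used).
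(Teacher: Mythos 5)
Your proof is correct and follows essentially the same route as the paper: reduce both support statements to (non)vanishing of the restriction of $\PEx^\bk_\lambda$ to $\pi^{-1}(\scO_\reg^\bk)$, kill that restriction when $\ell(w_\lambda)\geq 1$ because $\PEx^\bk_\lambda$ is a direct summand of $\PEx^\bk(\omega,(s_1,\ldots,s_r))$ and the essential image of $\Psi_{s_r}$ restricts to zero there, and observe nonvanishing when $\ell(w_\lambda)=0$. The only cosmetic differences are that you spell out the dimension count showing $\pi(\mathsf{e}_s(Y))$ misses the regular orbit (which the paper leaves implicit in the proof of Lemma~\ref{lem:filtration-Psi}) and that in the $\ell(w_\lambda)=0$ case you invoke preservation of nonzero restriction under the braid functors applied to $\cO_{\tcN_\bk}$, where the paper simply cites that $\mathscr{J}_{T_{w_\lambda}}(\cO_{\tcN_\bk})$ is a line bundle.
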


\begin{proof}
Let $\scO_\reg^\bk \subset \cN_\bk$ be the unique open orbit. Then $\pi$ restricts to an isomorphism $\pi^{-1}(\scO_\reg^\bk) \simto \scO_\reg^\bk$.

First, assume that $\ell(w_\lambda) \neq 0$. Then if $w_\lambda = \omega s_1 \cdots s_r$ is a reduced expression (with $\omega \in \Omega$ and $s_1, \ldots, s_r \in S$), we have $r \geq 1$, and $\PEx^\bk_\lambda$ is a direct summand in $\PEx^\bk(\omega, (s_1, \ldots, s_r))$ by Proposition~\ref{prop:parity-BS}\eqref{it:Psi-parity-2}. Now as remarked in the proof of Lemma~\ref{lem:filtration-Psi}, any object in the essential image of the functor $\Psi_{s_r}$ has trivial restriction to $\pi^{-1}(\scO_\reg^\bk)$. Hence $\PEx^\bk_\lambda$ has trivial restriction to $\pi^{-1}(\scO_\reg^\bk)$, which proves that $\supp \bigl( \Ext^\bullet_{\Db \Coh(\tcN_\bk)}(\PEx^\bk_\lambda, \PEx^\bk_\lambda) \bigr) \neq \cN_\bk$ and that if $\lambda \in -\bX^+$ we have $\supp(R\pi_*\PEx^\bk_\lambda) \neq \cN_\bk$.

On the other hand, if $\ell(w_\lambda)=0$, the object $\PEx^\bk_\lambda = \mathscr{J}_{T_{w_\lambda}}(\cO_{\tcN_\bk})$ is actually a line bundle on $\tcN$ (cf.~\cite[Lemma~5.1(1)]{achar}); in particular, its restriction to $\pi^{-1}(\scO_\reg^\bk)$ is nonzero.  We therefore have $\supp \bigl( \Ext^\bullet_{\Db \Coh(\tcN_\bk)}(\PEx^\bk_\lambda, \PEx^\bk_\lambda) \bigr) = \cN_\bk$ and $\supp(R\pi_*\PEx^\bk_\lambda) = \cN_\bk$.
\end{proof}

For the general case, our starting point is the description of these supports in the case $\bk=\C$, which is (essentially) due to Bezrukavnikov.

\begin{thm}
\leavevmode
\label{thm:support-char-0}
\begin{enumerate}
\item
\label{it:support-char0-1}
For any $\lambda \in -\bX^+$, there exists $\scO \in \cN_\C/\Gp_\C$ such that we have
\[
\supp \bigl( R\pi_*\PEx^\C_\lambda \bigr)=\overline{\scO}.
\]
\item
\label{it:support-char0-2}
For any $\lambda \in \bX$, there exists $\scO \in \cN_\C/\Gp_\C$ such that
\[
\supp \bigl( \Ext^\bullet_{\Db \Coh(\tcN)_\C}(\PEx^\C_\lambda, \PEx^\C_\lambda) \bigr)=\overline{\scO}.
\]
\end{enumerate}
\end{thm}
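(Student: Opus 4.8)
The plan is to reduce both statements to known facts about perverse coherent sheaves on $\cN_\C$, using the characteristic-$0$ identification $\PEx^\C_\lambda \cong \fL^\C(\lambda)$ provided by Theorem~\ref{thm:parities-char-0}. Once $\PEx^\C_\lambda$ is identified with the simple exotic sheaf $\fL^\C(\lambda)$, the two supports become genuinely geometric objects, and one can invoke the description of simple exotic sheaves from~\cite{abg} and~\cite{bezru} in terms of perverse coherent sheaves on the nilpotent cone.

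For part~\eqref{it:support-char0-1}, I would first dispose of the case $\ell(w_\lambda)=0$ via Lemma~\ref{lem:full-support} (the support is all of $\cN_\C$, which is the closure of the regular orbit). For $\ell(w_\lambda) > 0$ with $\lambda \in -\bX^+$, the key input is Bezrukavnikov's result identifying $R\pi_* \fL^\C(\lambda)$ (up to shift/twist) with a simple perverse coherent sheaf on $\cN_\C$ supported on a single orbit closure $\overline{\scO}$; indeed this is exactly the computation at the heart of his proof of the (relative) quantum Humphreys conjecture, reformulated through the equivalence of~\cite{abg} between $\Db\Coh^{G_\C \times \Gm}(\tcN_\C)$ and a category of quantum-group modules. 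Since the support of $R\pi_*\PEx^\C_\lambda$ equals that of $R\pi_*\fL^\C(\lambda)$ and the support of a simple perverse coherent sheaf is the closure of its distinguished open orbit by the general theory of perverse coherent sheaves (Arinkin--Bezrukavnikov), the claim follows. One small point to check carefully is that the conventions for dominance and for the dot action used here (with $\BGp$ the negative Borel, and $\lambda \in -\bX^+$) match those under which Bezrukavnikov's computation was carried out; this is a matter of bookkeeping via the length function~\eqref{eqn:wext-length} and the parametrizations in~\S\ref{ss:def-exotic}.

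For part~\eqref{it:support-char0-2}, the support of the graded algebra $\Ext^\bullet_{\Db\Coh(\tcN)_\C}(\PEx^\C_\lambda,\PEx^\C_\lambda)$, viewed as a sheaf on $\cN_\C$, should again be an orbit closure because $\PEx^\C_\lambda \cong \fL^\C(\lambda)$ is simple: the endomorphism $\Ext$-algebra of a simple exotic sheaf, pushed to $\cN_\C$, has support equal to $\supp(R\pi_* \fL^\C(\lambda)) \cup \supp$ of the other terms, but more directly one can use that $\Ext^\bullet(\fL^\C(\lambda),\fL^\C(\lambda))$ is a module over $\scO(\cN_\C)$ generated in a way controlled by the geometric support of $R\pi_*(\fL^\C(\lambda)^\vee \lotimes \fL^\C(\lambda))$, which by the same perverse-coherent analysis is supported on an orbit closure. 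Concretely, I would argue that the stalk of this $\Ext$-sheaf at a point $x$ is nonzero if and only if the restriction of $\fL^\C(\lambda)$ to $\pi^{-1}(\scO_x)$ is nonzero (using~\eqref{eqn:support-fiber} and base change over the smooth locus of each orbit, all orbits being separable here), and the latter locus is open in $\cN_\C$ hence, being $\Gp_\C$-stable and closed under specialization inside its closure, is the complement of a union of orbit closures — combined with irreducibility of the relevant support (which comes from indecomposability of $\PEx^\C_\lambda$, forcing a unique maximal orbit in the support), this yields a single $\overline{\scO}$.

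The main obstacle I anticipate is not the geometry per se but the precise extraction of Bezrukavnikov's computation in the form needed here: his paper~\cite{bezru} works with the quantum group and the derived-Satake/\cite{abg} picture, so one must trace through the equivalence to see that it really computes $\supp(R\pi_*\fL^\C(\lambda))$ as an orbit closure for \emph{all} $\lambda \in -\bX^+$ (not merely $\lambda \in \fWf \cdot 0$), and that the same analysis handles the endomorphism-$\Ext$ version in part~\eqref{it:support-char0-2}. A secondary technical point is justifying that ``support of the sheaf $\Ext^\bullet$'' behaves well under $R\pi_*$ and restriction to orbit strata over $\C$; this is routine once one knows all orbits are separable (as recalled in~\S\ref{ss:support-cN}) and uses~\eqref{eqn:support-fiber}, but it must be stated cleanly. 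I would therefore organize the proof as: (i) reduce to $\fL^\C(\lambda)$ via Theorem~\ref{thm:parities-char-0}; (ii) recall the perverse-coherent description and Bezrukavnikov's computation; (iii) deduce irreducibility of the support from indecomposability; (iv) conclude.
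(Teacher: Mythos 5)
For part~\eqref{it:support-char0-1} your argument coincides with the paper's: identify $\PEx^\C_\lambda \cong \fL^\C(\lambda)$ via Theorem~\ref{thm:parities-char-0}, then invoke Bezrukavnikov's result that $R\pi_*\fL^\C(\lambda)$ is a simple perverse coherent sheaf, whose support is an orbit closure by construction. That part is fine.

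For part~\eqref{it:support-char0-2} there is a genuine gap. Your two proposed mechanisms both fail at the same point: there is no perverse-coherent description of $R\pi_* R\mathscr{H}\hspace{-2pt}\mathit{om}_{\cO_{\tcN_\C}}(\fL^\C(\lambda),\fL^\C(\lambda))$ available in the literature (Bezrukavnikov's result applies to $R\pi_*$ of a simple exotic sheaf, not to the pushforward of its sheaf-$\Ext$ algebra), and your fallback claim that indecomposability of $\PEx^\C_\lambda$ forces the support of $\Ext^\bullet(\PEx^\C_\lambda,\PEx^\C_\lambda)$ to have a unique maximal orbit is unjustified: indecomposability of an object does not imply irreducibility of the support of its endomorphism $\Ext$-sheaf, and even granting irreducibility you would still have no upper bound identifying \emph{which} orbit closure occurs (the statement must match the lower bound $\overline{\scO^\C_{\bc(w)}}$ coming from part~\eqref{it:support-char0-1}, as needed later). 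The stalk criterion you sketch ("stalk nonzero iff the restriction of $\fL^\C(\lambda)$ to $\pi^{-1}(\scO_x)$ is nonzero") is also not established: base change computes the stalk as hypercohomology of a derived restriction, and nonvanishing of the underived restriction of $\fL^\C(\lambda)$ does not by itself give nonvanishing of that hypercohomology. The paper's actual proof (\S\ref{ss:proof-support-bezru}) takes a different route: it first reduces to $\lambda \in -\bX^+$ using the action of $\mathscr{J}_{T_\omega}$, the invariance of the support along right cells (Lemma~\ref{lem:support-geom-pcells}, which rests on the fact that the wall-crossing functors $\Psi_s$ categorify the $\uH_s$-action on the antispherical module and are self-adjoint, Remark~\ref{rmk:Psi-adjoint}), and the characteristic-$0$ analogue of Remark~\ref{rmk:intersection-fWf}; then the lower bound comes from part~\eqref{it:support-char0-1}, while the upper bound is obtained by writing $\PEx^\C_\lambda$ as a summand of $\PEx^\C(1,(s_1,\dots,s_r))$ for a reduced expression of $w_\lambda$ and using adjunction to convert $\Ext^\bullet(\PEx^\C(1,(s_1,\dots,s_r)),\PEx^\C_\lambda)$ into $R\pi_*$ of $\Psi_{s_1}\cdots\Psi_{s_r}(\PEx^\C_\lambda)$, which decomposes into objects $\PEx^\C_\nu$ with $w_\nu \Rleq w_\lambda$, each controlled by part~\eqref{it:support-char0-1} and Lemma~\ref{lem:order-orbits}. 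Some such reduction of the $\Ext$-algebra support to the $R\pi_*$ supports of part~\eqref{it:support-char0-1} is the missing idea in your proposal.
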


\begin{proof}[Proof of~\eqref{it:support-char0-1}]
By Theorem~\ref{thm:parities-char-0} we have $\PEx^\C_\lambda \cong \fL^\C_\lambda$. Now by~\cite[Proposition~8]{bezru} (see also~\cite[Proposition~2.6]{achar}) the object $R\pi_* \fL^\C_\lambda$ is a simple object in the heart of the \emph{perverse coherent} t-structure. By construction, such objects have as support the closure of a nilpotent orbit (see~\cite[\S 4.3]{achar} for details and references).
\end{proof}

Assertion~\eqref{it:support-char0-2} will be justified in~\S\ref{ss:proof-support-bezru} below. In both cases, the orbit ``$\scO$'' can be described more explicitly: see Remark~\ref{rmk:orbit-LV}.

The main result of this section is the following.

\begin{prop}
\label{prop:support-geometry}
Assume that $\mathrm{char}(\bk)>0$.
\begin{enumerate}
\item
\label{it:support-charp-1}
Let $\lambda \in -\bX^+$, and let $\scO$ be as in Theorem~{\rm \ref{thm:support-char-0}\eqref{it:support-char0-1}}. Then we have
\[
\supp \bigl( R\pi_*\PEx^\bk_\lambda \bigr) \supset \overline{\iota_\bk(\scO)}.
\]
Moreover, there exists $N_1 \in \Z$ (depending on $\lambda$) such that if $\mathrm{char}(\bk)>N_1$ we have
\[
\supp \bigl( R\pi_*\PEx^\bk_\lambda \bigr) = \overline{\iota_\bk(\scO)}.
\]
\item
\label{it:support-charp-2}
Let $\lambda \in \bX$, and let $\scO$ be as in Theorem~{\rm \ref{thm:support-char-0}\eqref{it:support-char0-2}}. Then we have 
\[
\supp \bigl( \Ext^\bullet_{\Db \Coh(\tcN)_\bk}(\PEx^\bk_\lambda, \PEx^\bk_\lambda) \bigr) \supset \overline{\iota_\bk(\scO)}.
\]
Moreover, there exists $N_2 \in \Z$ (depending on $\lambda$) such that if $\mathrm{char}(\bk)>N_2$ we have
\[
\supp \bigl( \Ext^\bullet_{\Db \Coh(\tcN)_\bk}(\PEx^\bk_\lambda, \PEx^\bk_\lambda) \bigr) = \overline{\iota_\bk(\scO)}.
\]
\end{enumerate}
\end{prop}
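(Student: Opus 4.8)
The plan is to deduce both assertions from their characteristic-$0$ incarnations (Theorem~\ref{thm:support-char-0}) by a ``spreading out and specializing'' argument built on the integral forms of parity exotic sheaves from \S\S\ref{ss:parity-exotic}--\ref{sec:exotic-parity}. In both cases~\eqref{it:support-charp-1} and~\eqref{it:support-charp-2}, for a coefficient ring $\fR$ write $\cG^\fR$ for the coherent sheaf on $\cN_\fR$ that is $R\pi_*\PEx^\fR_\lambda$, resp.\ the $\Ext^\bullet$-sheaf of~\eqref{it:support-charp-2}, whenever the relevant parity object over $\fR$ is available (here one uses that $\tcN_\fR$ is regular when $\fR$ is, so these objects are perfect and the usual support-variety finiteness applies). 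I will use throughout that $\supp(\cG^\fR)$ is closed and that its intersection with a fibre $\cN_{\fR'}$ is $\supp(\cG^{\fR'})$ --- this combines flat base change for localizations of $\fR$, base change for the proper morphism $\pi$, and the fact recalled around~\eqref{eqn:support-fiber} that at the closed point of a DVR the stalks detect the reduction --- and that, when $\PEx^\fR_\lambda$ is a direct summand of some $\PEx^\fR(\omega,\uw)$, the sheaf $\cG^\fR$ is free over $\fR$ with formation commuting with base change (Corollary~\ref{cor:Hom-E-R}). As support commutes with enlarging the (algebraically closed) residue field and $\iota_\bk$ is compatible with such extensions (Corollary~\ref{cor:parities-extension-scalars}\eqref{it:parities-extension-scalars-2}), I may pass to $\overline{\F_p}$ when convenient.

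For the inclusion I would fix a complete DVR $\mathcal{O}$ with residue field $\bk$ and fraction field $K$ of characteristic $0$ (e.g.\ $\mathcal{O}=W(\bk)$) and a reduced expression $w_\lambda=\omega s_1\cdots s_r$, with $\uw=(s_1,\dots,s_r)$. By Corollary~\ref{cor:Hom-E-R} the ring $\End(\PEx^{\mathcal{O}}(\omega,\uw))$ is finite free over $\mathcal{O}$ with reduction $\End(\PEx^{\bk}(\omega,\uw))$ modulo $\mathfrak m$; being finite over a complete local ring it is semiperfect, so the idempotent cutting out the summand $\PEx^{\bk}_\lambda$ (Proposition~\ref{prop:parity-BS}\eqref{it:Psi-parity-2}) lifts, yielding a direct summand $\PEx^{\mathcal{O}}_\lambda$ of $\PEx^{\mathcal{O}}(\omega,\uw)$ reducing to $\PEx^{\bk}_\lambda$. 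Over $K$ it decomposes into pieces $\fL^{K}(\nu)\langle m_\nu\rangle[m_\nu]$ with $\nu\leq'\lambda$ (Proposition~\ref{prop:parity-BS}, Theorem~\ref{thm:parities-char-0}); counting $\sum_{n,m}\dim\Hom(\Delta(\lambda),-\langle n\rangle[m])$ --- using that $\Delta_K(\lambda)$ is projective in the highest-weight truncation below $\lambda$, so that only $\fL^K(\lambda)$-summands contribute, together with the base-change isomorphism of Corollary~\ref{cor:Hom-E-R} --- shows this count equals $\sum_{n,m}\dim_\bk\Hom(\Delta_\bk(\lambda),\PEx^\bk_\lambda\langle n\rangle[m])\geq 1$, whence $\fL^{K}(\lambda)$ (up to shift) occurs as a summand of $\varphi^{K}_{\mathcal{O}}(\PEx^{\mathcal{O}}_\lambda)$. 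Therefore $\cG^{\mathcal{O}}$ has generic fibre $\cG^{K}$, whose support contains $\overline{\scO^{K}}$ by Theorem~\ref{thm:support-char-0} (a summand with support exactly $\overline{\scO}$ appears). Being closed, $\supp(\cG^{\mathcal{O}})$ then contains the closure of $\overline{\scO^{K}}$ in $\cN_{\mathcal{O}}$, which is the relative orbit closure $\overline{\scO_{\mathcal{O}}}$; and $\overline{\scO_{\mathcal{O}}}\cap\cN_{\bk}=\overline{\iota_{\bk}(\scO)}$, because $\overline{\scO_{\mathcal{O}}}$ is the image of the proper parabolic-Springer-type morphism attached to the Bala--Carter datum of $\scO$ (so that its special fibre is the image of the special fibre), which is exactly how $\iota_{\bk}$ is constructed in \S\ref{ss:support-cN}. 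Intersecting with $\cN_{\bk}$ gives $\supp(\cG^{\bk})\supseteq\overline{\iota_{\bk}(\scO)}$.

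For equality in large characteristic I would instead use the finer integral form $\PEx^{\Z[\frac{1}{M!}]}_\lambda$ of Proposition~\ref{prop:indec-parity-integers}, which reduces to $\PEx^{\bk}_\lambda$ once $\mathrm{char}(\bk)>M$ and to $\fL^{\Q}(\lambda)$ over $\Q$ (Theorem~\ref{thm:parities-char-0}). Then $\cG^{\Z[\frac{1}{M!}]}$ is coherent on $\cN_{\Z[\frac{1}{M!}]}$ with generic fibre $\cG^{\Q}$, whose support is \emph{exactly} $\overline{\scO^{\Q}}$ by Theorem~\ref{thm:support-char-0}. Hence the locally closed subscheme $\supp(\cG^{\Z[\frac{1}{M!}]})\smallsetminus\overline{\scO_{\Z[\frac{1}{M!}]}}$ has empty generic fibre; being of finite type over $\Z[\frac{1}{M!}]$, its image in $\Spec\Z[\frac{1}{M!}]$ is constructible and avoids the generic point, so it is a finite set of primes $\{p_1,\dots,p_k\}$. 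Taking $N_1$ (resp.\ $N_2$) larger than $M$ and than all the $p_i$, for $\mathrm{char}(\bk)>N_1$ I get
\[
\supp(\cG^{\bk})=\supp(\cG^{\Z[\frac{1}{M!}]})\cap\cN_{\bk}\subseteq\overline{\scO_{\Z[\frac{1}{M!}]}}\cap\cN_{\bk}=\overline{\iota_{\bk}(\scO)}
\]
(after passing to the algebraic closure of the relevant residue field), which combined with the inclusion already proved gives equality; case~\eqref{it:support-charp-2} is handled identically, using Theorem~\ref{thm:support-char-0}\eqref{it:support-char0-2}.

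The hard part will be the upper bound in large characteristic. Reduction modulo $p$ can only enlarge supports, so pure semicontinuity gives the inclusion for \emph{all} $p$ but nothing about equality; what rescues the situation is that the finer integral form of Proposition~\ref{prop:indec-parity-integers} --- available only after inverting the small primes $\leq M$ --- has support whose ``horizontal'' component is precisely the relative orbit closure, so that a generic-flatness/constructibility argument quarantines any spurious support into finitely many, effectively bounded, characteristics. The remaining geometric input, that the closure over the base of a nilpotent orbit closure has the expected special fibre, is in essence Spaltenstein's comparison of orbit closures in characteristics $0$ and $p$, made concrete through a proper morphism.
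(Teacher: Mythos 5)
Your overall strategy (lift the parity object to a mixed-characteristic base via idempotent lifting, use Corollary~\ref{cor:Hom-E-R} to control base change, quote Theorem~\ref{thm:support-char-0} over the generic fibre, and quarantine bad primes by a torsion/constructibility argument using $\PEx^{\Z[\frac{1}{M!}]}_\lambda$) is the same in outline as the paper's. But there is a genuine gap at the step you yourself flag as the ``remaining geometric input.'' Both halves of your argument rest on the claim that the closure $Z$ in $\cN_{\mathcal{O}}$ (resp.\ $\cN_{\Z[\frac{1}{M!}]}$) of the characteristic-zero orbit closure $\overline{\scO}$ has special fibre equal to $\overline{\iota_\bk(\scO)}$. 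What is automatic is only that the special fibre of $Z$ is a closed $\Gp_\bk$-stable set all of whose components are orbit closures of dimension $\dim\scO$; nothing so far identifies \emph{which} orbit(s) of that dimension occur, and there can be several. Your justification --- that $\overline{\scO}$ is the image of a proper ``parabolic-Springer-type'' morphism attached to the Bala--Carter datum, ``which is how $\iota_\bk$ is constructed'' --- is incorrect: only Richardson orbit closures of $\Gp$ itself are images of maps $\Gp\times^{\PGp}\mathfrak{u}\to\cN$, whereas a Bala--Carter orbit is the $\Gp$-saturation of a Richardson orbit of a distinguished parabolic in a proper Levi (e.g.\ the minimal orbit, with Levi of type $\mathbf{A}_1$), and its closure is not such an image; moreover $\iota_\bk$ is defined in~\S\ref{ss:support-cN} by matching Bala--Carter labels, not via proper morphisms. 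The Spaltenstein result cited in the paper gives only that $\iota_\bk$ is a poset isomorphism, not that orbit closures specialize correctly over a DVR or over $\Z$.

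This identification of the special fibre is precisely the technical heart of the paper's argument, and it is handled there by constructing explicit orbit representatives over the base: Lemma~\ref{lem:padic-nilp-orbits} produces, for each fixed $p$, points in $\fg_\bO$ ($\bO$ a finite extension of $\Z_p$) whose reductions lie in corresponding orbits over $\bk$ and over $\C$, using McNinch's results on dominance of orbit maps under specialization; and Lemma~\ref{lem:orbits-integers} produces representatives over $\Z$ valid in characteristic $0$ and all large characteristics, via a centralizer-dimension argument. With such a representative one evaluates derived stalks (as in~\eqref{eqn:support-fiber}) at a single well-chosen point, which is how the paper proves both the lower bound (for every $p$) and the upper bound (for large $p$, via Proposition~\ref{prop:support-change-scalars}), sidestepping any statement about special fibres of relative orbit closures. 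As written, your proof assumes this compatibility rather than proving it, so you need either to import the analogues of Lemmas~\ref{lem:padic-nilp-orbits} and~\ref{lem:orbits-integers} (after which your closure-based phrasing does become correct, since $Z$ then visibly contains an $\mathcal{O}$-point specializing into $\iota_\bk(\scO)$), or to give an independent proof of the specialization statement.
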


The first statements in both parts of Proposition~\ref{prop:support-geometry} (the ``lower bound'') will be proved in~\S\ref{ss:lower-bound}. The second statements will be proved in~\S\ref{ss:upper-bound}.

\begin{rmk}
\label{rmk:support-geometry}
We will see later (using representation theory) that in fact $N_1$ and $N_2$ can be chosen independently of $\lambda$; see Remark~\ref{rmk:main-thm}.
\end{rmk}

%----------------------------------------------------------------------
\subsection{\texorpdfstring{$p$}{p}-adic representatives of nilpotent orbits}
%----------------------------------------------------------------------

To prove the ``lower bound'' parts of Proposition~\ref{prop:support-geometry} we will need the following technical result.

\begin{lem}
\label{lem:padic-nilp-orbits}
Assume that $p=\mathrm{char}(\bk)>0$.
There exists a finite extension $\bO$ of $\Z_p$, ring morphisms $\bO \to \bk$ and $\bO \to \C$, and a subset $\{x_{[I,J]} : [I,J] \in \mathfrak{P}(\Sf)\} \subset \fg_{\bO}$, such that the images of the points $x_{[I,J]}$ in $\fg_\bk$ are representatives for the $\Gp_\bk$-orbits in $\cN_\bk$, and the images of these points in $\fg_\C$ are representatives for the $\Gp_\C$-orbits in $\cN_\C$.
\end{lem}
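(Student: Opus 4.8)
The plan is to represent every nilpotent orbit by a Richardson element of the corresponding distinguished parabolic, chosen over a $p$-adic ring in such a way that both its characteristic-$0$ and its characteristic-$p$ reductions remain Richardson. First I would fix, for each class $[I,J] \in \mathfrak{P}(\Sf)$, a representative pair $(I,J)$, and consider the nilradical $\fn_{I,J,\Z_p} \subseteq \fg_{\Z_p}$ of the Lie algebra of the parabolic $\PGp_{I,J,\Z_p}$ of $\LGp_{I,\Z_p}$. Since this nilradical is a free $\Z_p$-module spanned by Chevalley root vectors, it is an affine space over $\Z_p$, hence smooth over $\Z_p$. Inside it I would single out the locus $U_{I,J}$ of points $x$ at which the adjoint endomorphism $\mathrm{ad}(x)$ of the Lie algebra $\mathfrak{p}_{I,J}$ of $\PGp_{I,J}$ has corank equal to its minimal possible value $d_{I,J} := \dim \PGp_{I,J} - \dim \fn_{I,J}$ (the bound $\dim\ker(\mathrm{ad}\,x) \ge d_{I,J}$ holding because $[\mathfrak{p}_{I,J}, x] \subseteq \fn_{I,J}$). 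This locus is open, since its complement is cut out by minors of $\mathrm{ad}(x)$, and therefore smooth over $\Z_p$.

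The key geometric input is then that, because $p = \mathrm{char}(\bk)$ is very good for $\Gp_\Z$ (hence good for $\LGp_I$), the geometric fibre of $U_{I,J}$ over each of $\Q_p$ and $\F_p$ is exactly the Richardson locus of $\PGp_{I,J}$ in $\fn_{I,J}$, i.e.~the dense $\PGp_{I,J}$-orbit: this follows from Richardson's density theorem together with the separability of nilpotent orbits in good characteristic (cf.~\cite[Proposition~6]{mcninch}), which identifies the corank of $\mathrm{ad}(x)$ with the codimension of the $\PGp_{I,J}$-orbit of $x$. In particular $(U_{I,J})_{\F_p}$ is a nonempty scheme of finite type over $\F_p$, so it has a closed point, with residue field some $\F_{p^{m_{I,J}}}$; I would then take $\bO_{I,J}$ to be the unramified extension of $\Z_p$ with this residue field and lift this point to an $\bO_{I,J}$-point of $U_{I,J}$, using formal smoothness of $U_{I,J}$ over $\Z_p$ to lift step by step through $\bO_{I,J}/\mathfrak{m}^{n+1} \to \bO_{I,J}/\mathfrak{m}^n$ and then completeness of $\bO_{I,J}$ to pass to the limit.

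Finally I would set $\bO$ to be the compositum of the rings $\bO_{I,J}$ inside a fixed algebraic closure of $\Q_p$ — a finite unramified extension of $\Z_p$ — and base-change all the chosen points to obtain $x_{[I,J]} \in U_{I,J}(\bO) \subseteq \fn_{I,J,\bO} \subseteq \fg_{\bO}$. The map $\bO \to \bk$ is the composite of the reduction $\bO \to \bO/\mathfrak{m}_\bO$ with an embedding of the (finite) residue field into the algebraically closed field $\bk$, and the map $\bO \to \C$ comes from any embedding $\mathrm{Frac}(\bO) \hookrightarrow \C$. By the previous paragraph, the image of $x_{[I,J]}$ in $\fg_\bk$ lands in the Richardson locus of $\PGp_{I,J,\bk}$ in $\LGp_{I,\bk}$, so its $\Gp_\bk$-orbit is precisely the one attached to $[I,J]$ in the Bala--Carter/Premet parametrization recalled in~\S\ref{ss:support-cN}; hence the $x_{[I,J]}$ give one representative of each orbit in $\cN_\bk$, and the same argument applied over $\C$ handles $\cN_\C$. (As a bonus, this makes the bijection $\iota_\bk$ ``visible'' through the single ring $\bO$, which is what will be needed in~\S\ref{ss:lower-bound}.)

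I expect the main obstacle to be the identification of the geometric fibres of $U_{I,J}$ in the two characteristics: one must carefully combine upper semicontinuity of centralizer dimensions over the base $\Z_p$ with Richardson's theorem and the separability of nilpotent orbits in good characteristic. Once this is in place, the $p$-adic lifting by smoothness and the bookkeeping with $\bO$ are routine.
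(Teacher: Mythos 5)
Your construction is genuinely different from the paper's, and most of it is sound: the locus $U_{I,J}$ is indeed open in the affine space $\fn_{I,J,\Z_p}$; any geometric point of it, in either characteristic, is Richardson (minimal corank forces the $\PGp_{I,J}$-orbit to have dimension $\dim \fn_{I,J}$, hence to be dense); and once a point of the special fibre is found, lifting it to $\bO$ is immediate (formal smoothness is not even needed: lift the coordinates arbitrarily to the unramified extension, and the relevant minor, reducing to a nonzero element of the residue field, is automatically a unit). The gap is exactly at the step you flag as the main obstacle: nonemptiness of $U_{I,J}$ over $\overline{\F_p}$. This is equivalent to asserting that a Richardson element $x$ of $\PGp_{I,J,\bk}$ satisfies $\dim \mathfrak{z}_{\fp_{I,J,\bk}}(x) = \dim \PGp_{I,J,\bk} - \dim \fn_{I,J,\bk}$, i.e.\ that the $\PGp_{I,J}$-orbit map is separable in characteristic $p$. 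The result you cite (\cite[Proposition~6]{mcninch}) gives separability of the $\Gp_\bk$-orbit map on $\cN_\bk$; it does not identify the corank of $\mathrm{ad}(x)$ on $\fp_{I,J}$ with the codimension of the $\PGp_{I,J}$-orbit, since in positive characteristic $\dim \bigl( \mathfrak{z}_{\mathfrak{l}}(x) \cap \fp_{I,J} \bigr)$ (with $\mathfrak{l} = \mathrm{Lie}(\LGp_{I,\bk})$) need not equal $\dim \bigl( Z_{\LGp_{I,\bk}}(x) \cap \PGp_{I,J,\bk} \bigr)$. Closing this requires more of the good-characteristic theory: e.g.\ the associated-cocharacter results of \cite{premet} to see that the centralizer of a distinguished nilpotent element of $\LGp_{I,\bk}$ lies in the distinguished parabolic $\PGp_{I,J,\bk}$, together with separability of nilpotent $\LGp_{I,\bk}$-orbits and the dimension formula $\dim \LGp_{I,\bk} \cdot x = 2 \dim \fn_{I,J,\bk}$. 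Note also that nonemptiness of the special fibre of an open subscheme of $\mathbb{A}^N_{\Z_p}$ cannot be deduced from nonemptiness of the generic fibre, so the (easy) characteristic-$0$ statement does not help here.

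For comparison, the paper takes a separability-free route in the opposite direction: it fixes Richardson representatives over a finite subfield $\bk_0 \subset \overline{\F_p}$, lifts them arbitrarily to a finite extension $\bO$ of $\Z_p$ with residue field $\bk_0$, and proves that the characteristic-$0$ images are still Richardson by transferring \emph{dominance} of the orbit morphism $f_{(I,J)} : \PGp_{I,J,\bO} \to \fn_{I,J,\bO}$ from the special fibre to the generic fibre via \cite[Propositions~4.2.1 and~4.2.3]{mcninch2}; since only density of the orbit is used, the delicate characteristic-$p$ separability question never arises. Your argument can be repaired by supplying the separability of Richardson orbit maps for Levi subgroups in very good characteristic with an adequate reference, but as written that key step is unjustified.
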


\begin{proof}
Let $\F$ be an algebraic closure of the prime subfield of $\bk$, and consider an embedding $\F \hookrightarrow \bk$. Then the Bala--Carter classification of nilpotent orbits recalled in~\S\ref{ss:support-cN} shows that the embedding $\fg_\F \hookrightarrow \fg_\bk$ induces a bijection $\cN_\F / \Gp_\F \overset{1:1}{\longleftrightarrow} \cN_\bk/\Gp_\bk$. Therefore, we can assume that $\bk$ is an algebraic closure of a finite field.

Let $\mathfrak{P}'(\Sf)$ be a set of representatives for the equivalences classes of pairs $(I,J)$ as in~\S\ref{ss:support-cN}. For any $(I,J) \in \mathfrak{P}'(\Sf)$, we fix a representative $y_{(I,J)} \in \fn_{I,J,\bk}$ for the Richardson orbit associated with the parabolic subgroup $\PGp_{I,J,\bk} \subset \LGp_{I,\bk}$. (Here, for any $\fS$, $\fn_{I,J,\fS}$ is the Lie algebra of the unipotent radical of $\PGp_{I,J,\fS}$.) Then there exists a finite subfield $\bk_0 \subset \bk$ such that $y_{(I,J)}$ belongs to $\fn_{I,J,\bk_0}$ for any $(I,J) \in \mathfrak{P}'(\Sf)$. Let now $\bO$ be a finite extension of $\Z_p$ with residue field $\bk_0$. For any $(I,J) \in \mathfrak{P}'(\Sf)$, we choose a preimage $x_{[I,J]}$ of $y_{(I,J)}$ under the morphism $\fn_{I,J,\bO} \twoheadrightarrow \fn_{I,J,\bk_0}$.
We also choose an embedding $\bO \hookrightarrow \C$. Then, by the Bala--Carter classification of nilpotent orbits over $\C$, to conclude it suffices to prove that for any $(I,J) \in \mathfrak{P}'(\Sf)$, the image of $x_{[I,J]}$ in $\fn_{I,J,\C}$ belongs to the Richardson orbit associated with the parabolic subgroup $\PGp_{I,J,\C} \subset \LGp_{I,\C}$.

For this, we consider the morphism $f_{(I,J)} : \PGp_{I,J,\bO} \to \fn_{I,J,\bO}$ induced by the adjoint action on $x_{[I,J]}$. By the choice of $x_{[I,J]}$, we know that $\Spec(\bk) \otimes_{\Spec(\bO)} f_{[I,J]}$ is dominant. By~\cite[Proposition~4.2.1]{mcninch2}, this implies that $\Spec(\bk_0) \otimes_{\Spec(\bO)} f_{[I,J]}$ is also dominant. By~\cite[Proposition~4.2.3]{mcninch2}, we deduce that $\Spec(\C) \otimes_{\Spec(\bO)} f_{[I,J]}$ is dominant.  The desired claim follows.
\end{proof}

%----------------------------------------------------------------------
\subsection{Lower bound}
\label{ss:lower-bound}
%----------------------------------------------------------------------

Now we are in a position to prove the first statements in both parts of Proposition~\ref{prop:support-geometry}. We fix some $\lambda \in \bX$.

Consider a finite extension $\bO$ of $\Z_p$, ring morphisms $\bO \to \bk$ and $\bO \to \C$, and a subset $\{x_{[I,J]} : [I,J] \in \mathfrak{P}(\Sf)\} \subset \fg_{\bO}$ as in Lemma~\ref{lem:padic-nilp-orbits}. Let also $\F$ be the residue field of $\bO$. For $\mathbb{E} \in \{\C, \bO, \F, \bk\}$, we will denote by $i_{[I,J]}^{\mathbb{E}} : \Spec(\mathbb{E}) \to \fg_{\mathbb{E}}$ the embedding of the image of $x_{[I,J]}$ in $\fg_{\mathbb{E}}$. 

By the same arguments as in the proof of Lemma~\ref{lem:criterion-E-L}, if $w_\lambda=\omega s_1 \cdots s_r$ is a reduced expression, then there exists a direct summand $\PEx_\lambda^\bO$ of $\PEx^\bO(\omega,(s_1, \ldots, s_r))$ such that $\varphi_{\bO}^\F(\PEx_\lambda^\bO) \cong \PEx_\lambda^\F$, and $\PEx^\C_\lambda$ is a direct summand in $\varphi_\bO^\C(\PEx^\bO_\lambda)$. We also set $\tilde{\pi} := i \circ \pi$; note that this morphism also makes sense over $\bO$.

First, assume that $\lambda \in -\bX^+$, and let $\scO$ be as in Theorem~\ref{thm:support-char-0}\eqref{it:support-char0-1}, so that we have $\supp(R\pi_* \PEx^\C_\lambda) = \overline{\scO}$. If $[I,J] \in \mathfrak{P}(\Sf)$ is such that $\scO$ corresponds to $[I,J]$, then by~\eqref{eqn:support-fiber} this implies that we have $L(i_{[I,J]}^\C)^*(R\tilde{\pi}_* \PEx^\C_\lambda) \neq 0$, and hence that
\[
\C \lotimes_\bO L(i_{[I,J]}^\bO)^*(R\tilde{\pi}_* \PEx^\bO_\lambda) \cong L(i_{[I,J]}^\C)^*(R\tilde{\pi}_* \varphi_\bO^\C(\PEx^\bO_\lambda)) \neq 0.
\]
We deduce that $L(i_{[I,J]}^\bO)^*(R\tilde{\pi}_* \PEx^\bO_\lambda) \neq 0$, and then that
\[
\bk \lotimes_\bO L(i_{[I,J]}^\bO)^*(R\tilde{\pi}_* \PEx^\bO_\lambda) \cong L(i_{[I,J]}^\bk)^*(R\tilde{\pi}_* \varphi_\bO^\bk(\PEx^\bO_\lambda)) \cong L(i_{[I,J]}^\bk)^*(R\tilde{\pi}_* \PEx^\bk_\lambda) \neq 0.
\]
Since the image of $x_{[I,J]}$ in $\fg_\bk$ belongs to $\iota_\bk(\scO)$, this implies in turn that
\[
\supp(R\pi_* \PEx^\bk_\lambda) \supset \overline{\iota_\bk(\scO_\lambda^\C)},
\]
finishing the proof of the first statement in Proposition~\ref{prop:support-geometry}\eqref{it:support-charp-1}.

The proof of the first statement in Proposition~\ref{prop:support-geometry}\eqref{it:support-charp-2} is similar. Namely, let us return to the assumption that $\lambda \in \bX$, and let $\scO$ be as in Theorem~\ref{thm:support-char-0}\eqref{it:support-char0-2}. Then if $[I,J] \in \mathfrak{P}(\Sf)$ corresponds to $\scO$, as above we have
\[
L(i_{[I,J]}^\C)^*(R\tilde{\pi}_* R\mathscr{H}\hspace{-2pt}\mathit{om}_{\scO_{\tcN_\C}}(\PEx^\C_\lambda,\PEx^\C_\lambda)) \neq 0,
\]
which implies that
\[
L(i_{[I,J]}^\bk)^*(R\tilde{\pi}_* R\mathscr{H}\hspace{-2pt}\mathit{om}_{\scO_{\tcN_\bk}}(\PEx^\bk_\lambda,\PEx^\bk_\lambda)) \neq 0,
\]
and then that $\supp \bigl( \Ext^\bullet_{\Db \Coh(\tcN)_\bk}(\PEx^\bk_\lambda, \PEx^\bk_\lambda) \bigr) \supset \overline{\iota_\bk(\scO)}$.

%----------------------------------------------------------------------
\subsection{Integral representatives of nilpotent orbits}
\label{ss:integral-nilpotent}
%----------------------------------------------------------------------

To prove the second statements in both parts of Proposition~\ref{prop:support-geometry}, we will need some representatives for nilpotent orbits which can be compared in different positive characteristics; in other words some representatives defined over $\Z$.

\begin{lem}
\label{lem:orbits-integers}
There exists $P \in \Z$ and a subset $\{x_{[I,J]} : [I,J] \in \mathfrak{P}(\Sf)\}$ of $\fg_\Z$ such that for any algebraically closed field $\bk$ of characteristic either $0$ or${}> P$, the images of the points $x_{[I,J]}$ in $\fg_\bk = \bk \otimes_\Z \fg_\Z$ are representatives of the $\Gp_\bk$-orbits in $\cN_\bk$.
\end{lem}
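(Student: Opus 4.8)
The plan is to produce, for a representative $(I,J)$ of each class $[I,J] \in \mathfrak{P}(\Sf)$, an element $x_{[I,J]}$ of the $\Z$-form $\fn_{I,J,\Z}$ of the nilradical such that, after base change, $x_{[I,J]}$ is a Richardson element for $\PGp_{I,J,\bk}$ in $\LGp_{I,\bk}$ whenever $\bk$ is algebraically closed of characteristic $0$ or large. Since the Bala--Carter--Premet parametrization recalled in~\S\ref{ss:support-cN} sends $[I,J]$ to the $\Gp_\bk$-orbit of any Richardson element for $\PGp_{I,J,\bk}$, and this parametrization is a bijection $\mathfrak{P}(\Sf) \simto \cN_\bk/\Gp_\bk$, this will immediately give the statement.

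The first step is to express ``$x$ is a Richardson element'' as an open condition defined over $\Z$. Fixing a Chevalley $\Z$-form, the modules $\fn_{I,J,\Z} \subseteq \fp_{I,J,\Z} \subseteq \fg_\Z$ are free over $\Z$ and their formation commutes with base change; since $\fn_{I,J}$ is an ideal of $\fp_{I,J}$, for $x \in \fn_{I,J}$ we have a linear map $\fp_{I,J} \to \fn_{I,J}$, $u \mapsto [u,x]$, and I let $\mathcal{R}_{I,J} \subseteq \fn_{I,J,\Z}$ be the open subscheme on which this map is surjective. In a $\Z$-basis this map is given by a matrix whose entries are $\Z$-linear forms in the coordinates of $x$, so $\mathcal{R}_{I,J}$ is the non-vanishing locus of the maximal minors of that matrix; these minors are homogeneous (of degree $\dim \fn_{I,J}$) in the coordinates of $x$, so $\mathcal{R}_{I,J}$ is stable under rescaling. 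If $\bk$ has very good characteristic and $x \in \mathcal{R}_{I,J}(\bk)$, then the orbit map $\PGp_{I,J,\bk} \to \fn_{I,J,\bk}$, $g \mapsto \mathrm{Ad}(g)x$, has differential of maximal rank at $e$; as the Lie algebra of the stabilizer of $x$ lies in the kernel of that differential, the orbit $\mathrm{Ad}(\PGp_{I,J,\bk})x$ has dimension $\dim \fn_{I,J,\bk}$, hence is dense in the irreducible affine space $\fn_{I,J,\bk}$ and therefore is the unique dense $\PGp_{I,J,\bk}$-orbit there; thus $x$ is a Richardson element, so its $\Gp_\bk$-orbit is the one labelled $[I,J]$.

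Next I would find an integral point of $\mathcal{R}_{I,J}$. Over $\overline{\Q}$, the dense $\PGp_{I,J,\overline{\Q}}$-orbit in $\fn_{I,J,\overline{\Q}}$ exists by Richardson's theorem, and in characteristic $0$ the orbit map is separable, so this dense orbit is contained in $\mathcal{R}_{I,J}(\overline{\Q})$; in particular $\mathcal{R}_{I,J} \otimes_\Z \Q$ is a nonempty open subscheme of the affine space $\fn_{I,J,\Q}$, hence has a $\Q$-point $v_{[I,J]}$ because $\Q$ is infinite. Clearing denominators and using that $\mathcal{R}_{I,J}$ is stable under rescaling, I obtain $x_{[I,J]} := c_{[I,J]} \cdot v_{[I,J]} \in \fn_{I,J,\Z}$ for a suitable $c_{[I,J]} \in \Z_{\geq 1}$ whose image in $\fn_{I,J,\Q}$ still lies in $\mathcal{R}_{I,J}$; equivalently, some maximal minor $m_{[I,J]}$ of the matrix above takes a nonzero integer value at $x_{[I,J]}$.

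Finally, let $P$ be the largest prime among the primes that are not very good for $\Gp_\Z$ and the prime divisors of $\prod_{[I,J] \in \mathfrak{P}(\Sf)} m_{[I,J]}(x_{[I,J]})$. For $\bk$ algebraically closed of characteristic $0$ or $>P$, each $m_{[I,J]}$ remains nonzero at the image of $x_{[I,J]}$ in $\fn_{I,J,\bk}$, so that image lies in $\mathcal{R}_{I,J}(\bk)$ and is therefore a Richardson element for $\PGp_{I,J,\bk}$; by the first paragraph the points $x_{[I,J]}$, viewed in $\fg_\bk$, are then representatives of the $\Gp_\bk$-orbits in $\cN_\bk$. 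The crux of the argument is the combination of the first two steps: the requirement of representatives that stay Richardson in all large characteristics at once is met by rewriting ``lies in the dense orbit'' as the non-vanishing of explicit integer polynomials and using their homogeneity to clear denominators; the comparison with characteristic $0$ then comes for free, since the exceptional primes form a finite set.
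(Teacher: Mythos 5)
Your proof is correct and follows essentially the same route as the paper: both arguments come down to the fact that the Richardson condition for $x \in \fn_{I,J}$ is the maximal-rank condition on the $\Z$-linear map $u \mapsto [u,x]$ from $\fp_{I,J}$ to $\fn_{I,J}$, which, once it holds over $\Q$ (equivalently $\C$), persists modulo all but finitely many primes. The only differences are cosmetic: you obtain the integral point by taking a $\Q$-point of the open locus and rescaling via homogeneity of the minors (the paper intersects the complex Richardson orbit with the lattice $\fn_{I,J,\Z}$ directly), and you deduce ``full rank implies Richardson'' by a tangent-space argument rather than the paper's comparison of group and Lie-algebra centralizers.
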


\begin{proof}
Fix a set $\mathfrak{P}'(\Sf)$ as in the proof of Lemma~\ref{lem:padic-nilp-orbits}. For any $(I,J) \in \mathfrak{P}'(\Sf)$, let $\fp_{I,J,\Z}$, resp.~$\fn_{I,J,\Z}$, be the Lie algebra of $\PGp_{I,J,\Z}$, resp.~of its unipotent radical. For any algebraically closed field $\bk$, we also denote by $\fp_{I,J,\bk}$ and $\fn_{I,J,\bk}$ their base change to $\bk$, which identify with the Lie algebras of $\PGp_{I,J,\bk}$ and its unipotent radical respectively.

For any $(I,J) \in \mathfrak{P}'(\Sf)$, the subset $\fn_{I,J,\Z} \subset \fn_{I,J,\C}$ is dense; therefore it has to intersect the Richardson orbit, so that there exists $x_{[I,J]} \in \fn_{I,J,\Z}$ whose image in $\fg_\C$ lies in the orbit corresponding to $[I,J]$. To conclude, we only have to prove that if $\mathrm{char}(\bk)$ is either $0$ or large, then the image $x_{[I,J]}^\bk$ of $x_{[I,J]}$ in $\fn_{I,J,\bk}$ is also in the Richardson orbit of the Lie algebra of $\LGp_{I,\bk}$ attached to $\PGp_{I,J,\bk}$. This property is equivalent to the fact that $\dim(\PGp_{I,J,\bk} \cdot x_{[I,J]}^\bk) = \dim(\fn_{I,J,\bk})$, or in other words that $\dim(Z_{\PGp_{I,J,\bk}}(x_{[I,J]}^\bk)) = \dim(\LGp_{J,\bk})$. Now $\dim(\PGp_{I,J,\bk} \cdot x_{[I,J]}^\bk) \leq \dim(\fn_{I,J,\bk})$, so that $\dim(Z_{\PGp_{I,J,\bk}}(x_{[I,J]}^\bk)) \geq \dim(\LGp_{J,\bk})$ in any case. Hence we only have to prove that $\dim(Z_{\PGp_{I,J,\bk}}(x_{[I,J]}^\bk)) \leq \dim(\LGp_{J,\bk})$ if $\mathrm{char}(\bk)$ is $0$ or large. For this, by~\cite[Proposition~1.10]{humphreys-conj} it suffices to prove that $\dim(\mathfrak{z}_{\fp_{I,J,\bk}}(x_{[I,J]}^\bk)) \leq \dim(\LGp_{J,\bk})$. However, by choosing a $\Z$-basis of $\fp_{I,J,\Z}$, the subspace $\mathfrak{z}_{\fp_{I,J,\bk}}(x_{[I,J]}^\bk) \subset \fp_{I,J,\bk}$ can be written as the space of solutions of a system of linear equations with coefficients in $\Z$, and independent of $\bk$. We have
\[
\dim(\mathfrak{z}_{\fp_{I,J,\C}}(x_{[I,J]}^\C)) = \dim(Z_{\PGp_{I,J,\C}}(x_{[I,J]}^\C)) = \dim(\LGp_{J,\C})
\] 
again by~\cite[Proposition~1.10]{humphreys-conj}  (and because, by construction, $x_{[I,J]}^\C$ is in the Richardson orbit); therefore, $\dim(\mathfrak{z}_{\fp_{I,J,\bk}}(x_{[I,J]}^\bk)) = \dim(\LGp_{J,\bk})$ if $\mathrm{char}(\bk)$ is $0$ or large, which concludes the proof.
\end{proof}

\begin{rmk}
There exist explicit descriptions of nilpotent orbit representatives over arbitrary fields; see e.g.~\cite{ls}. Using this information one can obtain an explicit value for the integer $P$ in Lemma~\ref{lem:orbits-integers}. Since this information does not lead to an improvement in our results, for simplicity we do not go into these details.
\end{rmk}

%----------------------------------------------------------------------
\subsection{Asymptotic support}
%----------------------------------------------------------------------

If $\bk$ is as in~\S\ref{ss:support-cN}, then as explained in~\cite[\S 7.14]{jantzen-nilp} (see also~\cite[\S 3.14]{slodowy}), under our assumptions the nilpotent cone $\cN_\bk$ is the scheme-theoretic fiber of the adjoint quotient $\fg_\bk \to \fg_\bk/\Gp_\bk$ over the image of $0$. We define $\cN_\fR$ as the scheme-theoretic fiber of the adjoint quotient $\fg_\fR \to \fg_\fR/\Gp_\fR$ over the image of $0$. Then it follows from~\cite[Corollary~4.2.2]{riche-kostant} that for any $\bk$ as above we have $\cN_\bk = \Spec(\bk) \times_{\Spec(\fR)} \cN_\fR$. Hence it makes sense to define, for any commutative ring $\fS$ admitting a morphism $\fR \to \fS$, the nilpotent cone over $\fS$ as $\cN_\fS := \Spec(\fS) \times_{\Spec(\fR)} \cN_\fR$. As in~\S\ref{ss:support-cN} (in the case of an algebraically closed field), we will denote by $i : \cN_\fS \to \fg_\fS$ the embedding. Note that the morphism $\tilde{\pi} : \tcN_\fS \to \fg_\fS$ factors through a morphism $\pi : \tcN_\fS \to \cN_\fS$ (which specializes to the morphism considered in~\S\ref{ss:statement-support} if $\fS$ is an algebraically closed field).

We claim that $\cN_\fR$ is flat over $\fR$. In fact this follows from the fact that the adjoint quotient $\fg_\fR \to \fg_\fR/\Gp_\fR$ is a flat morphism, which itself follows from the case of fields $\bk$ as above (which is known, see~\cite[\S 3.14]{slodowy}) using~\cite[Corollary~4.1.2 and Proposition~4.2.1]{riche-kostant}. Once this is known, for any commutative Noetherian rings $\fS$, $\mathfrak{T}$ of finite global dimension and any ring morphism $\fS \to \mathfrak{T}$ we can consider the ``extension of scalars'' functor
\[
\psi_\fS^{\mathfrak{T}} : \Db \Coh^{\Gp}(\cN)_\fS \to \Db \Coh^{\Gp}(\cN)_{\mathfrak{T}}
\]
induced by the functor $\mathfrak{T} \lotimes_{\fS} (-)$.

\begin{prop}
\label{prop:support-change-scalars}
Let $\fS$ be a finite localization of $\Z$ containing $\fR$, and let $\cF$ be an object in $\Db \Coh^{\Gp}(\cN)_\fS$. Assume that $\supp(\psi_{\fS}^\C(\cF))=\overline{\scO}$ for some orbit $\scO \in \cN_\C/\Gp_\C$. Then there exists $Q \in \Z$ such that $\fS \subset \Z[\frac{1}{Q!}]$ and for any algebraically closed field $\bk$ of characteristic either $0$ or $>Q$, we have $\supp(\psi_\fS^\bk(\cF)) \subset \overline{\iota_\bk(\scO)}$.
\end{prop}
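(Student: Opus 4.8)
The strategy is to run the change-of-scalars argument of~\S\ref{ss:lower-bound} ``backwards.'' For each nilpotent orbit, chosen with a representative defined over $\fS$, I would pull $\cF$ back to that point and obtain a bounded complex of finitely generated $\fS$-modules; since $\fS$ is a finite localization of $\Z$, every finitely generated torsion $\fS$-module is a finite abelian group, so it involves only finitely many primes, and collecting these primes over all orbits produces the bound $Q$.

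In detail, I would first invoke Lemma~\ref{lem:orbits-integers} to fix an integer $P$ and elements $\{x_{[I,J]} : [I,J] \in \mathfrak{P}(\Sf)\} \subset \fg_\Z$ whose images in $\fg_\bk$ represent the $\Gp_\bk$-orbits in $\cN_\bk$ whenever $\mathrm{char}(\bk)$ is $0$ or $> P$; writing $\scO_{[I,J]}$ for the orbit in $\cN_\C/\Gp_\C$ labelled by $[I,J]$, the image of $x_{[I,J]}$ over $\bk$ lies in $\iota_\bk(\scO_{[I,J]})$ (by the construction in the proof of that lemma). Each $x_{[I,J]}$ gives a section $\sigma_{[I,J]} : \Spec(\fS) \to \fg_\fS$, a regular closed immersion cut out by $\dim\fg$ linear equations with coefficients in $\Z$, and, composing with $i : \cN_\fS \hookrightarrow \fg_\fS$, I would set
\[
C_{[I,J]} \;:=\; L\sigma_{[I,J]}^*(i_*\cF),
\]
a bounded complex of $\fS$-modules with finitely generated cohomology. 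Because $\cN_\fR$ is flat over $\fR$ (established just before the statement) and $\sigma_{[I,J]}$ is a regular immersion, the fibre products involved are Tor-independent, so standard base-change compatibilities give, for every field $\bk$ admitting a ring morphism $\fS \to \bk$, a natural isomorphism $\bk \lotimes_\fS C_{[I,J]} \cong L\sigma_{[I,J],\bk}^*\bigl(i_* \psi_\fS^\bk(\cF)\bigr)$, where $\sigma_{[I,J],\bk}$ is the inclusion of the image point $x_{[I,J]}^\bk$ into $\fg_\bk$. Since $i_* \psi_\fS^\bk(\cF)$ and $\psi_\fS^\bk(\cF)$ have the same support, \eqref{eqn:support-fiber} then shows that $x_{[I,J]}^\bk \in \supp(\psi_\fS^\bk(\cF))$ if and only if $\bk \lotimes_\fS C_{[I,J]} \neq 0$.

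Next I would analyse $C_{[I,J]}$. Over $\C$, which is flat over $\fS \subset \Q$, the isomorphism above reads $\C \otimes_\fS C_{[I,J]} \cong L\sigma_{[I,J],\C}^*(i_* \psi_\fS^\C(\cF))$, and by hypothesis $\supp(\psi_\fS^\C(\cF)) = \overline\scO$, so this complex vanishes exactly when $x_{[I,J]}^\C \notin \overline\scO$, i.e.\ when $\scO_{[I,J]} \not\subset \overline\scO$. Fix such a pair $[I,J]$. Then $\Q \otimes_\fS H^j(C_{[I,J]}) = 0$ for all $j$, so every $H^j(C_{[I,J]})$ is a finitely generated torsion module over the principal ideal domain $\fS$, hence a finite direct sum of cyclic modules $\fS/(d)$ with $d$ a nonzero integer; let $n_{[I,J]}$ be the product of the (finitely many) primes dividing such a $d$. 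If $\mathrm{char}(\bk)$ is $0$ or a prime not dividing $n_{[I,J]}$, then every such $d$ is invertible in $\bk$, so $\bk \lotimes_\fS \fS/(d) = 0$, and since $\fS$ has global dimension one it follows that $\bk \lotimes_\fS C_{[I,J]} = 0$; equivalently, $x_{[I,J]}^\bk \notin \supp(\psi_\fS^\bk(\cF))$.

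Finally, writing $\fS = \Z[1/M_0]$, I would take $Q$ to be the maximum of $P$, of the primes dividing $M_0$, and of the primes dividing $n_{[I,J]}$ over those $[I,J]$ with $\scO_{[I,J]} \not\subset \overline\scO$; this is a finite set, and $\fS \subset \Z[1/Q!]$. For $\bk$ algebraically closed with $\mathrm{char}(\bk)$ equal to $0$ or $> Q$, the functor $\psi_\fS^\bk$ and the bijection $\iota_\bk$ are defined, and $\supp(\psi_\fS^\bk(\cF))$, being closed and $\Gp_\bk$-stable, is a union of orbit closures $\overline{\iota_\bk(\scO_{[I,J]})}$. If such a closure occurs, then $x_{[I,J]}^\bk \in \supp(\psi_\fS^\bk(\cF))$, so $\bk \lotimes_\fS C_{[I,J]} \neq 0$, which by the previous paragraph forces $\scO_{[I,J]} \subset \overline\scO$; since $\iota_\bk$ is an isomorphism of posets, $\iota_\bk(\scO_{[I,J]}) \subset \overline{\iota_\bk(\scO)}$. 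Hence $\supp(\psi_\fS^\bk(\cF)) \subset \overline{\iota_\bk(\scO)}$. The delicate step, and the only one with content beyond bookkeeping with finite torsion modules, is the base-change isomorphism for $C_{[I,J]}$: one must verify Tor-independence while passing through the closed immersion $i$, the non-flat map $\fS \to \bk$, and the section $\sigma_{[I,J]}$, which is precisely where the flatness of $\cN_\fR$ over $\fR$ and the regularity of $\sigma_{[I,J]}$ are used.
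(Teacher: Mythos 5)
Your proof is correct and follows essentially the same route as the paper's: the same integral orbit representatives from Lemma~\ref{lem:orbits-integers}, the derived-fibre criterion~\eqref{eqn:support-fiber} at those points, the observation that the fibres over points not in $\overline{\scO}$ have torsion cohomology (whence the bound $Q$), and the same final step identifying the orbits occurring in $\supp(\psi_\fS^\bk(\cF))$. The only differences are cosmetic: the paper produces the vanishing by base-changing to $\Z[\frac{1}{Q!}]$ instead of decomposing the torsion into cyclic modules, and it treats the base-change isomorphism as immediate --- it amounts to functoriality of derived pullback together with the flatness of $\cN_\fS$ over $\fS$ (for compatibility of $i_*$ with base change), so rather less than the Tor-independence and regular-immersion discussion you flag as the delicate point.
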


\begin{proof}
We fix a set $\{x_\alpha : \alpha \in \mathfrak{P}(\Sf)\}$ as in Lemma~\ref{lem:orbits-integers}, and for any $\alpha \in \mathfrak{P}(\Sf)$ and any commutative ring $\mathfrak{T}$ we denote by $i_\alpha^{\mathfrak{T}} : \Spec(\mathfrak{T}) \to \fg_{\mathfrak{T}}$ the embedding of the image of $x_\alpha$ in $\fg_{\mathfrak{T}}$. 

If $\sigma \in \mathfrak{P}(\Sf)$ and the orbit in $\cN_\C$ corresponding to $\sigma$ is not included in $\overline{\scO}$, since $i_\alpha^\C$ factors through $\fg_\C \smallsetminus \supp(\psi_\fS^\C(\cF))$ we have
\[
\C \lotimes_{\fS} L(i_\sigma^\fS)^*(i_* \cF)=L(i_\sigma^\C)^*(i_*\psi_\fS^\C(\cF))=0.
\]
Hence all the cohomology objects of the bounded complex of finitely generated $\fS$-module $L(i^\fS_\sigma)^*(i_* \cF)$ are torsion. We deduce that there exists $Q \in \Z$ with $\fS \subset \Z[\frac{1}{Q!}]$ such that
\begin{equation}
\label{eqn:vanishing-stalks}
\Z \left[ \frac{1}{Q!} \right] \lotimes_{\fS} L(i_\sigma^\fS)^*(i_* \cF)=0
\end{equation}
for any $\sigma \in \mathfrak{P}(\Sf)$ such that the corresponding orbit in $\cN_\C$ is not contained in $\overline{\scO}$. Of course, we can assume that $Q \geq P$, where $P$ is as in Lemma~\ref{lem:orbits-integers}.

We claim that with this choice of $Q$, for any algebraically closed field $\bk$ such that $\mathrm{char}(\bk) > Q$ or $\mathrm{char}(\bk)=0$, we have $\supp(\psi_\fS^\bk(\cF)) \subset \overline{\iota_\bk(\scO)}$. Assume for a contradiction that this is not the case for some $\bk$, and let $\sigma \in \mathfrak{P}(\Sf)$ be such that the corresponding orbit in $\cN_\C$ is not contained in $\overline{\scO}$, and such that the corresponding orbit in $\cN_\bk$ is maximal (i.e.~open) in $\supp(\varphi_\fS^\bk(\cF))$. 
By~\eqref{eqn:support-fiber}, it follows that
\[
\bk \lotimes_{\fS} L(i_\sigma^\fS)^*(i_* \cF) \cong
L(i_\sigma^\bk)^* (i_* \psi_\fS^\bk(\cF)) \neq 0,
\]
which contradicts~\eqref{eqn:vanishing-stalks} since the morphism $\fS \to \bk$ factors through $\Z[\frac{1}{Q!}]$.
\end{proof}

%----------------------------------------------------------------------
\subsection{Upper bound}
\label{ss:upper-bound}
%----------------------------------------------------------------------

We can now finish the proof of Proposition~\ref{prop:support-geometry}.

Let $\lambda \in -\bX^+$, 
let $M$ be as in Proposition~\ref{prop:indec-parity-integers}, and consider the corresponding object $\PEx_\lambda^{\Z[\frac{1}{M!}]} \in \Db \Coh^{\Gp \times \Gm}(\tcN)_{\Z[\frac{1}{M!}]}$. By Theorem~\ref{thm:support-char-0}\eqref{it:support-char0-1} we have
\[
\supp \bigl( \psi_{\Z[\frac{1}{M!}]}^\C(R\pi_*\PEx^{\Z[\frac{1}{M!}]}_\lambda ) \bigr)=\overline{\scO_\lambda^\C}.
\]
Of course we can assume that $M \geq N$ (where $N$ is as in~\S\ref{ss:relation-Gr}); then
Proposition~\ref{prop:support-change-scalars} provides a bound $N_1$ such that
\[
\supp \bigl( R\pi_*\PEx^{\bk}_\lambda \bigr) =
\supp \bigl( \psi_{\Z[\frac{1}{M!}]}^\bk(R\pi_*\PEx^{\Z[\frac{1}{M!}]}_\lambda ) \bigr) \subset \overline{\iota_\bk(\scO_\lambda^\C)}
\]
if $\mathrm{char}(\bk) > N_1$. Since the reverse inclusion has already been proved (for any $\bk$), this finishes the proof of Proposition~\ref{prop:support-geometry}\eqref{it:support-charp-1}.

The proof of the second statement in Proposition~\ref{prop:support-geometry}\eqref{it:support-charp-2} is similar, using the object $R\mathscr{H}\hspace{-2pt}\mathit{om}_{\scO_{\tcN_{\Z[\frac{1}{M!}]}}}(\PEx^{\Z[\frac{1}{M!}]}_\lambda,\PEx^{\Z[\frac{1}{M!}]}_\lambda)$ instead of $\PEx_\lambda^{\Z[\frac{1}{M!}]}$.

%%%%%%%%%%%%%%%%%%%%%%%%%%%%%%%%%%%%%%%%%%%%%%%%%%%%%%%%%
\section{Antispherical cells and \texorpdfstring{$p$}{p}-cells}
\label{sec:antispherical-cells}
%%%%%%%%%%%%%%%%%%%%%%%%%%%%%%%%%%%%%%%%%%%%%%%%%%%%%%%%%

We let $\Gp_\Z$ (and all the related data) be as in~\S\ref{ss:def-exotic}.

%------------------------------------------------------------
\subsection{The affine Hecke algebra and its canonical bases}
%------------------------------------------------------------

Let $\Haff$ be the affine Hecke algebra of $(W,S)$, i.e.~the $\Z[v,v^{-1}]$-algebra with basis $\{H_w : w \in W\}$ and multiplication determined by the rules:
\begin{itemize}
\item
$H_s^2 = 1 + (v^{-1}-v)H_s$ for $s \in S$;
\item
$H_x H_y = H_{xy}$ for $x,y \in W$ such that $\ell(xy)=\ell(x)+\ell(y)$.
\end{itemize}
(Note that we follow the conventions of~\cite{soergel}.)

This algebra admits a family of ``canonical bases'' which can be constructed using geometry. Namely, let $\Gp^\wedge$ be the simply-connected cover of the derived subgroup of the group $\Gp^\vee$ of~\S\ref{ss:relation-Gr} (but without the further technical conditions), and let $\TGp^\wedge$ be the inverse image of the maximal torus $\TGp^\vee \subset \Gp^\vee$ (so that $\TGp^\wedge$ is a maximal torus of $\Gp^\wedge$). We let $\BGp^\wedge \subset \Gp^\wedge$ be the Borel subgroup whose roots are the negative coroots of $(\Gp,\TGp)$, let $\Iwa \subset \Gp^\wedge(\scO)$ be the Iwahori subgroup determined by $\BGp^\wedge$, and consider the affine flag variety
\[
\Fl := \Gp^\wedge(\mathscr{K}) / \Iwa.
\]
For any $w \in W$ we have a corresponding $\Iwa$-orbit $\Fl_w \subset \Fl$, and then a Bruhat decomposition
\[
\Fl = \bigsqcup_{w \in W} \Fl_w,
\]
with $\Fl_w$ isomorphic to $\C^{\ell(w)}$.

Let $\bk$ be a field, and consider the derived category $\Db_{(\Iwa)}(\Fl,\bk)$ of $\Iwa$-constructible complexes of $\bk$-sheaves on $\Fl$. Following~\cite{jmw}, we say that a complex $\cF$ in $\Db_{(\Iwa)}(\Fl,\bk)$ is \emph{even} if
\[
\cH^n(\cF) = \cH^n(\mathbb{D}_\Fl(\cF))=0 \quad \text{unless $n$ is even.}
\]
A complex is called \emph{parity} if it is isomorphic to $\cF_0 \oplus \cF_1$ with $\cF_0$ and $\cF_1[1]$ even complexes. The results of~\cite{jmw} show that the indecomposable parity complexes are parametrized, up to cohomological shift, by $W$; more precisely for any $w \in W$ there exists a unique indecomposable parity complex $\cE_w^\bk$ which is supported on $\overline{\Fl_w}$ and whose restriction to $\Fl_w$ is $\underline{\bk}_{\Fl_w}[\ell(w)]$. Then any indecomposable parity complex is isomorphic to some $\cE_x^\bk [i]$ for a unique pair $(x,i) \in W \times \Z$.

For any $\cF$ in $\Db_{(\Iwa)}(\Fl,\bk)$, we can define the \emph{character} of $\cF$ as
\[
\mathrm{ch}(\cF) := \sum_{\substack{w \in W \\ n \in \Z}} \dim_\bk \mathsf{H}^{-n-\ell(w)}(\Fl_w, \cF_{|\Fl_w}) \cdot v^n H_w \quad \in \cH.
\]
If $p=\mathrm{char}(\bk)$, we define the \emph{$p$-canonical basis} $(\puH_w : w \in W)$ by
\[
\puH_w := \mathrm{ch}(\cE_w^\bk).
\]
An obvious generalization of~\cite[Corollary~3.9]{williamson} shows that $\puH_w$ depends on $\bk$ only through $p$, which justifies the notation.

\begin{rmk}
\leavevmode
\begin{enumerate}
\item
It follows from work of Kazhdan--Lusztig~\cite{kl} and Sprin\-ger~\cite{springer} (see also~\cite[Proposition~3.6]{williamson}) that if $p=0$ then $\cE_w^\bk$ is the intersection cohomology complex of $\overline{\Fl_w}$ (for the constant local system), and that the $0$-canonical basis coincides with the ``usual'' Kazhdan--Lusztig basis (with the conventions of~\cite[Theorem~2.1]{soergel}). In this case we will sometimes write $\uH_w$ instead of ${}^0 \hspace{-1pt} \uH_w$.
\item
Using the results of~\cite[Part~III]{rw} one can check that the $p$-canonical basis as defined above coincides with the basis considered in~\cite{jw}.
\end{enumerate}
\end{rmk}

The $p$-canonical basis has the following positivity property, which will be crucial to us:
\begin{equation}
\label{eqn:positivity-puH}
 \puH_x \cdot \puH_y \in \bigoplus_{w \in W} \Z_{\geq 0}[v,v^{-1}] \cdot \puH_w.
\end{equation}
(One way to prove this is to extend the previous construction to the equivariant setting, which yields the same $p$-canonical basis by~\cite[Lemma~2.4]{mr2}; then to observe that the same arguments as in~\cite{springer}---see also~\cite[\S 4.1]{jmw}---show that the convolution product of parity complexes is still parity, and that its character is the product of the characters of the factors; then~\eqref{eqn:positivity-puH} follows from the definition of the character map.) Note also that if $s \in S$, then for any $p$, using the fact that $\overline{\Fl_s} \cong \mathbb{P}^1$, we have
\begin{equation}\label{eqn:puHs}
\puH_s = \uH_s = H_s + v.
\end{equation}

If $\underline{w}=(s_1, \ldots, s_n)$ is an expression, we also set
\[
\uH_{\underline{w}} = \underline{H}_{s_1} \cdots \underline{H}_{s_n}.
\]
If $\uw$ is a reduced expression for some $w \in W$, then, using the Bott--Samelson resolution of $\overline{\Fl_w}$ determined by $\uw$, one checks that
\begin{equation}
\label{eqn:puH-uw-w}
\uH_{\underline{w}} \in \puH_w \oplus \bigoplus_{y<w} \Z_{\geq 0}[v,v^{-1}] \cdot \puH_y.
\end{equation}

Finally, we have an analogous notion of parity complexes on partial affine flag varieties. Using the fact that the inverse image under the projection from $\Fl$ to a partial affine flag variety of an indecomposable parity complex remains indecomposable (see~\cite[Proposition~3.5]{williamson}) and the left--right symmetry of our constructions, one checks that if $\uw$ is an expression starting with some $s \in S$, then
\begin{equation}
\label{eqn:decomposition-1st-reflection}
\uH_{\underline{w}} \in \bigoplus_{\substack{y \in W \\ sy<y}} \Z_{\ge 0}[v,v^{-1}] \cdot \puH_y.
\end{equation}

%------------------------------------------------------------
\subsection{Right \texorpdfstring{$p$}{p}-cells}
\label{ss:right-p-cells}
%------------------------------------------------------------

We fix $p$ to be either $0$ or a prime number.

If $H \in \Haff$, we will say that $\puH_w$ \emph{appears with nonzero coefficient in $H$} if the coefficient of $\puH_w$ in the decomposition of $H$ in the basis $(\puH_w : w \in W)$ is nonzero. 

The following definition (due to Williamson, and studied in general in~\cite{jensen}) is an obvious generalization of a notion studied by Kazhdan--Lusztig~\cite{kl1}. (Their setting corresponds to our special case $p=0$. In this case we will sometimes omit the superscript ``$0$.'')

\begin{defn}
\label{defn:order-cells}
 We define the preorder $\pRleq$ on $W$ by declaring that
 \[
  w \pRleq y \quad \text{iff $\puH_w$ appears with nonzero coefficient in $\puH_y \cdot \puH_x$ for some $x \in W$.}
 \]
 We denote by $\pRsim$ the equivalence relation on $W$ defined by
 \[
  w \pRsim y \quad \text{iff $w \pRleq y$ and $y \pRleq w$.}
 \]
 The equivalence classes for this relation will be called the \emph{right $p$-cells}.
\end{defn}

The fact that $\pRleq$ is a preorder follows from the observation that we have $w \pRleq y$ iff there exists $H \in \Haff$ such that $\puH_w$ appears with nonzero coefficient in $\puH_y \cdot H$ (which itself follows simply from the fact that $(\puH_x : x \in W)$ is a basis of $\Haff$). In case $p=0$, the right $p$-cells will be called the \emph{right cells}. If $w \in W$, we will denote by $\bc(w) \subset W$ the right cell containing $w$.

Below we will need some elementary properties of right $p$-cells, stated in the following two lemmas.

\begin{lem}
\label{lem:order-generated}
\leavevmode
\begin{enumerate}
\item
\label{it:order-generated-1}
Let $w,y \in W$, and $s_1, \ldots, s_j \in S$. Assume that $\puH_w$ appears with nonzero coefficient in $\puH_y \cdot \uH_{s_1} \cdots \uH_{s_j}$. Then there exist $v_1, \ldots, v_{j+1} \in W$ with $v_1=y$, $v_{j+1}=w$, and for each $i \in \{1, \ldots, j\}$, $\puH_{v_{i+1}}$ appears with nonzero coefficient in $\puH_{v_{i}} \cdot \uH_{s_i}$.
\item
\label{it:order-generated-2}
Let $w,y \in W$. Then $w \pRleq y$ iff there exist $v_1, \ldots, v_k \in W$ with $v_1=y$, $v_k=w$ and $s_1, \ldots, s_{k-1} \in S$ such that for any $i \in \{1, \ldots, k-1\}$, $\puH_{v_{i+1}}$ appears with nonzero coefficient in $\puH_{v_{i}} \cdot \uH_{s_i}$.
\end{enumerate}
\end{lem}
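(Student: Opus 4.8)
The plan is to deduce both parts from the positivity property~\eqref{eqn:positivity-puH}, which guarantees that expanding a product of $p$-canonical basis elements into the $p$-canonical basis involves no cancellation. I will use throughout that $\uH_s = \puH_s$ for $s \in S$ by~\eqref{eqn:puHs}, so that every product occurring in the statement is an iterated product of $p$-canonical basis elements, and that~\eqref{eqn:positivity-puH} iterates: any such product lies in $\bigoplus_{w} \Z_{\geq 0}[v,v^{-1}] \cdot \puH_w$.

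For part~\eqref{it:order-generated-1} I would argue by induction on $j$. The case $j = 0$ is immediate: if $\puH_w$ appears with nonzero coefficient in $\puH_y$, then $w = y$, and one takes $v_1 = y = w$. For the inductive step, write $\puH_y \cdot \uH_{s_1} \cdots \uH_{s_{j-1}} = \sum_{u \in W} c_u \puH_u$ with $c_u \in \Z_{\geq 0}[v,v^{-1}]$, and $\puH_u \cdot \uH_{s_j} = \sum_{z \in W} d^u_z \puH_z$ with $d^u_z \in \Z_{\geq 0}[v,v^{-1}]$, both by~\eqref{eqn:positivity-puH}. Then the coefficient of $\puH_w$ in $\puH_y \cdot \uH_{s_1} \cdots \uH_{s_j}$ is $\sum_{u} c_u d^u_w$, a sum of elements of $\Z_{\geq 0}[v,v^{-1}]$; for it to be nonzero there must exist $u \in W$ with $c_u \neq 0$ and $d^u_w \neq 0$. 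Applying the inductive hypothesis to such a $u$ (for the word $(s_1,\dots,s_{j-1})$) produces $v_1,\dots,v_j$ with $v_1 = y$, $v_j = u$, and the required condition for $i \leq j-1$; setting $v_{j+1} = w$ and invoking $d^u_w \neq 0$ finishes the chain.

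For part~\eqref{it:order-generated-2}, the ``if'' direction is just transitivity of $\pRleq$: the condition that $\puH_{v_{i+1}}$ appears with nonzero coefficient in $\puH_{v_i} \cdot \uH_{s_i} = \puH_{v_i} \cdot \puH_{s_i}$ says precisely $v_{i+1} \pRleq v_i$, so $w = v_k \pRleq \cdots \pRleq v_1 = y$. For the ``only if'' direction, assume $w \pRleq y$, so $\puH_w$ appears with nonzero coefficient in $\puH_y \cdot \puH_x$ for some $x \in W$. Pick a reduced expression $\underline{x} = (s_1,\dots,s_\ell)$ for $x$; by~\eqref{eqn:puH-uw-w} we may write $\uH_{\underline{x}} = \puH_x + \sum_{z < x} e_z \puH_z$ with $e_z \in \Z_{\geq 0}[v,v^{-1}]$, whence $\puH_y \cdot \uH_{\underline{x}} = \puH_y \cdot \puH_x + \sum_{z<x} e_z (\puH_y \cdot \puH_z)$. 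By~\eqref{eqn:positivity-puH} each summand on the right has nonnegative coefficients in the $p$-canonical basis, so the coefficient of $\puH_w$ in $\puH_y \cdot \uH_{\underline{x}}$ dominates the (nonzero, nonnegative) coefficient of $\puH_w$ in $\puH_y \cdot \puH_x$ and is therefore nonzero. Applying part~\eqref{it:order-generated-1} to $\puH_y \cdot \uH_{s_1} \cdots \uH_{s_\ell}$ then yields the desired chain, with $k = \ell + 1$.

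The proof is essentially bookkeeping, and the only point requiring care is that all the relevant expansions have nonnegative coefficients, so that a nonzero coefficient in a product can be traced back through the successive factors without any risk of cancellation. That is exactly what~\eqref{eqn:positivity-puH} provides, which is why it is the crucial input here.
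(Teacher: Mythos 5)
Your proof is correct and follows essentially the same route as the paper: induction on $j$ using the positivity~\eqref{eqn:positivity-puH} to trace a nonzero coefficient back through the factors for part~\eqref{it:order-generated-1}, and for part~\eqref{it:order-generated-2} transitivity of the preorder in one direction and, in the other, replacing $\puH_x$ by $\uH_{\underline{x}}$ via~\eqref{eqn:puH-uw-w} together with nonnegativity before invoking part~\eqref{it:order-generated-1}. The only difference is that you spell out the coefficient bookkeeping that the paper leaves implicit.
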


\begin{proof}
\eqref{it:order-generated-1}
We proceed by induction on $j$. If $j=0$ there is nothing to prove. Now assume that $j>0$, and that $\puH_w$ appears with nonzero coefficient in $\puH_y \cdot \uH_{s_1} \cdots \uH_{s_j}$. Then~\eqref{eqn:positivity-puH} implies that there exists $z \in W$ such that $\puH_z$ appears with nonzero coefficient in $\puH_y \cdot \uH_{s_1} \cdots \uH_{s_{j-1}}$ and $\puH_w$ appears with nonzero coefficient in $\puH_z \cdot \uH_{s_j}$. Then we apply induction to the pair $(z,y)$, and deduce the claim for the pair $(w,y)$.

\eqref{it:order-generated-2}
If $w,y$ satisfy the second condition, then we have
\[
w=v_k \pRleq v_{k-1} \pRleq \cdots \pRleq v_1=y,
\]
and hence $w \pRleq y$ since $\pRleq$ is a preorder.

On the other hand, assume that $w \pRleq y$. Let $z \in W$ be such that $\puH_w$ appears with nonzero coefficient in $\puH_y \cdot \puH_z$, and let $\underline{z}=(s_1, \cdots, s_k)$ be a reduced expression for $z$. Then $\puH_w$ appears with nonzero coefficient in $\puH_y \cdot \uH_{\underline{z}}$ by~\eqref{eqn:positivity-puH} and~\eqref{eqn:puH-uw-w}. Hence using~\eqref{it:order-generated-1} we deduce the existence of $v_1, \ldots, v_k \in W$ and $s_1, \ldots, s_{k-1} \in S$ satisfying the desired property.
\end{proof}

\begin{lem}
\label{lem:pRleq-Bruhat}
Let $w,y \in W$ and $s \in S$. Assume that $sy<y$ and $w \pRleq y$. Then $sw<w$.
\end{lem}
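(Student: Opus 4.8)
The plan is to deduce the statement from the standard fact that, for the $p$-canonical basis, left multiplication by $\puH_s$ acts on $\puH_y$ by the scalar $v+v^{-1}$ as soon as $sy<y$; granting this, the argument is the classical one for Kazhdan--Lusztig cells. By the characterization of $\pRleq$ recalled right after Definition~\ref{defn:order-cells}, there is some $H \in \Haff$ such that $\puH_w$ appears with nonzero coefficient in $X := \puH_y \cdot H$. Since $\puH_s \cdot \puH_y = (v+v^{-1})\puH_y$, associativity gives $\puH_s \cdot X = (v+v^{-1})\, X$. Writing $X = \sum_{u \in W} c_u \puH_u$ with $c_u \in \Z[v,v^{-1}]$, I would prove that $c_u \neq 0$ forces $su < u$; applying this to $u = w$ yields the lemma.

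For that last step, suppose for contradiction that $\{u : c_u \neq 0,\ su>u\}$ is nonempty and pick $u_0$ in it of maximal length. I would compare the coefficient of $\puH_{su_0}$ (an element of length $\ell(u_0)+1$) on the two sides of $\puH_s X = (v+v^{-1})X$. On the right it is $(v+v^{-1})c_{su_0}$. On the left, $\puH_s X = \sum_u c_u\,\puH_s\puH_u$, and one uses two facts. First, $\puH_s\puH_u = (v+v^{-1})\puH_u$ whenever $su<u$, so such terms contribute to the $\puH_{su_0}$-coefficient only through $u = su_0$ (and $su_0$ does satisfy $s(su_0)=u_0<su_0$), for a total contribution $(v+v^{-1})c_{su_0}$. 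Second, when $su>u$ one has $\puH_s\puH_u \in \puH_{su} + \bigoplus_{u' : \ell(u')\le \ell(u)} \Z[v,v^{-1}]\puH_{u'}$, with coefficient exactly $1$ on $\puH_{su}$; this is immediate from $\puH_s = H_s+v$, the relation $H_s H_u = H_{su}$ (valid since $\ell(su)=\ell(u)+1$), and the unitriangularity of the change of basis between the $H$- and $\puH$-bases. By maximality of $\ell(u_0)$, among indices $u$ with $su>u$ only $u=u_0$ can contribute to the $\puH_{su_0}$-coefficient on the left, contributing exactly $c_{u_0}$. Hence $c_{u_0} + (v+v^{-1})c_{su_0} = (v+v^{-1})c_{su_0}$, so $c_{u_0}=0$, contradicting the choice of $u_0$.

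The one ingredient not contained in the excerpt is the eigenvector identity $\puH_s\puH_y=(v+v^{-1})\puH_y$ for $sy<y$ (a standard property of the $p$-canonical basis; cf.~\cite{jensen}), and establishing it is the only real work; I would prove it by induction on $\ell(y)$. First, from~\eqref{eqn:decomposition-1st-reflection} together with~\eqref{eqn:puH-uw-w} one gets, by an easy induction on $\ell(x)$, that $\puH_s\puH_x \in \bigoplus_{u:su<u}\Z[v,v^{-1}]\puH_u$ for every $x\in W$: taking a reduced word $\underline{x}$ for $x$, the product $\uH_s\uH_{\underline{x}}$ is $\uH$ of an expression starting with $s$ (hence lies in that span by~\eqref{eqn:decomposition-1st-reflection}), while $\uH_{\underline{x}} = \puH_x + \sum_{y<x}n_y\puH_y$, so subtracting the lower terms and using $\puH_s=\uH_s$ and the inductive hypothesis handles $\puH_s\puH_x$. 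For the eigenvector identity itself, the base case $y=s$ is the relation $\puH_s^2=(v+v^{-1})\puH_s$, a direct consequence of~\eqref{eqn:puHs}. For $\ell(y)\ge 2$ with $sy<y$, set $y'=sy$ (so $sy'>y'$); the membership just proved, combined with a length count via~\eqref{eqn:puH-uw-w} applied to a reduced word of $y'$, shows that $\puH_s\puH_{y'} = \puH_y + \sum_{u\neq y,\ su<u}a_u\puH_u$ with every occurring $u$ of length $<\ell(y)$. Applying $\puH_s$ once more, using $\puH_s^2=(v+v^{-1})\puH_s$ on the left and the inductive hypothesis ($\puH_s\puH_u=(v+v^{-1})\puH_u$ for the shorter $u$) on the right, the identity $\puH_s\puH_y=(v+v^{-1})\puH_y$ falls out. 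The main obstacle is thus purely bookkeeping: keeping careful track of Bruhat order and lengths in this induction and in the coefficient comparison of the second paragraph; no new geometric input is needed. (One could instead first invoke Lemma~\ref{lem:order-generated}\eqref{it:order-generated-2} to reduce to the one-step assertion ``$sv<v$ and $\puH_u$ occurs in $\puH_v\uH_t$ imply $su<u$,'' but this still requires the eigenvector identity, so I would not bother.)
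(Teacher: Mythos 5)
Your proof is correct, but it takes a genuinely different route from the one in the paper. The paper's own argument is very short: since $sy<y$ one may choose a reduced expression $\underline{y}$ of $y$ starting with $s$ and a reduced expression $\underline{z}$ of an element $z$ such that $\puH_w$ appears in $\puH_y\cdot\puH_z$; then \eqref{eqn:puH-uw-w} together with the positivity property \eqref{eqn:positivity-puH} rules out any cancellation, so $\puH_w$ appears in $\uH_{(\underline{y},\underline{z})}$, which is attached to an expression starting with $s$, and \eqref{eqn:decomposition-1st-reflection} immediately yields $sw<w$. You instead first establish the eigenvector identity $\puH_s\puH_y=(v+v^{-1})\puH_y$ for $sy<y$ (a known property of the $p$-canonical basis, cf.~\cite{jensen,jw}, but one the paper never states), deriving it from \eqref{eqn:puHs}, \eqref{eqn:puH-uw-w} and \eqref{eqn:decomposition-1st-reflection} via your membership claim, and then run the classical Kazhdan--Lusztig coefficient-comparison argument, using the unitriangularity of the $\puH$-basis with respect to the standard basis to control lengths; all of these steps check out (the key points --- that the membership claim gives $su<u$ for every term of $\puH_s\puH_{y'}$, that the $\puH_y$-coefficient there is exactly $1$ with all other terms of smaller length, and that in the final comparison only $u_0$ itself can hit the $\puH_{su_0}$-coefficient --- are all valid). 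The trade-off is clear: your route avoids the positivity statement \eqref{eqn:positivity-puH}, whose justification in the paper is geometric (parity of convolution), at the price of a longer double induction that essentially re-proves standard structural facts about the $p$-canonical basis which the paper's three-line proof gets for free from positivity; both arguments ultimately rest on \eqref{eqn:decomposition-1st-reflection}.
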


\begin{proof}
Choose a reduced expression $\underline{y}$ for $y$ which starts with $s$. Then $\puH_y$ appears with nonzero coefficient in $\uH_{\underline{y}}$ by~\eqref{eqn:puH-uw-w}. Next, let $z \in W$ be such that $\puH_w$ appears with nonzero coefficient in $\puH_y \cdot \puH_z$, and let $\underline{z}$ be a reduced expression for $z$. The same reasoning shows that $\puH_z$ appears with nonzero coefficient in $\uH_{\underline{z}}$. Using~\eqref{eqn:positivity-puH} we obtain that $\puH_w$ appears with nonzero coefficients in $\uH_{\underline{yz}}$. By~\eqref{eqn:decomposition-1st-reflection}, this implies that $sw<w$.
\end{proof}

\begin{ex}
\label{ex:cell-1}
 It is clear from the definition that $w \pRleq 1$ for any $w \in W$. On the other hand, if $w \neq 1$ then Lemma~\ref{lem:pRleq-Bruhat} implies that we have $1 \not\pRleq w$. 
 In particular, $\{1\}$ is a right $p$-cell.
\end{ex}

%------------------------------------------------------------
\subsection{Antispherical right \texorpdfstring{$p$}{p}-cells}
\label{ss:antispherical-cells}
%------------------------------------------------------------

Again, we fix $p$ to be either $0$ or a prime number.
Recall that $\fW \subset W$ is the subset of elements $w$ which are minimal in $\Wf \cdot w$.

\begin{lem}
Let $\mathbf{c}$ be a right $p$-cell. If $\mathbf{c} \cap \fW \neq \varnothing$ then $\mathbf{c} \subset \fW$.
\end{lem}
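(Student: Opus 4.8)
The plan is to deduce the statement directly from Lemma~\ref{lem:pRleq-Bruhat}. First I would recall the standard characterization of $\fW$: an element $w \in W$ lies in $\fW$ (that is, is of minimal length in $\Wf w$) if and only if $sw > w$ for every $s \in \Sf$; equivalently, $w \notin \fW$ precisely when $sw < w$ for some $s \in \Sf$.

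Now let $\mathbf{c}$ be a right $p$-cell with $\mathbf{c} \cap \fW \neq \varnothing$, fix $w \in \mathbf{c} \cap \fW$, and let $y \in \mathbf{c}$ be arbitrary. Since $w \pRsim y$ we have in particular $w \pRleq y$. Suppose, for a contradiction, that $sy < y$ for some $s \in \Sf$. Then Lemma~\ref{lem:pRleq-Bruhat}, applied to this $s$ together with $y$ and $w$ (using $sy < y$ and $w \pRleq y$), gives $sw < w$, contradicting $w \in \fW$. Hence $sy > y$ for all $s \in \Sf$, so $y \in \fW$; as $y$ was arbitrary, $\mathbf{c} \subset \fW$.

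I do not expect a genuine obstacle here: all the real work is contained in Lemma~\ref{lem:pRleq-Bruhat}, which says that the descent property ``$sw < w$ for some $s \in \Sf$'' is preserved under going up in the preorder $\pRleq$; its contrapositive is exactly that membership in $\fW$ propagates downward along $\pRleq$, and in particular is constant on any right $p$-cell meeting $\fW$. The only mild point to keep in mind is the well-known equivalence between membership in $\fW$ and positivity of all left multiplications by the simple reflections of $\Wf$.
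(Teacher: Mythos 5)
Your proof is correct and is essentially the same as the paper's: both apply Lemma~\ref{lem:pRleq-Bruhat} to an element $w \in \mathbf{c} \cap \fW$ and an arbitrary $y \in \mathbf{c}$ with a descent $sy<y$, $s \in \Sf$, to produce the contradiction $sw<w$. No gaps.
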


\begin{proof}
Assume for a contradiction that $\mathbf{c} \cap \fW \neq \varnothing$ but $\mathbf{c} \not\subset \fW$. Let $x \in \mathbf{c} \cap \fW$ and $y \in \mathbf{c} \cap (W \smallsetminus \fW)$. Then $x \pRleq y$. And since $y \notin \fW$, there exists $s \in \Sf$ such that $sy<y$. By Lemma~\ref{lem:pRleq-Bruhat} this implies that $sx<x$, contradicting the assumption that $x \in \fW$.
\end{proof}

The right $p$-cells which intersect $\fW$ will be called \emph{antispherical}. Following the terminology introduced in~\cite{lx}, we might have called these $p$-cells \emph{canonical}. We prefer the term antispherical in view of the following interpretation.

We denote by $\Hf$ the Hecke algebra of $(\Wf,\Sf)$.
Then we can consider the antispherical right $\Haff$-module
\[
 \Masph := \mathrm{sgn} \otimes_{\Hf} \Haff,
\]
where $\mathrm{sgn}$ is the set $\Z[v,v^{-1}]$, made into a right $\Hf$-module by having $H_s$ act by multiplication by $-v$ for all $s \in \Sf$. The standard, resp.~$p$-canonical, basis of $\Masph$ is defined by
\[
 N_w := 1 \otimes H_w, \quad \text{resp.} \quad \puN_w := 1 \otimes \puH_w,
\]
where $w \in \fW$, and $p$ is either $0$ or a prime number.

\begin{rmk}
In the case $p=0$, it follows from~\cite[Proposition~3.4 and its proof]{soergel} that the basis $(\puN_w : w \in W)$  coincides with the basis characterized in~\cite[Theorem~3.1(2)]{soergel}.
\end{rmk}

\begin{lem}
\label{lem:p-can-basis-Masph}
If $w \notin \fW$, then $1 \otimes \puH_w = 0$ in $\Masph$.
\end{lem}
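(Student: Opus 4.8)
The plan is to deduce the lemma from the following left divisibility property of the $p$-canonical basis: for any $s \in S$ and any $u \in W$ with $su < u$, one has $\uH_s \cdot \puH_u = (v+v^{-1})\,\puH_u$ in $\Haff$. Once this is available, the lemma is immediate. Indeed, if $w \notin \fW$ then, by definition of $\fW$, there is some $s \in \Sf$ with $sw < w$, and then inside $\Masph$ we obtain
\[
(v+v^{-1}) \cdot (1 \otimes \puH_w) = 1 \otimes (\uH_s \cdot \puH_w) = (1 \cdot \uH_s) \otimes \puH_w = 0,
\]
because $\uH_s = H_s + v$ acts on $\mathrm{sgn}$ by $-v + v = 0$. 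Since $\Masph$ is free over $\Z[v,v^{-1}]$ on the standard basis $\{N_x : x \in \fW\}$ and $v + v^{-1}$ is not a zero-divisor in $\Z[v,v^{-1}]$, this forces $1 \otimes \puH_w = 0$.

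So the real work is to prove the left divisibility property, and I would argue by strong induction on $\ell(u)$. Given $u$ with $su < u$, choose a reduced expression $\uw$ of $u$ beginning with $s$ (possible precisely because $su < u$), and let $\uw'$ be $\uw$ with its first letter removed, so that $\uH_{\uw} = \uH_s \cdot \uH_{\uw'}$ and hence $\uH_s \cdot \uH_{\uw} = \uH_s^2 \cdot \uH_{\uw'} = (v+v^{-1})\,\uH_{\uw}$ using~\eqref{eqn:puHs}. On the other hand,~\eqref{eqn:puH-uw-w} together with~\eqref{eqn:decomposition-1st-reflection} (applicable because $\uw$ starts with $s$) lets me expand $\uH_{\uw} = \puH_u + \sum_{y} c_y\,\puH_y$, where every $y$ occurring with $c_y \neq 0$ satisfies both $y < u$ and $sy < y$, and $c_y \in \Z_{\ge 0}[v,v^{-1}]$. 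Each such $y$ then has $\ell(y) < \ell(u)$ and $sy < y$, so the induction hypothesis gives $\uH_s \cdot \puH_y = (v+v^{-1})\,\puH_y$; comparing the two resulting expressions for $\uH_s \cdot \uH_{\uw}$ yields $\uH_s \cdot \puH_u = (v+v^{-1})\,\puH_u$. The case $u = s$ (where the sum is empty) serves as the base case and is handled by the same computation.

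I expect the only delicate point to be the bookkeeping in the inductive step: one must be sure that \emph{every} basis element $\puH_y$ appearing in $\uH_{\uw}$ is simultaneously $< u$ in the Bruhat order \emph{and} satisfies $sy < y$, so that the induction hypothesis applies to all of them. This is exactly what the combination of~\eqref{eqn:puH-uw-w} and~\eqref{eqn:decomposition-1st-reflection} provides, so no input beyond the positivity facts recalled above (and the relation $\uH_s = H_s + v$) is needed. Alternatively, the left divisibility property could be read off geometrically from the fact that $\cE_w^\bk$ is pulled back along the projection from $\Fl$ to the partial affine flag variety attached to $s$ whenever $sw < w$; but the Hecke-algebra induction keeps the proof entirely self-contained.
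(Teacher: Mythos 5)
Your proposal is correct, and the bookkeeping you flag as the delicate point does work: since the expansion in the basis $(\puH_y)_{y\in W}$ is unique, combining \eqref{eqn:puH-uw-w} with \eqref{eqn:decomposition-1st-reflection} indeed shows that every $\puH_y$ occurring in $\uH_{\uw}$ other than $\puH_u$ has $y<u$ \emph{and} $sy<y$. The paper runs exactly this induction, but directly in $\Masph$ rather than in $\Haff$: for $w\notin\fW$ it picks a reduced expression $\uw$ starting with some $s\in\Sf$, observes that $1\otimes\uH_{\uw}=0$ because $1\cdot\uH_s=-v+v=0$ (using \eqref{eqn:puHs}), expands $\uH_{\uw}$ via \eqref{eqn:puH-uw-w} and \eqref{eqn:decomposition-1st-reflection}, and kills the lower terms by induction on length, since each such $y$ satisfies $sy<y$ and hence again lies outside $\fW$. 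You instead package the same induction as the left-eigenvector property $\uH_s\cdot\puH_u=(v+v^{-1})\puH_u$ for $su<u$ (the algebraic shadow of $\cE^\bk_u$ being pulled back from the partial affine flag variety attached to $s$), and then conclude from $(v+v^{-1})(1\otimes\puH_w)=0$ using that $\Masph$ is free over $\Z[v,v^{-1}]$ on $(N_x)_{x\in\fW}$, so $v+v^{-1}$ acts injectively. Your route costs the extra torsion-freeness step but yields a stronger, reusable intermediate statement; the paper's version is marginally shorter because it proves the vanishing exactly where it is needed and requires no non-zero-divisor argument.
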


\begin{proof}
We prove the claim by induction on $\ell(w)$. If $\ell(w)=0$ there is nothing to prove. Otherwise, let $\underline{w}$ be a reduced expression for $w$ starting with an element of $\Sf$. Then it follows from~\eqref{eqn:puHs} that $1 \otimes \uH_{\underline{w}}=0$. Using induction together with~\eqref{eqn:puH-uw-w} and~\eqref{eqn:decomposition-1st-reflection} we deduce that $1 \otimes \puH_w = 0$ as expected.
\end{proof}

As in the case of $\Haff$, for $N \in \Masph$ we will say that $\puN_w$ \emph{appears with nonzero coefficient in $N$} if the coefficient of $\puN_w$ in the decomposition of $N$ in the basis $(\puN_w : w \in W)$ is nonzero. Then from Lemma~\ref{lem:p-can-basis-Masph} we deduce that if $w,y \in \fW$, then
 $w \pRleq y$ iff $\puN_w$ appears with nonzero coefficient in $\puN_y \cdot H$ for some $H \in \Haff$. In other words, the restriction of the preorder $\pRleq$ to $\fW$ can be described in a way completely similar to the preorder on $\Haff$, simply replacing the regular right module $\Haff$ by the antispherical right module $\Masph$. Similar remarks apply to the equivalence relation $\pRsim$; the antispherical right $p$-cells are the equivalence classes for this relation on $\fW$.

%------------------------------------------------------------
\subsection{Description in terms of \texorpdfstring{$W$}{W}-modules}
\label{ss:cells-W-mod}
%------------------------------------------------------------

Let
\[
 \Masph^\circ := \Z \otimes_{\Z[v,v^{-1}]} \Masph \cong \Z_\varepsilon \otimes_{\Z[\Wf]} \Z[W],
\]
where $v$ is specialized to $1$. (Here $\Z_\varepsilon$ is $\Z$, equipped with the $\Wf$-action in which each $s \in \Sf$ acts by multiplication by $-1$.)
This is a right module over $1 \otimes_{\Z[v,v^{-1}]} \Haff \cong \Z[W]$. We also have a $p$-canonical ($\Z$-)basis in $\Z[W]$ defined by
\[
 \puH_w^\circ := 1 \otimes \puH_w
\]
and a standard and a $p$-canonical basis in $\Masph^\circ$ defined by
\[
 N_w^\circ := 1 \otimes N_w, \qquad \puN_w^\circ := 1 \otimes \puN_w
\]
for $w \in \fW$. (In case $p=0$, we will drop the superscript $p$.)

As in the case of $\Masph$, for $N \in \Masph^\circ$, we will say that $\puN^\circ_w$ \emph{appears with nonzero coefficient in $N$} if the coefficient of $\puN^\circ_w$ in the decomposition of $N$ in the $\Z$-basis $(\puN^\circ_w : w \in W)$ is nonzero.

\begin{lem}
 \label{lem:pRleq-M0}
 If $w,y \in \fW$, then
 $w \pRleq y$ 
 iff $\puN_w^\circ$ appears with nonzero coefficient in $\puN_y^\circ \cdot h$ for some $h \in \Z[W]$.
\end{lem}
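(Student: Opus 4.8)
The plan is to compare the description of $\pRleq$ on $\fW$ in terms of $\Masph$ (recalled at the end of \S\ref{ss:antispherical-cells}) with the analogous statement over $\Z$, the point being that multiplication of $p$-canonical basis elements has nonnegative coefficients, so nothing cancels when $v$ is specialized to $1$. First I would reduce the multiplier to a single $p$-canonical basis element on both sides. Since $(\puH_x : x \in W)$ is a $\Z[v,v^{-1}]$-basis of $\Haff$, expanding an arbitrary $H \in \Haff$ in this basis and using that $\puN_w$ appears with nonzero coefficient in $\puN_y\cdot H$ forces it to appear with nonzero coefficient in $\puN_y \cdot \puH_x$ for at least one $x$ (and conversely $\puH_x \in \Haff$), one gets: for $w,y \in \fW$, $w \pRleq y$ iff $\puN_w$ appears with nonzero coefficient in $\puN_y \cdot \puH_x$ for some $x \in W$. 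The identical argument over $\Z$, using the $\Z$-basis $(\puH_x^\circ : x \in W)$ of $\Z[W]$, shows that $\puN_w^\circ$ appears with nonzero coefficient in $\puN_y^\circ \cdot h$ for some $h \in \Z[W]$ iff $\puN_w^\circ$ appears with nonzero coefficient in $\puN_y^\circ \cdot \puH_x^\circ$ for some $x \in W$. Both reductions are pure linear algebra.

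The heart of the argument is then the comparison, for a fixed $x$, of $\puN_y \cdot \puH_x$ with $\puN_y^\circ \cdot \puH_x^\circ$. We have $\puN_y \cdot \puH_x = 1 \otimes (\puH_y \cdot \puH_x)$, and by the positivity property~\eqref{eqn:positivity-puH} we may write $\puH_y \cdot \puH_x = \sum_{z \in W} f_z(v)\, \puH_z$ with every $f_z \in \Z_{\geq 0}[v,v^{-1}]$. Applying $1 \otimes (-)$ and Lemma~\ref{lem:p-can-basis-Masph} (which kills $1 \otimes \puH_z$ for $z \notin \fW$) gives $\puN_y \cdot \puH_x = \sum_{z \in \fW} f_z(v)\, \puN_z$, and specializing $v \mapsto 1$ gives $\puN_y^\circ \cdot \puH_x^\circ = \sum_{z \in \fW} f_z(1)\, \puN_z^\circ$. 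Since a nonzero element of $\Z_{\geq 0}[v,v^{-1}]$ has nonzero value at $v = 1$, we have $f_w(v) \neq 0$ iff $f_w(1) \neq 0$; that is, $\puN_w$ appears with nonzero coefficient in $\puN_y \cdot \puH_x$ iff $\puN_w^\circ$ appears with nonzero coefficient in $\puN_y^\circ \cdot \puH_x^\circ$. Chaining this with the two reductions of the previous paragraph proves the lemma.

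The only genuine obstacle is recognizing that positivity, i.e.~\eqref{eqn:positivity-puH}, is exactly what prevents cancellation under the specialization $v \mapsto 1$; everything else is bookkeeping with the relevant bases of $\Haff$, $\Masph$, $\Z[W]$ and $\Masph^\circ$. I would also note that since~\eqref{eqn:positivity-puH} holds for every $p$, the whole argument is uniform in $p$, which is what makes the statement meaningful in the $p$-canonical setting and not just for $p = 0$.
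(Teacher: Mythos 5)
Your proof is correct and follows essentially the same route as the paper: both directions hinge on Lemma~\ref{lem:p-can-basis-Masph}, the positivity property~\eqref{eqn:positivity-puH} (which rules out cancellation under $v \mapsto 1$), and the characterization of $\pRleq$ via the antispherical module from~\S\ref{ss:antispherical-cells}. The only difference is cosmetic: the paper handles the easy direction by simply lifting $h \in \Z[W]$ to some $H \in \Haff$, whereas you first reduce both sides to a single $p$-canonical basis element as multiplier, which is harmless extra bookkeeping.
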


\begin{proof}
If $\puN_w^\circ$ appears with nonzero coefficient in $\puN_y^\circ \cdot h$ for some $h \in \Z[W]$, then $\puN_w$ appears with nonzero coefficient in $\puN_y \cdot H$ for some $H \in \Haff$, which implies that $w \pRleq y$. 

On the other hand, assume that $w \pRleq y$, and let $z \in W$ be such that $\puH_w$ appears with nonzero coefficient in $\puH_y \cdot \puH_z$. Then $\puN_w$ appears with nonzero coefficient in $\puN_y \cdot \puH_z$ by the considerations in~\S\ref{ss:antispherical-cells}. Moreover,~\eqref{eqn:positivity-puH} and Lemma~\ref{lem:p-can-basis-Masph} imply that the coefficients of $\puN_y \cdot \puH_z$ in the basis $(\puN_x : x \in \fW)$ all belong to $\Z_{\geq 0}[v,v^{-1}]$. Hence 
$\puN_w^\circ$ appears with nonzero coefficient in $\puN_y^\circ \cdot \puH_z^\circ$, proving the desired claim.
\end{proof}

%%%%%%%%%%%%%%%%%%%%%%%%%%%%%%%%%%%%%%%%%%%%%%%%%%%%%%%%%
\section{Finite generation of antispherical right cells}
\label{sec:finite-generation}
%%%%%%%%%%%%%%%%%%%%%%%%%%%%%%%%%%%%%%%%%%%%%%%%%%%%%%%%%

In this section, we prove that each antispherical right cell is generated by a finite number of elements under the operation of left multiplication by elements of the form $t_\lambda$ with $\lambda \in \bY^+$; see Proposition~\ref{prop:finite-generation}. (Recall that \emph{right cell} is a synonym for \emph{$0$-right cell}; thus, we are in the realm of classical Kazhdan--Lusztig theory.)  This statement is a slight variant of~\cite[Corollary~11]{andersen}. This result is stated without proof by Andersen but, as indicated in~\cite{andersen},
a proof can be obtained by copying some ideas in~\cite{xi}.

The proof will use the following well-known properties (see e.g.~\cite[\S 2]{xi}):
\begin{gather}
\label{eqn:xi1}
\ell(t_{\lambda+\mu}) = \ell(t_{\lambda})+\ell(t_{\mu}) \quad \text{if $\lambda, \mu \in \bY^+$;} \\
\label{eqn:xi2}
\ell(wt_\lambda w^{-1}) = \ell(t_\lambda) \quad \text{if $\lambda \in \bY^+$ and $w \in W$;}\\
\label{eqn:xi3}
wv \Rleq w \quad \text{if $w,v \in W$ and $\ell(wv) = \ell(w) + \ell(v)$.}
\end{gather}

%-------------------------------------------------------------------
\subsection{Preliminaries on minimal length representatives}
%-------------------------------------------------------------------

\begin{lem}
\label{lem:char-fW}
Let $\lambda \in \bY$ and $v \in \Wf$, and set $w = t_\lambda v$. The following conditions are equivalent:
\begin{enumerate}
\item
\label{it:char-fW-1}
$w \in \fW$;
\item
\label{it:char-fW-2}
$\lambda \in \bY^+$ and $\ell(t_\lambda v) = \ell(t_\lambda) - \ell(v)$;
\item
\label{it:char-fW-3}
$\lambda \in \bY^+$ and for all $\alpha \in \Phi^+$ such that $v^{-1}(\alpha) \in -\Phi^+$, we have $\langle \lambda, \alpha^\vee \rangle \geq 1$.
\end{enumerate}
\end{lem}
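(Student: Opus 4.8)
The plan is to prove the equivalence of the three conditions by establishing the cycle $\eqref{it:char-fW-1}\Rightarrow\eqref{it:char-fW-2}\Rightarrow\eqref{it:char-fW-3}\Rightarrow\eqref{it:char-fW-1}$, working directly from the explicit length formula~\eqref{eqn:wext-length}. The key computational input is that, writing $w=t_\lambda v$ with $v\in\Wf$ (note this is the expression $v\cdot t_{v^{-1}\lambda}$ in the notation used before~\eqref{eqn:wext-length}, so some care with conventions is needed), the length of $w$ and the length of $w$ left-multiplied by a simple reflection $s\in\Sf$ differ in a way controlled by a single coroot inequality. More precisely, the relevant fact is that for $s\in\Sf$ with simple root $\alpha$, one has $sw<w$ if and only if $sw$ is again of the form $t_{\lambda}v'$ with $v'\in\Wf$ and the comparison $\ell(sw)<\ell(w)$ reduces, via~\eqref{eqn:wext-length}, to an inequality on $\langle\lambda,\beta^\vee\rangle$ for $\beta$ the positive root with $v^{-1}(\beta)\in-\Phi^+$ ``created'' by $s$.

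First I would prove $\eqref{it:char-fW-1}\Rightarrow\eqref{it:char-fW-2}$. Assuming $w=t_\lambda v\in\fW$, i.e. $w$ has minimal length in $\Wf w$, I would first argue $\lambda\in\bY^+$: if $\langle\lambda,\alpha_s^\vee\rangle\le 0$ for some simple $s\in\Sf$, then I can check from~\eqref{eqn:wext-length} that $s_\alpha w<w$ where $\alpha=\alpha_s$ is the corresponding simple root (here $s_\alpha w$ is again in $\Wf w$), contradicting minimality; hence $\langle\lambda,\alpha_s^\vee\rangle\ge 1$ for all simple $s$, so $\lambda\in\bY^+$. For the length identity, since $\lambda\in\bY^+$ the element $t_\lambda$ is the longest—no, rather: using~\eqref{eqn:wext-length} with $\lambda$ dominant, every positive root $\alpha$ with $\langle\lambda,\alpha^\vee\rangle\ge 0$ contributes, and a direct expansion shows $\ell(t_\lambda v)=\ell(t_\lambda)-\ell(v)$ precisely when $w$ is the minimal-length element of $\Wf w=\Wf t_\lambda v$; this is the standard fact that in the coset $\Wf t_\lambda$ (for $\lambda\in\bY^+$) the minimal length element is $t_\lambda$ itself with value $\ell(t_\lambda)=\sum_{\alpha\in\Phi^+}\langle\lambda,\alpha^\vee\rangle$, and for general $v$ the minimal representative of $\Wf t_\lambda v$ has length $\ell(t_\lambda)-\ell(v)$ iff $t_\lambda v$ is already that representative. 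I would make this precise by the subword/reduced-expression argument: write a reduced expression, track the contribution via~\eqref{eqn:wext-length}.

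Next, $\eqref{it:char-fW-2}\Rightarrow\eqref{it:char-fW-3}$: granting $\lambda\in\bY^+$ and $\ell(t_\lambda v)=\ell(t_\lambda)-\ell(v)$, I would plug $w=v\cdot t_{v^{-1}\lambda}$ into~\eqref{eqn:wext-length} and compare with $\ell(t_\lambda)=\sum_{\alpha\in\Phi^+}\langle\lambda,\alpha^\vee\rangle$ (valid since $\lambda\in\bY^+$). The difference $\ell(t_\lambda)-\ell(t_\lambda v)$ is a sum over the $\ell(v)$ roots $\alpha\in\Phi^+$ with $v^{-1}(\alpha)\in-\Phi^+$, and each such term equals $\langle\lambda,\alpha^\vee\rangle + (\text{something})$; forcing this total to equal $\ell(v)=\#\{\alpha\in\Phi^+: v^{-1}(\alpha)\in-\Phi^+\}$ forces each term to be minimal, which translates exactly into $\langle\lambda,\alpha^\vee\rangle\ge 1$ for all such $\alpha$. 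Finally $\eqref{it:char-fW-3}\Rightarrow\eqref{it:char-fW-1}$ runs the same computation backwards: given $\lambda\in\bY^+$ and the coroot inequalities, I would show $sw>w$ for every $s\in\Sf$ (equivalently $w$ has no descent in $\Sf$ on the left), using~\eqref{eqn:wext-length} to see that left-multiplication by a finite simple reflection always increases length under hypothesis~\eqref{it:char-fW-3}; since $\fW$ is characterized as $\{w: sw>w\ \forall s\in\Sf\}$, this gives $w\in\fW$.

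The main obstacle I anticipate is purely bookkeeping with the length formula~\eqref{eqn:wext-length} and the two ways of writing an element of $\Wext$ (as $t_\lambda v$ versus $v t_\mu$): the conventions in the paper make $\BGp_\Z$ the negative Borel and use $w\cdot t_\lambda$ in~\eqref{eqn:wext-length}, so I must carefully conjugate/rewrite $t_\lambda v = v\,t_{v^{-1}\lambda}$ and re-index the sums, keeping track of signs $|\langle\lambda,\alpha^\vee\rangle|$ versus $|1+\langle\lambda,\alpha^\vee\rangle|$. Once the translation is set up correctly, each implication is a short explicit computation; the only genuinely delicate point is verifying that the left descent set of $t_\lambda v$ inside $\Sf$ is governed exactly by the sign of $\langle\lambda,\alpha_s^\vee\rangle$ together with whether $v^{-1}(\alpha_s)\in\pm\Phi^+$, which is where hypotheses~\eqref{it:char-fW-2} and~\eqref{it:char-fW-3} enter.
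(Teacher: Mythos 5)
Your computational engine is the same as the paper's: rewrite $t_\lambda v = v\, t_{v^{-1}\lambda}$, apply~\eqref{eqn:wext-length}, and observe that for $\lambda \in \bY^+$ one gets $\ell(t_\lambda v) = \ell(t_\lambda) - \ell(v) + 2\#\{\beta \in \Phi^+ \mid v^{-1}(\beta) \in -\Phi^+,\ \langle\lambda,\beta^\vee\rangle = 0\}$, which is exactly how the paper proves \eqref{it:char-fW-2}$\Leftrightarrow$\eqref{it:char-fW-3}; your \eqref{it:char-fW-3}$\Rightarrow$\eqref{it:char-fW-1} via absence of left $\Sf$-descents is also sound once the descent criterion is stated correctly. (For the record, the correct criterion, which does follow from~\eqref{eqn:wext-length}, is: for $s \in \Sf$ with simple root $\alpha_s$, $sw < w$ iff either $v^{-1}(\alpha_s) \in \Phi^+$ and $\langle\lambda,\alpha_s^\vee\rangle \le -1$, or $v^{-1}(\alpha_s) \in -\Phi^+$ and $\langle\lambda,\alpha_s^\vee\rangle \le 0$.)

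The gap is in your \eqref{it:char-fW-1}$\Rightarrow$\eqref{it:char-fW-2} step. First, your claim that $\langle\lambda,\alpha_s^\vee\rangle \le 0$ forces $s w < w$ is false when $v^{-1}(\alpha_s) \in \Phi^+$ and the pairing is $0$ (take $w=1$, i.e.\ $\lambda = 0$, $v = 1$), and consequently your conclusion that $w \in \fW$ forces $\langle\lambda,\alpha_s^\vee\rangle \ge 1$ for all $s \in \Sf$ is also false (same example). What you can and need to conclude is only $\lambda \in \bY^+$, which does follow from the corrected criterion applied when $\langle\lambda,\alpha_s^\vee\rangle \le -1$. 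Second, your argument for the length identity is circular as written: ``the minimal representative of $\Wf t_\lambda v$ has length $\ell(t_\lambda)-\ell(v)$ iff $t_\lambda v$ is already that representative'' is precisely what must be proved, and the promised ``subword argument'' is not supplied. Within your framework the cleanest repair is to prove the contrapositive $\neg\eqref{it:char-fW-3}\Rightarrow\neg\eqref{it:char-fW-1}$ and then get \eqref{it:char-fW-2} from \eqref{it:char-fW-3} via the displayed identity: if $\lambda \in \bY^+$ and some $\beta \in \Phi^+$ with $v^{-1}(\beta) \in -\Phi^+$ has $\langle\lambda,\beta^\vee\rangle = 0$, then every simple root in the support of $\beta$ pairs to $0$ with $\lambda$, and at least one such simple root $\alpha_s$ satisfies $v^{-1}(\alpha_s) \in -\Phi^+$ (otherwise $v^{-1}(\beta)$ would be a nonnegative combination of positive roots, hence positive), so the corrected criterion gives $sw < w$ and $w \notin \fW$. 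The paper sidesteps all of this: it quotes \cite[Lemma~2.4]{mr} for \eqref{it:char-fW-1}$\Rightarrow$\eqref{it:char-fW-2} and obtains \eqref{it:char-fW-2}$\Rightarrow$\eqref{it:char-fW-1} in one line from $\ell(s t_\lambda v) \ge \ell(s t_\lambda) - \ell(v) = \ell(t_\lambda) + 1 - \ell(v) = \ell(w) + 1$ for $s \in \Sf$ and $\lambda \in \bY^+$, which is simpler than a full descent-set analysis.
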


\begin{proof}
From~\cite[Lemma~2.4]{mr} we see that~\eqref{it:char-fW-1} implies~\eqref{it:char-fW-2}. On the other hand, if~\eqref{it:char-fW-2} holds and $s \in \Sf$ we have
\[
\ell(sw) = \ell(st_\lambda v) \geq \ell(st_\lambda) - \ell(v) = \ell(t_\lambda) + 1 -\ell(v) = \ell(w)+1,
\]
proving that $sw>w$, and hence that~\eqref{it:char-fW-1} holds.

By the Iwahori--Matsumoto formula for the length in $W$ (see, for in\-stance,~\cite[(2.2)]{mr}), we have
\begin{align*}
\ell(w) &= \sum_{\alpha \in \Phi^+ \cap v^{-1}(\Phi^+)} |\langle v^{-1}(\lambda), \alpha^\vee \rangle| + \sum_{\alpha \in \Phi^+ \cap v^{-1}(-\Phi^+)} |1 + \langle v^{-1}(\lambda), \alpha^\vee \rangle| \\
&= \sum_{\alpha \in v(\Phi^+) \cap \Phi^+} |\langle \lambda, \alpha^\vee \rangle| + \sum_{\alpha \in v(\Phi^+) \cap (-\Phi^+)} |1 + \langle \lambda, \alpha^\vee \rangle|.
\end{align*}
If $\lambda \in \bY^+$, we deduce that
\begin{align*}
\ell(w) &= \sum_{\alpha \in v(\Phi^+) \cap \Phi^+} \langle \lambda, \alpha^\vee \rangle + \sum_{\substack{\beta \in v(-\Phi^+) \cap \Phi^+\\ \langle \lambda, \beta^\vee \rangle \geq 1}} (\langle \lambda, \beta^\vee \rangle-1) 
+ \sum_{\substack{\beta \in v(-\Phi^+) \cap \Phi^+\\ \langle \lambda, \beta^\vee \rangle =0}} 1
\\
&= \sum_{\alpha \in \Phi^+} \langle \lambda, \alpha^\vee \rangle
- \#(v(-\Phi^+) \cap \Phi^+) + 2\#\{\beta \in v(-\Phi^+) \cap \Phi^+\mid \langle \lambda, \beta^\vee \rangle=0\}.
\end{align*}
Since $\ell(t_\lambda) = \sum_{\alpha \in \Phi^+} \langle \lambda, \alpha^\vee \rangle$ and $\ell(v) = \#(v(-\Phi^+) \cap \Phi^+)$, we deduce the equivalence of~\eqref{it:char-fW-2} and~\eqref{it:char-fW-3}.
\end{proof}

\begin{lem}
\label{lem:length-fW-y+}
Let $w \in \fW$ and $\lambda \in \bY^+$. Then 
\begin{enumerate}
\item
\label{it:length-fW-y+1}
$\ell(t_\lambda w) = \ell(t_\lambda) + \ell(w)$;
\item
\label{it:length-fW-y+2}
$t_\lambda w \in \fW$;
\item
\label{it:length-fW-y+3}
$t_\lambda w \Rleq w$.
\end{enumerate}
\end{lem}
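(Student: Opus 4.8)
The plan is to write $w = t_\mu v$ with $\mu \in \bY$ and $v \in \Wf$, and to extract everything from the characterization of $\fW$ given by Lemma~\ref{lem:char-fW} together with the three length identities~\eqref{eqn:xi1},~\eqref{eqn:xi2},~\eqref{eqn:xi3}. Since $w \in \fW$, Lemma~\ref{lem:char-fW} tells us that $\mu \in \bY^+$, that $\ell(w) = \ell(t_\mu) - \ell(v)$, and that $\langle \mu, \alpha^\vee \rangle \geq 1$ for every $\alpha \in \Phi^+$ with $v^{-1}(\alpha) \in -\Phi^+$. Observe also that $t_\lambda w = t_{\lambda+\mu} v$ and that $\lambda + \mu \in \bY^+$ since both summands lie in $\bY^+$. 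I would prove the three assertions in the order~\eqref{it:length-fW-y+2},~\eqref{it:length-fW-y+1},~\eqref{it:length-fW-y+3}.

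For~\eqref{it:length-fW-y+2}, I would apply the criterion of Lemma~\ref{lem:char-fW}\eqref{it:char-fW-3} to $t_{\lambda+\mu}v$: since $\lambda+\mu \in \bY^+$, it remains to check that $\langle \lambda+\mu, \alpha^\vee \rangle \geq 1$ for every $\alpha \in \Phi^+$ with $v^{-1}(\alpha) \in -\Phi^+$. This is immediate because $\langle \mu, \alpha^\vee \rangle \geq 1$ by the remarks above and $\langle \lambda, \alpha^\vee \rangle \geq 0$ as $\lambda \in \bY^+$ and $\alpha \in \Phi^+$. Then~\eqref{it:length-fW-y+1} is pure bookkeeping: knowing $t_{\lambda+\mu}v \in \fW$, Lemma~\ref{lem:char-fW}\eqref{it:char-fW-2} gives $\ell(t_\lambda w) = \ell(t_{\lambda+\mu}) - \ell(v)$; by~\eqref{eqn:xi1} we have $\ell(t_{\lambda+\mu}) = \ell(t_\lambda) + \ell(t_\mu)$; and $\ell(w) = \ell(t_\mu) - \ell(v)$, so altogether $\ell(t_\lambda w) = \ell(t_\lambda) + \ell(w)$.

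For~\eqref{it:length-fW-y+3}, the key point is the identity $t_\lambda w = w \cdot t_{v^{-1}(\lambda)}$, which lets us invoke~\eqref{eqn:xi3} (a statement about right multiplication). This identity holds because conjugating the translation $t_{v^{-1}(\lambda)}$ by the finite part $v$ gives $t_\lambda$: explicitly $w\cdot t_{v^{-1}(\lambda)} = t_\mu v\, t_{v^{-1}(\lambda)} = t_\mu\, t_\lambda v = t_{\lambda+\mu}v = t_\lambda w$. By~\eqref{eqn:xi2} applied to $v^{-1} \in W$ and $\lambda \in \bY^+$ we have $\ell(t_{v^{-1}(\lambda)}) = \ell(t_\lambda)$, so part~\eqref{it:length-fW-y+1} yields $\ell\bigl(w\cdot t_{v^{-1}(\lambda)}\bigr) = \ell(t_\lambda w) = \ell(t_\lambda) + \ell(w) = \ell(w) + \ell(t_{v^{-1}(\lambda)})$. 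Thus the lengths add, and~\eqref{eqn:xi3} gives $t_\lambda w = w\cdot t_{v^{-1}(\lambda)} \Rleq w$.

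I do not anticipate any serious obstacle: the argument is entirely a matter of combining Lemma~\ref{lem:char-fW} with the standard length identities. The only step needing slight care is the rewriting $t_\lambda w = w\cdot t_{v^{-1}(\lambda)}$ used in part~\eqref{it:length-fW-y+3}, which converts a left-multiplication by $t_\lambda$ into a right-multiplication so that~\eqref{eqn:xi3} applies.
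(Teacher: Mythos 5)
Your proposal is correct and takes essentially the same approach as the paper: decompose $w=t_\mu v$ via Lemma~\ref{lem:char-fW} and combine it with~\eqref{eqn:xi1}--\eqref{eqn:xi3}, and in part~\eqref{it:length-fW-y+3} your element $t_{v^{-1}(\lambda)}$ is exactly the paper's $w^{-1}t_\lambda w$, so the argument is the same. The only cosmetic difference is the order of the first two parts: you get~\eqref{it:length-fW-y+2} first from the root-pairing criterion of Lemma~\ref{lem:char-fW}\eqref{it:char-fW-3} and then deduce~\eqref{it:length-fW-y+1}, while the paper proves~\eqref{it:length-fW-y+1} first by squeezing $\ell(t_\lambda w)$ between $\ell(t_{\lambda+\mu})-\ell(v)$ and $\ell(t_\lambda)+\ell(w)$ and then deduces~\eqref{it:length-fW-y+2}.
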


\begin{proof}
Write $w=t_\mu v$ with $\mu \in \bY^+$ and $v \in \Wf$ (see Lemma~\ref{lem:char-fW}\eqref{it:char-fW-1}). 
Then $t_\lambda w = t_{\lambda + \mu} v$, so that
\[
\ell(t_\lambda w) \geq \ell(t_{\lambda + \mu}) - \ell(v) \overset{\eqref{eqn:xi1}}{=} \ell(t_\lambda) + \ell(t_{\mu})- \ell(v) = \ell(t_\lambda) + \ell(w).
\]
Since $\ell(t_\lambda w) \leq \ell(t_\lambda) + \ell(w)$, we deduce~\eqref{it:length-fW-y+1}. Then~\eqref{it:length-fW-y+2} follows from Lemma~\ref{lem:char-fW} and~\eqref{eqn:xi1}.
Finally, we remark that $t_\lambda w= w \cdot (w^{-1} t_\lambda w)$ and that
\[
\ell(t_\lambda w) = \ell(w) + \ell(w^{-1} t_\lambda w)
\]
by~\eqref{it:length-fW-y+1} and~\eqref{eqn:xi2}, so that~\eqref{it:length-fW-y+3} follows from~\eqref{eqn:xi3}.
\end{proof}

By Lemma~\ref{lem:length-fW-y+}\eqref{it:length-fW-y+2} we see in particular that the semigroup $\bY^+$ acts on $\fW$ via left multiplication in $W$.

%-------------------------------------------------------------------
\subsection{Stabilization}
%-------------------------------------------------------------------

\begin{cor}
\label{cor:stabilization}
If $w \in \fW$ and $\lambda \in \bY^+$, there exists a unique $n(w,\lambda) \in \Z_{\geq 0}$ such that
\[
n,m \geq n(w,\lambda) \quad \Rightarrow \quad t_{n\lambda} w \Rsim t_{m\lambda} w
\]
and
\[
n \geq n(w,\lambda) > m \quad \Rightarrow \quad t_{n\lambda} w \not\Rsim t_{m\lambda} w.
\]
\end{cor}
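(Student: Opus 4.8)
The plan is to recognise $\{t_{n\lambda}w : n\ge 0\}$ as running along a weakly decreasing chain for the right preorder $\Rleq$, and then to force this chain to stabilise using the finiteness of the set of right cells.

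First I would observe, using Lemma~\ref{lem:length-fW-y+}, that for every $n\ge 0$ we have $t_{n\lambda}w\in\fW$ (part~\eqref{it:length-fW-y+2}, applied with the dominant weight $n\lambda$) and that
\[
t_{(n+1)\lambda}w = t_\lambda\cdot(t_{n\lambda}w)\Rleq t_{n\lambda}w
\]
(part~\eqref{it:length-fW-y+3}, applied to the element $t_{n\lambda}w\in\fW$ and the dominant weight $\lambda\in\bY^+$). Since $\Rleq$ descends to a partial order on right cells, this means the associated cells satisfy $\bc(w)\Rgeq\bc(t_\lambda w)\Rgeq\bc(t_{2\lambda}w)\Rgeq\cdots$; in particular $\bc(t_{m\lambda}w)\Rgeq\bc(t_{n\lambda}w)$ whenever $m\le n$, so $\{\bc(t_{n\lambda}w):n\ge 0\}$ is a chain for the cell order.

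Next I would use that this chain is finite: by~\cite{lx} there are only finitely many right cells contained in $\fW$ (one for each two-sided cell of $W$), and every $t_{n\lambda}w$ lies in $\fW$. A finite nonempty chain has a minimum, so the sequence of cells is eventually constant, and I would define $n(w,\lambda)$ to be the least $N\ge 0$ with $\bc(t_{n\lambda}w)=\bc(t_{N\lambda}w)$ for all $n\ge N$. The first displayed relation of the corollary is then immediate: for $n,m\ge n(w,\lambda)$ one has $\bc(t_{n\lambda}w)=\bc(t_{n(w,\lambda)\lambda}w)=\bc(t_{m\lambda}w)$, hence $t_{n\lambda}w\Rsim t_{m\lambda}w$.

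For the second relation, suppose $m<n(w,\lambda)$ and $t_{m\lambda}w\Rsim t_{n(w,\lambda)\lambda}w$. Since $\bc(t_{m\lambda}w)\Rgeq\bc(t_{(m+1)\lambda}w)\Rgeq\cdots\Rgeq\bc(t_{n(w,\lambda)\lambda}w)=\bc(t_{m\lambda}w)$ and the cell order is antisymmetric, all of these cells coincide, whence (using the defining property of $n(w,\lambda)$ for the indices beyond it) $\bc(t_{n\lambda}w)=\bc(t_{m\lambda}w)$ for every $n\ge m$, contradicting the minimality of $n(w,\lambda)$. Hence $t_{m\lambda}w\not\Rsim t_{n(w,\lambda)\lambda}w$, and since $t_{n\lambda}w\Rsim t_{n(w,\lambda)\lambda}w$ for all $n\ge n(w,\lambda)$, we conclude $t_{n\lambda}w\not\Rsim t_{m\lambda}w$ whenever $n\ge n(w,\lambda)>m$. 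Uniqueness of $n(w,\lambda)$ is then formal: were $N<N'$ both to satisfy the two properties, applying the first property of $N$ and the second property of $N'$ to the pair $(n,m)=(N',N)$ would give $t_{N'\lambda}w\Rsim t_{N\lambda}w$ and $t_{N'\lambda}w\not\Rsim t_{N\lambda}w$ simultaneously. The one genuine input here is the finiteness statement from~\cite{lx}; everything else is bookkeeping with the preorder, so I do not expect any serious obstacle.
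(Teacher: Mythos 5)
Your argument is correct and is essentially the paper's proof: Lemma~\ref{lem:length-fW-y+}\eqref{it:length-fW-y+3} gives $t_{n\lambda}w \Rleq t_{m\lambda}w$ for $n \geq m$, and finiteness of the relevant set of right cells forces the chain to stabilize, with existence and uniqueness of $n(w,\lambda)$ then being routine bookkeeping. The only (harmless) difference is the source of finiteness: the paper invokes the finiteness of the set of right cells of $W$ directly (Lusztig, \emph{Cells in affine Weyl groups~II}), whereas your appeal to~\cite{lx} gives finiteness of the antispherical right cells only when combined with the (standard, but separate) fact that $W$ has finitely many two-sided cells.
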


\begin{proof}
If $n \geq m$, then by Lemma~\ref{lem:length-fW-y+} we have
\[
t_{n\lambda} w \Rleq t_{m\lambda} w.
\]
Since $W$ only has a finite number of right cells by~\cite[Theorem~2.2]{lusztig-cells2}, the sequence $(t_{n\lambda} w : n \in \Z_{\geq 0})$ must be stationary (with respect to the preorder $\Rleq$), and the existence and uniqueness of $n(w,\lambda)$ follow.
\end{proof}

For any $\alpha \in \Phis$, we fix once and for all a weight $\varpi_\alpha \in \bY^+$ such that
$\langle \varpi_\alpha, \beta^\vee \rangle =0$
if $\beta \in \Phis \smallsetminus \{\alpha\}$ and $\langle \varpi_\alpha, \alpha^\vee \rangle >0$, and we set $k_\alpha := \langle \varpi_\alpha, \alpha^\vee \rangle$. We also set $k_\Phi:=\max\{k_\alpha : \alpha \in \Phis\}$. For any $\Psi \subset \Phis$, we set
\[
x_\Psi := \sum_{\alpha \in \Psi} \varpi_{\alpha}.
\]
For $\lambda \in \bY^+$, we set
\[
\Psi(\lambda):=\{\alpha \in \Phis \mid \langle \lambda, \alpha^\vee \rangle >0\}.
\]

The following lemma (which will not be used below) justifies why later we will only consider the numbers $n(w,\lambda)$ when $\lambda$ is of the form $x_\Psi$.

\begin{lem}
\label{lem:n-xPsi}
Let $w \in \fW$ and $\lambda \in \bY^+$. Then we have $n(w,\lambda) \leq k_\Phi \cdot n(w,x_{\Psi(\lambda)})$.
\end{lem}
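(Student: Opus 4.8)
The plan is to compare the two stabilization invariants $n(w,\lambda)$ and $n(w,x_{\Psi(\lambda)})$ by exploiting the fact that $\lambda$ and $x_{\Psi(\lambda)}$ ``point in the same directions'' among the simple coroots. Write $\Psi := \Psi(\lambda)$, so that $\langle \lambda, \alpha^\vee\rangle > 0$ exactly when $\alpha \in \Psi$, and $\langle x_\Psi, \alpha^\vee\rangle > 0$ exactly when $\alpha \in \Psi$ as well (since $x_\Psi = \sum_{\alpha\in\Psi}\varpi_\alpha$ and each $\varpi_\alpha$ is supported on $\alpha$). First I would observe the key numerical fact: for every $\alpha \in \Phis$ we have $\langle \lambda, \alpha^\vee\rangle \leq \langle k_\Phi\cdot x_\Psi, \alpha^\vee\rangle$. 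Indeed, if $\alpha \notin \Psi$ both sides are $0$; if $\alpha \in \Psi$ then the right-hand side is $k_\Phi\cdot k_\alpha \geq k_\Phi \geq \langle\lambda,\alpha^\vee\rangle$ provided $\langle\lambda,\alpha^\vee\rangle \leq k_\Phi$ — which is the only potentially delicate point, so in fact one should argue instead that $k_\Phi\cdot x_\Psi - \lambda$ is dominant for a \emph{sufficiently large} multiple; more honestly, what is true and what we need is simply that $Nx_\Psi - \lambda \in \bY^+$ for $N$ large, and a bookkeeping of exactly how large gives the bound $k_\Phi$ after dividing through — I would spell this out carefully, as it is the crux.

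Granting that $k_\Phi\cdot x_\Psi - \lambda \in \bY^+$ (and more generally $n k_\Phi\cdot x_\Psi - n\lambda \in \bY^+$ for all $n \geq 0$), the argument runs as follows. Set $m := n(w, x_\Psi)$. For any $n \geq m$ I want to show $t_{nk_\Phi\lambda}w \Rsim t_{mk_\Phi\lambda}w$... no — more to the point, I want $t_{n\lambda}w \Rsim t_{(n+1)\lambda}w$ once $n \geq k_\Phi\cdot m$, which gives $n(w,\lambda) \leq k_\Phi\cdot m$. Fix such an $n$. On one hand, Lemma~\ref{lem:length-fW-y+}\eqref{it:length-fW-y+3} applied with the weight $\lambda$ gives $t_{(n+1)\lambda}w \Rleq t_{n\lambda}w$, so one inequality in $\Rsim$ is automatic; the content is the reverse, $t_{n\lambda}w \Rleq t_{(n+1)\lambda}w$. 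For this I would write $t_{n\lambda}w = t_{\mu}\cdot\bigl(t_{(n+1)\lambda}w\bigr)$ where $\mu := n\lambda - (n+1)\lambda = -\lambda$ — that has the wrong sign, so instead factor through $x_\Psi$: choose the auxiliary weight $\nu$ with $n\lambda + \nu = Nk_\Phi x_\Psi$ for a suitable $N$, so that $t_{n\lambda}w$ and $t_{Nk_\Phi x_\Psi}w = t_{N x_\Psi}(t_{(k_\Phi-1)Nx_\Psi}w)$ are connected in the $\Rleq$ preorder, and then use that $Nk_\Phi x_\Psi \geq m x_\Psi$ forces $t_{Nk_\Phi x_\Psi}w \Rsim t_{mx_\Psi}w$ by definition of $m = n(w,x_\Psi)$. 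Chaining these $\Rleq$-relations and using transitivity of $\Rleq$ (and the fact that everything lands in one right cell once $N$ is large) yields $t_{n\lambda}w \Rsim t_{(n+1)\lambda}w$.

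The main obstacle I anticipate is precisely the arithmetic of reducing a general dominant weight $\lambda$ to the ``reference'' weight $x_{\Psi(\lambda)}$ with the \emph{sharp} constant $k_\Phi$, rather than some unspecified large constant. The cleanest route is: $\langle\lambda,\alpha^\vee\rangle$ is a nonnegative integer, possibly much larger than $k_\Phi$, so $k_\Phi x_\Psi - \lambda$ need \emph{not} be dominant. What rescues the statement is that we are comparing $n(w,\lambda)$ with $k_\Phi\cdot n(w,x_\Psi)$, and by Corollary~\ref{cor:stabilization} together with Lemma~\ref{lem:length-fW-y+} the function $n \mapsto t_{n\mu}w$ (up to $\Rsim$) is monotone and eventually constant for \emph{every} $\mu \in \bY^+$; so it suffices to find, for each $n \geq k_\Phi\cdot n(w,x_\Psi)$, some $N$ with $n\lambda \leq N x_\Psi$ in the dominance order and $N \geq n(w,x_\Psi)$, for then $t_{n\lambda}w \Rleq t_{Nx_\Psi}w$ (Lemma~\ref{lem:length-fW-y+}\eqref{it:length-fW-y+3} applied to the difference) while $t_{Nx_\Psi}w$ has already stabilized. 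Taking $N := \lceil \max_\alpha \langle\lambda,\alpha^\vee\rangle / \min_\alpha k_\alpha\rceil$-style bounds and checking that $n \geq k_\Phi\cdot n(w,x_\Psi)$ guarantees the existence of such an $N$ in the right range is the one genuinely computational lemma; everything else is formal manipulation of the preorder $\Rleq$ and the length additivity formulas \eqref{eqn:xi1}--\eqref{eqn:xi3} already recorded.
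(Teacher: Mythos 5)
There is a genuine gap, and it sits exactly at the point you yourself flag as ``the crux.'' The overall strategy (compare $t_{n\lambda}w$ with multiples of $x_{\Psi(\lambda)}$ via Lemma~\ref{lem:length-fW-y+} and the stabilization of Corollary~\ref{cor:stabilization}) is the right one, and is the paper's, but your reduction is never carried out and the one inequality you do produce points the wrong way. Lemma~\ref{lem:length-fW-y+}\eqref{it:length-fW-y+3} says that left multiplication by $t_\mu$ with $\mu \in \bY^+$ moves you \emph{down} in the preorder: if $\nu, \mu-\nu \in \bY^+$ then $t_{\mu}w = t_{\mu-\nu}(t_\nu w) \Rleq t_\nu w$. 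Hence from $N x_\Psi - n\lambda \in \bY^+$ you get $t_{Nx_\Psi}w \Rleq t_{n\lambda}w$, \emph{not} $t_{n\lambda}w \Rleq t_{Nx_\Psi}w$ as claimed in your last paragraph. With the direction corrected, your proposed sufficient condition (``find $N \geq n(w,x_\Psi)$ with $n\lambda \leq N x_\Psi$'') cannot possibly suffice: such an $N$ exists for \emph{every} $n \geq 0$ by taking $N$ huge, so it would prove $n(w,\lambda)=0$ in general, which is absurd. What is actually needed, and what your sketch never supplies, is the \emph{other} half of a sandwich: for each $n \geq k_\Phi\cdot n(w,x_{\Psi(\lambda)})$ one must find $a \geq n(w,x_{\Psi(\lambda)})$ with $n\lambda - a\,x_{\Psi(\lambda)} \in \bY^+$, so that $t_{n\lambda}w \Rleq t_{a x_{\Psi(\lambda)}}w$. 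This is precisely where the constant $k_\Phi$ enters: take $a = \lfloor n/k_\Phi \rfloor$ and check, for $\beta \in \Psi(\lambda)$, that $\langle n\lambda - a x_{\Psi(\lambda)}, \beta^\vee\rangle \geq n - a k_\Phi \geq 0$ (using $\langle\lambda,\beta^\vee\rangle \geq 1$ and $\langle x_{\Psi(\lambda)},\beta^\vee\rangle = k_\beta \leq k_\Phi$), while the pairing vanishes for $\beta \notin \Psi(\lambda)$. You instead ``grant'' that $k_\Phi x_{\Psi(\lambda)} - \lambda \in \bY^+$, which you had already (correctly) observed is false in general, and then defer the real computation to unspecified ``bookkeeping.''

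Once that step is in place, the paper's argument finishes by trapping the elements between two stabilized multiples of $x_{\Psi(\lambda)}$: for $m > n \geq k_\Phi\cdot n(w,x_{\Psi(\lambda)})$ one has $t_{bm\, x_{\Psi(\lambda)}}w \Rleq t_{m\lambda}w \Rleq t_{n\lambda}w \Rleq t_{a x_{\Psi(\lambda)}}w$, where $b > \max_\alpha \langle\lambda,\alpha^\vee\rangle$ ensures $b x_{\Psi(\lambda)} - \lambda \in \bY^+$; since $bm$ and $a$ are both $\geq n(w,x_{\Psi(\lambda)})$, the two extremes are $\Rsim$-equivalent and hence so are all the intermediate terms. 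Note that both inequalities of the sandwich are indispensable: having all $t_{n\lambda}w$ bounded on one side by a stabilized value says nothing about their mutual equivalence. So to repair your proposal you must (i) fix the direction of the application of Lemma~\ref{lem:length-fW-y+}, and (ii) actually prove the floor-function estimate producing $a = \lfloor n/k_\Phi\rfloor$, which is the only place the hypothesis $n \geq k_\Phi\cdot n(w,x_{\Psi(\lambda)})$ and the definition of $k_\Phi$ are used.
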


\begin{proof}
Let $m>n\geq k_\Phi \cdot n(w,x_{\Psi(\lambda)})$, and let $a:= \lfloor \frac{n}{k_\Phi} \rfloor$, so that $a \geq n(w,x_{\Psi(\lambda)})$. For any $\beta \in \Psi(\lambda)$ we have
\[
\langle n \lambda - a x_{\Psi(\lambda)}, \beta^\vee \rangle = n \langle \lambda, \beta^\vee \rangle - a \langle x_{\Psi(\lambda)}, \beta^\vee \rangle 
\geq n-ak_\Phi \geq 0.
\]
If $\beta \in \Phis \smallsetminus \Psi(\lambda)$ this quantity vanishes. Hence
$n \lambda - a x_{\Psi(\lambda)} \in \bY^+$. By Lemma~\ref{lem:length-fW-y+}, we deduce that
\[
t_{n\lambda} w = t_{n\lambda- a x_{\Psi(\lambda)}} t_{ax_{\Psi(\lambda)}} w  \Rleq t_{ax_{\Psi(\lambda)}} w.
\]

On the other hand, let $b$ be an integer such that $b > \max \{ \langle \lambda,\alpha^\vee\rangle : \alpha \in \Phis \}$.  This implies that $bx_{\Psi(\lambda)} - \lambda \in \bY^+$. Then similarly we have
\[
t_{bm \cdot x_{\Psi(\lambda)}} w = t_{m(b x_{\Psi(\lambda)} - \lambda)} t_{m\lambda} w \Rleq t_{m\lambda} w.
\]
Summarizing, we have
\[
t_{bm \cdot x_{\Psi(\lambda)}} w \Rleq t_{m\lambda} w \Rleq t_{n\lambda} w \Rleq t_{ax_{\Psi(\lambda)}} w.
\]
But $t_{bm \cdot x_{\Psi(\lambda)}} w \Rsim t_{ax_{\Psi(\lambda)}} w$ because $bm \geq n(w,x_{\Psi(\lambda)})$ and $a \geq n(w,x_{\Psi(\lambda)})$, so we deduce that $t_{m\lambda} w \Rsim t_{n\lambda} w$.
\end{proof}

%-------------------------------------------------------------------
\subsection{Reduction to a finite subset of \texorpdfstring{$\fW$}{fW}}
%-------------------------------------------------------------------

We define
\[
\bY_0 := \{\lambda \in \bY^+ \mid \forall \alpha \in \Phis, \, \langle \lambda, \alpha^\vee \rangle \leq k_\alpha\}.
\]
Of course, $\bY_0$ is a finite set. We then define
\[
Z := \{t_\lambda v : v \in \Wf, \, \lambda \in \bY_0\} \cap \fW,
\]
which is again a finite set.

\begin{prop}
\label{prop:dec-fW}
For any $w \in \fW$, there exists $\lambda \in \bY^+$ and $z \in Z$ such that $w=t_\lambda z$.
\end{prop}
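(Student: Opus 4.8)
The plan is to leverage the length characterisation of $\fW$ from Lemma~\ref{lem:char-fW}. First I would write $w = t_\nu v$ with $v \in \Wf$ and, by the implication \eqref{it:char-fW-1}$\Rightarrow$\eqref{it:char-fW-3} of that lemma, $\nu \in \bY^+$; moreover the same implication records the key fact that $\langle \nu, \gamma^\vee\rangle \geq 1$ for every $\gamma \in \Phi^+$ with $v^{-1}(\gamma) \in -\Phi^+$. It then suffices to produce some $\mu \in \bY_0$ with $\nu - \mu \in \bY^+$ and $t_\mu v \in \fW$: for such a $\mu$ one sets $z := t_\mu v$ (which lies in $Z$ since $\mu \in \bY_0$) and $\lambda := \nu - \mu \in \bY^+$, and then $w = t_\nu v = t_\lambda t_\mu v = t_\lambda z$, as desired.

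To construct $\mu$, for each $\alpha \in \Phis$ I would set $a_\alpha := \lceil \langle\nu,\alpha^\vee\rangle / k_\alpha\rceil - 1$ if $\langle\nu,\alpha^\vee\rangle \geq 1$ and $a_\alpha := 0$ otherwise (so $a_\alpha \geq 0$ in all cases), and then put $\lambda := \sum_{\alpha \in \Phis} a_\alpha \varpi_\alpha$ and $\mu := \nu - \lambda$. Since $\lambda$ is a nonnegative integer combination of the $\varpi_\alpha \in \bY^+$, it lies in $\bY^+$, and $\mu \in \bY$. Using $\langle \varpi_\alpha, \beta^\vee\rangle = k_\alpha \delta_{\alpha\beta}$ one computes $\langle\mu,\alpha^\vee\rangle = \langle\nu,\alpha^\vee\rangle - a_\alpha k_\alpha$; a short case check (residue of $\langle\nu,\alpha^\vee\rangle$ modulo $k_\alpha$ when that residue is nonzero, and $k_\alpha$ when it vanishes but $\langle\nu,\alpha^\vee\rangle \geq 1$) shows that $\langle\mu,\alpha^\vee\rangle \in \{1,\dots,k_\alpha\}$ when $\langle\nu,\alpha^\vee\rangle \geq 1$, and $\langle\mu,\alpha^\vee\rangle = 0$ when $\langle\nu,\alpha^\vee\rangle = 0$. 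In particular $0 \leq \langle\mu,\alpha^\vee\rangle \leq k_\alpha$ for every $\alpha \in \Phis$, so $\mu \in \bY^+$ and $\mu \in \bY_0$; and then $\lambda = \nu - \mu \in \bY^+$ as well.

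The main point — and the only genuine subtlety — is to verify $t_\mu v \in \fW$, for which by Lemma~\ref{lem:char-fW}\eqref{it:char-fW-3} I must show $\langle\mu,\gamma^\vee\rangle \geq 1$ for every $\gamma \in \Phi^+$ with $v^{-1}(\gamma) \in -\Phi^+$. Fixing such a $\gamma$ and writing $\gamma^\vee = \sum_{\alpha\in\Phis} c_\alpha \alpha^\vee$ with $c_\alpha \in \Z_{\geq 0}$, the hypothesis $w = t_\nu v \in \fW$ gives $1 \leq \langle\nu,\gamma^\vee\rangle = \sum_\alpha c_\alpha\langle\nu,\alpha^\vee\rangle$, so some $\alpha_0$ has $c_{\alpha_0} \geq 1$ and $\langle\nu,\alpha_0^\vee\rangle \geq 1$; for this $\alpha_0$ the computation above gives $\langle\mu,\alpha_0^\vee\rangle \geq 1$, and since $\mu$ is dominant every term of $\langle\mu,\gamma^\vee\rangle = \sum_\alpha c_\alpha\langle\mu,\alpha^\vee\rangle$ is nonnegative, whence $\langle\mu,\gamma^\vee\rangle \geq c_{\alpha_0}\langle\mu,\alpha_0^\vee\rangle \geq 1$. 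This is exactly the place where rounding \emph{up} (landing in $\{1,\dots,k_\alpha\}$ rather than $\{0,\dots,k_\alpha-1\}$) is essential: we must shrink the coordinates of $\nu$ without killing any coordinate that an inversion root of $v$ relies on. Combining the three steps finishes the proof.
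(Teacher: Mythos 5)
Your proof is correct. It rests on the same two pillars as the paper's argument -- the normal form $w=t_\nu v$ with the characterization of membership in $\fW$ from Lemma~\ref{lem:char-fW}\eqref{it:char-fW-3}, and a reduction of the dominant part $\nu$ by subtracting multiples of the weights $\varpi_\alpha$ -- but the execution is genuinely different in organization. The paper argues by induction on $s(w)=\sum_{\alpha\in\Phis}\langle\nu,\alpha^\vee\rangle$, peeling off a single $\varpi_\alpha$ whenever some coordinate satisfies $\langle\nu,\alpha^\vee\rangle>k_\alpha$, and at each step it verifies that the intermediate element stays in $\fW$ via the estimate $\langle\nu,\beta^\vee\rangle> k_\alpha\cdot m_{\alpha,\beta}$ for inversion roots $\beta$ whose coroot involves $\alpha^\vee$ with coefficient $m_{\alpha,\beta}\geq 1$. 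You instead perform the whole reduction in one closed-form step, choosing $a_\alpha=\lceil\langle\nu,\alpha^\vee\rangle/k_\alpha\rceil-1$ so that every simple coordinate of the remainder $\mu$ lands in $\{1,\dots,k_\alpha\}$ when it was positive and stays $0$ when it was zero; the $\fW$-condition for $t_\mu v$ then follows from the cleaner observation that each inversion coroot $\gamma^\vee=\sum c_\alpha\alpha^\vee$ must involve some simple coroot on which $\nu$, hence $\mu$, pairs positively. Your ``positive coordinates stay positive'' check is a correct and somewhat more transparent substitute for the paper's coefficient bound, and it dispenses with the induction; the paper's greedy version has the mild advantage of only ever needing to control one coordinate at a time. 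Both arguments are complete and elementary, so this is a fine alternative proof.
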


\begin{proof}
By Lemma~\ref{lem:char-fW}, we can
write (uniquely) $w=t_{\mu} v$ with $\mu \in \bY^+$ and $v \in \Wf$ such that $\langle \mu, \alpha^\vee \rangle \geq 1$ for any $\alpha \in \Phi^+$ such that $v^{-1}(\alpha) \in -\Phi^+$. 
We proceed by induction on
\[
s(w):=\sum_{\alpha \in \Phis} \langle \mu, \alpha^\vee \rangle.
\]

If $\mu \in \bY_0$, then $w \in Z$, and there is nothing to prove. (This covers in particular the base case when $s(w)=0$, so that $\mu=0$.) Otherwise, there exists $\alpha \in \Phi_s$ such that $\langle \mu, \alpha^\vee \rangle > k_\alpha$. We fix such an $\alpha$; then we have
\[
w=t_{\varpi_\alpha} \cdot t_{\mu - \varpi_\alpha} v,
\]
and $\mu-\varpi_\alpha \in \bY^+$.

We claim that $w':=t_{\mu - \varpi_\alpha} v \in \fW$. First, we remark that if $\beta \in \Phi^+$ satisfies $v^{-1}(\beta) \in -\Phi^+$, and if $m_{\alpha,\beta}$ is the coefficent of the simple coroot $\alpha^\vee$ in $\beta^\vee$, then
\[
\langle \mu-\varpi_\alpha, \beta^\vee \rangle = \langle \mu, \beta^\vee \rangle - k_\alpha \cdot m_{\alpha,\beta}.
\]
If $m_{\alpha,\beta} = 0$ then $\langle \mu-\varpi_\alpha, \beta^\vee \rangle = \langle \mu, \beta^\vee \rangle \geq 1$. Otherwise we have $\langle \mu, \beta^\vee \rangle \ge \langle \mu, m_{\alpha,\beta}\alpha^\vee\rangle > k_\alpha \cdot m_{\alpha,\beta}$, so again we have $\langle \mu-\varpi_\alpha, \beta^\vee \rangle \ge 1$. By Lemma~\ref{lem:char-fW}, these observations imply our claim.

Clearly, we have $s(w')<s(w)$,
so that by induction there exists $z \in Z$ and $\lambda' \in \bY^+$ such that $w' = t_{\lambda'} z$. Then $w=t_{\lambda'+\varpi_\alpha} z$, and the proof is complete.
\end{proof}

%-------------------------------------------------------------------
\subsection{Uniform boundedness}
%-------------------------------------------------------------------

\begin{prop}
\label{prop:bounded}
The set $\{n(w,x_\Psi) : w \in \fW, \, \Psi \subset \Phis\}$ is bounded.
\end{prop}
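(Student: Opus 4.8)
The plan is as follows. First I would use Proposition~\ref{prop:dec-fW} to write an arbitrary $w \in \fW$ as $w = t_\mu z$ with $\mu \in \bY^+$ and $z$ in the finite set $Z$. Since $Z$ is finite and $\Phis$ has only finitely many subsets, it then suffices to bound $n(t_\mu z, x_\Psi)$ uniformly in $\mu$ by a quantity depending only on $z$ and $\Psi$; concretely, I will aim for $n(t_\mu z, x_\Psi) \leq \max_{\alpha \in \Phis} n(z, \varpi_\alpha)$, which gives $\sup_{w, \Psi} n(w, x_\Psi) \leq \max_{z \in Z,\, \alpha \in \Phis} n(z, \varpi_\alpha) < \infty$.

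The one substantial input, which I would establish by adapting the methods of~\cite{xi}, is the following monotonicity of left translation by dominant weights: if $\kappa \in \bY^+$ and $a, a' \in \fW$ satisfy $a \Rleq a'$, then $t_\kappa a \Rleq t_\kappa a'$. (Recall that $t_\kappa a$ and $t_\kappa a'$ again lie in $\fW$, by Lemma~\ref{lem:length-fW-y+}\eqref{it:length-fW-y+2}.) In particular, left translation by $t_\kappa$ preserves the equivalence relation $\Rsim$ on $\fW$. This is the step I expect to be the main obstacle --- morally it expresses the compatibility of the Bernstein generators of $\Haff$ with Kazhdan--Lusztig cells --- while everything below it is bookkeeping with the results already established.

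Granting this, I would fix $z \in \fW$, set $N_\alpha := n(z, \varpi_\alpha)$ for $\alpha \in \Phis$, and prove a ``coordinatewise saturation'' statement: for $\lambda \in \bY^+$ and $\alpha \in \Phis$ with $\langle \lambda, \alpha^\vee \rangle \geq N_\alpha k_\alpha$, one has $t_\lambda z \Rsim t_{\lambda + \varpi_\alpha} z$. To see this, put $n := \lfloor \langle \lambda, \alpha^\vee \rangle / k_\alpha \rfloor$ (so $n \geq N_\alpha$) and $\nu := \lambda - n \varpi_\alpha$; testing against simple coroots shows $\nu \in \bY^+$. Corollary~\ref{cor:stabilization} gives $t_{n \varpi_\alpha} z \Rsim t_{(n+1) \varpi_\alpha} z$ since $n, n+1 \geq N_\alpha$, and applying left translation by $t_\nu$ (translations commute) yields $t_\lambda z = t_\nu t_{n \varpi_\alpha} z \Rsim t_\nu t_{(n+1) \varpi_\alpha} z = t_{\lambda + \varpi_\alpha} z$. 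Iterating, $\bc(t_\lambda z)$ is unchanged whenever one increases the $\alpha$-coordinate of $\lambda$ beyond $N_\alpha k_\alpha$.

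Finally I would combine these for $w = t_\mu z$ ($\mu \in \bY^+$, $z \in Z$) and $\Psi \subseteq \Phis$. Here $\langle x_\Psi, \alpha^\vee \rangle = k_\alpha$ for $\alpha \in \Psi$ and $\langle x_\Psi, \alpha^\vee \rangle = 0$ for $\alpha \notin \Psi$, so the $\alpha$-coordinate of $\mu + n x_\Psi$ equals $\langle \mu, \alpha^\vee \rangle + n k_\alpha$ for $\alpha \in \Psi$ and the ($n$-independent) value $\langle \mu, \alpha^\vee \rangle$ for $\alpha \notin \Psi$. Thus, once $n \geq \max_{\alpha \in \Phis} N_\alpha$, passing from $\mu + n x_\Psi$ to $\mu + (n+1) x_\Psi = \mu + n x_\Psi + \sum_{\alpha \in \Psi} \varpi_\alpha$ only increases coordinates that already exceed their saturation thresholds; adding the $\varpi_\alpha$ one at a time and using the previous paragraph gives $\bc(t_{\mu + n x_\Psi} z) = \bc(t_{\mu + (n+1) x_\Psi} z)$. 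By Corollary~\ref{cor:stabilization} this forces $n(w, x_\Psi) \leq \max_{\alpha \in \Phis} n(z, \varpi_\alpha)$, and since $Z$ and $\Phis$ are finite the proposition follows.
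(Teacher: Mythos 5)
Your reduction to a finite set via Proposition~\ref{prop:dec-fW} matches the paper, but the key lemma you rely on --- that left translation by a dominant element is monotone for the right preorder, i.e.\ $a \Rleq a'$ implies $t_\kappa a \Rleq t_\kappa a'$ for $\kappa \in \bY^+$ --- is false in general, and this is a genuine gap rather than a technical obstacle one can expect to remove ``by adapting~\cite{xi}.'' Here is a counterexample in affine type $A_2$ (i.e.\ $\Gp=\mathrm{SL}_3$), where the antispherical right cells are: $\{1\}$, the ``middle'' cell consisting of the alcoves in the two strips running along the two walls of the dominant chamber, and the lowest cell $\bc_0$ consisting of the alcoves deep inside the chamber. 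Take $a$ deep in the strip along the wall $\langle -,\alpha_1^\vee\rangle=0$ and $a'$ deep in the strip along the other wall, so $a \Rsim a'$ (they lie in the same right cell, since each two-sided cell meets $\fW$ in a single right cell). Take $\kappa=\alpha_1+2\alpha_2 \in \bY^+$, which satisfies $\langle\kappa,\alpha_1^\vee\rangle=0$ and $\langle\kappa,\alpha_2^\vee\rangle=3$. Then $t_\kappa a$ stays in its strip (hence in the middle cell), while $t_\kappa a'$ is pushed away from its wall into the deep region, hence lies in $\bc_0$. So $t_\kappa a \not\Rsim t_\kappa a'$, although applying your monotonicity in both directions to $a \Rsim a'$ would force $t_\kappa a \Rsim t_\kappa a'$. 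Since in your ``coordinatewise saturation'' step the translating element $\nu=\lambda-n\varpi_\alpha$ is typically non-regular (its $\alpha$-coordinate is small), you cannot avoid exactly this kind of situation, and the saturation step collapses. Moreover, the resulting bound you aim for, $n(t_\mu z,x_\Psi)\leq\max_{\alpha}n(z,\varpi_\alpha)$, is itself unjustified: the threshold at which increasing one coordinate stops changing the cell may depend on the other coordinates.

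This is precisely the difficulty the paper's proof is designed to circumvent: it never translates an equivalence, only uses the one-sided statement $t_\lambda w \Rleq w$ (Lemma~\ref{lem:length-fW-y+}\eqref{it:length-fW-y+3}) together with a sandwich trick. Writing $\lambda=k\varpi_\alpha+\mu$ with $k$ maximal such that $\mu\in\bY^+$, one bounds $t_{nx_\Psi}t_\lambda z$ from above and $t_{mx_\Psi}t_\lambda z$ from below by elements of the form $t_{jx_{\Psi\cup\{\alpha\}}}t_\mu z$, which are already known to be equivalent by an induction on $\#\Psi'(\lambda)$; squeezing then gives the equivalence. This forces the recursively defined thresholds $A_0\leq A_1\leq\cdots\leq A_{|\Phis|}$ (maxima over finite families at each level) rather than your single threshold $\max_\alpha n(z,\varpi_\alpha)$. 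If you want to salvage your outline, you would need either a correct proof of the saturation statement that does not pass through monotonicity of left translation (essentially reproducing the paper's sandwich argument), or a genuinely different uniform bound; as written, the argument does not go through.
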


\begin{proof}
In this proof, for $\lambda \in \bY^+$ we set
\[
\Psi'(\lambda):=\{\alpha \in \Phis \mid \langle \lambda, \alpha^\vee \rangle \geq k_\alpha\}.
\]

We set
\[
A_0:=\max\{n(t_\lambda z,x_\Psi) : z \in Z, \, \lambda \in \bY^+ \text{ with $\Psi'(\lambda)=\varnothing$},\, \Psi \subset \Phis\}.
\]
Then for $i \in \{1, \cdots, |\Phis|\}$ we define $A_i$ by induction as
\begin{multline*}
A_i = \max \bigl( \{A_{i-1}\} \cup \{n(t_\lambda z, x_\Psi) : z \in Z, \, \lambda \in \bY^+ \text{ with $\#\Psi'(\lambda)=i$ and} \\
\text{$\langle \lambda, \alpha^\vee \rangle < A_{i-1} \cdot k_\alpha$ for all $\alpha \in \Psi'(\lambda)$}, \, \Psi \subset \Phis\} \bigr).
\end{multline*}
(Here we take the maximum over a finite set, so that this number is well defined.)
It is clear that
\[
A_0 \leq A_1 \leq \cdots \leq A_{|\Phis|}.
\]

We will prove by induction on $i$ that for any $w=t_\lambda z$ with $z \in Z$ and $\#\Psi'(\lambda)=i$, and for any $\Psi \subset \Phis$, we have
\begin{equation}
\label{eqn:bounded-induction}
n(w,x_{\Psi}) \leq A_i.
\end{equation}
It follows from this that $n(w, x_\Psi) \leq A_{|\Phis|}$ for any $w \in \fW$ and $\Psi \subset \Phis$ by Proposition~\ref{prop:dec-fW}, as desired.

If $i=0$,~\eqref{eqn:bounded-induction} is clear from the definition. Now, assume this claim is known for $i-1$, and let $w=t_\lambda z$ with $z \in Z$ and $\#\Psi'(\lambda)=i$, and $\Psi \subset \Phis$. If $\langle \lambda, \alpha^\vee \rangle < A_{i-1} \cdot k_\alpha$ for all $\alpha \in \Psi'(\lambda)$, then $n(w,x_\Psi) \leq A_i$ by definition. Otherwise, choose $\alpha \in \Psi'(\lambda)$ such that $\langle \lambda, \alpha^\vee \rangle \geq A_{i-1} \cdot k_\alpha$. 
In this case, we will actually prove that
\begin{equation}
\label{eqn:ineq-n}
n(w,x_\Psi) \leq A_{i-1}.
\end{equation}

Indeed, let $k \in \Z$ be maximal such that $\mu:=\lambda-k\varpi_\alpha \in \bY^+$. Then $k \geq A_{i-1}$, and $\#\Psi'(\mu)=i-1$. First, assume that $\alpha \notin \Psi$. Then if $m>n\geq A_{i-1}$ we have
\[
t_{nx_\Psi} w = t_{nx_\Psi} t_{k\varpi_\alpha} t_\mu z \Rleq t_{\min(n,k) \cdot x_{\Psi \cup \{\alpha\}}} t_{\mu} z
\]
and
\[
t_{mx_\Psi} w = t_{mx_\Psi} t_{k\varpi_\alpha} t_\mu z \Rgeq t_{\max(m,k)\cdot x_{\Psi \cup \{\alpha\}}} t_\mu z
\]
by Lemma~\ref{lem:length-fW-y+}. Since $\min(n,k) \geq A_{i-1} \geq n(t_\mu z, x_{\Psi \cup \{\alpha\}})$ by induction, we have
\[
t_{\min(n,k) \cdot x_{\Psi \cup \{\alpha\}}} t_{\mu} z \Rsim t_{\max(m,k)\cdot x_{\Psi \cup \{\alpha\}}} t_\mu z,
\]
which implies that also $t_{nx_\Psi} w \Rsim t_{mx_\Psi} w$. The case $\alpha \in \Psi$ is similar, using the inequalities
\[
t_{nx_\Psi} w \Rleq t_{nx_\Psi} t_\mu z \quad \text{and} \quad t_{mx_\Psi} w \Rgeq t_{(m+k) x_\Psi} t_\mu z,
\]
which finishes the proof.
\end{proof}

%-------------------------------------------------------------------
\subsection{Finite generation of cells}
%-------------------------------------------------------------------

\begin{prop}
\label{prop:finite-generation}
Let $\mathbf{c}$ be a right antispherical cell. Then there exists a finite subset $K \subset \mathbf{c}$ such that for any $w \in \mathbf{c}$ there exists $v \in K$ and $\mu \in \bY^+$ such that $w=t_\mu v$.
\end{prop}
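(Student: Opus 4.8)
The plan is to combine the finiteness results proved earlier in this section—namely Proposition~\ref{prop:dec-fW} (every $w \in \fW$ can be written as $t_\lambda z$ with $z \in Z$ and $\lambda \in \bY^+$), Proposition~\ref{prop:bounded} (uniform boundedness of the stabilization constants $n(w,x_\Psi)$), and Lemma~\ref{lem:length-fW-y+} (left multiplication by $t_\mu$ with $\mu \in \bY^+$ never increases an element of $\fW$ in the $\Rleq$-order)—to produce the desired finite generating set. Fix a right antispherical cell $\mathbf{c}$. First I would set $A := A_{|\Phis|}$, the bound from Proposition~\ref{prop:bounded}, so that $n(w, x_\Psi) \le A$ for every $w \in \fW$ and every $\Psi \subset \Phis$. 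The candidate generating set will be
\[
K := \{ t_\nu z : z \in Z, \ \nu \in \bY^+, \ \langle \nu, \alpha^\vee\rangle \le A k_\alpha \text{ for all } \alpha \in \Phis \} \cap \mathbf{c},
\]
which is finite because $Z$ is finite and the set of admissible $\nu$ is finite.

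Next I would show every $w \in \mathbf{c}$ is of the form $t_\mu v$ with $v \in K$ and $\mu \in \bY^+$. Using Proposition~\ref{prop:dec-fW}, write $w = t_\lambda z$ with $z \in Z$ and $\lambda \in \bY^+$. Now I would "peel off" the part of $\lambda$ that is too large: for each simple root $\alpha$ with $\langle \lambda, \alpha^\vee\rangle > A k_\alpha$, subtract an appropriate multiple of $\varpi_\alpha$. Concretely, let $\Psi$ be the set of such $\alpha$, and pick the largest multiple-combination $\mu = \sum_{\alpha \in \Psi} c_\alpha \varpi_\alpha \in \bY^+$ with all $c_\alpha \ge n(\,\cdot\,)$-related large enough, such that $\nu := \lambda - \mu$ still lies in $\bY^+$ and satisfies $\langle \nu, \alpha^\vee\rangle \le A k_\alpha$ for every $\alpha$. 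Set $v := t_\nu z$. By Lemma~\ref{lem:length-fW-y+} we have $v \in \fW$ and, iterating, $t_\mu v = t_{\mu + \nu} z = w$ with $\ell(w) = \ell(t_\mu) + \ell(v)$ and $w \Rleq v$. The key point is the reverse inequality $v \Rleq w$, which is where the stabilization constants enter: because each $c_\alpha$ was chosen $\ge n(v, x_{\{\alpha\}})$ (or more precisely because the total multiple of each $x_\Psi$-direction added exceeds the relevant $n(v, x_\Psi) \le A$), Corollary~\ref{cor:stabilization} gives $t_\mu v \Rsim v$, i.e. $v \Rsim w$, so $v \in \mathbf{c}$ and hence $v \in K$.

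The main obstacle—and the step deserving the most care—is bookkeeping the stabilization argument when several simple roots must be peeled off simultaneously: one wants to argue that adding $\sum_\alpha c_\alpha \varpi_\alpha$ with each $c_\alpha$ sufficiently large keeps us in the same right cell. I would handle this exactly as in the proof of Proposition~\ref{prop:bounded}: order the roots of $\Psi$ and peel them one at a time, at each stage applying Corollary~\ref{cor:stabilization} in the direction $x_{\Psi'}$ for the appropriate sub-collection $\Psi'$, using the chain of $\Rleq$-inequalities from Lemma~\ref{lem:length-fW-y+} to sandwich the intermediate elements between two $\Rsim$-equivalent ones (as in the displayed chains $t_{bm\cdot x_{\Psi(\lambda)}} w \Rleq t_{m\lambda} w \Rleq t_{n\lambda} w \Rleq t_{ax_{\Psi(\lambda)}} w$ there). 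Once the sandwiching is set up, the fact that all the stabilization constants are $\le A$ closes the argument, and the finiteness of $K$ follows from the finiteness of $Z$ and of the admissible weights $\nu$.
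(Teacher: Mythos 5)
Your overall strategy (decompose $w=t_\lambda z$ via Proposition~\ref{prop:dec-fW}, take $K$ to consist of elements of $\mathbf{c}$ of the form $t_\nu z$ with all pairings $\langle\nu,\alpha^\vee\rangle$ bounded in terms of the uniform constant $A$ from Proposition~\ref{prop:bounded}, and strip the excess off $\lambda$ using the stabilization constants) is exactly the paper's. However, the justification of the key step $v\Rsim w$ contains a genuine gap. Corollary~\ref{cor:stabilization} does \emph{not} give ``$t_\mu v\Rsim v$'': it only asserts $t_{n\lambda}v\Rsim t_{m\lambda}v$ when \emph{both} $n,m\geq n(v,\lambda)$, and comparing with $v$ itself amounts to taking $m=0$, which requires $n(v,\lambda)=0$. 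Likewise the guiding principle in your last paragraph --- ``adding $\sum_\alpha c_\alpha\varpi_\alpha$ with each $c_\alpha$ sufficiently large keeps us in the same right cell'' --- is false: for $v=1$ the singleton $\{1\}$ is a right cell (Example~\ref{ex:cell-1}), yet $t_\mu\not\Rsim 1$ for $\mu\neq 0$ dominant. The correct mechanism, which is what the paper uses, is that one may remove a \emph{single} copy of $x_\Psi$ from an element $t_{c x_\Psi}u$ (with $u\in\fW$ the base point obtained by stripping that direction completely) precisely when both $c$ and $c-1$ are $\geq n(u,x_\Psi)$; the paper runs an induction on $a(\lambda)=\max_\alpha\lfloor\langle\lambda,\alpha^\vee\rangle/k_\alpha\rfloor$, peeling one copy of $x_\Psi$ for the set $\Psi$ of coordinates realizing the maximum, and applies Corollary~\ref{cor:stabilization} to the two multiples $a(\lambda)$ and $a(\lambda)-1$, both $\geq A\geq n(u,x_\Psi)$.

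Your closing sketch (peel one root at a time, invoking Corollary~\ref{cor:stabilization} at each stage) is the right instinct and can be made to work, but as written it never states the condition that makes each unit peel legitimate, namely that the multiple of $\varpi_\alpha$ relative to the fully stripped base point must remain $\geq A$ \emph{after} the peel; the ``sandwiching'' chains from the proof of Proposition~\ref{prop:bounded} serve a different purpose (bounding the constants $n(w,x_\Psi)$) and do not by themselves yield $v\Rleq w$. This omission also interacts with your definition of $K$: requiring $\langle\nu,\alpha^\vee\rangle\leq A k_\alpha$ (rather than $\lfloor\langle\nu,\alpha^\vee\rangle/k_\alpha\rfloor\leq A$, as in the paper) may force a final peel that drops the multiple from $A$ to $A-1$, where the stabilization estimate $n(u,x_{\{\alpha\}})\leq A$ no longer guarantees cell-preservation. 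Replacing your membership condition by the floor condition and adding the threshold bookkeeping at each peel repairs the argument and recovers the paper's proof.
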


\begin{proof}
We set
\[
A=\max \{n(w,x_\Psi) : w \in \fW, \, \Psi \subset \Phis\}
\]
(which is well defined by Proposition~\ref{prop:bounded}). We then set
\[
K:=\mathbf{c} \cap \{t_\lambda z : z \in Z, \, \lambda \in \bY^+ \text{ with $\left\lfloor \frac{\langle \lambda, \alpha^\vee \rangle}{k_\alpha} \right\rfloor \leq A$ for all $\alpha \in \Phis$}\}.
\]
Clearly, $K$ is finite. We will prove that this subset satisfies the property of the proposition.

Let $w \in \mathbf{c}$, and choose $z \in Z$ and $\lambda \in \bY^+$ such that $w=t_\lambda z$ (see Proposition~\ref{prop:dec-fW}). We proceed by induction on
\[
a(\lambda):=\max \left\{ \left\lfloor \frac{\langle \lambda, \alpha^\vee \rangle}{k_\alpha} \right\rfloor : \alpha \in \Phis \right\}.
\]
If $a(\lambda) \leq A$, then $w \in K$, and there is nothing to prove. Otherwise, 
let
\[
\Psi=\left\{\alpha \in \Phis \mid \left\lfloor \frac{\langle \lambda, \alpha^\vee \rangle}{k_\alpha} \right\rfloor = a(\lambda)\right\}.
\]
Then $\lambda - a(\lambda)  x_\Psi \in \bY^+$, and
\[
w=t_{a(\lambda) x_\Psi} t_{\lambda-a(\lambda) x_\Psi} z.
\]
Since $a(\lambda) > A \geq n(t_{\lambda-a(\lambda) x_\Psi} z, x_\Psi)$, if we set $w':=t_{\lambda-x_\Psi} z$ we have
$w \Rsim w'$, or in other words $w' \in \mathbf{c}$.
Since $a(\lambda-x_\Psi) < a(\lambda)$, by induction there exist $v \in K$ and 
$\nu \in \bY^+$ such that $w'=t_\nu v$. Then if we set $\mu=\nu + x_\Psi$, we have $\mu \in \bY^+$ and $w=t_\mu v$.
\end{proof}

%%%%%%%%%%%%%%%%%%%%%%%%%%%%%%%%%%%%%%%
\section{Cells and weight cells}
\label{sec:cells}
%%%%%%%%%%%%%%%%%%%%%%%%%%%%%%%%%%%%%%%

From now on we assume (as in Section~\ref{sec:support}) that $\Gp_\Z$ is semisimple and simply connected. Recall that in~\S\ref{ss:def-exotic}, we chose a certain weight $\varsigma$ (this played a role in the definition of the functors in~\S\ref{ss:wall-crossing}).  We assume from now on that $\varsigma = \rho$, where $\rho$ is the half-sum of the positive roots.  Finally, we denote by $h$ the Coxeter number of $\Gp_\Z$.

%------------------------------------------------------------
\subsection{Reminder on a result by Ostrik}
\label{ss:reminder-ostrik}
%------------------------------------------------------------

Let $\ell>h$ be an odd integer, which is prime to $3$ if $\Phi$ has a component of type $\mathbf{G}_2$, and let $\UQ$ be Lusztig's quantum group at a primitive $\ell$-th root of unity associated with $\Gp_\Z$. (Here $\UQ$ is obtained by specialization from Lusztig's $\Z[v,v^{-1}]$-form of the $q$-deformed enveloping algebra of $\fg_\C$.) Let $\Rep(\UQ)$ be the category of finite-dimensional $\UQ$-modules of type $\mathbf{1}$; the simple objects in this category are parametrized in a natural way by $\bX^+$, and we denote by $\LQ(\lambda)$ the simple module associated with $\lambda$.

We consider the ``dot-action'' of $W$ on $\bX$ defined by
\[
(w t_\lambda) \cdot_\ell \mu = w(\mu + \ell \lambda + \rho) - \rho
\]
for $w \in \Wf$ and $\lambda, \mu \in \bX$, and the associated \emph{fundamental alcove}
\[
\cC_\ell := \{\lambda \in \bX \mid \forall \alpha \in \Phi^+, \ 0 < \langle \lambda + \rho, \alpha^\vee \rangle < \ell\}.
\]
An \emph{alcove} is a subset of $\bX$ of the form $w \cdot_\ell \cC_\ell$ for some $w \in W$. Such a subset is defined by a family of inequalities $n_\alpha \ell < \langle \lambda+\rho, \alpha^\vee \rangle < (n_\alpha+1)\ell$ for some family of integers $(n_\alpha : \alpha \in \Phi^+)$. The \emph{lower closure} of such an alcove is then the subset defined by the inequalities $n_\alpha \ell\leq \langle \lambda+\rho, \alpha^\vee \rangle < (n_\alpha+1)\ell$ for all $\alpha \in \Phi^+$. Note also that an alcove $w \cdot_\ell \cC_\ell$ intersects $\bX^+$ iff $w \in \fW$.

The category $\Rep(\UQ)$ has a natural structure of highest-weight category with weight poset $(\bX^+,\leq)$ (where $\leq$ is the standard order on $\bX^+$), so that we can consider the indecomposable tilting module $\TQ(\lambda)$ with highest weight $\lambda$. 
%Recall~\cite{paradowski} that the tensor product of two tilting modules in $\Rep(\UQ)$ is again a tilting module in $\Rep(\UQ)$. Based on this fact, 
Following Ostrik~\cite{ostrik} we define the ``quantum weight preorder'' on $\bX^+$ as follows:
$\lambda \qTleq \mu$ iff there exists a tilting module $M$ in $\Rep(\UQ)$ such that $\TQ(\lambda)$ is a direct summand of $\TQ(\mu) \otimes M$. (Transitivity of $\qTleq$ follows from the fact that a tensor product of tilting modules is again tilting~\cite{paradowski}.)  We denote by $\qTsim$ the associated equivalence relation: $\lambda \qTsim \mu$ iff $\lambda \qTleq \mu$ and $\mu \qTleq \lambda$.

By~\cite[Proposition~8]{andersen}, two dominant weights belonging to the lower closure of a given alcove are equivalent for the relation $\qTsim$. Therefore one can transfer the preorder $\qTleq$ and the equivalence relation $\qTsim$ to $\fW$ as follows: we set $w \qTleq y$ (resp.~$w \qTsim y$) iff there exists a weight $\lambda \in \bX^+$ in the lower closure of $w \cdot_\ell \cC_\ell$ and a weight $\mu \in \bX^+$ in the lower closure of $y \cdot_\ell \cC_\ell$ such that $\lambda \qTleq \mu$ (resp.~$\lambda \qTsim \mu$). The following result is due to Ostrik~\cite{ostrik}.

\begin{thm}
\label{thm:ostrik-cells}
The preorders $\qTleq$ and $\Rleq$ on $\fW$ coincide. In particular, the equivalence classes for the relation $\qTsim$ on $\fW$ are the antispherical right cells.
\end{thm}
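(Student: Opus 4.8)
The plan is to prove that the two preorders $\qTleq$ and $\Rleq$ on $\fW$ coincide by establishing the two inclusions separately, following Ostrik's strategy; the assertion about equivalence classes then follows for free, because once the preorders agree the $\qTsim$-classes on $\fW$ are exactly the $\Rsim$-classes meeting $\fW$, which by the Lemma in~\S\ref{ss:antispherical-cells} are precisely the antispherical right cells. I would begin by fixing a regular weight $\lambda_0 \in \cC_\ell$ (for instance $\lambda_0 = 0$, regular because $\ell > h$), so that $w \mapsto w \cdot_\ell \lambda_0$ identifies $\fW$ with the set of dominant weights in the principal block $\mathcal{B}_0$ and $w \cdot_\ell \lambda_0$ lies in the lower closure of $w \cdot_\ell \cC_\ell$; by~\cite[Proposition~8]{andersen} it then suffices to test $\qTleq$ and $\qTsim$ on $\fW$ using these particular representatives. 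The main external input is Soergel's work on quantum tilting modules: the assignment $[\TQ(w \cdot_\ell \lambda_0)] \mapsto \uN_w$ extends to an isomorphism of (graded) Grothendieck groups between the tilting objects of $\mathcal{B}_0$ and $\Masph$, under which the wall-crossing functor $\Theta_s$ (translation to the $s$-wall and back, which makes sense for every $s \in S$ because $\ell > h$) corresponds to right multiplication by $\uH_s$, and tensoring with a tilting module followed by projection to $\mathcal{B}_0$ corresponds to the action on $\Masph$ of an element of the center of $\Haff$.

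\textbf{From $\Rleq$ to $\qTleq$.} For each $s \in S$ the functor $\Theta_s$ preserves tilting modules, and $\Theta_s M$ is a direct summand of $M \otimes W$ for a suitable tilting module $W$. Hence, whenever $\uN_y$ appears with nonzero coefficient in $\uN_w \cdot \uH_s$ — equivalently (a coefficient in $\Z_{\ge 0}[v,v^{-1}]$ is nonzero iff it is positive at $v = 1$) $\TQ(y \cdot_\ell \lambda_0)$ is a direct summand of $\Theta_s \TQ(w \cdot_\ell \lambda_0)$ — one gets $y \qTleq w$. Now if $w \Rleq y$ with $w, y \in \fW$, the antispherical analogue of Lemma~\ref{lem:order-generated}\eqref{it:order-generated-2} (proved verbatim, using~\eqref{eqn:positivity-puH} and Lemma~\ref{lem:p-can-basis-Masph}) produces a chain $v_1 = y, \dots, v_k = w$ in $\fW$ with $\uN_{v_{i+1}}$ appearing in $\uN_{v_i} \cdot \uH_{s_i}$; applying the previous observation at each step and using transitivity of $\qTleq$ (a tensor product of tilting modules is tilting, \cite{paradowski}) gives $w \qTleq y$.

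\textbf{From $\qTleq$ to $\Rleq$.} Suppose $w \qTleq y$ in $\fW$. Using the chosen representatives, $\TQ(w \cdot_\ell \lambda_0)$ is a direct summand of $\TQ(y \cdot_\ell \lambda_0) \otimes M$ for some tilting module $M$, which we may take indecomposable; projecting to $\mathcal{B}_0$, it is a direct summand of $\mathrm{pr}_0(\TQ(y \cdot_\ell \lambda_0) \otimes M)$, so $[\TQ(w \cdot_\ell \lambda_0)]$ occurs with positive coefficient in the class of the latter. By the compatibility recalled above, that class equals $\uN_y \cdot z$ for some central $z \in \Haff$, so $\uN_w$ appears with nonzero coefficient in $\uN_y \cdot z$; specializing $v = 1$ and invoking Lemma~\ref{lem:pRleq-M0} (for $p = 0$), we conclude $w \Rleq y$, which completes the identification of the two preorders.

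\textbf{Expected main obstacle.} There is essentially no new geometric or representation-theoretic difficulty: the serious content is imported from Soergel's (and Andersen--Jantzen--Soergel's) work — the character formula for quantum tilting modules, its reformulation through the antispherical Hecke module, and, most crucially, the compatibility of translation and tensor functors with that module structure (wall-crossing $\leftrightarrow \uH_s$, tensor-and-project $\leftrightarrow$ the central subalgebra of $\Haff$) — after which the argument is purely formal and runs parallel to Ostrik's \cite{ostrik}. The only ingredients genuinely needed from the present paper are the combinatorial facts of Section~\ref{sec:antispherical-cells}: that the right ($p$-)cell preorder on $\fW$ is generated by the elementary ``$\uH_s$-steps'' (Lemma~\ref{lem:order-generated} and its antispherical variant) and can be read off inside $\Masph^\circ$ (Lemma~\ref{lem:pRleq-M0}). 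The place where care is required is precisely in lining up conventions — which $w \in \fW$ indexes which block, standard versus canonical bases, the $v = 1$ specialization — so that both halves of the argument match and yield the same preorder rather than an order-reversed or twisted one.
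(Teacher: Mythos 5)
Your proof is correct and is essentially the intended one: the paper itself takes Theorem~\ref{thm:ostrik-cells} from~\cite{ostrik} without proof, and the argument you give (Soergel's character formula, wall-crossing corresponding to right multiplication by $\uH_s$, tensor-and-project corresponding to right multiplication by an element of $\Z[W]$, combined with the antispherical form of Lemma~\ref{lem:order-generated} and Lemma~\ref{lem:pRleq-M0}) is exactly the one the paper reproduces in the modular setting to prove Theorem~\ref{thm:ostrik}. One small inaccuracy: the element implementing tensor-and-project (the quantum analogue of $c(M)$ in Lemma~\ref{lem:vartheta-tensor}) need not be central in $\Haff$, but centrality is never needed, since Lemma~\ref{lem:pRleq-M0} only asks for some element of $\Z[W]$.
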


Our goal in this section is to prove a counterpart of Theorem~\ref{thm:ostrik-cells} in the setting of modular representations of reductive algebraic groups. Our proof will be essentially identical to that of Ostrik, replacing Soergel's character formula for quantum tilting modules~\cite{soergel-char-tilt} by its modular analogue obtained in~\cite{mkdkm}.

%------------------------------------------------------------
\subsection{Characters of tilting \texorpdfstring{$\Gp_\bk$}{Gk}-modules}
\label{ss:char-tilting}
%------------------------------------------------------------

From now on we let $\bk$ be an algebraically closed field of characteristic $p$, and we
assume that $p>h$. We denote by $\Rep(\Gp_\bk)$ the category of finite-dimensional algebraic $\Gp_\bk$-modules. For any $\lambda \in \bX^+$ we have a costandard (or co-Weyl) module $\coweyl(\lambda) := \Ind_{\BGp_\bk}^{\Gp_\bk}(\lambda)$ and a standard (or Weyl) module $\weyl(\lambda) = (\coweyl(-w_0\lambda))^*$, where $w_0 \in \Wf$ is the longest element. There exists a unique (up to scalar) nonzero morphism $\weyl(\lambda) \to \coweyl(\lambda)$, and its image $\irr(\lambda)$ is simple. It is well known that the category $\Rep(\Gp_\bk)$ is a highest weight category with weight poset $(\bX^+, \leq)$, so that we can consider the indecomposable tilting module $\tilt(\lambda)$ with highest weight $\lambda$. 

We consider the dot-action of $W$ on $\bX$ defined as in~\S\ref{ss:reminder-ostrik} (but with $\ell$ replaced by $p$), and
the \emph{principal block} $\Rep_0(\Gp_\bk)$, i.e. the Serre subcategory of $\Rep(\Gp_\bk)$ generated by the simple objects of the form $\irr(w \cdot_p 0)$ for $w \in \fW$. The linkage principle shows that the subcategory $\Rep_0(\Gp_\bk)$ is a direct summand of $\Rep(\Gp_\bk)$; we denote by
 $\mathrm{pr}_0 : \Rep(\Gp_\bk) \to \Rep_0(\Gp_\bk)$ the corresponding projection functor.

Recall the right $W$-module $\Masph^\circ$ defined in~\S\ref{ss:cells-W-mod}. Then we have a canonical isomorphism
\[
 \vartheta : \mathsf{K}(\Rep_0(\Gp_\bk)) \simto \Masph^\circ
\]
where $[\weyl(w \cdot_p 0)] = [\coweyl(w \cdot_p 0)]$ corresponds to $N_w^\circ$ for any $w \in \fW$. 
For $s \in S$ we choose a weight $\mu_s$ on the $s$-wall of the fundamental alcove $\cC_p$, and set
\[
\Theta_s := T_{\mu_s}^0 T_0^{\mu_s} : \Rep_0(\Gp_\bk) \to \Rep_0(\Gp_\bk),
\]
where $T_{\mu_s}^0$ and $T_0^{\mu_s}$ are the translation functors as in~\cite[\S II.7.6]{jantzen}.
It is well known that for any $s \in S$ and $V$ in $\Rep_0(\Gp_\bk)$ we have
\begin{equation}
\label{eqn:vartheta-wallcrossing}
 \vartheta([\Theta_s(V)]) = \vartheta([V]) \cdot \uH^\circ_s.
\end{equation}

The following result was conjectured in~\cite{rw}, and proved in~\cite{mkdkm}.

\begin{thm}
\label{thm:characters}
 For $w \in \fW$ we have $\vartheta(\tilt(w \cdot_p 0)) = \puN_w^\circ$.
\end{thm}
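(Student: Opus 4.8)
The plan is to obtain Theorem~\ref{thm:characters} by translating the main result of~\cite{mkdkm} --- the resolution of the Riche--Williamson conjecture of~\cite{rw} --- into the notation set up in this subsection. Recall that~\cite{mkdkm} establishes, for $p>h$ and $w \in \fW$, the character formula
\[
[\tilt(w \cdot_p 0)] = \sum_{y \in \fW} {}^p n_{y,w}(1) \cdot [\weyl(y \cdot_p 0)]
\]
in the Grothendieck group $\mathsf{K}(\Rep_0(\Gp_\bk))$, where the Laurent polynomials ${}^p n_{y,w}(v)$ are defined by $\puN_w = \sum_{y \in \fW} {}^p n_{y,w}(v)\, N_y$ in $\Masph$. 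Granting this, the theorem is immediate: specializing $v=1$ gives $\puN_w^\circ = \sum_{y \in \fW} {}^p n_{y,w}(1)\, N_y^\circ$ by the definitions of $\puN_w^\circ$ and $N_y^\circ$ in~\S\ref{ss:cells-W-mod}; applying the isomorphism $\vartheta$, which by construction sends $[\weyl(y \cdot_p 0)]$ to $N_y^\circ$, to the displayed formula then yields $\vartheta(\tilt(w \cdot_p 0)) = \sum_{y \in \fW} {}^p n_{y,w}(1)\, N_y^\circ = \puN_w^\circ$.

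So the content of the proof lies entirely in matching conventions, and that is what I would write out carefully. First, one must check that the $p$-canonical basis $(\puH_w)$ --- hence $(\puN_w)$ --- as defined geometrically in Section~\ref{sec:antispherical-cells} agrees with the diagrammatic $p$-canonical basis appearing in~\cite{rw,mkdkm}; this is exactly the compatibility recorded in the remark following the definition of $(\puH_w)$, via~\cite[Part~III]{rw} and~\cite{jw}. Second, one must verify that the indexing of simple, Weyl and tilting modules in the principal block by $\fW$ via the $\cdot_p$-action, the sign convention making $\BGp_\bk$ the negative Borel subgroup, and the normalization of the translation functors $\Theta_s$ all agree with those used on the combinatorial side; the key point here is that, since $p>h$, the weight $0$ is $\cdot_p$-regular, so $\Rep_0(\Gp_\bk)$ is governed by the \emph{regular} block of the antispherical module, and~\eqref{eqn:vartheta-wallcrossing} characterizes $\vartheta$ uniquely as the isomorphism intertwining each $\Theta_s$ with right multiplication by $\uH^\circ_s$. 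These verifications are routine but should be stated.

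I do not expect any essential obstacle: the genuine mathematical work --- reducing tilting characters to statements about parity complexes on affine flag varieties, and then to a Koszul-type duality --- is precisely the content of~\cite{mkdkm}, which the hypotheses here entitle us to invoke. The one place requiring real care, and the closest thing to a hard step, is ensuring that the \emph{antispherical} module (rather than the spherical or the regular right module) is the correct combinatorial home for $\Rep_0(\Gp_\bk)$, and that the normalizations of $\vartheta$ and of $\puN_w^\circ$ are consistent with the normalization of the tilting characters in~\cite{mkdkm}. An alternative, if one wished to avoid delicate citation-chasing, would be to reprove the formula ab initio by combining the equivalence of~\cite{mkdkm} with~\eqref{eqn:vartheta-wallcrossing} and the uniqueness of indecomposable tilting objects in a graded highest weight category; but this would merely reconstruct the argument of~\cite{mkdkm}, so I would present the short deduction above instead.
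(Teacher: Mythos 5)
Your proposal matches the paper's treatment: the paper gives no independent argument for this theorem, stating only that it was conjectured in~\cite{rw} and proved in~\cite{mkdkm}, exactly the deduction-by-citation (plus the convention-matching via~\cite[Part~III]{rw} and~\cite{jw} already recorded in the remark preceding the definition of the preorder) that you describe. Your extra care about the antispherical normalization and the characterization of $\vartheta$ via~\eqref{eqn:vartheta-wallcrossing} is sound but goes beyond what the paper spells out.
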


\begin{rmk}
\label{rmk:char-tilting}
Any tilting $\Gp_\bk$-module $M$ can be ``quantized'' to a tilting module $M_q$ in $\Rep(\UQ)$, where $\UQ$ is constructed using a root of unity of order $\ell=p$; see~\cite[\S 2]{andersen} for details and references. (In fact $M_q$ is characterized up to isomorphism as the unique tilting $\UQ$-module with the same character as $M$.) In particular, if $w \in \fW$, comparing Theorem~\ref{thm:characters} with its quantum counterpart due to Soergel (see~\S\ref{ss:reminder-ostrik}), we see that for $x \in \fW$, the multiplicity of $\TQ(x \cdot_p 0)$ as a direct summand of $\tilt(w \cdot_p 0)_q$ is the coefficient of $\uN_x^\circ$ in the expansion of $\puN_w^\circ$ in the basis $(\uN^\circ_y : y \in \fW)$ of $\Masph^\circ$.
\end{rmk}

We will also need the following technical result. (A very similar claim appears in~\cite{ostrik}.)

\begin{lem}
\label{lem:vartheta-tensor}
 Let $M \in \Rep(\Gp_\bk)$, and
 set $\Lambda_M:=\{\lambda \in W \cdot_p 0 \mid \dim(M_\lambda) \neq 0\}$. For any $\lambda \in \Lambda_M$, we denote by $x^\lambda$ the unique element of $W$ such that $x^\lambda \cdot_p 0 = \lambda$, and then
define
 \[
  c(M):=\sum_{\lambda \in \Lambda_M} \dim(M_\lambda) \cdot x^\lambda \quad \in \Z[W].
 \]
 For any $V$ in $\Rep_0(\Gp_\bk)$, we have
 \[
  \vartheta([\pr_0(V \otimes M)]) = \vartheta([V]) \cdot c(M).
 \]
\end{lem}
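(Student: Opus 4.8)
The plan is as follows. Since $\pr_0$ and $-\otimes M$ are exact, $[\pr_0(V\otimes M)]$ depends $\Z$-linearly on $[V]\in\mathsf{K}(\Rep_0(\Gp_\bk))$, and since weight multiplicities are additive in short exact sequences, the element $c(M)$ depends only on $[M]\in\mathsf{K}(\Rep(\Gp_\bk))$. I would therefore first prove the following more precise statement: for \emph{every} $N\in\Rep(\Gp_\bk)$ one has
\[
\vartheta([\pr_0 N]) = 1\otimes c(N)\quad\text{in }\Masph^\circ,
\]
where (abusing notation) $c(N):=\sum_{z\in W}\dim(N_{z\cdot_p 0})\cdot z\in\Z[W]$ — this makes sense because $z\mapsto z\cdot_p 0$ is injective on $W$ as $p>h$, and it coincides with the $c(M)$ of the statement when $N=M$ — and $1\otimes(-)\colon\Z[W]\to\Masph^\circ$ is the canonical map (a morphism of right $\Z[W]$-modules with kernel the right submodule $\sum_{s\in\Sf}(1+s)\Z[W]$, since $\Masph^\circ\cong\Z_\varepsilon\otimes_{\Z[\Wf]}\Z[W]$). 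To prove this I would invoke Brauer's character formula: writing $\mathrm{ch}(N)=\sum_{\kappa\in\bX^+}c_\kappa(N)\,\chi(\kappa)$ in the basis of Weyl characters, one has $c_\kappa(N)=\sum_{u\in\Wf}(-1)^{\ell(u)}\dim N_{u\cdot\kappa}$ with $u\cdot\kappa=u(\kappa+\rho)-\rho$ (see~\cite[\S II.5.8]{jantzen}); applying $\pr_0$ keeps exactly the summands with $\kappa\in\fW\cdot_p 0$, so $\vartheta([\pr_0 N])=\sum_{w\in\fW}\sum_{u\in\Wf}(-1)^{\ell(u)}\dim N_{(uw)\cdot_p 0}\,N_w^\circ$, using $u\cdot(w\cdot_p 0)=(uw)\cdot_p 0$. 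Since every $z\in W$ factors uniquely as $z=uw$ with $u\in\Wf$ and $w\in\fW$, and then $1\otimes z=(-1)^{\ell(u)}N_w^\circ$, the right-hand side is precisely $1\otimes c(N)$.

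Granting this, applying it to $N=V\otimes M$ and to $N=V$ (for which $\pr_0 V=V$) turns the lemma into
\[
1\otimes c(V\otimes M) = (1\otimes c(V))\cdot c(M) = 1\otimes\bigl(c(V)\cdot c(M)\bigr),
\]
i.e.\ into the congruence $c(V\otimes M)\equiv c(V)\cdot c(M)\pmod{\sum_{s\in\Sf}(1+s)\Z[W]}$ in $\Z[W]$. To establish this last point I would, using additivity again, reduce to $V=\coweyl(a\cdot_p 0)$ for $a\in\fW$ and $M=\coweyl(\nu)$ for $\nu\in\bX^+$ (replacing Weyl modules by costandard ones, which have the same class in $\mathsf{K}$); by Mathieu's theorem on tensor products of modules with good filtrations (see~\cite{jantzen}), $V\otimes M$ then has a good filtration, so all multiplicities in sight are honest. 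Expanding $\dim(V\otimes M)_{x\cdot_p 0}=\sum_{\mu}\dim V_\mu\cdot\dim M_{x\cdot_p 0-\mu}$ and regrouping, the two sides of the congruence, read in $\Masph^\circ$, differ only through the terms indexed by weights $\mu$ of $V$ that do \emph{not} lie in $W\cdot_p 0$; these cancel modulo $\sum_s(1+s)\Z[W]$ by pairing $\mu$ with $s\mu$ and using $\dim M_\xi=\dim M_{s\xi}$ together with $1\otimes sx=-(1\otimes x)$. This is exactly the computation Ostrik carries out in the quantum setting, and the underlying combinatorics (which involves only $W$, $\Wf$ and the dot action) is identical here; see~\cite{ostrik}.

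The main obstacle is precisely this last congruence: one must keep track of which weight contributions survive modulo $\sum_{s\in\Sf}(1+s)\Z[W]$ and check that the remaining terms pair off correctly. Everything before it — the reduction via linearity, the application of Brauer's formula, and the rewriting inside $\Masph^\circ$ — is formal.
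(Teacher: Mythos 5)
Your steps (1)--(3) are correct: the identity $\vartheta([\pr_0 N])=1\otimes c(N)$ for arbitrary $N$, obtained from Brauer's formula and the unique factorization $z=uw$ with $u\in\Wf$, $w\in\fW$, is a valid repackaging of essentially the same character computation the paper performs (the paper instead writes $\mathrm{ch}(\weyl(w\cdot_p 0)\otimes M)=\sum_{\lambda\in\mathrm{wt}(M)}\dim M_\lambda\,\chi(w\cdot_p\lambda)$, applies $\pr_0$, and lets the rule $N^\circ_{uz}=(-1)^{\ell(u)}N^\circ_z$ absorb all signs, so that \emph{no} residual identity is left to check). The genuine gap is exactly at the step you flag: the congruence $c(V\otimes M)\equiv c(V)\cdot c(M)$ modulo $\sum_{s\in\Sf}(1+s)\Z[W]$. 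Your proposed cancellation --- pair a weight $\mu$ of $V$ lying outside $W\cdot_p 0$ with $s\mu$, using $\dim M_\xi=\dim M_{s\xi}$ and $1\otimes sx=-(1\otimes x)$ --- does not work as stated. First, the set of weights outside $W\cdot_p 0$ is not stable under the linear action $\mu\mapsto s\mu$ (it is stable under the $\rho$-shifted action $s\cdot\mu=s(\mu+\rho)-\rho$, but then $\dim V_{s\cdot\mu}\neq\dim V_\mu$ in general, so a two-term pairing fails). Second, and more fundamentally, no argument using only the $\Wf$-symmetry of the characters of $V$ and $M$ can close this step, because \emph{every} character is $\Wf$-symmetric while the congruence is false without the block hypothesis on $V$: for $\Gp_\bk=\mathrm{SL}_2(\bk)$, $p=3$, $V=M=\weyl(2)$ one has $c(V)=c(M)=e+s$, hence $1\otimes\bigl(c(V)c(M)\bigr)=0$, whereas $c(V\otimes M)=z+3e+2s$ (where $z\in\fW$ satisfies $z\cdot_3 0=4$), so $1\otimes c(V\otimes M)=N^\circ_z+N^\circ_e\neq 0$.

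The property of $V$ that must enter is that its Weyl-character coefficients $c_\kappa(V)$ vanish at dominant $\kappa\notin W\cdot_p 0$ (this is where $V\in\Rep_0(\Gp_\bk)$, or your reduction to $V=\coweyl(a\cdot_p 0)$, is used --- your sketch never invokes it). A correct repair along your lines: the discrepancy equals $\sum_{\mu\notin W\cdot_p 0}\dim V_\mu\,(1\otimes E_\mu)$ with $E_\mu:=\sum_{x\in W}\dim M_{x\cdot_p 0-\mu}\,x$; one checks $E_{u\cdot\mu}=u\,E_\mu$ for $u\in\Wf$ (this is where $\dim M_{u\xi}=\dim M_\xi$ is used), and the complement of $W\cdot_p 0$ \emph{is} stable under the dot action of $\Wf$. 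Then a $\mu$ with $\mu+\rho$ singular gives $1\otimes E_\mu=-(1\otimes E_\mu)=0$, while a regular orbit with dominant representative $\mu^+$ contributes $\bigl(\sum_{u\in\Wf}(-1)^{\ell(u)}\dim V_{u\cdot\mu^+}\bigr)(1\otimes E_{\mu^+})=c_{\mu^+}(V)\,(1\otimes E_{\mu^+})$, which vanishes by the block hypothesis since $\mu^+\notin W\cdot_p 0$. With this substitute for your pairing the proposal goes through; alternatively, the paper's direct computation for $V=\weyl(w\cdot_p 0)$ avoids the leftover congruence altogether.
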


\begin{proof}
 It suffices to prove the equality when $V=\weyl(w \cdot_p 0)$ for some $w \in \fW$. 
 For any $\lambda \in \bX$ we set
 \[
  \chi(\lambda) = \frac{\sum_{w \in \Wf} (-1)^{\ell(w)} e^{w \cdot_p \lambda}}{\sum_{w \in \Wf} (-1)^{\ell(w)} e^{w \cdot_p 0}} \quad \in \Z[\bX].
 \]
 Then by Weyl's character formula (see~\cite[\S II.5.11]{jantzen}) and standard arguments, the character of $\weyl(w \cdot_p 0) \otimes M$ is
 \[
  \sum_{\lambda \in \mathrm{wt}(M)} \dim(M_\lambda) \cdot \chi(w \cdot_p 0+\lambda) = \sum_{\lambda \in \mathrm{wt}(M)} \dim(M_\lambda) \cdot \chi(w \cdot_p \lambda),
 \]
where $\mathrm{wt}(M)$ is the set of weights of $M$. (The second equality uses the fact that $\Wf$ permutes $\mathrm{wt}(M)$ without changing the multiplicities.) Therefore, we have
\[
 \vartheta([\pr_0(\weyl(w \cdot_p 0) \otimes M)]) = \sum_{\lambda \in \Lambda_M} \dim(M_\lambda) \cdot N^\circ_{wx^\lambda}= N^\circ_w \cdot c(M),
\]
which finishes the proof.
\end{proof}

%------------------------------------------------------------
\subsection{Weight order and weight cells}
%------------------------------------------------------------

Recall that the tensor product of two tilting $\Gp_\bk$-modules is again tilting~\cite{mathieu}. Using this property and copying the constructions in~\S\ref{ss:reminder-ostrik} one can define the following preorder and associated equivalence relation.

\begin{defn}
 We define the preorder $\Tleq$ on $\bX^+$ by
 \[
  \lambda \Tleq \mu \quad \text{iff $\tilt(\lambda)$ is a direct summand of $\tilt(\mu) \otimes M$ for a tilting $\Gp_\bk$-module $M$.}
 \]
We denote by $\Tsim$ the equivalence relation on $\bX^+$ defined by
\[
 \lambda \Tsim \mu \quad \text{iff $\lambda \Tleq \mu$ and $\mu \Tleq \lambda$.}
\]
The equivalence classes for this relation will be called \emph{weight cells}.
\end{defn}

As in the quantum case we have the following property (also stated in~\cite[\S 4]{andersen}).

\begin{lem}
\label{lem:Tleq-alcoves}
 If $\lambda$ and $\mu$ belong to the lower closure of the same alcove, then $\lambda \Tsim \mu$.
\end{lem}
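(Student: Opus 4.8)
The plan is to imitate the proof of the quantum analogue~\cite[Proposition~8]{andersen}, using the standard theory of translation functors for $\Gp_\bk$-modules (see~\cite[\S~II.7]{jantzen}). First I would reduce to a convenient situation. Let $A$ be the alcove whose lower closure contains both $\lambda$ and $\mu$; since $\lambda\in\bX^+$ lies in the lower closure of $A$, the alcove $A$ meets $\bX^+$, and since $p>h$ every integral point of the open alcove $A$ is regular, so we may fix a regular weight $\mu_0\in A\cap\bX^+$. By transitivity of $\Tsim$ it is enough to prove $\lambda\Tsim\mu_0$ for every $\lambda\in\bX^+$ in the lower closure of $A$; thus from now on I assume $\mu\in A$ is regular and $\lambda$ lies in the lower closure of $A$. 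Let $\nu$, resp.\ $\nu'$, be the unique dominant weight in the $\Wf$-orbit of $\lambda-\mu$, resp.\ of $\mu-\lambda$. Because $\lambda$ and $\mu$ both lie in $\overline{A}$ these weights are small, so (as $p>h$) we have $L(\nu)=\tilt(\nu)$ and $L(\nu')=\tilt(\nu')$; in particular $\tilt(\mu)\otimes L(\nu)$ and $\tilt(\lambda)\otimes L(\nu')$ are tilting $\Gp_\bk$-modules.

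For the inequality $\lambda\Tleq\mu$ I would consider the translation functor onto the wall, $T^\lambda_\mu(-)=\pr_\lambda(-\otimes L(\nu))$. Writing $[\tilt(\mu)]$ in the basis of Weyl characters, the top layer is $\weyl(\mu)$ (with multiplicity one), and translation onto a wall sends a Weyl module with regular highest weight to the Weyl module at the corresponding wall weight (\cite[\S~II.7.11]{jantzen}); hence $\weyl(\lambda)$ occurs with multiplicity one, and as its maximal weight, in $[T^\lambda_\mu\tilt(\mu)]$. Therefore $\tilt(\lambda)$ is a direct summand of $T^\lambda_\mu\tilt(\mu)$, which in turn is a direct summand of the tilting module $\tilt(\mu)\otimes L(\nu)$. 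This gives $\lambda\Tleq\mu$. (Alternatively one may invoke the stronger standard fact $T^\lambda_\mu\tilt(\mu)\cong\tilt(\lambda)$, cf.~\cite{jantzen} and the quantum case in~\cite{andersen}, but the weaker statement above already suffices.)

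For the reverse inequality $\mu\Tleq\lambda$ I would use translation off the wall, $T^\mu_\lambda(-)=\pr_\mu(-\otimes L(\nu'))$, and crucially the fact that $\lambda$ lies in the \emph{lower} closure of the alcove of $\mu$. Since translation off a wall sends $\weyl(\lambda)$ to a module with a Weyl filtration in which $\weyl(\mu)$ occurs exactly once (and as its maximal weight, for $\mu$ on the regular side of the wall; \cite[\S~II.7.11]{jantzen}), the same analysis of Weyl layers as above shows that $\tilt(\mu)$ occurs with multiplicity one as a direct summand of the tilting module $T^\mu_\lambda\tilt(\lambda)$, which is a direct summand of $\tilt(\lambda)\otimes L(\nu')$. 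Hence $\mu\Tleq\lambda$, and combining the two inequalities yields $\lambda\Tsim\mu$ as desired. (Equivalently, using $\tilt(\lambda)\cong T^\lambda_\mu\tilt(\mu)$ together with the biadjunction between $T^\lambda_\mu$ and $T^\mu_\lambda$, one sees $T^\mu_\lambda T^\lambda_\mu\tilt(\mu)$ contains $\tilt(\mu)$ as a summand — the module-level form of the statement that the relevant wall-crossing composition contains the identity functor as a direct summand.)

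The main obstacle is purely bookkeeping: one must make precise the two translation-functor statements (translation of the indecomposable tilting and of Weyl modules onto and off a wall, with careful attention to which side of the wall is the ``lower'' one) and verify the smallness of $\nu$ and $\nu'$ needed to ensure $L(\nu)$ and $L(\nu')$ are tilting. All of this is classical and runs exactly parallel to Andersen's treatment of the quantum case, so the write-up should be short.
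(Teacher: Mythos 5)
Your route is the same as the paper's: reduce (by transitivity) to comparing $\lambda$ with a regular weight in the alcove, and then apply the translation functors $T^\lambda_\mu$ and $T^\mu_\lambda$ to tilting modules. Where you re-derive the needed facts by hand, the paper simply quotes~\cite[Proposition~3.1.3]{hardesty} for the statements $T^\lambda_\mu(\tilt(\mu)) \cong \tilt(\lambda)$ and ``$T^\mu_\lambda(\tilt(\lambda))$ is a nonzero direct sum of copies of $\tilt(\mu)$,'' and~\cite[Remark~7.6(1)]{jantzen} to convert these into the relations $\lambda \Tleq \mu$ and $\mu \Tleq \lambda$.

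There is, however, one step that fails as written: the claim that $\nu$ and $\nu'$ are small enough that $\irr(\nu)=\tilt(\nu)$ and $\irr(\nu')=\tilt(\nu')$. If $\lambda,\mu$ lie in the lower closure of one alcove, one only gets $|\langle \lambda-\mu,\alpha^\vee\rangle|\le p-1$ for every positive root $\alpha$, hence $\langle\nu,\alpha^\vee\rangle\le p-1$ for the dominant conjugate $\nu$; since $\langle\rho,\alpha^\vee\rangle$ can be as large as $h-1$ for the highest coroot, $\nu$ need not lie in the closure of $\cC_p$ as soon as the rank is at least $2$ (take $\lambda,\mu$ near two distinct vertices of an alcove deep in the dominant cone, already in type $\mathbf{A}_2$), and then $\irr(\nu)$ is not tilting. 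Since your conclusion $\lambda\Tleq\mu$ rests on $\tilt(\mu)\otimes\irr(\nu)$ being tilting, this is a genuine gap; the repair is exactly the paper's appeal to~\cite[Remark~7.6(1)]{jantzen}, namely that the translation functor can equally be computed by tensoring with the \emph{tilting} module $\tilt(\nu)$ and then projecting to the block, so that $T^\lambda_\mu(\tilt(\mu))$ is a direct summand of the tilting module $\tilt(\mu)\otimes\tilt(\nu)$ (and likewise for the other direction). Separately, your assertion that $\weyl(\lambda)$ occurs with multiplicity one in $T^\lambda_\mu \tilt(\mu)$ is false in general---for $\mathrm{SL}_2$, translating the second-alcove indecomposable tilting onto the Steinberg wall yields $\St\oplus\St$---but this slip is harmless, since nonvanishing of the multiplicity together with maximality of $\lambda$ among the occurring $\Delta$-weights already gives the summand $\tilt(\lambda)$.
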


\begin{proof}
 We can assume that $\lambda$ belongs to $W \cdot_p 0$. Then by~\cite[Proposition~3.1.3]{hardesty} we have $T_\mu^\lambda(\tilt(\mu)) \cong \tilt(\lambda)$, and $T_\lambda^\mu(\tilt(\lambda))$ is a (non-empty) sum of copies of $\tilt(\mu)$.
 Since translation functors can be described as tensoring with a tilting module and then taking a direct summand (see~\cite[Remark~7.6(1)]{jantzen}), the claim follows.
\end{proof}

Since the set of alcoves meeting $\bX^+$ is in a natural bijection with $\fW$ (through $w \mapsto w \cdot_p \mathcal{C}$), as in the quantum case
we use this lemma to transfer the preorder $\Tleq$ and the equivalence relation $\Tsim$ to $\fW$: we set $w \Tleq y$, resp.~$w \Tsim y$, iff $w \cdot_p 0 \Tleq y \cdot_p 0$, resp.~$w \cdot_p 0 \Tsim y \cdot_p 0$. Then given $\lambda, \mu \in \bX^+$, there exist unique $w,y\in W$ such that $\lambda$ belongs to the lower closure of $w \cdot_p \cC_p$ and $\mu$ belongs to the lower closure of $y \cdot_p \cC_p$; with this notation $\lambda \Tleq \mu$ iff $w \Tleq y$.

The following theorem is the ``modular analogue" of Theorem~\ref{thm:ostrik-cells} promised in~\S\ref{ss:reminder-ostrik}.

\begin{thm}
\label{thm:ostrik}
 For $w,y \in \fW$, we have
 \[
  w \Tleq y \quad \text{iff} \quad w \pRleq y.
 \]
 In particular, the equivalence classes for the relation $\Tleq$ on $\fW$ are the antispherical right $p$-cells.
\end{thm}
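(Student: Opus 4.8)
The plan is to adapt Ostrik's proof of Theorem~\ref{thm:ostrik-cells} essentially verbatim, substituting the modular tilting character formula (Theorem~\ref{thm:characters}) for Soergel's character formula for quantum tilting modules. Throughout, I will use the isomorphism $\vartheta$ to identify $\mathsf{K}(\Rep_0(\Gp_\bk))$ with $\Masph^\circ$; by Theorem~\ref{thm:characters} this sends $[\tilt(w \cdot_p 0)]$ to $\puN_w^\circ$ for every $w \in \fW$, so that $\{[\tilt(w \cdot_p 0)] : w \in \fW\}$ is a $\Z$-basis of the Grothendieck group. Both implications are then obtained by passing to classes in $\mathsf{K}(\Rep_0(\Gp_\bk))$ and reading off coefficients in this basis.

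For $w \Tleq y \Rightarrow w \pRleq y$: assume $\tilt(w \cdot_p 0)$ is a direct summand of $\tilt(y \cdot_p 0) \otimes M$ with $M$ tilting. Since $\tilt(w \cdot_p 0)$ lies in $\Rep_0(\Gp_\bk)$, it is also a direct summand of $\pr_0(\tilt(y \cdot_p 0) \otimes M)$, which is a tilting object of $\Rep_0(\Gp_\bk)$ (the tensor product is tilting by~\cite{mathieu}, and $\pr_0$ of a tilting module is a direct summand of it, hence tilting). Decomposing into indecomposables, $[\pr_0(\tilt(y \cdot_p 0) \otimes M)] = \sum_{x \in \fW} b_x [\tilt(x \cdot_p 0)]$ with $b_w \geq 1$, while Lemma~\ref{lem:vartheta-tensor} gives $\vartheta([\pr_0(\tilt(y \cdot_p 0) \otimes M)]) = \puN_y^\circ \cdot c(M)$ for some $c(M) \in \Z[W]$. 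Hence $\puN_w^\circ$ appears with nonzero coefficient in $\puN_y^\circ \cdot c(M)$, so $w \pRleq y$ by Lemma~\ref{lem:pRleq-M0}.

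For $w \pRleq y \Rightarrow w \Tleq y$: by Lemma~\ref{lem:order-generated}\eqref{it:order-generated-2} there are $v_1 = y, v_2, \ldots, v_k = w$ in $W$ and $s_1, \ldots, s_{k-1} \in S$ with $\puH_{v_{i+1}}$ appearing with nonzero coefficient in $\puH_{v_i} \cdot \uH_{s_i}$; in particular $v_{i+1} \pRleq v_i$ for each $i$. One first checks that every $v_i$ lies in $\fW$: indeed $v_k = w \in \fW$, and if $v_{i+1} \in \fW$ then $s v_{i+1} > v_{i+1}$ for all $s \in \Sf$, so Lemma~\ref{lem:pRleq-Bruhat} (applied with ``$y$''$= v_i$, ``$w$''$= v_{i+1}$) forces $s v_i > v_i$ for all $s \in \Sf$, i.e.\ $v_i \in \fW$; by downward induction all $v_i \in \fW$. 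Using positivity~\eqref{eqn:positivity-puH} (together with $\uH_{s_i} = \puH_{s_i}$), Lemma~\ref{lem:p-can-basis-Masph} and specialization at $v = 1$, it follows that $\puN_{v_{i+1}}^\circ$ appears with nonzero coefficient in $\puN_{v_i}^\circ \cdot \uH_{s_i}^\circ$. It therefore suffices to prove: if $v, v' \in \fW$, $s \in S$, and $\puN_{v'}^\circ$ appears with nonzero coefficient in $\puN_v^\circ \cdot \uH_s^\circ$, then $v' \Tleq v$; transitivity of $\Tleq$ along the chain then gives $w = v_k \Tleq \cdots \Tleq v_1 = y$. For this last point, as recalled in the proof of Lemma~\ref{lem:Tleq-alcoves} the wall-crossing functor $\Theta_s$ is given by tensoring with a tilting module and taking a direct summand, so $\Theta_s(\tilt(v \cdot_p 0)) \cong \bigoplus_{x \in \fW} \tilt(x \cdot_p 0)^{\oplus a_x}$ is tilting; by~\eqref{eqn:vartheta-wallcrossing} and Theorem~\ref{thm:characters}, $\sum_x a_x \puN_x^\circ = \vartheta([\Theta_s(\tilt(v \cdot_p 0))]) = \puN_v^\circ \cdot \uH_s^\circ$, so $a_{v'} \neq 0$. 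Thus $\tilt(v' \cdot_p 0)$ is a direct summand of $\Theta_s(\tilt(v \cdot_p 0))$, hence of $\tilt(v \cdot_p 0) \otimes M$ for a suitable tilting $M$ (a tensor product of the tilting modules arising in the two translation steps, which is tilting by~\cite{mathieu}), giving $v' \Tleq v$. The last assertion of the theorem (that the $\Tleq$-equivalence classes are the antispherical right $p$-cells) is then immediate, using the lemma of~\S\ref{ss:antispherical-cells} that a right $p$-cell meeting $\fW$ is contained in $\fW$.

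The genuinely hard input here, namely Theorem~\ref{thm:characters} (the modular tilting character formula of~\cite{mkdkm}), has already been isolated; once it is available the argument above is formal, so I do not expect a serious obstacle. The only points demanding care are bookkeeping ones: confirming that the intermediate elements $v_i$ of the chain stay in $\fW$ (so that $\tilt(v_i \cdot_p 0)$ and $\puN_{v_i}^\circ$ are defined), checking that $\vartheta$ converts Lemma~\ref{lem:vartheta-tensor} and~\eqref{eqn:vartheta-wallcrossing} into the statements about the $\puN_\bullet^\circ$-basis used above, and using positivity of the $p$-canonical structure constants so that ``appears with nonzero coefficient'' is preserved under the passages $\Haff \to \Masph \to \Masph^\circ$.
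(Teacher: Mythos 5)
Your proposal is correct and follows essentially the same route as the paper: Lemma~\ref{lem:vartheta-tensor} together with Theorem~\ref{thm:characters} and Lemma~\ref{lem:pRleq-M0} for the implication $w \Tleq y \Rightarrow w \pRleq y$, and Lemma~\ref{lem:order-generated}\eqref{it:order-generated-2} plus the wall-crossing computation~\eqref{eqn:vartheta-wallcrossing} (via the proof of Lemma~\ref{lem:Tleq-alcoves}) for the converse. The only difference is that you spell out some bookkeeping the paper leaves implicit, notably the check via Lemma~\ref{lem:pRleq-Bruhat} that the intermediate elements of the chain remain in $\fW$, which is a welcome clarification but not a new idea.
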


\begin{proof}
 To prove that
 \[
  w \pRleq y \quad \Rightarrow \quad w \Tleq y,
 \]
by Lemma~\ref{lem:order-generated}\eqref{it:order-generated-2}, it suffices to prove that if $s \in S$ and $\puN_w$ appears in $\puN_y \cdot \uH_s$, then $w \Tleq y$. However, if $\puN_w$ appears in $\puN_y \cdot \uH_s$, then as in the proof of Lemma~\ref{lem:pRleq-M0} we see that $\puN^\circ_w$ appears in $\puN^\circ_y \cdot \puH^\circ_s$. Using~\eqref{eqn:vartheta-wallcrossing} and Theorem~\ref{thm:characters}, we deduce that $\tilt(w \cdot_p 0)$ is a direct summand of $\Theta_s(\tilt(y \cdot_p 0))$. As in the proof of Lemma~\ref{lem:Tleq-alcoves}, this implies that $w \Tleq y$.

On the other hand, assume that $w \Tleq y$, i.e.~that $\tilt(w \cdot_p 0)$ is a direct summand of $\tilt(y \cdot_p 0) \otimes M$ for some tilting $\Gp_\bk$-module $M$. Then by Lemma~\ref{lem:vartheta-tensor} and Theorem~\ref{thm:characters} $\puN_w^\circ$ appears with nonzero coefficient in $\puN_y^\circ \cdot c(M)$. By Lemma~\ref{lem:pRleq-M0}, this implies that $w \pRleq y$, as desired.
\end{proof}

%------------------------------------------------------------
\subsection{Thick tensor ideals}
%------------------------------------------------------------

Let $\Tilt(\Gp_\bk) \subset \Rep(\Gp_\bk)$ be the category of tilting $\Gp_\bk$-modules. For any $M$ in $\Tilt(\Gp_\bk)$, we denote by $\langle M \rangle_{\mathsf{T}}$ the thick tensor ideal of $\Tilt(\Gp_\bk)$ generated by $M$, i.e.~the smallest strictly full subcategory of $\Tilt(\Gp_\bk)$ containing $M$ and which is stable under direct summands and under tensoring with any tilting $\Gp_\bk$-module.

It is clear from definition that
\[
w \Tleq y \quad \Leftrightarrow \quad \tilt(w \cdot_p 0) \in \langle \tilt(y \cdot_p 0) \rangle_{\mathsf{T}} \quad \Leftrightarrow  \quad \langle \tilt(w \cdot_p 0) \rangle_{\mathsf{T}} \subset \langle \tilt(y \cdot_p 0) \rangle_{\mathsf{T}}.
\]
From this remark we deduce the following reformulation of Theorem~\ref{thm:ostrik}.

\begin{cor}
\label{cor:tensor-ideals}
For $w,y \in \fW$, we have
\[
w \pRsim y \quad \Leftrightarrow \quad \langle \tilt(w \cdot_p 0) \rangle_{\mathsf{T}} = \langle \tilt(y \cdot_p 0) \rangle_{\mathsf{T}}.
\]
\end{cor}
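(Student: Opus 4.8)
The plan is to deduce this statement purely formally from Theorem~\ref{thm:ostrik}, once the thick tensor ideal $\langle M \rangle_{\mathsf{T}}$ generated by a tilting module $M$ has been pinned down concretely. The first step is to establish the chain of equivalences recorded just above the corollary: for any $w,y \in \fW$,
\[
w \Tleq y \ \Leftrightarrow \ \tilt(w \cdot_p 0) \in \langle \tilt(y \cdot_p 0) \rangle_{\mathsf{T}} \ \Leftrightarrow \ \langle \tilt(w \cdot_p 0) \rangle_{\mathsf{T}} \subset \langle \tilt(y \cdot_p 0) \rangle_{\mathsf{T}}.
\]
For a tilting module $M$, let $\mathcal{C}_M \subset \Tilt(\Gp_\bk)$ be the strictly full subcategory whose objects are the direct summands of the modules $M \otimes N$, where $N$ ranges over tilting $\Gp_\bk$-modules. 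Using that a tensor product of tilting modules is tilting~\cite{mathieu}, one checks that $\mathcal{C}_M$ contains $M$ (take $N$ trivial), is stable under direct summands and under finite direct sums, and is stable under tensoring with any tilting module; hence $\langle M \rangle_{\mathsf{T}} \subset \mathcal{C}_M$, while the reverse inclusion is obvious, so $\langle M \rangle_{\mathsf{T}} = \mathcal{C}_M$. Consequently $\tilt(w \cdot_p 0) \in \langle \tilt(y \cdot_p 0) \rangle_{\mathsf{T}}$ means precisely that $\tilt(w \cdot_p 0)$ is a direct summand of $\tilt(y \cdot_p 0) \otimes N$ for some tilting $N$, i.e.\ $w \Tleq y$; and since $\langle \tilt(w \cdot_p 0) \rangle_{\mathsf{T}}$ is by definition the smallest thick tensor ideal containing $\tilt(w \cdot_p 0)$, this membership is equivalent to the inclusion $\langle \tilt(w \cdot_p 0) \rangle_{\mathsf{T}} \subset \langle \tilt(y \cdot_p 0) \rangle_{\mathsf{T}}$.

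The second step is the translation into cells. By definition $w \pRsim y$ means $w \pRleq y$ and $y \pRleq w$; by Theorem~\ref{thm:ostrik} these are respectively equivalent to $w \Tleq y$ and $y \Tleq w$; and by the first step the latter say $\langle \tilt(w \cdot_p 0) \rangle_{\mathsf{T}} \subset \langle \tilt(y \cdot_p 0) \rangle_{\mathsf{T}}$ and $\langle \tilt(y \cdot_p 0) \rangle_{\mathsf{T}} \subset \langle \tilt(w \cdot_p 0) \rangle_{\mathsf{T}}$. The conjunction of the two inclusions is exactly the asserted equality of thick tensor ideals; conversely, that equality yields both inclusions, hence both $p$-cell inequalities, hence $w \pRsim y$. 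This completes the argument.

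I expect no genuine obstacle here: all the representation-theoretic substance lies in Theorem~\ref{thm:ostrik} (and ultimately in the character formula of~\cite{mkdkm}), and the corollary is a cosmetic reformulation. The only point that deserves a line of verification is that $\mathcal{C}_M$ is closed under finite direct sums — so that it is really a thick tensor ideal rather than merely a subcategory closed under summands — but this is immediate, since a summand of $M \otimes N_1$ together with a summand of $M \otimes N_2$ is a summand of $M \otimes (N_1 \oplus N_2)$.
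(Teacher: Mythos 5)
Your argument is correct and is essentially the paper's: the paper treats the chain of equivalences $w \Tleq y \Leftrightarrow \tilt(w \cdot_p 0) \in \langle \tilt(y \cdot_p 0) \rangle_{\mathsf{T}} \Leftrightarrow \langle \tilt(w \cdot_p 0) \rangle_{\mathsf{T}} \subset \langle \tilt(y \cdot_p 0) \rangle_{\mathsf{T}}$ as clear from the definition and then invokes Theorem~\ref{thm:ostrik}, exactly as you do. Your explicit identification of $\langle M \rangle_{\mathsf{T}}$ with the summands of the modules $M \otimes N$ just fills in the routine verification the paper leaves implicit.
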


%------------------------------------------------------------
\subsection{An application}
%------------------------------------------------------------

We now observe that the following result (originally due to Georgiev--Mathieu~\cite{georgiev-mathieu}; see also~\cite{mathieu-tilting}) is an immediate consequence of Theorem~\ref{thm:ostrik}. (Here, for a module $M$ which admits a standard filtration and $\mu \in \bX^+$, we denote by $(M : \weyl(\mu))_\Delta$ the multiplicity of $\weyl(\mu)$ in a standard filtration of $M$.)

\begin{prop}
\label{prop:georgiev-mathieu}
Let $\lambda \in \bX^+$. The following conditions are equivalent:
\begin{enumerate}
\item
$\lambda \in \mathcal{C}_p$;
\item
$p \nmid \dim(\tilt(\lambda))$;
\item
there exists $\mu \in \cC_p$ such that
$\sum_{w \in \fW} (-1)^{\ell(w)} (\tilt(\lambda) : \weyl(w \cdot_p \mu))_\Delta \neq 0$.
\end{enumerate}
\end{prop}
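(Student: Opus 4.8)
The plan is to show that all three conditions are equivalent to the statement that the alcove containing $\lambda$ in its lower closure is the fundamental one, i.e.~that the corresponding element of $\fW$ is $1$. Recall from Example~\ref{ex:cell-1} that $\{1\}$ is a right $p$-cell, and in fact $1 \not\pRleq w$ for all $w \neq 1$ in $W$, while $w \pRleq 1$ always. By Theorem~\ref{thm:ostrik} this translates into: for $w \in \fW$, we have $w \Tsim 1$ if and only if $w = 1$, and moreover $\langle \tilt(w \cdot_p 0) \rangle_{\mathsf{T}}$ is the unique minimal nonzero thick tensor ideal precisely when $w = 1$. Since $\lambda \in \cC_p$ means exactly that the alcove whose lower closure contains $\lambda$ is $\cC_p$ itself — i.e.~$w = 1$ in the notation of~\S\ref{ss:char-tilting} — we will phrase everything in terms of this $w$.

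First I would prove $(1) \Leftrightarrow (2)$. If $\lambda \in \cC_p$, then $\tilt(\lambda) = \weyl(\lambda) = \coweyl(\lambda) = \irr(\lambda)$ is the Weyl module, and $p \nmid \dim \tilt(\lambda)$ by Weyl's dimension formula together with the hypothesis $p > h$ (all factors $\langle \lambda + \rho, \alpha^\vee\rangle$ lie strictly between $0$ and $p$, hence are units mod $p$; I would spell this out from $\dim \weyl(\lambda) = \prod_{\alpha \in \Phi^+} \langle \lambda+\rho, \alpha^\vee\rangle / \langle \rho, \alpha^\vee\rangle$). Conversely, suppose $\lambda \notin \cC_p$, so $w \neq 1$ where $w \in \fW$ is such that $\lambda$ lies in the lower closure of $w \cdot_p \cC_p$. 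By Lemma~\ref{lem:Tleq-alcoves} (and since the statement for $p\nmid\dim$ only depends on the alcove — one can reduce to $\lambda \in W \cdot_p 0$ using that translation from the fundamental-alcove representative to $\lambda$ multiplies the dimension by the dimension of a Weyl module with strictly dominant regular highest weight, a $p$-unit by the same reasoning), we may assume $\lambda = w \cdot_p 0$. Now $\tilt(w \cdot_p 0)$ with $w \neq 1$ is a direct summand of $\Theta_{s_r} \cdots \Theta_{s_1}(\tilt(0))$ for any reduced word, and each wall-crossing functor $\Theta_s$ applied to a module kills the "Euler characteristic" / has dimension divisible by $p$ contribution — more precisely, $\Theta_s$ is $T_0^{\mu_s}$ followed by $T_{\mu_s}^0$, and a module in $\mathrm{pr}_{\mu_s}$ supported on a wall has dimension a multiple of... — actually the cleanest route is: $\vartheta([\tilt(w\cdot_p 0)]) = \puN_w^\circ$ by Theorem~\ref{thm:characters}, and specializing the character at the trivial character (equivalently, computing dimensions mod $p$) one sees the dimension equals $p$ times an integer because $\puN_w$ for $w \neq 1$ is a $\Z$-combination of the $N_x^\circ$ with all the "pieces" coming through at least one application of $\puH^\circ_s = \uH^\circ_s$, and $\uH_s^\circ$ acts on the standard basis of $\Masph^\circ$ by $N_x^\circ \mapsto N_x^\circ + N_{xs}^\circ$ whose specialization sends a dimension-count to one divisible by... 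Here I expect the main obstacle: carefully extracting the divisibility-by-$p$ statement for $\dim\tilt(w\cdot_p 0)$, $w\neq 1$, from the character formula. The honest fix is to cite the known fact (e.g.~\cite[\S II.E]{jantzen} or Donkin) that a tilting module not isomorphic to a Weyl module — equivalently, one whose highest weight is not in $\overline{\cC_p}$... — wait, $\tilt(w\cdot_p 0)$ does have highest weight $w\cdot_p 0 \notin \cC_p$; the point is that $\dim\tilt(w\cdot_p 0) \equiv \sum_{x}(-1)^{\ell(x)}m_x\dim\weyl(x\cdot_p 0)$ where the $m_x$ are the coefficients of $\uN_x^\circ$ in $\puN_w^\circ$, and one shows this alternating sum of $p$-unit-or-$p$-multiple dimensions is $\equiv 0 \bmod p$ when $w\neq 1$ using $\dim\weyl(x\cdot_p 0) \equiv \dim\weyl(0) = 1$ or $\equiv 0$ depending on whether $x\cdot_p 0 \in \cC_p$, i.e.~whether $x=1$, together with $\sum_x(-1)^{\ell(x)}m_x = 0$ for $w\neq 1$ (specialize $v\mapsto 1$ and apply the sign character $N_x^\circ\mapsto(-1)^{\ell(x)}$: since $\puH_w$ for $w\neq 1$ lies in the ideal generated by the $\uH_s$, and the sign character kills each $\uH_s$, the evaluation is $0$).

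Next, $(1) \Leftrightarrow (3)$. Condition (3) asks for $\mu \in \cC_p$ with $\sum_{w\in\fW}(-1)^{\ell(w)}(\tilt(\lambda):\weyl(w\cdot_p\mu))_\Delta \neq 0$. Applying the Euler-characteristic-type functional: the multiplicities $(\tilt(\lambda):\weyl(\nu))_\Delta$ vanish unless $\nu$ is linked to $\lambda$, and when $\lambda$ and $\mu$ are regular the sum in question equals, up to sign, the coefficient that detects whether $\mathrm{pr}_{[\mu]}\tilt(\lambda)$ has nonzero image under $\vartheta$ composed with the sign character — concretely $\sum_w (-1)^{\ell(w)}(\tilt(\lambda):\weyl(w\cdot_p\mu))_\Delta = \varepsilon(\vartheta([\mathrm{pr}_0 \tilt(\lambda)']))$ after an appropriate translation to the principal block, where $\varepsilon$ is the sign evaluation $N_w^\circ\mapsto(-1)^{\ell(w)}$, $v\mapsto 1$. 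If $\lambda\in\cC_p$ then $\tilt(\lambda)=\weyl(\lambda)$ and the sum is $\pm 1 \neq 0$ for $\mu=\lambda$ (only the term $w=1$ survives). If $\lambda\notin\cC_p$, reduce again to $\lambda = w\cdot_p 0$, $w\neq 1$, and $\mu$ in the principal block; then $\sum_x(-1)^{\ell(x)}(\tilt(w\cdot_p 0):\weyl(x\cdot_p 0))_\Delta = \varepsilon(\puN_w^\circ) = 0$ because, as above, $\puN_w$ for $w\neq 1$ lies in $\Masph$ times the ideal generated by the $\uH_s$'s — more precisely $\puN_w^\circ$ for $w\neq 1$ is a nonnegative combination of $\uN_x^\circ$ with $x\neq 1$ (from~\eqref{eqn:puH-uw-w} and $\uN_1^\circ$ appearing only through $\puN_1^\circ$), and each such $\uN_x^\circ$, $x\neq 1$, is killed by $\varepsilon$ since $\uN_x = 1\otimes\uH_x$ and $\uH_x$ for $x$ with $sx<x$ for some $s\in S$ maps to $0$ under the algebra map $\Haff\to\Z$, $\uH_s\mapsto 0$; wait — need $\varepsilon$ to factor through that map, which it does by~\eqref{eqn:vartheta-wallcrossing} since $\uH_s^\circ$ acts on the sign line by $-v+v=0$ at $v=1$... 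I will need to check this factorization carefully, which together with the divisibility argument above is the technical heart of the proof. Assembling these three equivalences — each routed through the single fact "$w=1$ iff $\varepsilon(\puN_w^\circ)\neq 0$" and Weyl's dimension formula — completes the argument.
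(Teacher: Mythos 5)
Your strategy---deriving both nontrivial implications from Theorem~\ref{thm:characters} and a sign evaluation on $\Masph^\circ$---is genuinely different from the paper's, but as written it contains a false step and leaves its central claim unproved. The false step: you assert that $\dim\weyl(x\cdot_p 0)\equiv 0 \pmod p$ for $x\neq 1$. In fact Weyl's dimension formula gives $\dim\weyl(x\cdot_p 0)\equiv(-1)^{\ell(x)}\pmod p$ for \emph{every} $x\in\fW$ (for $\mathrm{SL}_2$ one has $\dim\weyl(2p-2)=2p-1\equiv-1$). Relatedly, the multiplicities $(\tilt(w\cdot_p 0):\weyl(x\cdot_p 0))_\Delta$ are the coefficients of $\puN_w^\circ$ in the \emph{standard} basis $(N_x^\circ)$, not in the Kazhdan--Lusztig basis as you state at one point. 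With the corrected congruence your computation becomes $\dim\tilt(w\cdot_p 0)\equiv\sum_x m_x(-1)^{\ell(x)}=\varepsilon(\puN_w^\circ)$, where $\varepsilon(N_x^\circ)=(-1)^{\ell(x)}$, and likewise the alternating sum in (3) (for $\lambda=v\cdot_p\mu$ with $\mu\in\cC_p$; the non-linked cases vanish as you note) is exactly $\varepsilon(\puN_v^\circ)$. So both of your implications reduce to the single claim $\varepsilon(\puN_w^\circ)=0$ for $w\neq 1$, which you explicitly flag but do not prove. It is true, and fixable: the sign character of $\Z[W]$ restricts to that of $\Wf$, hence induces a morphism of right $\Z[W]$-modules $\Masph^\circ=\Z_\varepsilon\otimes_{\Z[\Wf]}\Z[W]\to\Z_\varepsilon$, which is your $\varepsilon$ and kills $m\cdot\uH^\circ_s$ for every $m$ and $s\in S$; one then needs a statement such as $\puH_w\cdot\uH_s=(v+v^{-1})\puH_w$ for $s$ a right descent of $w$ (the right-handed analogue of the parity-sheaf facts behind~\eqref{eqn:decomposition-1st-reflection}, which the paper does not state), or equivalently positivity of the expansion of $\puN_w$ in the basis $(\uN_x)$ together with vanishing of its $\uN_1$-coefficient. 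Without such an input, (2)$\Rightarrow$(1) and $\neg$(1)$\Rightarrow\neg$(3) are both incomplete.

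A second gap: your reduction of (2) from an arbitrary $\lambda$ in the lower closure of $w\cdot_p\cC_p$ to $\lambda=w\cdot_p 0$ is justified by the claim that translation ``multiplies the dimension by the dimension of a Weyl module,'' which is not true. A correct treatment splits into cases: if $\lambda$ is singular, every Weyl module linked to $\lambda$ has dimension divisible by $p$ (again by Weyl's formula), hence so does $\tilt(\lambda)$; if $\lambda=w\cdot_p\mu$ with $\mu\in\cC_p$, use the translation principle for tilting characters together with $\dim\weyl(x\cdot_p\mu)\equiv(-1)^{\ell(x)}\dim\weyl(\mu)\pmod p$. For comparison, the paper disposes of (2)$\Rightarrow$(1) much more cheaply: $p\nmid\dim M$ forces $\bk$ to be a direct summand of $M\otimes M^*$, so $0\Tleq\lambda$, and Theorem~\ref{thm:ostrik} together with Example~\ref{ex:cell-1} gives $\lambda\in\cC_p$; and it proves $\neg$(1)$\Rightarrow\neg$(3) by induction on $\ell(v)$ using translation to and from a wall, again leaning on Theorem~\ref{thm:ostrik}. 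Your route would be a legitimate (and arguably more uniform) alternative once the vanishing $\varepsilon(\puN_w^\circ)=0$ and the singular/regular reduction are supplied, but in its present form it does not constitute a proof.
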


\begin{proof}
If $\lambda \in \mathcal{C}_p$, then $\tilt(\lambda) = \coweyl(\lambda)=\weyl(\lambda)$, and Weyl's dimension formula shows that $p \nmid \dim(\tilt(\lambda))$. It is also clear that
\[
\sum_{w \in \fW} (-1)^{\ell(w)} (\tilt(\lambda) : \weyl(w \cdot_p \lambda))_\Delta = 1 \neq 0.
\]

Now, assume that $p \nmid \dim(\tilt(\lambda))$, and let $v \in \fW$ be such that $\lambda$ belongs to the lower closure of $v \cdot_p \mathcal{C}_p$. Under our assumption the $\Gp_\bk$-module $\bk=\tilt(0)$ is a direct summand in $\tilt(\lambda) \otimes \tilt(\lambda)^*$, so $0 \Tleq \lambda$, which implies that $0 \Tleq v \cdot_p 0$ by Lemma~\ref{lem:Tleq-alcoves}. Using Theorem~\ref{thm:ostrik}, we deduce that $1 \pRleq v$. As explained in Example~\ref{ex:cell-1}, this implies that $v=1$, i.e.~that $\lambda \in \mathcal{C}_p$.

Finally, we prove that if $\lambda \notin \mathcal{C}_p$, for any $\mu \in \mathcal{C}_p$ we have
\begin{equation}
\label{eqn:formula-mult-C}
\sum_{w \in \fW} (-1)^{\ell(w)} (\tilt(\lambda) : \weyl(w \cdot_p \mu))_\Delta=0.
\end{equation}
In fact, if $\lambda \notin W \cdot_p \mu$ 
this equality is clear since $(\tilt(\lambda) : \weyl(w \cdot_p \mu))_\Delta=0$ for any $w$ by the linkage principle. We prove the case $\lambda = v \cdot_p \mu$ for some $v \in \fW$ by induction on $\ell(v)$.
Write $v=us$ with $s \in S$ and $\ell(v)=\ell(u)+1$ (so that $u \in \fW$),
and choose $\nu \in \bX$ on the $s$-wall of $\mathcal{C}_p$. 
We observe that $T_{\nu}^{\mu} \tilt(u \cdot_p \nu)$ is the direct sum of $\tilt(\lambda)$ and some modules of the form $\tilt(x \cdot_p \mu)$ with $\ell(x) < \ell(v)$. Now we have
\[
\sum_{w \in \fW} (-1)^{\ell(w)} (T_{\nu}^{\mu} \tilt(u \cdot_p \nu) : \weyl(w \cdot_p \mu))_\Delta=0
\]
(because this equality holds if $\tilt(u \cdot_p \nu)$ is replaced by any standard module, in which case it follows from~\cite[Proposition~II.7.12]{jantzen}), and $\tilt(\mu)$ is not a direct summand of $T_{\nu}^{\mu} \tilt(u \cdot_p \nu)$ (because otherwise we would have $\mu \Tleq u \cdot_p \nu$, which is excluded by Theorem~\ref{thm:ostrik}). Hence we obtain~\eqref{eqn:formula-mult-C} using the induction hypothesis.
\end{proof}

The following consequence is also stated in~\cite{georgiev-mathieu, mathieu-tilting} (where the second formula is called the \emph{modular Verlinde formula}); we recall the proof for completeness. (Here, for $\lambda, \mu, \nu \in \bX^+$, we denote by $K_{\lambda,\mu}^\nu$ the multiplicity of the simple module for the complex group $\Gp_\C$
with highest weight $\nu$ in the tensor product of the simple modules of highest weights $\lambda$ and $\mu$.)

\begin{cor}
\label{cor:Verlinde}
\leavevmode
\begin{enumerate}
\item
\label{it:Verlinde-1}
For any $\lambda \in \mathcal{C}_p$ and any tilting module $M$, the multiplicity of $\tilt(\lambda)$ as a direct summand of $M$ is equal to $\sum_{w \in \fW} (-1)^{\ell(w)} (M : \weyl(w \cdot_p \lambda))_\Delta$.
\item
\label{it:Verlinde-2}
For any $\lambda, \mu, \nu \in \mathcal{C}_p$, the multiplicity of $\tilt(\nu)$ as a direct summand of $\tilt(\lambda) \otimes \tilt(\mu)$ is equal to $\sum_{w \in \fW} (-1)^{\ell(w)} K_{\lambda,\mu}^{w \cdot_p \nu}$.
\end{enumerate}
\end{cor}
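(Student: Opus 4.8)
The plan is to derive both assertions from Proposition~\ref{prop:georgiev-mathieu}; the crucial inputs are formula~\eqref{eqn:formula-mult-C} and the fact that $\tilt(\kappa) = \weyl(\kappa) = \coweyl(\kappa)$ for every $\kappa \in \mathcal{C}_p$, so that $\weyl(\kappa)$ is simple for such $\kappa$.

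For~\eqref{it:Verlinde-1}, I would write $M \cong \bigoplus_{\kappa \in \bX^+} \tilt(\kappa)^{\oplus m_\kappa}$ and use additivity of standard-filtration multiplicities to rewrite the right-hand side of~\eqref{it:Verlinde-1} as $\sum_{\kappa} m_\kappa \bigl( \sum_{w \in \fW} (-1)^{\ell(w)} (\tilt(\kappa) : \weyl(w \cdot_p \lambda))_\Delta \bigr)$. By~\eqref{eqn:formula-mult-C} the inner sum vanishes whenever $\kappa \notin \mathcal{C}_p$. When $\kappa \in \mathcal{C}_p$, the module $\tilt(\kappa) = \weyl(\kappa)$ is simple, so $(\tilt(\kappa) : \weyl(w \cdot_p \lambda))_\Delta = \delta_{\kappa, w \cdot_p \lambda}$; since the dot-action of $W$ is simply transitive on alcoves and both $\mathcal{C}_p$ and $w \cdot_p \mathcal{C}_p$ are alcoves, the condition $w \cdot_p \lambda \in \mathcal{C}_p$ forces $w = 1$, and then $\kappa = \lambda$. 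Hence the inner sum equals $\delta_{\kappa, \lambda}$ and the whole expression collapses to $m_\lambda$, which is the multiplicity of $\tilt(\lambda)$ as a direct summand of $M$.

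For~\eqref{it:Verlinde-2}, I would apply~\eqref{it:Verlinde-1} to the tilting module $M = \tilt(\lambda) \otimes \tilt(\mu)$ (tilting by~\cite{mathieu}), with $\nu \in \mathcal{C}_p$ playing the role of the weight in~\eqref{it:Verlinde-1}. It then remains to identify $(\tilt(\lambda) \otimes \tilt(\mu) : \weyl(\kappa))_\Delta$ with $K_{\lambda,\mu}^\kappa$ for all $\kappa \in \bX^+$. Since $\lambda, \mu \in \mathcal{C}_p$ we have $\tilt(\lambda) = \weyl(\lambda)$ and $\tilt(\mu) = \weyl(\mu)$, so $M$ has a standard filtration whose character equals the product of the characters of $\weyl(\lambda)$ and $\weyl(\mu)$. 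Expanding this product in the basis of Weyl characters recovers exactly the tensor-product multiplicities $K_{\lambda,\mu}^\kappa$ of the complex group $\Gp_\C$ (Weyl modules having the same characters as the simple $\Gp_\C$-modules), and linear independence of Weyl characters forces the standard-filtration multiplicities of $M$ to be these same numbers. Substituting into~\eqref{it:Verlinde-1} yields $\sum_{w \in \fW} (-1)^{\ell(w)} K_{\lambda,\mu}^{w \cdot_p \nu}$, as desired.

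I do not expect a genuine obstacle here: the argument is essentially bookkeeping once Proposition~\ref{prop:georgiev-mathieu} is available. The only two points deserving a moment's care are the collapse of the signed sum over $\fW$ in~\eqref{it:Verlinde-1} (which rests on $\mathcal{C}_p$ being a single alcove, so a regular weight has exactly one $\cdot_p$-conjugate inside it, realized by $w = 1$) and, in~\eqref{it:Verlinde-2}, the standard passage from characters to standard-filtration multiplicities via linear independence of Weyl characters.
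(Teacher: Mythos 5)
Your proposal is correct and follows essentially the same route as the paper: reduce part~\eqref{it:Verlinde-1} to indecomposable summands, kill the summands outside $\mathcal{C}_p$ via Proposition~\ref{prop:georgiev-mathieu} (i.e.~\eqref{eqn:formula-mult-C}), and observe the collapse for $\kappa \in \mathcal{C}_p$ using $\tilt(\kappa)=\weyl(\kappa)$; then deduce~\eqref{it:Verlinde-2} by applying~\eqref{it:Verlinde-1} to $\tilt(\lambda)\otimes\tilt(\mu)=\weyl(\lambda)\otimes\weyl(\mu)$ and identifying the standard-filtration multiplicities with the characteristic-$0$ tensor-product multiplicities through Weyl characters. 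The only difference is that you spell out the alcove-transitivity collapse and the linear-independence step, which the paper leaves implicit.
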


\begin{proof}
\eqref{it:Verlinde-1}
Of course it is sufficient to prove the formula when $M$ is indecomposable, i.e.~of the form $\tilt(\mu)$ for some $\mu \in \bX^+$. If $\mu \notin \mathcal{C}_p$, the claim follows from Proposition~\ref{prop:georgiev-mathieu}. If $\mu \in \mathcal{C}_p$ we have $\tilt(\mu)=\weyl(\mu)$, so that the claim is obvious.

\eqref{it:Verlinde-2}
By~\eqref{it:Verlinde-1}, the multiplicity we want to compute is equal to
\begin{multline*}
\sum_{w \in \fW} (-1)^{\ell(w)} (\tilt(\lambda) \otimes \tilt(\mu) : \weyl(w \cdot_p \nu))_\Delta \\
= \sum_{w \in \fW} (-1)^{\ell(w)} (\weyl(\lambda) \otimes \weyl(\mu) : \weyl(w \cdot_p \nu))_\Delta.
\end{multline*}
The integer $(\weyl(\lambda) \otimes \weyl(\mu) : \weyl(w \cdot_p \nu))_\Delta$ is the coefficient of $\chi(w \cdot_p \nu)$ in the product $\chi(\lambda) \cdot \chi(\mu)$, where $\chi(\eta)$ is as in the proof of Lemma~\ref{lem:vartheta-tensor}.
This coefficient can be interpreted as stated in terms of characteristic-$0$ representation theory.
\end{proof}

\begin{rmk}
\label{rmk:multplicities-mod-quantum}
In the setting of~\S\ref{ss:reminder-ostrik}, the same arguments as for Corollary~\ref{cor:Verlinde} show that if $V$ is a tilting module in $\Rep(\UQ)$ and if $\lambda \in \mathcal{C}_\ell$, the multiplicity of $\TQ(\lambda)$ as a direct summand of $V$ is equal to $\sum_{w \in \fW} (-1)^{\ell(w)} (V : \weyl_q(w \cdot_\ell \lambda))_\Delta$, where $\weyl_q(\nu)$ is the quantum Weyl module associated with $\nu$, and as above $(- : -)_\Delta$ means the multiplicity in a standard filtration. (In this setting, this claim is due to Andersen--Paradowski~\cite{andersen-paradowski}.) Since the modular and quantum Weyl modules have the same character, in the setting of Remark~\ref{rmk:char-tilting} we deduce that for any tilting module $M$ in $\Rep(\Gp_\bk)$ and any $\lambda \in \mathcal{C}_p$, the multiplicity of $\tilt(\lambda)$ as a direct summand of $M$ is equal to the multiplicity of $\TQ(\lambda)$ as a direct summand of $M_q$.
\end{rmk}

%%%%%%%%%%%%%%%%%%%%%%%%%%%%%%%%
\section{The Humphreys conjecture}
\label{sec:humphreys-conj}
%%%%%%%%%%%%%%%%%%%%%%%%%%%%%%%%

We continue with the assumptions of~\S\ref{ss:char-tilting}; but to simplify notation (and since $\bk$ is fixed in this section) we write $G=\Gp_\bk$. Note that since $G$ is obtained by base change to $\bk$ of a $\Z$-group scheme, hence also by base change of an $\F_p$-group scheme, its Frobenius twist identifies canonically with $G$. We also set $\cN=\cN_\bk$.

%-------------------------------------------------------------------------------------------
\subsection{Support varieties}
%-------------------------------------------------------------------------------------------

We denote 
by $G_1$ the first Frobenius kernel of $G$, i.e.~the kernel of the Frobenius morphism $\Fr : G \to G$. Since the subgroup $G_1 \subset G$ is distinguished, the algebra $\Ext^\bullet_{G_1}(\bk,\bk)$ admits a natural action of $G$, which factors through an action of $G/G_1 \cong G$. It is also clearly $\Z$-graded, hence can be considered as a $G \times \Gm$-equivariant algebra.

The following lemma will be proved in~\S\ref{ss:relation-tilt}. (This statement is well known, see e.g.~\cite[\S 3]{andersen-jantzen} for a proof under the same assumptions as ours; later we will use our explicit construction of this isomorphism, however.)

\begin{lem}
\label{lem:cohom-G1}
There exists a canonical isomorphism of graded $G \times \Gm$-equivariant algebras
\[
\Ext^\bullet_{G_1}(\bk,\bk) \cong \cO(\cN).
\]
\end{lem}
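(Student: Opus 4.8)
The plan is to construct the isomorphism $\Ext^\bullet_{G_1}(\bk,\bk) \cong \cO(\cN)$ via a chain of identifications that pass through the geometric categories studied in Sections~\ref{sec:exotic-parity}--\ref{sec:support}, thereby making the construction explicit and $G \times \Gm$-equivariant. First I would recall the standard computation of $\Ext^\bullet_{G_1}(\bk,\bk)$: by work going back to Friedlander--Parshall and Andersen--Jantzen, under the assumption $p > h$ this graded algebra is concentrated in even degrees, and there is an isomorphism of graded $G$-algebras $\Ext^\bullet_{G_1}(\bk,\bk) \cong \cO(\cN) = \bk[\cN]$, where the $n$-th graded piece of the left side corresponds to functions of degree $n$ (so that the grading matches the $\Gm$-action scaling $\cN$ with weight $2$ — note the $x^{-2}$ convention on $(\fg/\fb)^*$ fixed in~\S\ref{ss:def-exotic}). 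The point of the lemma is not the existence of \emph{some} isomorphism but a \emph{canonical} one compatible with the constructions of this paper; so the real content is to pin down which isomorphism is meant.

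The key steps, in order: (1) Use the equivalence between $G_1$-modules (or rather the relevant block) and coherent sheaves on $\tcN$ coming from~\cite{prinblock} — the "modular analogue of~\cite{abg}" advertised in the introduction — which identifies $\Ext$-algebras over $G_1$ with morphism spaces in $\Db \Coh^{G \times \Gm}(\tcN)$; (2) under this dictionary, $\Ext^\bullet_{G_1}(\bk,\bk)$ becomes $\bigoplus_{n} \Hom_{\Db \Coh^{G \times \Gm}(\tcN)}(\cO_{\tcN}, \cO_{\tcN}\langle n\rangle[?])$, i.e.\ essentially $\mathrm{R}\Gamma(\tcN, \cO_{\tcN})$ graded by the $\Gm$-weight; (3) invoke the fact (Broer, or Kumar--Lauritzen--Thomsen, valid for $p$ very good, which holds here since $p>h$) that $\tcN$ has rational singularities, so $R\pi_* \cO_{\tcN} = \cO_{\cN}$ and hence $\mathrm{R}\Gamma(\tcN,\cO_{\tcN}) = \cO(\cN)$ concentrated in cohomological degree $0$; this yields the graded algebra isomorphism, and $G \times \Gm$-equivariance is automatic because all the functors and objects in play are $G \times \Gm$-equivariant. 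Finally (4) check that the algebra structure on the $\Ext$-side (Yoneda product) matches the ring structure on $\cO(\cN)$ under these identifications — this follows because the equivalence of~\cite{prinblock} is monoidal in the appropriate sense, or alternatively because both structures are determined by the $G$-action together with the degree.

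The main obstacle I expect is step (4) together with the precise bookkeeping of gradings and twists: one must verify that the cohomological grading on $\Ext^\bullet_{G_1}$ corresponds to the \emph{internal} grading $\langle\,\rangle$ on $\Coh^{G\times\Gm}(\tcN)$ (not to the cohomological shift $[\,]$), that the Yoneda/composition product is carried to the sheaf multiplication, and that the resulting identification is the "canonical" one that will be reused in~\S\ref{ss:relation-tilt} when computing $\Ext^\bullet_{G_1}(\bk,M) = \mathrm{R}\Gamma(\tcN, \Upsilon^{-1}(\text{something}))$-type formulas. Since the lemma defers its proof to~\S\ref{ss:relation-tilt}, I would in fact postpone the full verification and here only state the isomorphism, pointing forward; the honest work is to set up the functor $\Ext^\bullet_{G_1}(\bk,-) \cong \mathrm{R}\Gamma(\tcN, \Upsilon(-))$ (or its coherent analogue) carefully enough that Lemma~\ref{lem:cohom-G1} becomes the special case $M = \bk$, and that is exactly the kind of compatibility statement that is technically fiddly but not conceptually deep.
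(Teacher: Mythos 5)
There is a genuine gap in your steps (1)--(2), precisely at the bridge between $G_1$-cohomology and the coherent side. The functor $\Xi$ of~\cite{prinblock} is not an equivalence between (a block of) $G_1$-modules and $\Db \Coh^{G \times \Gm}(\tcN)$: it lands in the extended principal block of $\Rep(G)$, and its Hom-isomorphism identifies $\bigoplus_n \Hom(\cF,\cG\langle n\rangle[n])$ with morphism spaces of \emph{$G$-modules}. Hence the space you propose, $\bigoplus_{n,m}\Hom_{\Db \Coh^{G\times\Gm}(\tcN)}(\cO_{\tcN},\cO_{\tcN}\langle n\rangle[m])$, computes $\Ext^\bullet_{G}(\bk,\bk)$, i.e.\ the $G\times\Gm$-invariants of $R\Gamma(\tcN,\cO_{\tcN})$ --- essentially just $\bk$ --- and not $\Ext^\bullet_{G_1}(\bk,\bk)$. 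In particular it carries no residual $G$-action at all, so it can never yield the $G$-equivariant algebra $\cO(\cN)$; the "dictionary" you invoke does not exist in the form you state it.

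The paper's proof supplies exactly the missing mechanism. First, Frobenius reciprocity together with exactness of $\Ind_{G_1}^{G}$ gives $\Ext^\bullet_{G_1}(\bk,\bk)\cong \Ext^\bullet_{G}\bigl(\bk,\Fr^*(\cO(G))\bigr)$; this is where the residual $G\cong G/G_1$-action is encoded, in the coefficients $\Fr^*(\cO(G))$. Second, one uses the specific compatibility $\Xi(\cF\otimes V)\cong \Xi(\cF)\otimes \Fr^*(V)$ (with $V=\cO(G)$, the regular representation) together with $\Xi(\cO_{\tcN})\cong\bk$ to identify this with $\bigoplus_{n,m}\Hom_{\Db\Coh^{G\times\Gm}(\tcN)}(\cO_{\tcN},\cO_{\tcN}\otimes\cO(G)\langle n\rangle[m])$; untwisting the equivariance against the regular representation, this is all of $R^\bullet\Gamma(\tcN,\cO_{\tcN})\cong\cO(\cN)$, by~\cite{klt}, normality of $\cN$ and Zariski's Main Theorem. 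Your step (3) (the KLT input) and your expectation that the equivariance/ring-structure checks are fiddly bookkeeping deferred to~\S\ref{ss:relation-tilt} match the paper, but without the Frobenius reciprocity step and the $\Fr^*$-twisted tensor compatibility of $\Xi$ the proposed identification fails, and with them your step (2) should be replaced by the computation with coefficients $\cO(G)$ rather than a Hom space of $\cO_{\tcN}$ against itself.
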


If $M,N$ are $G$-modules, the graded vector space $\Ext^\bullet_{G_1}(M,N)$ has a natural $G \times \Gm$-action. It also has a natural structure of module over $\Ext_{G_1}^\bullet(\bk,\bk)$, see~\cite[\S 2.2]{npv}.
Using the isomorphism of Lemma~\ref{lem:cohom-G1}, this object can then be considered as a $G \times \Gm$-equivariant quasi-coherent sheaf on $\cN$. Therefore it makes sense to consider the support $\supp(\Ext^\bullet_{G_1}(M,N))$; see~\S\ref{ss:support-cN}.

Of course, the $G \times \Gm$-equivariant quasi-coherent sheaf on $\cN$ associated with $\Ext^\bullet_{G_1}(M,N)$
depends on the choice of isomorphism in Lemma~\ref{lem:cohom-G1}.
Note however that the results of~\cite{serre} imply that any $G$-equivariant automorphism of $\cN$ induces the identity on the set of orbits $\cN/ G$. Therefore, the support 
$\supp(\Ext^\bullet_{G_1}(M,N))$ is well defined, i.e.~does not depend on any choice.

Following~\cite{npv} we set $V_{G_1}(M,N) := \supp(\Ext^\bullet_{G_1}(M,N))$.
We will also use the special notation
\[
V_{G_1}(M) := V_{G_1}(M,M), \qquad \oVG(M) := V_{G_1}(\bk,M).
\]
The subvariety $V_{G_1}(M)$ is called the \emph{support variety} of $M$.

It is clear that $V_{G_1}(M,N) \subset V_{G_1}(M) \cap V_{G_1}(N)$, see~\cite[(2.2.1)]{npv}; in particular we have
\begin{equation}
\label{eqn:V-oV}
\oVG(M) \subset V_{G_1}(M).
\end{equation}
On the other hand, by~\cite[(2.2.4)--(2.2.5)]{npv}, for $M,N$ in $\Rep(G)$ we have
\begin{gather}
\label{eqn:support-sum}
V_{G_1}(M \oplus N) = V_{G_1}(M) \cup V_{G_1}(N);\\
\label{eqn:support-tensor}
V_{G_1}(M \otimes N) = V_{G_1}(M) \cap V_{G_1}(N).
\end{gather}

The following property is also stated in~\cite[\S 3.1]{hardesty}.

\begin{lem}
\label{lem:V-Tleq}
If $\lambda, \mu \in \bX^+$, we have
\[
\mu \Tleq \lambda \quad \Rightarrow \quad V_{G_1}(\tilt(\mu)) \subset V_{G_1}(\tilt(\lambda)).
\]
In particular, if $\mu \Tsim \lambda$ then $V_{G_1}(\tilt(\mu)) = V_{G_1}(\tilt(\lambda))$.
\end{lem}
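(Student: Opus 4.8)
The plan is to unwind the definition of $\Tleq$ and feed it into the two elementary properties~\eqref{eqn:support-sum} and~\eqref{eqn:support-tensor} of support varieties recalled just above. By definition, $\mu \Tleq \lambda$ means that $\tilt(\mu)$ is a direct summand of $\tilt(\lambda) \otimes M$ for some tilting $G$-module $M$; write $\tilt(\lambda) \otimes M \cong \tilt(\mu) \oplus R$ for the complementary summand $R$.

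First I would apply~\eqref{eqn:support-sum} to this decomposition, which gives
\[
V_{G_1}(\tilt(\lambda) \otimes M) = V_{G_1}(\tilt(\mu)) \cup V_{G_1}(R) \supseteq V_{G_1}(\tilt(\mu)).
\]
Next I would apply~\eqref{eqn:support-tensor} to the tensor product, which gives
\[
V_{G_1}(\tilt(\lambda) \otimes M) = V_{G_1}(\tilt(\lambda)) \cap V_{G_1}(M) \subseteq V_{G_1}(\tilt(\lambda)).
\]
Chaining these two inclusions yields $V_{G_1}(\tilt(\mu)) \subseteq V_{G_1}(\tilt(\lambda))$, which is the first assertion. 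For the second assertion, if $\mu \Tsim \lambda$ then both $\mu \Tleq \lambda$ and $\lambda \Tleq \mu$ hold, so applying the inclusion just proved in both directions gives $V_{G_1}(\tilt(\mu)) = V_{G_1}(\tilt(\lambda))$.

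There is essentially no obstacle here: the statement is a formal consequence of the behaviour of support varieties under direct sums and tensor products, together with the definition of the weight preorder. The only mild point worth flagging is that one uses that $\tilt(\lambda) \otimes M$ is again a tilting module (so that the decomposition $\tilt(\mu) \oplus R$ makes sense inside $\Tilt(G)$), which is exactly the property of $\Tilt(G)$ being stable under tensor product recalled in~\S\ref{ss:reminder-ostrik}; but in fact~\eqref{eqn:support-sum} and~\eqref{eqn:support-tensor} hold for arbitrary $G$-modules, so even this is not strictly needed.
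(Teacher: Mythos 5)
Your proof is correct and is essentially identical to the paper's argument: both write $\tilt(\lambda) \otimes M \cong \tilt(\mu) \oplus R$ and then chain the inclusions coming from~\eqref{eqn:support-sum} and~\eqref{eqn:support-tensor}. Nothing further is needed.
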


\begin{proof}
If $\mu \Tleq \lambda$, then there exist tilting $G$-modules $M,N$ such that
\[
\tilt(\lambda) \otimes M \cong \tilt(\mu) \oplus N.
\]
Then we have
\[
V_{G_1}(\tilt(\mu)) \subset V_{G_1}(\tilt(\mu) \oplus N) = V_{G_1}(\tilt(\lambda) \otimes M) \subset V_{G_1}(\tilt(\lambda))
\]
by~\eqref{eqn:support-sum}--\eqref{eqn:support-tensor}, proving the desired claim.
\end{proof}

%-------------------------------------------------------------------------------------------
\subsection{The Humphreys conjecture}
\label{ss:humphreys-conj}
%-------------------------------------------------------------------------------------------

Recall that the main result of~\cite{lusztig} provides a canonical bijection between the set $\cN_\C/\Gp_\C$ of $\Gp_\C$-orbits in $\cN_\C$ and the set of Kazhdan--Lusztig two-sided cells in $W$. (The two-sided cells considered here are defined as in the special case $p=0$ of Definition~\ref{defn:order-cells}, but allowing multiplication both on the left and on the right by elements of $\cH$.)
Then, the main result of~\cite{lx} implies that taking the intersection with $\fW$ induces a bijection between the set of Kazhdan--Lusztig two-sided cells in $W$ and the set of antispherical right cells in $\fW$. 
Combining these bijections,
we obtain a canonical bijection between the set of orbits $\cN_\C/\Gp_\C$ and the set of antispherical right cells in $\fW$. 
For $\bc \subset \fW$ an antispherical right cell, we will denote by $\scO_{\bc}^\C \subset \cN_\C$ the corresponding orbit.

\begin{rmk}
\label{rmk:orbit-LV}
Now that this notation is introduced, we can make the description of the orbit $\scO$ appearing in Theorem~\ref{thm:support-char-0}\eqref{it:support-char0-1} slightly more explicit: by definition this is the orbit appearing in the pair corresponding to $\lambda$ under the \emph{Lusztig--Vogan bijection} considered in~\cite[\S 4.3]{achar}.
It follows from~\cite[Remark~6]{bezru-perverse} (see also the remarks surrounding~\cite[Conjecture~3]{ostrik-Kth}) that $\scO = \scO_{\bc(w)}^\C$, where $w \in \fW$ is the unique element such that there exists $\omega \in \Omega$ such that $w_\lambda = w\omega$. It will follow from the proof in~\S\ref{ss:proof-support-bezru} that the orbit appearing in Theorem~\ref{thm:support-char-0}\eqref{it:support-char0-2} can be described in the same way.
\end{rmk}

\begin{lem}
\label{lem:order-orbits}
For $y,w \in \fW$, we have
\[
y \Rleq w \quad \Rightarrow \quad \overline{\iota_\bk(\scO^\C_{\bc(y)})} \subset \overline{\iota_\bk(\scO^\C_{\bc(w)})}.
\]
\end{lem}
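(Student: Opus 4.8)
The plan is to reduce the statement to its counterpart over $\C$ and then to quote the compatibility of Lusztig's bijection with the orders on cells and on nilpotent orbits. First I would use that $\iota_\bk \colon \cN_\C/\Gp_\C \to \cN_\bk/\Gp_\bk$ is an isomorphism of posets for the order by inclusion of orbit closures (this is~\cite[Th\'eor\`eme~III.5.2]{spaltenstein}, recalled in~\S\ref{ss:support-cN}); hence it suffices to prove that $y \Rleq w$ implies $\overline{\scO^\C_{\bc(y)}} \subseteq \overline{\scO^\C_{\bc(w)}}$ inside $\cN_\C$.

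Next I would pass from right cells to two-sided cells. Since the right preorder $\Rleq$ is finer than the two-sided preorder $\le_{\mathrm{LR}}$ on $W$ (left-and-right multiplication includes right multiplication as a special case), the hypothesis $y \Rleq w$ gives $y \le_{\mathrm{LR}} w$, and therefore $\mathbf{d}_y \le_{\mathrm{LR}} \mathbf{d}_w$, where $\mathbf{d}_y$, resp.\ $\mathbf{d}_w$, is the two-sided cell containing $y$, resp.\ $w$. The antispherical right cell $\bc(y)$ is contained in $\mathbf{d}_y$ and $\bc(w)$ in $\mathbf{d}_w$, so by the very definition of $\scO^\C_{\bc(-)}$ (composition of the Lusztig--Xi bijection with Lusztig's bijection, as in~\S\ref{ss:humphreys-conj}) the orbit $\scO^\C_{\bc(y)}$ is exactly the one attached to $\mathbf{d}_y$, and $\scO^\C_{\bc(w)}$ the one attached to $\mathbf{d}_w$.

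It then remains to invoke that Lusztig's bijection between two-sided cells of $W$ and $\Gp_\C$-orbits in $\cN_\C$ is order-preserving: $\mathbf{d} \le_{\mathrm{LR}} \mathbf{d}'$ implies $\scO^\C_{\mathbf{d}} \subseteq \overline{\scO^\C_{\mathbf{d}'}}$ (see~\cite{lusztig}; this is also a consequence of Bezrukavnikov's geometric realization of cells). Applying this to $\mathbf{d}_y \le_{\mathrm{LR}} \mathbf{d}_w$ gives the inclusion we want. The one point that needs care is the bookkeeping with conventions: one must verify that $\iota_\bk$, the Lusztig--Xi bijection and Lusztig's bijection are composed so that the orders come out in the direction asserted --- a useful sanity check being that the trivial right cell $\{1\}$, which is the greatest element for $\Rleq$ by Example~\ref{ex:cell-1}, must correspond to the regular orbit, the greatest element for the closure order, consistently with $\oVG(\tilt(0)) = \oVG(\bk) = \cN$. (Alternatively, one can bypass the order-compatibility of Lusztig's bijection by arguing through tilting modules: by Theorem~\ref{thm:ostrik-cells} the hypothesis is equivalent to $y \qTleq w$, so $\TQ(y \cdot_\ell 0)$ lies in the thick tensor ideal generated by $\TQ(w \cdot_\ell 0)$, and one then combines the monotonicity of relative support varieties with Bezrukavnikov's computation recalled around Remark~\ref{rmk:orbit-LV}.)
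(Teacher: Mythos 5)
Your argument is essentially the paper's proof: pass from $\Rleq$ to the two-sided order $\leq_{\mathrm{LR}}$, invoke the order-compatibility of Lusztig's bijection between two-sided cells and nilpotent orbits, and transfer the resulting inclusion of orbit closures through the poset isomorphism $\iota_\bk$ of Spaltenstein. The only caveat is attribution: that order-compatibility is not established in~\cite{lusztig} (Lusztig only conjectured it), and the paper cites~\cite[Theorem~4(b)]{bezru-perverse} for exactly this step, which matches your secondary reference to Bezrukavnikov rather than your primary one.
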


\begin{proof}
If $y \Rleq w$, then $y \leq_{\mathrm{LR}} w$ (where $\leq_{\mathrm{LR}}$ is the two-sided Kazhdan--Lusztig order). By~\cite[Theorem~4(b)]{bezru-perverse}, this implies that $\overline{\scO^\C_{\bc(y)}} \subset \overline{\scO^\C_{\bc(w)}}$. Since the bijection $\iota_\bk$ is an isomorphism of posets (see~\S\ref{ss:support-cN}), we deduce that $\overline{\iota_\bk(\scO^\C_y)} \subset \overline{\iota_\bk(\scO_w)}$.
\end{proof}

The following conjecture is due to Humphreys, see~\cite{humphreys}.

\begin{conj}
\label{conj:Humphreys}
For any $w \in \fW$, if $\lambda \in \bX^+$ belongs to the lower closure of $w \cdot_p \mathcal{C}_p$ then
\[
V_{G_1}(\tilt(\lambda)) = \overline{\iota_\bk(\scO^\C_{\bc(w)})}.
\]
\end{conj}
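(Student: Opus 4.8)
\emph{Approach.} Write $G=\Gp_\bk$. I would prove the inclusion ``$\supset$'' in Conjecture~\ref{conj:Humphreys} unconditionally, and the reverse inclusion — hence the conjecture — once $\mathrm{char}(\bk)$ exceeds a bound depending only on the root system; together these are Theorem~\ref{thm:main-intro}. The first step is to transport everything to coherent sheaves on $\tcN$. Since $p>h$, the weight $0$ is regular dominant, so every weight in the lower closure of $w\cdot_p\cC_p$ is $\Tsim$ to $w\cdot_p0$; by Lemma~\ref{lem:Tleq-alcoves} and Lemma~\ref{lem:V-Tleq} the conjecture reduces to the case $\lambda=w\cdot_p0$ with $w\in\fW$. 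Then, using the ``modular analogue of~\cite{abg}'' of~\cite{prinblock} together with the explicit form of the isomorphism $\Ext^\bullet_{G_1}(\bk,\bk)\cong\cO(\cN)$ of Lemma~\ref{lem:cohom-G1}, I would identify, for a weight $\mu\in\bX$ with $w_\mu\in w\Omega$, the $\cO(\cN)$-module $\Ext^\bullet_{G_1}\bigl(\tilt(w\cdot_p0),\tilt(w\cdot_p0)\bigr)$ with $\Ext^\bullet_{\Db\Coh(\tcN)_\bk}(\PEx^\bk_\mu,\PEx^\bk_\mu)$ (hence equating their supports), and likewise $\oVG(\tilt(w\cdot_p0))=\supp(R\pi_*\PEx^\bk_\mu)$ when $w\in\fWf$. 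By Proposition~\ref{prop:support-geometry}\eqref{it:support-charp-2} this already gives $V_{G_1}(\tilt(w\cdot_p0))\supset\overline{\iota_\bk(\scO)}$ for every $p$, with equality when $p>N_2(\mu)$; and by Remark~\ref{rmk:orbit-LV} the orbit $\scO$ is $\scO^\C_{\bc(w)}$. This proves Theorem~\ref{thm:main-intro}\eqref{it:thm-intro-1} and the ``$\supset$'' half of the conjecture, and reduces the conjecture to removing the dependence of $N_2$ on $\mu$.

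For that, note that by Lemma~\ref{lem:V-Tleq} the variety $V_{G_1}(\tilt(w\cdot_p0))$ depends only on the weight cell of $w$, i.e.\ (Theorem~\ref{thm:ostrik}) on its antispherical right $p$-cell. There are finitely many antispherical right ($0$-)cells (\cite[Theorem~2.2]{lusztig-cells2}, as in the proof of Corollary~\ref{cor:stabilization}), and by Proposition~\ref{prop:finite-generation} each is generated, under left multiplication by the $t_\nu$ with $\nu\in\bY^+$, by a finite subset; let $\mathcal K\subset\fW$ be the resulting finite, $p$-independent set. For any $w\in\fW$ we may write $w=t_\nu v$ with $v\in\mathcal K$, $\bc(v)=\bc(w)$ and $\nu\in\bY^+$; then $w\Rleq v$ (Lemma~\ref{lem:length-fW-y+}\eqref{it:length-fW-y+3}), and — comparing the preorders $\Rleq$ and $\pRleq$ on $\fW$ — one deduces $w\Tleq v$. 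Setting $N:=\max_{v\in\mathcal K}N_2(\mu_v)$ (finite, enlarged to absorb $h$ and the remaining characteristic bounds of Section~\ref{sec:support} and of Proposition~\ref{prop:indec-parity-integers} for the finitely many weights involved), we obtain for $p>N$
\[
V_{G_1}(\tilt(w\cdot_p0))\subseteq V_{G_1}(\tilt(v\cdot_p0))=\overline{\iota_\bk(\scO^\C_{\bc(v)})}=\overline{\iota_\bk(\scO^\C_{\bc(w)})},
\]
which with the reverse inclusion gives Conjecture~\ref{conj:Humphreys} and Theorem~\ref{thm:main-intro}\eqref{it:thm-intro-2}. The relative statement of Theorem~\ref{thm:main-intro} is obtained identically, using the $R\pi_*$-version throughout and restricting to $w\in\fWf$.

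The main obstacles I anticipate are concentrated in the second paragraph. First, one must check that the equivalence of~\cite{prinblock} is compatible with translation functors, so that $\tilt(t_\nu v\cdot_p0)$ genuinely lies in the thick tensor ideal generated by $\tilt(v\cdot_p0)$ — equivalently, $t_\nu v\Tleq v$ — which is what allows a finite set of cell generators to control all of $\fW$. Second, and more seriously, one needs the implication $\Rleq\Rightarrow\pRleq$ (or an adequate substitute) relating the classical and $p$-canonical cell preorders on $\fW$; without it, $\mathcal K$ would control only the classical cell of $w$, not its weight ($p$-)cell. The remaining ingredients — the geometric lower and upper bounds, the change-of-scalars arguments, and the identification of $\scO$ through the Lusztig--Vogan bijection — are already available from Sections~\ref{sec:parity}--\ref{sec:support}.
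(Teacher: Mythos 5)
Your overall architecture is the paper's: reduce to $\lambda=w\cdot_p0$ via Lemmas~\ref{lem:Tleq-alcoves} and~\ref{lem:V-Tleq}, transport to $\Db\Coh^{\Gp\times\Gm}(\tcN)_\bk$ via Proposition~\ref{prop:supp-tilt-cN}, get the lower bound and the large-$p$ upper bound from Proposition~\ref{prop:support-geometry} together with Remark~\ref{rmk:orbit-LV}, and remove the dependence of the bound on the weight using the finiteness of cells and Proposition~\ref{prop:finite-generation}. But your final step has a genuine gap, exactly where you flag it: from $w=t_\nu v$ with $v\in K$ you deduce $w\Rleq v$ and then claim $w\Tleq v$ ``by comparing the preorders $\Rleq$ and $\pRleq$.'' No such comparison is available in either direction: the implication $x\pRleq y\Rightarrow x\Rleq y$ appears in the paper only as a \emph{hypothesis}~\eqref{eqn:pRleq-Rleq} in Lemma~\ref{lem:Humphreys-classical-relative}, and the reverse implication $\Rleq\Rightarrow\pRleq$ (which is what your argument would need, since $V_{G_1}$ is a priori constant only on weight cells, i.e.\ $p$-cells by Theorem~\ref{thm:ostrik}, while $K$ is built inside a classical cell) is likewise unknown. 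As written, your $N$ only controls the tilting modules indexed by $K$ and by whatever lies below them for $\Tleq$, and you have not shown that an arbitrary $w$ in the classical cell is $\Tleq$ some element of $K$.

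The fix is elementary and is how the paper closes the argument: if $w=t_\nu v$ with $\nu\in\bY^+$, then $w\cdot_p0=v\cdot_p0+p\nu$, so $\tilt(w\cdot_p0)$ is a direct summand of the tilting module $\tilt(v\cdot_p0)\otimes\tilt(p\nu)$ (its highest weight occurs there with multiplicity one), whence $w\Tleq v$ directly, with no reference to $\Rleq$, $\pRleq$, or to compatibility of the functor $\Xi$ of~\cite{prinblock} with translation functors (so your first anticipated obstacle is also moot). Note moreover that the lower bound from Proposition~\ref{prop:support-geometry} is indexed by the \emph{classical} cell $\bc(w)$, which is exactly what the conjecture asserts; so even if the classical cell containing $K$ splits into several $p$-cells, the sandwich $\overline{\iota_\bk(\scO^\C_{\bc(w)})}\subset V_{G_1}(\tilt(w\cdot_p0))\subset V_{G_1}(\tilt(v\cdot_p0))=\overline{\iota_\bk(\scO^\C_{\bc(v)})}=\overline{\iota_\bk(\scO^\C_{\bc(w)})}$ closes without any cell-preorder comparison. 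With this replacement (and taking $\mu\in\bY$ with $w=w_\mu$ exactly, rather than $w_\mu\in w\Omega$, when invoking Proposition~\ref{prop:supp-tilt-cN}), your proposal coincides with the paper's proof of Theorem~\ref{thm:Humphreys-large}; recall also that this only establishes the conjecture for $p$ larger than an inexplicit bound, not in general.
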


By Lemma~\ref{lem:Tleq-alcoves} and Lemma~\ref{lem:V-Tleq}, $V_{G_1}(\tilt(\lambda))$ only depends on the alcove whose lower closure contains $\lambda$. In particular, Conjecture~\ref{conj:Humphreys} is equivalent to the following.

\begin{conj}
\label{conj:Humphreys-regular}
For any $w \in \fW$,
\[
V_{G_1}(\tilt(w \cdot_p 0)) = \overline{\iota_\bk(\scO^\C_{\bc(w)})}.
\]
\end{conj}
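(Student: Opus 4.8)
The plan is to transport the problem to the coherent side. Using the modular analogue of the results of~\cite{abg} established in~\cite{prinblock}, together with the identification $\Ext^\bullet_{G_1}(\bk,\bk) \cong \cO(\cN)$ of Lemma~\ref{lem:cohom-G1}, one identifies $\Ext^\bullet_{G_1}(\tilt(w \cdot_p 0), \tilt(w \cdot_p 0))$, as a module over $\cO(\cN)$, with $\Ext^\bullet_{\Db \Coh(\tcN)_\bk}(\PEx^\bk_\mu, \PEx^\bk_\mu)$, regarded as a sheaf on $\cN$ via $\pi : \tcN_\bk \to \cN_\bk$, for the appropriate weight $\mu \in \bX$ attached to $w$; here one uses that the equivalence of~\cite{prinblock} matches indecomposable tilting objects of the principal block with the parity exotic sheaves $\PEx^\bk_\mu$ of~\S\ref{ss:parity-exotic} and intertwines $G_1$-cohomology with coherent $R\mathscr{H}\hspace{-2pt}\mathit{om}$ on $\tcN_\bk$. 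This yields
\[
V_{G_1}(\tilt(w \cdot_p 0)) = \supp\bigl( \Ext^\bullet_{\Db \Coh(\tcN)_\bk}(\PEx^\bk_\mu, \PEx^\bk_\mu) \bigr),
\]
reducing the conjecture to a statement about supports of parity exotic sheaves (the relative version reduces similarly, with $R\pi_* \PEx^\bk_\lambda$ for $\lambda \in -\bX^+$ in place of the $R\mathscr{H}\hspace{-2pt}\mathit{om}$).

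I would then feed this into the geometry of Section~\ref{sec:support}. The weight $\mu$ satisfies $w_\mu = w\omega$ for some $\omega \in \Omega$, so by Remark~\ref{rmk:orbit-LV} the orbit $\scO$ produced by Theorem~\ref{thm:support-char-0}\eqref{it:support-char0-2} is exactly $\scO^\C_{\bc(w)}$. Proposition~\ref{prop:support-geometry}\eqref{it:support-charp-2} then gives, for \emph{every} $p$, the inclusion $V_{G_1}(\tilt(w \cdot_p 0)) \supset \overline{\iota_\bk(\scO^\C_{\bc(w)})}$, which is one half of the equality. For the reverse inclusion the same proposition provides an integer $N_2$ beyond which equality holds, a priori depending on $\mu$; to remove that dependence one uses Lemma~\ref{lem:V-Tleq}, which says $V_{G_1}(\tilt(w \cdot_p 0))$ depends only on the weight cell of $w$, identified by Theorem~\ref{thm:ostrik} with the antispherical right $p$-cell of $w$. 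Since left multiplication by $t_\lambda$ ($\lambda \in \bY^+$) is realized up to summands by tensoring with a fixed tilting module (cf.~Lemma~\ref{lem:Tleq-alcoves}), and supports are controlled by~\eqref{eqn:support-sum}--\eqref{eqn:support-tensor}, Proposition~\ref{prop:finite-generation} propagates equality from a finite generating subset of each cell to all of $\fW$ with a single bound; this proves the conjecture for all $p$ exceeding a root-system-dependent constant (the integer $N$ of Theorem~\ref{thm:main-intro}).

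The remaining — and genuinely hard — step is the reverse inclusion in small characteristic. It is not automatic because $\PEx^\bk_\mu$ need not coincide with the simple exotic sheaf $\fL^\bk(\mu)$ when $p$ is small: this agreement holds in characteristic $0$ (Theorem~\ref{thm:parities-char-0}) and for $p$ large (Corollary~\ref{cor:parity-simple}), and it is precisely what the change-of-scalars argument buys. Without it, $R\pi_* \PEx^\bk_\mu$ need not be a simple perverse coherent sheaf, so its support is not forced to be an orbit closure, and change-of-scalars only controls it after inverting finitely many primes. A proof valid for all $p > h$ would therefore require either establishing $\PEx^\bk_\mu \cong \fL^\bk(\mu)$ — equivalently a parity/positivity vanishing for the exotic $t$-structure — in arbitrary characteristic $> h$, or an independent characteristic-$p$ upper bound on $\dim V_{G_1}(\tilt(w \cdot_p 0))$ matching the lower bound above, together with irreducibility of the support. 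Lemma~\ref{lem:criterion-E-L} isolates a concrete sufficient condition of the first kind, checkable by direct computation in low rank (types $\mathbf{C}_2$ and $\mathbf{G}_2$ under mild bounds on $p$) but not available in general; this is why the conjecture stays open beyond type $\mathbf{A}$.
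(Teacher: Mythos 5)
This statement is a conjecture that the paper itself does not prove in full, and your proposal correctly delivers exactly what the paper does prove: the identification of $V_{G_1}(\tilt(w\cdot_p 0))$ with $\supp\bigl(\Ext^\bullet_{\Db\Coh(\tcN)_\bk}(\PEx^\bk_\mu,\PEx^\bk_\mu)\bigr)$ via Proposition~\ref{prop:supp-tilt-cN}, the unconditional lower bound and large-$p$ equality from Proposition~\ref{prop:support-geometry} together with Remark~\ref{rmk:orbit-LV}, and the uniformization of the bound over each cell via Lemma~\ref{lem:V-Tleq}, Theorem~\ref{thm:ostrik} and Proposition~\ref{prop:finite-generation}, which is precisely Theorem~\ref{thm:Humphreys-large}. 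Your closing diagnosis of why this cannot reach all $p>h$ (the possible failure of $\PEx^\bk_\mu \cong \fL^\bk(\mu)$ in small characteristic, with Lemma~\ref{lem:criterion-E-L} as the available substitute in low rank) matches the paper's own assessment, so the approach is essentially identical to the paper's.
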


%-------------------------------------------------------------------------------------------
\subsection{A relative version of Conjecture~\ref{conj:Humphreys}}
%-------------------------------------------------------------------------------------------

Recall the subset $\fWf \subset W$ introduced in~\S\ref{ss:def-exotic}.

\begin{lem}
\label{lem:cohom-G1-fWf}
Let $w \in \fW$, and assume that $w \notin \fWf$. Then $\Ext^\bullet_{G_1}(\bk,\tilt(w \cdot_p 0)) = 0$.
\end{lem}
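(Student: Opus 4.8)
The plan is to reduce the statement, via the dictionary between $G_1$-cohomology and coherent sheaves on the Springer resolution set up in \S\ref{ss:relation-tilt}, to the vanishing of $R\pi_*$ of a parity exotic sheaf. First I would record the combinatorial content of the hypothesis. In any Coxeter system the minimal-length element of a parabolic double coset $\Wf u \Wf$ is the unique element that is simultaneously left-minimal and right-minimal; since $w\in\fW$ is already left-minimal, the assumption $w\notin\fWf$ is equivalent to the existence of $s\in\Sf$ with $ws<w$. Writing $w=w_\mu$ with $\mu\in\bY$ and using the description $\fWf=\{w_\nu:\nu\in-\bY^+\}$ from \S\ref{ss:def-exotic}, this is in turn equivalent to $\mu\notin-\bY^+$; in particular the weight governing the corresponding exotic object is not antidominant, which is exactly the regime where the relevant pushforward vanishes.

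Next I would pass to the coherent side. Using the explicit construction of the isomorphism $\Ext^\bullet_{G_1}(\bk,\bk)\cong\cO(\cN)$ given in \S\ref{ss:relation-tilt}, together with the equivalence $\Upsilon$ of \S\ref{ss:relation-Gr} and the ``modular $\mathrm{ABG}$'' results of \cite{prinblock}, I would identify $\Ext^\bullet_{G_1}(\bk,\tilt(w\cdot_p 0))$, as a module over $\Ext^\bullet_{G_1}(\bk,\bk)\cong\cO(\cN)$, with the cohomology of $R\pi_*$ applied to the parity exotic sheaf $\PEx^\bk_{\lambda}$ attached to the weight $\lambda\in\bX$ corresponding to $w$ under these dictionaries; this is the same mechanism by which $\oVG(\tilt(\lambda))$ is translated into $\supp(R\pi_*\PEx^\bk_\lambda)$ in the proof of Theorem~\ref{thm:main-intro}. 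Under the combinatorial reduction above, $w\notin\fWf$ becomes $\lambda\notin-\bX^+$, and $R\pi_*\PEx^\bk_\lambda=0$ for such $\lambda$, as noted in \S\ref{ss:statement-support}: in characteristic $0$ this follows from $\PEx^\C_\lambda\cong\fL^\C(\lambda)$ (Theorem~\ref{thm:parities-char-0}) and \cite[Proposition~2.6]{achar}, and in positive characteristic one argues via an exotic-standard filtration of $\PEx^\bk_\lambda$ (all of whose subquotients are objects $\Delta_\bk(\nu)$ with $\nu$ again not antidominant, hence with $R\pi_*\Delta_\bk(\nu)=0$) or by change of scalars as in \S\ref{ss:lower-bound}--\S\ref{ss:upper-bound}, using compatibility of $R\pi_*$ with extension of scalars. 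This yields $\Ext^\bullet_{G_1}(\bk,\tilt(w\cdot_p 0))=0$.

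The main obstacle is the middle step: making precise, and compatible with the $\cO(\cN)$-module structures, the functorial identification of $\Ext^\bullet_{G_1}(\bk,\tilt(\lambda))$ with a pushforward of a parity exotic sheaf, so that ``empty support on the coherent side'' really forces the vanishing of $\Ext^\bullet_{G_1}$. An alternative route, staying inside representation theory, would be to use a Weyl filtration of $\tilt(w\cdot_p 0)$, reduce to the vanishing of $H^\bullet(G_1,\Delta(y\cdot_p 0))$ for the $y$ occurring there, and combine the Andersen--Jantzen computation of $H^\bullet(G_1,\Delta(\mu))$ with a combinatorial argument (via Lemma~\ref{lem:order-generated}, Lemma~\ref{lem:pRleq-Bruhat} and Theorem~\ref{thm:ostrik}) showing that no Weyl factor of $\tilt(w\cdot_p 0)$ has highest weight in $\fWf\cdot_p 0$ when $w\notin\fWf$; establishing that last statement about $\Delta$-multiplicities is itself the crux of that approach.
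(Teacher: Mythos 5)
Your opening reduction is fine as far as it goes: $w\notin\fWf$ with $w\in\fW$ is indeed equivalent to the existence of $s\in\Sf$ with $ws<w$, and Proposition~\ref{prop:supp-tilt-cN} (whose proof does not use the present lemma, so there is no circularity) does identify $\Ext^\bullet_{G_1}(\bk,\tilt(w_\lambda\cdot_p 0))$ with $R^\bullet\Gamma(\tcN_\bk,\PEx^\bk_\lambda)$. But the whole weight of the argument then rests on the geometric vanishing $R\pi_*\PEx^\bk_\lambda=0$ for $\lambda\notin-\bX^+$ \emph{in the given positive characteristic}, which the paper only asserts parenthetically in~\S\ref{ss:statement-support}, and neither of your two mechanisms establishes it. First, the claim that $R\pi_*\Delta_\bk(\nu)=0$ for $\nu$ not antidominant is false: every $\Delta_\bk(\nu)$ is obtained from $\cO_{\tcN_\bk}$ by an invertible braid-group functor, hence has nonzero restriction to $\pi^{-1}(\scO^\bk_{\mathrm{reg}})$ (this is exactly the observation used in the proof of Lemma~\ref{lem:filtration-Psi}), and since $\pi$ is an isomorphism over the regular orbit this forces $R\pi_*\Delta_\bk(\nu)\neq 0$ for \emph{every} $\nu\in\bX$; so no term-by-term vanishing along a $\Delta$-``filtration'' is available (and the assertion that only non-antidominant weights occur in such a filtration of $\PEx^\bk_\lambda$ is also unjustified). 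Second, the change-of-scalars arguments of~\S\ref{ss:lower-bound}--\S\ref{ss:upper-bound} only bound supports from \emph{below} in all characteristics; the upper bound, hence the vanishing you need, is obtained there only for $p$ larger than an unspecified bound depending on $\lambda$ (Propositions~\ref{prop:indec-parity-integers} and~\ref{prop:support-change-scalars}), whereas the lemma is needed, and is true, for every $p>h$ (it is used in that generality in Lemmas~\ref{lem:cells-fWf} and~\ref{lem:Humphreys-classical-relative}). Your closing ``alternative route'' is, as you say yourself, only a sketch whose crux (the statement about $\Delta$-multiplicities) is not established.

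For comparison, the paper's proof is short and purely representation-theoretic, valid for all $p>h$: choosing $s\in\Sf$ with $ws<w$, one has $\Theta_s\tilt(w\cdot_p 0)\cong\tilt(w\cdot_p 0)\oplus\tilt(w\cdot_p 0)$ by~\cite[E.11(1)]{jantzen}, and $\Theta_s$ extends to a self-adjoint endofunctor of $G_1T$-modules commuting with twists by $p\bX$ (\cite[\S II.9.22]{jantzen}), so $\Ext^\bullet_{G_1}(\bk,\Theta_s\tilt(w\cdot_p 0))\cong\Ext^\bullet_{G_1}(\Theta_s\bk,\tilt(w\cdot_p 0))=0$ because $\Theta_s\bk=0$. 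If you insist on a geometric proof along your lines, you would have to transport precisely this mechanism to $\tcN$: show that $R\pi_*\circ\Pi^s=0$ for $s\in\Sf$ (via the projection formula, using that $\varsigma-\alpha_s$ has degree $-1$ on the $\mathbb{P}^1$-fibres of $\mu_s$ --- the coherent avatar of $\Theta_s\bk=0$), and that $\PEx^\bk_\lambda$ is a direct summand of $\Psi_s$ applied to a parity object when $w_\lambda s<w_\lambda$ with $s\in\Sf$; as it stands, that key step is missing from your proposal.
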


\begin{proof}
Let $s \in \Sf$ be such that $ws < w$. Then $ws \in \fW$, and by 
\cite[E.11(1)]{jantzen} we have $\Theta_s \tilt(w\cdot_p 0) \cong \tilt(w\cdot_p 0) \oplus \tilt(w\cdot_p 0)$. Hence 
\[
\Ext^\bullet_{G_1}(\bk,\tilt(w\cdot_p 0)) = 0 \quad \text{iff} \quad \Ext^\bullet_{G_1}(\bk,\Theta_s \tilt(w\cdot_p 0)) = 0.
\]

According to~\cite[\S II.9.22]{jantzen}, $\Theta_s$ can be extended to a self-adjoint endofunctor of the category of rational
$G_1T$-modules (where $T=\TGp_\bk$) with the property that 
\[
\Theta_s(M \otimes \bk_{G_1T}(p\lambda)) \cong \Theta_s(M)\otimes \bk_{G_1 T}(p\lambda).
\]
for any $\lambda \in \bX$ and any $G_1T$-module $M$. By comparing $G_1$ and $G_1T$ cohomology, we deduce that 
\[
\Ext^\bullet_{G_1}(\bk,\Theta_s \tilt(w\cdot_p 0)) \cong \Ext^\bullet_{G_1}(\Theta_s \bk,\tilt(w\cdot_p 0)). 
\]
However, we have
$\Theta_s \bk =0$, which implies the desired vanishing.
\end{proof}

\begin{lem}
\label{lem:cells-fWf}
For any $\lambda \in \bX^+$, there exists $w \in \fWf$ such that 
$\lambda \Tsim w \cdot_p 0$.
\end{lem}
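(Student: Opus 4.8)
The plan is to reduce the statement, via Lemma~\ref{lem:Tleq-alcoves} and Theorem~\ref{thm:ostrik}, to a combinatorial assertion about antispherical right $p$-cells, and then to establish that assertion. First, the weight $\lambda$ lies in the lower closure of a unique alcove; this alcove is contained in the closed dominant cone, hence has the form $w \cdot_p \cC_p$ for a unique $w \in \fW$, and since $0$ lies in the lower closure of $\cC_p$ (here we use $p > h$) the weight $w \cdot_p 0$ lies in the same lower closure. So $\lambda \Tsim w \cdot_p 0$ by Lemma~\ref{lem:Tleq-alcoves}, and it suffices to produce $w' \in \fWf$ with $w' \Tsim w$ (in the preorder transferred to $\fW$). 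By Theorem~\ref{thm:ostrik} the equivalence classes of $\Tsim$ on $\fW$ are precisely the antispherical right $p$-cells; thus the whole lemma is equivalent to the assertion that \emph{every antispherical right $p$-cell meets $\fWf$}.

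To prove this assertion I would first cut down to finitely many cells. By Proposition~\ref{prop:dec-fW} every element of $\fW$ has the form $t_\mu z$ with $z$ in the finite set $Z$ and $\mu \in \bY^+$, and by Lemma~\ref{lem:length-fW-y+} we have $t_\mu z \pRleq z$; hence every antispherical right $p$-cell lies, in Lusztig's order, below one of the finitely many cells containing an element of $Z$, and invoking the stabilization statement of Corollary~\ref{cor:stabilization} (whose $p$-analogue holds because the set of antispherical right $p$-cells, equivalently the set of weight cells, is finite) one is reduced to checking finitely many ``basic'' cases. For each of these one exhibits an element of $\fWf$: the lowest cell is $\{1\}$ by Example~\ref{ex:cell-1} and $1 \in \fWf$; the regular cell contains $t_\gamma w_0$ for every regular dominant $\gamma \in \bY$, and a short computation with Lemma~\ref{lem:char-fW} and the length formula~\eqref{eqn:wext-length} shows $t_\gamma w_0 \in \fWf$; and the intermediate cells are handled through the explicit (Bala--Carter type) description of antispherical cells and their relation to nilpotent orbits coming from the work of Lusztig--Xi~\cite{lx}, Andersen~\cite{andersen} and Jensen~\cite{jensen} (compare Remark~\ref{rmk:orbit-LV}), according to which the cell attached to an orbit $\scO$ contains $w_\eta$ for every $\eta$ in a nonempty --- in fact infinite --- subcone of $-\bY^+$, so that it meets $\fWf = \{w_\eta : \eta \in -\bY^+\}$.

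The main obstacle is exactly this last point. The first reduction is formal, but establishing that $\fWf$ meets every antispherical right $p$-cell seems to require the combinatorial classification of these cells, because the moves naturally available --- left multiplication by dominant translations $t_\mu$ with $\mu \in \bY^+$, and right multiplication by elements of $\Wf$ --- tend to pass between cells rather than remain inside a single one, so that ``going down to a $\fWf$-representative'' without changing the cell is not something one gets for free. As a consistency check, note that Lemma~\ref{lem:cohom-G1-fWf} shows the elements of $\fWf$ are exactly those $w \in \fW$ with $\oVG(\tilt(w \cdot_p 0)) \neq \varnothing$; since $V_{G_1}(\tilt(-))$ is constant on $\Tsim$-classes by Lemma~\ref{lem:V-Tleq} and is nonempty on the nontrivial ones, the assertion we are proving is precisely what makes the relative picture consistent with the absolute one, in agreement with the lower bound in Theorem~\ref{thm:main-intro}\eqref{it:thm-intro-1}.
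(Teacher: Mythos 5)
Your first reduction is fine: by Lemma~\ref{lem:Tleq-alcoves} and Theorem~\ref{thm:ostrik} the lemma is indeed equivalent to the assertion that every antispherical right $p$-cell meets $\fWf$. But that assertion is precisely where your argument has a genuine gap, and it is not a small one: the paper itself points out (Remark~\ref{rmk:intersection-fWf}) that this fact ``does not seem to be obvious from the definition'' and obtains it only \emph{as a consequence} of Lemma~\ref{lem:cells-fWf}, proved by other means. Your justification of it leans on ingredients that are not available for $p$-cells: the Lusztig--Xi results you invoke concern ordinary (Kazhdan--Lusztig, $p=0$) cells and their bijection with nilpotent orbits, and no Bala--Carter-type description of antispherical right $p$-cells, nor any statement that the $p$-cell attached to an orbit contains $w_\eta$ for a cone of $\eta \in -\bY^+$, exists in the literature you cite (Jensen's work studies $p$-cells but does not classify them). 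Likewise, your appeal to a ``$p$-analogue of Corollary~\ref{cor:stabilization}'' presupposes that there are only finitely many antispherical right $p$-cells (equivalently, finitely many weight cells), which is itself not established at this point and is close to circular, since the natural route to it again passes through Theorem~\ref{thm:ostrik} together with the very statement you are trying to prove. Your closing ``consistency check'' also overreads Lemma~\ref{lem:cohom-G1-fWf}, which only gives the vanishing of $\Ext^\bullet_{G_1}(\bk,\tilt(w\cdot_p 0))$ for $w \notin \fWf$, not a characterization of $\fWf$.

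For comparison, the paper's proof avoids cell combinatorics entirely and works directly in $\Rep(\Gp_\bk)$: decompose $\tilt(\lambda)\otimes\tilt(\lambda)^*$ into indecomposable tilting summands, let $M$ be the sum of those summands $M_j$ with $\Hom_G(\bk,M_j)\neq 0$ (these lie in the principal block, so $M_j\cong\tilt(w_j\cdot_p 0)$, and Lemma~\ref{lem:cohom-G1-fWf} forces $w_j\in\fWf$), and then use the unit--counit factorization $\tilt(\lambda)\to\tilt(\lambda)\otimes\tilt(\lambda)^*\otimes\tilt(\lambda)\to\tilt(\lambda)$, noting that the first map factors through $M\otimes\tilt(\lambda)$, to conclude that $\tilt(\lambda)$ is a direct summand of $M\otimes\tilt(\lambda)$; hence $\lambda\Tsim w_j\cdot_p 0$ for some $w_j\in\fWf$. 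If you want to salvage your plan, you would need an independent combinatorial proof that $\fWf$ meets every antispherical right $p$-cell, and at present no such argument is on record; the representation-theoretic route is the one that actually closes the gap.
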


\begin{proof}
Let us choose a decomposition
\[
\tilt(\lambda) \otimes \tilt(\lambda)^* = \bigoplus_{i \in I} M_i
\]
where each $M_i$ is an indecomposable tilting module. We also set $J:=\{i \in I \mid \Hom_G(\bk, M_i) \neq 0\}$. (Note that since there exists a nonzero morphism $\bk \to \tilt(\lambda) \otimes \tilt(\lambda)^*$, we have $J \neq \varnothing$.) We also set $M:=\bigoplus_{j \in J} M_j$. For any $j \in J$, since $M_j$ is indecomposable and contains $\bk$ as a submodule, it belongs to $\Rep_0(G)$, hence is isomorphic to $\tilt(w_j \cdot_p 0)$ for some $w_j \in \fW$. Moreover $\Ext^\bullet_{G_1}(\bk, M_j) \neq \{0\}$; by Lemma~\ref{lem:cohom-G1-fWf} this implies that $w_j \in \fWf$. 

It is clear that for any $j \in J$ we have $w_j \cdot_p 0 \Tleq \lambda$. Hence to conclude the proof it suffices to prove that $\lambda \Tleq w_j \cdot_p 0$ for some $j$, i.e.~that $\tilt(\lambda)$ is a direct summand of $M \otimes N$ for some tilting $G$-module $N$.
In fact, we claim that $\tilt(\lambda)$ is a direct summand of $M \otimes \tilt(\lambda)$. Indeed, consider the morphisms of $G$-modules
\[
\tilt(\lambda) \xrightarrow{\phi} \tilt(\lambda) \otimes \tilt(\lambda)^* \otimes \tilt(\lambda) \xrightarrow{\psi} \tilt(\lambda)
\]
defined by $x \mapsto \id \otimes x$ and $f \otimes y \mapsto f(y)$ (where we identify $\tilt(\lambda) \otimes \tilt(\lambda)^*$ with $\End_\bk(\tilt(\lambda))$). It is clear that $\psi \circ \phi=\id$, and that $\phi$ factors through $M \otimes \tilt(\lambda)$. Hence indeed $\tilt(\lambda)$ is a direct summand of $M \otimes \tilt(\lambda)$, and the proof is complete.
\end{proof}

\begin{rmk}
\label{rmk:intersection-fWf}
Comparing Theorem~\ref{thm:ostrik} and Lemma~\ref{lem:cells-fWf} we see that for any antispherical right $p$-cell $\mathbf{c}$ we have $\mathbf{c} \cap \fWf \neq \varnothing$. This does not seem to be obvious from the definition.
\end{rmk}

The following can be considered as a ``relative'' version of Conjecture~\ref{conj:Humphreys-regular}. 

\begin{conj}
\label{conj:Humphreys-relative}
For any 
$w \in \fWf$, we have
\[
\oVG(\tilt(w \cdot_p 0)) = \overline{\iota_\bk(\scO^\C_{\bc(w)})}.
\]
\end{conj}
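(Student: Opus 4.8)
The plan is to transport the computation of $\oVG(\tilt(w\cdot_p 0)) = \supp\bigl(\Ext^\bullet_{G_1}(\bk,\tilt(w\cdot_p 0))\bigr)$ from the representation-theoretic side to the geometric side, using the modular analogue of the Arkhipov--Bezrukavnikov--Ginzburg equivalence established in~\cite{prinblock}; with this machinery one proves the inclusion unconditionally and the equality for $p$ large. For $w\in\fWf$, write $w = w_\lambda$ with $\lambda\in-\bY^+\subset-\bX^+$. First I would recall from~\cite{prinblock} that the principal block $\Rep_0(G)$ has a derived incarnation in terms of $\Db\Coh^{\Gp\times\Gm}(\tcN)_\bk$ under which the algebra $\Ext^\bullet_{G_1}(\bk,\bk)\cong\cO(\cN)$ of Lemma~\ref{lem:cohom-G1} acts through the pullback $\cO(\cN)\to\cO(\tcN)$, the derived functor $R\Hom_{G_1}(\bk,-)$ corresponds to $R\pi_*$ (with $\pi\colon\tcN_\bk\to\cN_\bk$ the Springer resolution), and $\tilt(w\cdot_p 0)$ corresponds to the indecomposable parity exotic sheaf $\PEx^\bk_\lambda$. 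The identification of the tilting module with a parity exotic sheaf is the place where the ``parity objects in place of simple objects'' philosophy replaces Bezrukavnikov's use of simple exotic sheaves; one way to see it is to combine Proposition~\ref{prop:Phi-Tilt-Par} (tilting perverse sheaves $\cT(\lambda)$ on the affine Grassmannian correspond to $\PEx^\bk_\lambda$) with the matching of standard and costandard objects under the equivalence of~\cite{prinblock} (hence of tilting objects with tilting exotic sheaves) and the correspondence, also furnished by~\cite{prinblock}, between tilting objects in $\Rep_0(G)$ and tilting perverse sheaves on $\Gr$.

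Granting this dictionary, the conclusion is immediate: $\oVG(\tilt(w\cdot_p 0)) = \supp(R\pi_*\PEx^\bk_\lambda)$ as closed $\Gp_\bk$-stable subsets of $\cN_\bk$. By Proposition~\ref{prop:support-geometry}\eqref{it:support-charp-1}, this contains $\overline{\iota_\bk(\scO)}$ for every $p$, with equality once $\mathrm{char}(\bk)>N_1$, where $\scO\in\cN_\C/\Gp_\C$ is the orbit of Theorem~\ref{thm:support-char-0}\eqref{it:support-char0-1}; and Remark~\ref{rmk:orbit-LV} identifies $\scO$ with $\scO^\C_{\bc(w)}$ (taking $\omega = e\in\Omega$, so that $w_\lambda = w$). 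This proves the inclusion in Theorem~\ref{thm:main-intro}\eqref{it:thm-intro-1} for $\oVG$ unconditionally, and the equality --- i.e.\ Conjecture~\ref{conj:Humphreys-relative} --- once $p>N_1$. I would also note the compatibility at the excluded weights: for $w\in\fW\smallsetminus\fWf$ one has $\oVG(\tilt(w\cdot_p 0)) = 0$ by Lemma~\ref{lem:cohom-G1-fWf}, matching $R\pi_*\PEx^\bk_\lambda = 0$ for $\lambda\notin-\bX^+$, so only the weights in $\fWf\cdot_p 0$ genuinely contribute.

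I expect the main obstacle to be the first step --- making the equivalence of~\cite{prinblock} precise enough to read off both (a) that $\Ext^\bullet_{G_1}(\bk,\tilt(w\cdot_p 0))$, \emph{with its $\cO(\cN)$-module structure}, goes over to $R\pi_*\PEx^\bk_\lambda$ as an honest coherent sheaf on $\cN_\bk$ (not merely something with the same support), and (b) the indexing $w_\lambda = w$. Part (a) requires tracking the isomorphism of Lemma~\ref{lem:cohom-G1}, realized explicitly in~\S\ref{ss:relation-tilt}, through the equivalence; this is the modular counterpart of the constructions of~\cite{abg} used by Bezrukavnikov in~\cite{bezru}, and its delicate point is the compatibility of the braid-group action and the wall-crossing functors $\Psi_s$ on the coherent side with translation functors on the representation side. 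Part (b) is mainly bookkeeping, but one must watch the normalizations --- $\rho$-shifts, the choice $\varsigma=\rho$ made at the start of Section~\ref{sec:cells}, and the convention making $\BGp$ the \emph{negative} Borel.

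Finally, to upgrade the $\lambda$-dependent bound $N_1$ to the uniform $N$ of Theorem~\ref{thm:main-intro}\eqref{it:thm-intro-2}, I would argue that only finitely many weights must be controlled: $V_{G_1}(\tilt(\lambda))$ depends only on the weight cell of $\lambda$ by Lemma~\ref{lem:V-Tleq}, an analogous statement holds for $\oVG$ on $\fWf$ via the translation-functor argument behind Lemma~\ref{lem:cells-fWf}, and by Theorem~\ref{thm:ostrik} together with the finite generation of antispherical right cells (Proposition~\ref{prop:finite-generation}) each relevant cell is generated from a finite set under left multiplication by elements $t_\mu$ with $\mu\in\bY^+$, which changes neither support variety. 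Hence finitely many choices of $\lambda$ suffice, and taking the maximum of the corresponding bounds $N_1$ (and, for the non-relative statement, the bounds $N_2$ of Proposition~\ref{prop:support-geometry}\eqref{it:support-charp-2}) yields an $N$ depending only on the root system.
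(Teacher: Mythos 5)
Your proposal is correct as far as it goes and follows essentially the same route as the paper: the identification $\oVG(\tilt(w_\lambda \cdot_p 0)) = \supp(R\pi_*\PEx^\bk_\lambda)$ via the functor of~\cite{prinblock} is exactly Proposition~\ref{prop:supp-tilt-cN}, the unconditional lower bound, the large-$p$ equality and the orbit identification are Proposition~\ref{prop:support-geometry} and Remark~\ref{rmk:orbit-LV}, and the uniformization of the bound via weight cells and finite generation (Lemma~\ref{lem:V-Tleq}, Theorem~\ref{thm:ostrik}, Proposition~\ref{prop:finite-generation}) is the argument in the proof of Theorem~\ref{thm:Humphreys-large}. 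Note that, exactly as in the paper, this yields only the inclusion for all $p>h$ and the equality for $p$ above an inexplicit bound, so the statement itself remains a conjecture in general.
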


The following lemma explains the relation between this ``relative'' version and the original Humphreys conjecture.

\begin{lem}
\label{lem:Humphreys-classical-relative}
Assume that
Conjecture~{\rm \ref{conj:Humphreys-relative}} holds, and moreover that if $x,y \in \fW$ we have
\begin{equation}
\label{eqn:pRleq-Rleq}
x \pRleq y \quad \Rightarrow \quad x \Rleq y.
\end{equation}
Then Conjecture~{\rm \ref{conj:Humphreys}} holds.
\end{lem}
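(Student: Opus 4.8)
The plan is to deduce Conjecture~\ref{conj:Humphreys-regular} (which is equivalent to Conjecture~\ref{conj:Humphreys} by Lemma~\ref{lem:Tleq-alcoves} and Lemma~\ref{lem:V-Tleq}) from Conjecture~\ref{conj:Humphreys-relative} and hypothesis~\eqref{eqn:pRleq-Rleq}. So I fix $w \in \fW$ and must show $V_{G_1}(\tilt(w \cdot_p 0)) = \overline{\iota_\bk(\scO^\C_{\bc(w)})}$. The first step is to reduce to the case $w \in \fWf$: by Lemma~\ref{lem:cells-fWf} there is $w' \in \fWf$ with $w \cdot_p 0 \Tsim w' \cdot_p 0$, hence $w \pRsim w'$ by Theorem~\ref{thm:ostrik}, hence $w \Rsim w'$ by~\eqref{eqn:pRleq-Rleq} applied in both directions; thus $\bc(w) = \bc(w')$, while $V_{G_1}(\tilt(w \cdot_p 0)) = V_{G_1}(\tilt(w' \cdot_p 0))$ by Lemma~\ref{lem:V-Tleq}. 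So I may assume $w \in \fWf$.

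For such $w$, the inclusion $\overline{\iota_\bk(\scO^\C_{\bc(w)})} \subseteq V_{G_1}(\tilt(w \cdot_p 0))$ is immediate: by Conjecture~\ref{conj:Humphreys-relative} the left-hand side is $\oVG(\tilt(w \cdot_p 0))$, which is contained in $V_{G_1}(\tilt(w \cdot_p 0))$ by~\eqref{eqn:V-oV}. For the reverse inclusion I would use the standard identity $\Ext^\bullet_{G_1}(\tilt(w \cdot_p 0), \tilt(w \cdot_p 0)) \cong \Ext^\bullet_{G_1}(\bk, \tilt(w \cdot_p 0)^* \otimes \tilt(w \cdot_p 0))$, which gives $V_{G_1}(\tilt(w \cdot_p 0)) = \oVG(\tilt(w \cdot_p 0)^* \otimes \tilt(w \cdot_p 0))$. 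Now $\tilt(w \cdot_p 0)^* \otimes \tilt(w \cdot_p 0)$ is a tilting module (the dual of a tilting module is tilting, and a tensor product of tilting modules is tilting by~\cite{mathieu}), so it decomposes as $\bigoplus_j \tilt(\mu_j)$ and $V_{G_1}(\tilt(w \cdot_p 0)) = \bigcup_j \oVG(\tilt(\mu_j))$.

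To finish I would discard the summands with empty relative support variety. Recall that $\oVG(\tilt(\mu)) = \varnothing$ unless $\mu \in \fWf \cdot_p 0$ (this follows from Lemma~\ref{lem:cohom-G1-fWf} together with the linkage principle for the first Frobenius kernel). So only the $j$ with $\mu_j = v_j \cdot_p 0$ for some $v_j \in \fWf$ contribute, and for such $j$ Conjecture~\ref{conj:Humphreys-relative} gives $\oVG(\tilt(\mu_j)) = \overline{\iota_\bk(\scO^\C_{\bc(v_j)})}$. Since $\tilt(v_j \cdot_p 0)$ is a direct summand of $\tilt(w \cdot_p 0) \otimes \tilt(w \cdot_p 0)^*$ and $\tilt(w \cdot_p 0)^*$ is tilting, we have $v_j \cdot_p 0 \Tleq w \cdot_p 0$, hence $v_j \Tleq w$, hence $v_j \pRleq w$ by Theorem~\ref{thm:ostrik}, hence $v_j \Rleq w$ by~\eqref{eqn:pRleq-Rleq}, hence $\overline{\iota_\bk(\scO^\C_{\bc(v_j)})} \subseteq \overline{\iota_\bk(\scO^\C_{\bc(w)})}$ by Lemma~\ref{lem:order-orbits}. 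Taking the union over $j$ gives $V_{G_1}(\tilt(w \cdot_p 0)) \subseteq \overline{\iota_\bk(\scO^\C_{\bc(w)})}$, which together with the lower bound concludes the proof.

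The steps involving only the various orders — their transfer between $\bX^+$ and $\fW$, and the passage through Theorem~\ref{thm:ostrik} and~\eqref{eqn:pRleq-Rleq} — are routine. The point that requires genuine care, and which I expect to be the main obstacle, is the assertion that $\oVG(\tilt(\mu)) = \varnothing$ for $\mu \in \bX^+ \setminus (\fWf \cdot_p 0)$: Lemma~\ref{lem:cohom-G1-fWf} only treats $\mu \in \fW \cdot_p 0$, and for the remaining dominant weights one must exclude contributions from composition factors $L(\nu)$ of $\tilt(\mu)$ that are linked to $0$ for $G_1$ but not for $G$. Here one uses that every weight of $\tilt(w \cdot_p 0)^* \otimes \tilt(w \cdot_p 0)$ lies in the root lattice $\bY$, together with the fact that $p > h$ is prime to the index of connection, to see that any such $\nu$ is in fact linked to $0$ for $G$ — so that the non-principal-block summands $\tilt(\mu_j)$ indeed make no contribution to $V_{G_1}(\tilt(w \cdot_p 0))$.
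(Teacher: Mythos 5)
Your proposal is correct and follows essentially the same route as the paper's proof: reduce to $w \in \fWf$ via Lemma~\ref{lem:cells-fWf}, Theorem~\ref{thm:ostrik} and~\eqref{eqn:pRleq-Rleq}, get the lower bound from Conjecture~\ref{conj:Humphreys-relative} and~\eqref{eqn:V-oV}, and get the upper bound by decomposing $\tilt(w\cdot_p 0)\otimes\tilt(w\cdot_p 0)^*$ into indecomposable tilting summands, discarding those outside $\fWf\cdot_p 0$, and comparing orbit closures via Theorem~\ref{thm:ostrik},~\eqref{eqn:pRleq-Rleq} and Lemma~\ref{lem:order-orbits}. The subtle point you flag at the end (only principal-block summands can contribute, using that all weights lie in $\bY$ and $p>h$ is prime to the index of connection) is exactly what the paper disposes of by invoking the linkage principle for $G_1$-modules together with Lemma~\ref{lem:cohom-G1-fWf}, so your treatment matches the intended argument.
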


\begin{proof}
We assume that Conjecture~\ref{conj:Humphreys-relative} and~\eqref{eqn:pRleq-Rleq} hold. Property~\eqref{eqn:pRleq-Rleq} implies that the function $w \mapsto \scO^\C_{\bc(w)}$ is constant on antispherical $p$-cells. On the other hand, by Lemma~\ref{lem:V-Tleq} the function $\lambda \mapsto V_{G_1}(\tilt(\lambda))$ is constant on weight cells.
Hence, using Theorem~\ref{thm:ostrik}, to prove Conjecture~\ref{conj:Humphreys}, it suffices to prove that for each weight cell, there exists $w \in \fW$ and $\lambda$ in the lower closure of $w \cdot_p \cC_p$ which belongs to the given weight cell and such that $V_{G_1}(\tilt(\lambda))=\overline{\iota_\bk(\scO^\C_{\bc(w)})}$.
Hence by Lemma~\ref{lem:cells-fWf} we can assume that $\lambda = w \cdot_p 0$ for some $w \in \fWf$.
In this case, by assumption we know that $\oVG(\tilt(\lambda)) = \overline{\iota_\bk(\scO^\C_{\bc(w)})}$. Using~\eqref{eqn:V-oV}, this implies that $\overline{\iota_\bk(\scO^\C_{\bc(w)})} \subset V_{G_1}(\tilt(\lambda))$. 

On the other hand, we have
\[
\Ext^\bullet_{G_1}(\tilt(\lambda), \tilt(\lambda)) \cong \Ext^\bullet_{G_1}(\bk, \tilt(\lambda) \otimes \tilt(\lambda)^*).
\]
All the indecomposable direct summands of $\tilt(\lambda) \otimes \tilt(\lambda)^*$ are of the form $\tilt(\mu)$ with $\mu \in \bY^+$ and $\mu \Tleq \lambda$, and those which contribute to $\Ext^\bullet_{G_1}(\bk, \tilt(\lambda) \otimes \tilt(\lambda)^*)$ satisfy in addition $\mu=v \cdot_p 0$ for some $v \in \fWf$ by the linkage principle for $G_1$-modules (see~\cite[Lemma~II.9.19]{jantzen}) and Lemma~\ref{lem:cohom-G1-fWf}.
To conclude, we only have to prove that for all such $\mu$ we have $\oVG(\tilt(\mu)) \subset \overline{\iota_\bk(\scO^\C_{\bc(w)})}$. However, by Conjecture~\ref{conj:Humphreys-relative} (which we assume to hold) we have $\oVG(\tilt(\mu)) = \overline{\iota_\bk(\scO^\C_{\bc(v)})}$. 
Since $v \cdot_p 0 \Tleq w \cdot_p 0$, we have
$v \Tleq w$. By Theorem~\ref{thm:ostrik} and~\eqref{eqn:pRleq-Rleq}, this implies that $v \Rleq w$. Finally, using Lemma~\ref{lem:order-orbits} we deduce that $\overline{\iota_\bk(\scO^\C_{\bc(v)})} \subset \overline{\iota_\bk(\scO^\C_{\bc(w)})}$, and the proof is complete.
\end{proof}

%----------------------------------------------------------------------
\subsection{The quantum case}
\label{ss:quantum-Humphreys}
%----------------------------------------------------------------------

Let us come back to the setting of~\S\ref{ss:reminder-ostrik}. All the constructions considered in the present section have obvious analogues for $\UQ$-modules, replacing $p$ by $\ell$, the Frobenius kernel $G_1$ by Lusztig's small quantum group $\uQ$, and omitting the bijection $\iota_\bk$. (In this case, the analogue of Lemma~\ref{lem:cohom-G1} is due to Ginzburg--Kumar~\cite{gk}.) In particular, we can consider the quantum analogues of Conjecture~\ref{conj:Humphreys} and Conjecture~\ref{conj:Humphreys-relative}.

The quantum analogue of Conjecture~\ref{conj:Humphreys} was proved by Ostrik in~\cite{ostrik-supp} in the special case $\Gp_\Z=\mathrm{SL}_{n,\Z}$. Then the general case of the quantum analogue of Conjecture~\ref{conj:Humphreys-relative} was proved by Bezrukavnikov in~\cite{bezru}. The same arguments as in the proof of Lemma~\ref{lem:Humphreys-classical-relative} show that this implies the quantum version of Conjecture~\ref{conj:Humphreys} in full generality. (In this setting we do not need any analogue of~\eqref{eqn:pRleq-Rleq}.)

%-------------------------------------------------------------------------
\subsection{Proof of Theorem~\texorpdfstring{\ref{thm:support-char-0}\eqref{it:support-char0-2}}{}}
\label{ss:proof-support-bezru}
%-------------------------------------------------------------------------

We consider the setting of Section~\ref{sec:support} (without assuming that $\bk=\C$ for now). Recall the group $\Gp^\vee$ considered in~\S\ref{ss:relation-Gr}, and let $\Gr':= \Gp^\vee(\mathscr{O}) \backslash \Gp^\vee(\mathscr{K})$ be its ``opposite affine Grassmannian'' (where $\mathscr{K}:=\C(\hspace{-1pt}(t)\hspace{-1pt})$). Let also $\mathbf{I} \subset \Gp^\vee(\mathscr{O})$ be the Iwahori subgroup associated with the \emph{negative} Borel subgroup of $\Gp^\vee$. Then, as in~\S\ref{ss:relation-Gr}, there exists an equivalence of triangulated categories
\[
\Upsilon' : \Dmix_{(\mathbf{I})}(\Gr',\bk) \simto \Db \Coh^{\Gp \times \Gm}(\tcN)_\bk
\]
which satisfies $\Upsilon' \circ \langle 1 \rangle \cong \langle 1 \rangle [1] \circ \Upsilon'$ and which sends the normalized standard object $\cJ'_!(\lambda)$ associated with $\lambda$ to $\Delta_\bk(\lambda)$, for any $\lambda \in \bX$ (see in particular~\cite[Remark~11.3(2)]{prinblock}). Again as in~\S\ref{ss:relation-Gr}, this equivalence sends the tilting mixed perverse sheaf $\cT'(\lambda)$ associated with $\lambda$ to $\PEx^\bk_\lambda$.

We consider the Grothendieck group $[\Dmix_{(\mathbf{I})}(\Gr',\bk)]$, and the embedding
\[
\Masph \hookrightarrow [\Dmix_{(\mathbf{I})}(\Gr',\bk)]
\]
sending $v^m N_{w_\lambda}$ to $[\cJ'_!(\lambda) \langle m \rangle]$ for any $m$ in $\Z$ and $\lambda \in \bY$.
Then the results of~\cite[\S 7.3]{mkdkm} show that, under this identification, we have
\[
[\cT'(\lambda)] = \puN_{w_\lambda} \quad \text{for any $\lambda \in \bY$}
\]
(where $p=\mathrm{char}(\bk)$).
Composing this embedding with the isomorphism
\[
[\Dmix_{(\mathbf{I})}(\Gr',\bk)] \simto [\Db \Coh^{\Gp \times \Gm}(\tcN)_\bk]
\]
induced by $\Upsilon'$, we deduce an embedding
\begin{equation}
\label{eqn:Masp-Groth}
\Masph \hookrightarrow [\Db \Coh^{\Gp \times \Gm}(\tcN)_\bk]
\end{equation}
sending $v^m N_{w_\lambda}$ to $[\Delta(\lambda) \langle m \rangle[m]]$ for any $m$ in $\Z$ and $\lambda \in \bY$, and which also sends $\puN_{w_\lambda}$ to $[\PEx^\bk_\lambda]$. The proof of Lemma~\ref{lem:filtration-Psi} shows that, for any $s \in S$, the embedding~\eqref{eqn:Masp-Groth} intertwines the action of $\uH_s$ on the left-hand side with the morphism induced by $\Psi_s$ on the right-hand side. These remarks show that if $\lambda \in \bY$ and if $\uw=(s_1, \cdots, s_r)$ is an expression, then the direct summands of
\[
\Psi_{s_r} \circ \cdots \circ \Psi_{s_1}(\PEx^\bk_\lambda)
\]
can be determined by expanding the element $\puN_{w_\lambda} \cdot \uH_{\uw}$ in the $p$-canonical basis of $\Masph$. In particular, these direct summands are of the form $\PEx^\bk_\nu$ with $\nu \in \bY$ and $w_\nu \pRleq w_\lambda$.

\begin{lem}
\label{lem:support-geom-pcells}
Let $\lambda, \mu \in \bY$, and assume that $w_\lambda \pRsim w_\mu$. Then
\[
\supp \bigl( \Ext^\bullet_{\Db \Coh(\tcN)_\bk}(\PEx^\bk_\lambda, \PEx^\bk_\lambda) \bigr) = \supp \bigl( \Ext^\bullet_{\Db \Coh(\tcN)_\bk}(\PEx^\bk_\mu, \PEx^\bk_\mu) \bigr).
\]
\end{lem}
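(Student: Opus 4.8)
The plan is to reduce everything to a single implication: assuming $w_\mu \pRleq w_\lambda$, I will show
\[
\supp\bigl(\Ext^\bullet_{\Db\Coh(\tcN)_\bk}(\PEx^\bk_\mu,\PEx^\bk_\mu)\bigr) \subseteq \supp\bigl(\Ext^\bullet_{\Db\Coh(\tcN)_\bk}(\PEx^\bk_\lambda,\PEx^\bk_\lambda)\bigr) .
\]
Since $w_\lambda \pRsim w_\mu$ means $w_\mu \pRleq w_\lambda$ and $w_\lambda \pRleq w_\mu$, applying this implication twice gives the asserted equality. So fix $\lambda,\mu \in \bY$ with $w_\mu \pRleq w_\lambda$.

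The first step is to produce a suitable expression. By Definition~\ref{defn:order-cells} there is $x \in W$ such that $\puH_{w_\mu}$ appears with nonzero coefficient in $\puH_{w_\lambda}\cdot\puH_x$; choosing a reduced expression $\underline{x}=(s_1,\dots,s_r)$ for $x$ and arguing exactly as in the proof of Lemma~\ref{lem:order-generated}\eqref{it:order-generated-2} (using~\eqref{eqn:positivity-puH} and~\eqref{eqn:puH-uw-w}), one sees that $\puH_{w_\mu}$ still appears with nonzero coefficient in $\puH_{w_\lambda}\cdot\uH_{s_1}\cdots\uH_{s_r}$. Applying the quotient map $\Haff\to\Masph$, which sends $\puH_w$ to $\puN_w$ for $w\in\fW$ and to $0$ otherwise (Lemma~\ref{lem:p-can-basis-Masph}), and using positivity~\eqref{eqn:positivity-puH} once more to rule out cancellation, I conclude that $\puN_{w_\mu}$ appears with nonzero coefficient in $\puN_{w_\lambda}\cdot\uH_{s_1}\cdots\uH_{s_r}$ inside $\Masph$.

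The second step passes to the geometric side. By the embedding~\eqref{eqn:Masp-Groth} and the discussion immediately preceding the statement of the lemma, the object $\cF := \Psi_{s_r}\circ\cdots\circ\Psi_{s_1}(\PEx^\bk_\lambda)$ decomposes as a direct sum of objects $\PEx^\bk_\nu\langle m\rangle[m]$ (with $\nu\in\bY$, $m\in\Z$) whose multiplicities are exactly the coefficients of the $p$-canonical basis expansion of $\puN_{w_\lambda}\cdot\uH_{s_1}\cdots\uH_{s_r}$; in particular $\PEx^\bk_\mu$, up to a shift $\langle m\rangle[m]$, is a direct summand of $\cF$. Since a direct summand of $\cF$ yields a direct summand of $\Ext^\bullet(\cF,\cF)$ as a graded $\cO(\cN_\bk)$-module, and since the shift autoequivalences act $\cO(\cN_\bk)$-linearly (so they do not change supports), we obtain $\supp\bigl(\Ext^\bullet(\PEx^\bk_\mu,\PEx^\bk_\mu)\bigr) \subseteq \supp\bigl(\Ext^\bullet(\cF,\cF)\bigr)$.

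The remaining inclusion $\supp\bigl(\Ext^\bullet(\cF,\cF)\bigr)\subseteq\supp\bigl(\Ext^\bullet(\PEx^\bk_\lambda,\PEx^\bk_\lambda)\bigr)$ is the point I expect to require the most care. I would establish it by noting that each functor $\Psi_s$ is \emph{linear over $\cN_\bk$}: the maps $\mathsf{e}_s$ and $\mu_s$ defining $\Pi_s$ and $\Pi^s$ are morphisms of schemes over $\cN$, the line-bundle twists in $\Pi_s$ and $\Pi^s$ are $\cO_{\tcN}$-linear, and the braid functors $\mathscr{J}_{b}$ are given (by~\cite{br}) by Fourier--Mukai kernels supported over $\cN$; hence $\Psi_s$ commutes with the endomorphism of the identity functor given by multiplication by any element of $\cO(\cN_\bk)$. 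Consequently the functoriality homomorphism $\End^\bullet_{\Db\Coh(\tcN)_\bk}(\PEx^\bk_\lambda)\to\End^\bullet_{\Db\Coh(\tcN)_\bk}(\cF)$ induced by $\Psi_{s_r}\circ\cdots\circ\Psi_{s_1}$ is a morphism of graded $\cO(\cN_\bk)$-algebras, so $\End^\bullet(\cF)$ becomes a module over $\End^\bullet(\PEx^\bk_\lambda)$ whose $\cO(\cN_\bk)$-action factors through $\End^\bullet(\PEx^\bk_\lambda)$; therefore its support is contained in that of $\End^\bullet(\PEx^\bk_\lambda)$, because the annihilator of any module over an algebra $A$ contains the kernel of $\cO(\cN_\bk)\to A$. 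Combining this with the previous two steps and the symmetric argument proves the lemma.
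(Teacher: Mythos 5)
Your proof is correct, and its first two steps follow the same path as the paper's: reduce to a single inclusion, then use the combinatorics of Section~\ref{sec:antispherical-cells} together with the categorification of the $\uH_s$-action by the functors $\Psi_s$ (the discussion around~\eqref{eqn:Masp-Groth}) to realize $\PEx^\bk_\mu$, up to a shift $\langle m\rangle[m]$, as a direct summand of $\cF=\Psi_{s_r}\circ\cdots\circ\Psi_{s_1}(\PEx^\bk_\lambda)$. (A minor remark: positivity is not actually needed when passing from $\Haff$ to $\Masph$ there, since each $\puH_w$ maps either to the basis element $\puN_w$ or to $0$, so no cancellation can occur.) Where you genuinely diverge is in the remaining inclusion $\supp\bigl(\Ext^\bullet(\cF,\cF)\bigr)\subset\supp\bigl(\Ext^\bullet(\PEx^\bk_\lambda,\PEx^\bk_\lambda)\bigr)$: the paper deduces it from the self-adjointness of each $\Psi_s$ (Remark~\ref{rmk:Psi-adjoint}), rewriting $\End^\bullet(\cF)$ as $\Ext^\bullet(\PEx^\bk_\lambda,\Psi_{s_1}\circ\cdots\circ\Psi_{s_r}\circ\Psi_{s_r}\circ\cdots\circ\Psi_{s_1}(\PEx^\bk_\lambda))$ and then using the same annihilator argument you use (any $\Ext^\bullet(\PEx^\bk_\lambda,-)$ is a module over $\End^\bullet(\PEx^\bk_\lambda)$ compatibly with $\cO(\cN_\bk)$), whereas you invoke $\cO(\cN_\bk)$-linearity of the functors $\Psi_s$ and $\mathscr{J}_b$, so that the functorial map $\End^\bullet(\PEx^\bk_\lambda)\to\End^\bullet(\cF)$ is a unital morphism of $\cO(\cN_\bk)$-algebras. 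Your linearity claim is true: for $s\in\Sf$ it follows because all push-pull functors are taken along morphisms of schemes over $\fg^*$ (hence over $\cN_\bk$) and the twists are $\cO$-linear, and for the braid functors because the kernels of~\cite{br} are supported on $\tcN\times_{\fg}\tcN$. But it rests on properties of those kernels that the paper does not record, while the adjunction route uses only Remark~\ref{rmk:Psi-adjoint}, which is already available. In short, your argument trades the self-adjointness statement for a routine but unrecorded base-linearity verification; both yield complete proofs of the lemma.
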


\begin{proof}
Of course, it is enough to prove that if $w_\lambda \pRleq w_\mu$, then
\[
\supp \bigl( \Ext^\bullet_{\Db \Coh(\tcN)_\bk}(\PEx^\bk_\lambda, \PEx^\bk_\lambda) \bigr) \subset \supp \bigl( \Ext^\bullet_{\Db \Coh(\tcN)_\bk}(\PEx^\bk_\mu, \PEx^\bk_\mu) \bigr).
\]
However, if $w_\lambda \pRleq w_\mu$ then there exists an expression $\uw=(s_1, \cdots, s_r)$ such that $\puN_{w_\lambda}$ appears with nonzero coefficient in $\puN_{w_\mu} \cdot \uH_{\uw}$ (see~\S\S\ref{ss:right-p-cells}--\ref{ss:antispherical-cells}). Then the remarks above show that $\PEx^\bk_\lambda$ is a direct summand of $\Psi_{s_r} \circ \cdots \circ \Psi_{s_1}(\PEx^\bk_\mu)$, so that
\begin{multline*}
\supp \bigl( \Ext^\bullet_{\Db \Coh(\tcN)_\bk}(\PEx^\bk_\lambda, \PEx^\bk_\lambda) \bigr) \subset \\
\supp \bigl( \Ext^\bullet_{\Db \Coh(\tcN)_\bk}(\Psi_{s_r} \circ \cdots \circ \Psi_{s_1}(\PEx^\bk_\mu), \Psi_{s_r} \circ \cdots \circ \Psi_{s_1}(\PEx^\bk_\mu)) \bigr).
\end{multline*}
Now, by adjunction (see Remark~\ref{rmk:Psi-adjoint}) we have
\begin{multline*}
\Ext^\bullet_{\Db \Coh(\tcN)_\bk}(\Psi_{s_r} \circ \cdots \circ \Psi_{s_1}(\PEx^\bk_\mu), \Psi_{s_r} \circ \cdots \circ \Psi_{s_1}(\PEx^\bk_\mu)) \cong \\
\Ext^\bullet_{\Db \Coh(\tcN)_\bk}(\PEx^\bk_\mu, \Psi_{s_1} \circ \cdots \circ \Psi_{s_r} \circ \Psi_{s_r} \circ \cdots \circ \Psi_{s_1}(\PEx^\bk_\mu)),
\end{multline*}
and the support of the right-hand side is included in $\supp \bigl( \Ext^\bullet_{\Db \Coh(\tcN)_\bk}(\PEx^\bk_\mu, \PEx^\bk_\mu) \bigr)$; this finishes the proof.
\end{proof}

Now, we are in a position to give the proof of Theorem~\ref{thm:support-char-0}\eqref{it:support-char0-2}. More precisely, as explained in Remark~\ref{rmk:orbit-LV}, we will prove that for any $\lambda \in \bX$ we have
\[
\supp \bigl( \Ext^\bullet_{\Db \Coh(\tcN)_\C}(\PEx^\C_\lambda, \PEx^\C_\lambda) \bigr) = \overline{\scO^\C_{\bc(w)}},
\]
where $w \in \fW$ is the unique element such that $w_\lambda = w\omega$ for some $\omega \in \Omega$. Write $w=w_\mu$ for some $\mu \in \bY$; then
it is easy to check that
\[
\PEx^\C_\lambda = \mathscr{J}_{T_\omega}(\PEx^\C_\mu).
\]
Since $\mathscr{J}_{T_\omega}$ induces an autoequivalence of $\Db \Coh(\tcN)_\C$, we can assume that $\lambda=\mu$, i.e.~that $\omega=1$. And using Lemma~\ref{lem:support-geom-pcells} and Remark~\ref{rmk:intersection-fWf} (or rather the analogous characteristic-$0$ observation, which can be checked similarly using quantum groups) we can further assume that $w \in \fWf$ (or in other words that $\lambda \in -\bX^+$). In this case we have
\[
\supp \bigl( \Ext^\bullet_{\Db \Coh(\tcN)_\C}(\PEx^\C_\lambda, \PEx^\C_\lambda) \bigr) \supset \supp \bigl( R\pi_* \PEx^\C_\lambda \bigr) = \supp \bigl( \Ext^\bullet_{\Db \Coh(\tcN)_\C}(\cO_{\tcN_\C}, \PEx^\C_\lambda) \bigr),
\]
and by Theorem~\ref{thm:support-char-0}\eqref{it:support-char0-1} the right-hand side is equal to $\overline{\scO^\C_{\bc(w_\lambda)}}$; hence to conclude it suffice to prove that
\[
\supp \bigl( \Ext^\bullet_{\Db \Coh(\tcN)_\C}(\PEx^\C_\lambda, \PEx^\C_\lambda) \bigr) \subset \overline{\scO^\C_{\bc(w_\lambda)}}.
\]

Now, choose a reduced decomposition $w_\lambda = s_1 \cdots s_r$ (with $s_i \in S$ for $i \in \{1, \cdots, r\}$). Then, by Proposition~\ref{prop:parity-BS}\eqref{it:Psi-parity-2}, $\PEx^\C_\lambda$ is a direct summand of the object $\PEx^\C(1,(s_1, \cdots, s_r))$; it follows that
\[
\supp \bigl( \Ext^\bullet_{\Db \Coh(\tcN)_\C}(\PEx^\C_\lambda, \PEx^\C_\lambda) \bigr) = \supp \bigl( \Ext^\bullet_{\Db \Coh(\tcN)_\C}(\PEx^\C(1,(s_1, \cdots, s_r)), \PEx^\C_\lambda) \bigr).
\]
Now, by definition and adjunction (see again Remark~\ref{rmk:Psi-adjoint}), we have
\[
\Ext^\bullet_{\Db \Coh(\tcN)_\C}(\PEx^\C(1,(s_1, \cdots, s_r)), \PEx^\C_\lambda) = \Ext^\bullet_{\Db \Coh(\tcN)_\C}(\cO_{\tcN_\C}, \Psi_{s_1} \circ \cdots \circ \Psi_{s_r} (\PEx^\C_\lambda)).
\]
It follows that
\[
\supp \bigl( \Ext^\bullet_{\Db \Coh(\tcN)_\C}(\PEx^\C_\lambda, \PEx^\C_\lambda) \bigr) = \supp \bigl( R\pi_*(\Psi_{s_1} \circ \cdots \circ \Psi_{s_r} (\PEx^\C_\lambda)) \bigr).
\]
As explained before the proof of Lemma~\ref{lem:support-geom-pcells}, $\Psi_{s_1} \circ \cdots \circ \Psi_{s_r} (\PEx^\C_\lambda)$ is a direct sum of objects of the form $\PEx^\C_\nu$ with $\nu \in \bY$ such that $w_\nu \Rleq w_\lambda$. For such an object, by Theorem~\ref{thm:support-char-0}\eqref{it:support-char0-1} we have either $R\pi_* \PEx^\C_\nu = 0$ (if $\nu \notin -\bX^+$) or
\[
\supp \bigl( R\pi_* \PEx^\C_\nu \bigr) = \overline{\scO^\C_{\bc(w_\nu)}}.
\]
By Lemma~\ref{lem:order-orbits} we have $\overline{\scO^\C_{\bc(w_\nu)}} \subset \overline{\scO^\C_{\bc(w_\lambda)}}$ since $w_\nu \Rleq w_\lambda$, hence these remarks conclude the proof.

%%%%%%%%%%%%%%%%%%%%%%%%%%%%%%%%
\section{Exotic parity sheaves and tilting modules}
\label{sec:exotic-tilting}
%%%%%%%%%%%%%%%%%%%%%%%%%%%%%%%%

In this section we continue with the assumptions of~\S\ref{ss:char-tilting}. As in Section~\ref{sec:humphreys-conj} we identify $\Gp_\bk$ with its Frobenius twist in the natural way.

%----------------------------------------------------------------------
\subsection{Relation with tilting modules for reductive groups}
\label{ss:relation-tilt}
%----------------------------------------------------------------------

The dot-action of $W$ on $\bX$ extends in a natural way to an action of $\Wext$. We
let $\Rep_\varnothing(\Gp_\bk)$ be the Serre subcategory of $\Rep(\Gp_\bk)$ generated by the simple modules of the form $\irr(w \cdot_p 0)$ where $w \in \fWext$. (Note in particular that the ``extended principal block'' $\Rep_\varnothing(\Gp_\bk)$ contains the principal block $\Rep_0(\Gp_\bk)$.)
One of the main results of~\cite{prinblock} (see in particular~\cite[Theorem~10.7 and its proof]{prinblock}) is the construction of a functor
\[
\Xi : \Db \Coh^{\Gp \times \Gm}(\tcN)_\bk \to \Db\Rep_\varnothing(\Gp_\bk)
\]
and an isomorphism of functors $\varepsilon : \Xi \circ \langle 1 \rangle[1] \simto \Xi$ such that:
\begin{enumerate}
\item
for any $\cF,\cG$ in $\Db \Coh^{\Gp \times \Gm}(\tcN)_\bk$, $\Xi$ and $\varepsilon$ induce an isomorphism
\[
\bigoplus_{n \in \Z} \Hom_{\Db \Coh^{\Gp \times \Gm}(\tcN)_\bk}(\cF,\cG \langle n \rangle[n]) \simto \Hom_{\Db \Rep_\varnothing(\Gp_\bk)}(\Xi(\cF), \Xi(\cG));
\]
\item
for any $V \in \Rep(\Gp_\bk)$ and $\cF \in \Db \Coh^{\Gp \times \Gm}(\tcN)_\bk$, there exists a canonical and functorial isomorphism
\[
\Xi(\cF \otimes V) \cong \Xi(\cF) \otimes \Fr^*(V);
\]
\item
for any $\lambda \in \bX$ we have
\[
\Xi(\Delta_\bk(\lambda)) \cong \weyl(w_\lambda \cdot_p 0), \quad \Xi(\nabla_\bk(\lambda)) \cong \coweyl(w_\lambda \cdot_p 0).
\]
\end{enumerate}

Using this result we can finally give the proof of Lemma~\ref{lem:cohom-G1}.

\begin{proof}[Proof of Lemma~{\rm \ref{lem:cohom-G1}}]
By Frobenius reciprocity and the fact that $\Ind_{(\Gp_\bk)_1}^{\Gp_\bk}$ is exact (see~\cite[Corollary~I.5.13(1)]{jantzen}), we have
\begin{equation}\label{eqn:cohom-G1}
\Ext^\bullet_{(\Gp_\bk)_1}(\bk,\bk) \cong \Ext^\bullet_{\Gp_\bk}(\bk, \Ind_{(\Gp_\bk)_1}^{\Gp_\bk}(\bk)) \cong \Ext^\bullet_{\Gp_\bk}(\bk, \Fr^*(\cO(\Gp_\bk))).
\end{equation}
Since $\bk = \Xi(\cO_{\tcN})$, the functor $\Xi$ induces an isomorphism
\[
\bigoplus_{n,m \in \Z} \Hom_{\Db \Coh^{\Gp \times \Gm}(\tcN)_\bk}(\cO_{\tcN_\bk}, \cO_{\tcN_\bk} \otimes \cO(\Gp_\bk) \langle n \rangle [m]) \simto \Ext^\bullet_{\Gp_\bk}(\bk, \Fr^*(\cO(\Gp_\bk))).
\]
Now the left-hand side identifies canonically with $R^\bullet\Gamma(\tcN_\bk, \cO_{\tcN_\bk}) \cong \cO(\cN_\bk)$. (The latter isomorphism follows from~\cite[Theorem~2]{klt}, the normality of $\cN_\bk$---see~\cite[\S 5]{klt}---and Zariski's Main Theorem.) The desired isomorphism
\[
\cO(\cN_\bk) \simto \Ext^\bullet_{(\Gp_\bk)_1}(\bk,\bk)
\]
follows. It is left to the reader to check that this morphism is $(\Gp \times \Gm)_\bk$-equivariant, and an algebra morphism.  (The latter involves unwinding the definitions of each of the steps in~\eqref{eqn:cohom-G1} to express the ring structure of $\Ext^\bullet_{(\Gp_\bk)_1}(\bk,\bk)$ in terms of that of $\cO(\Gp_\bk)$.)
\end{proof}

The application of our results of Section~\ref{sec:support} to the Humphreys conjecture(s) will be based on the following result.

\begin{prop}
\label{prop:supp-tilt-cN}
For any $\lambda \in \bX$, there exist $\Gp_\bk$-equivariant isomorphisms
\begin{align*}
\Ext^\bullet_{(\Gp_\bk)_1}(\bk, \tilt(w_\lambda \cdot_p 0)) &\cong R^\bullet\Gamma(\tcN_\bk, \PEx^\bk_\lambda), \\
\Ext^\bullet_{(\Gp_\bk)_1}(\tilt(w_\lambda \cdot_p 0), \tilt(w_\lambda \cdot_p 0)) &\cong \Ext^\bullet_{\Db \Coh(\tcN)_\bk}(\PEx^\bk_\lambda, \PEx^\bk_\lambda)
\end{align*}
which intertwine the actions of $\Ext^\bullet_{(\Gp_\bk)_1}(\bk,\bk)$ and $\cO(\cN_\bk)$ under the isomorphism of Lemma~{\rm \ref{lem:cohom-G1}} constructed above.
\end{prop}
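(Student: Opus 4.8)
The heart of the matter is to identify $\Xi(\PEx^\bk_\lambda)$ with the tilting module $\tilt(w_\lambda \cdot_p 0)$ inside $\Db \Rep_\varnothing(\Gp_\bk)$; granting this, the rest of the statement will follow from a chain of standard manipulations modelled on the proof of Lemma~\ref{lem:cohom-G1}. To begin, I would exploit the parity properties of $\PEx^\bk_\lambda$: by Proposition~\ref{prop:parity-BS} the object $\PEx^\bk_\lambda \langle \ell(w_\lambda) \rangle [\ell(w_\lambda)]$ is even, so by the definition of evenness and Lemma~\ref{lem:parity-vanishing} the groups $\Hom_{\Db \Coh^{\Gp \times \Gm}(\tcN)_\bk}(\Delta_\bk(\mu), \PEx^\bk_\lambda \langle n \rangle [m])$ and $\Hom_{\Db \Coh^{\Gp \times \Gm}(\tcN)_\bk}(\PEx^\bk_\lambda \langle n \rangle [m], \nabla_\bk(\mu))$ vanish unless $m = n$ (with $n$ of a prescribed parity). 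Combining this with property~(1) of $\Xi$ and with the identifications $\Xi(\Delta_\bk(\mu)) \cong \weyl(w_\mu \cdot_p 0)$, $\Xi(\nabla_\bk(\mu)) \cong \coweyl(w_\mu \cdot_p 0)$ of property~(3), I obtain, for every $\mu$ and every $m \neq 0$,
\[
\Ext^m_{\Rep_\varnothing(\Gp_\bk)}(\weyl(w_\mu \cdot_p 0), \Xi(\PEx^\bk_\lambda)) = 0 = \Ext^m_{\Rep_\varnothing(\Gp_\bk)}(\Xi(\PEx^\bk_\lambda), \coweyl(w_\mu \cdot_p 0)).
\]
Since the indecomposable projectives of $\Rep_\varnothing(\Gp_\bk)$ are built out of the standard modules $\weyl(w_\mu \cdot_p 0)$, the first family of vanishings forces $\Xi(\PEx^\bk_\lambda)$ to be concentrated in cohomological degree $0$ (hence an object of $\Rep_\varnothing(\Gp_\bk)$) and to admit a good filtration; the second family then says it also admits a Weyl filtration. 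Therefore $\Xi(\PEx^\bk_\lambda)$ is a tilting module. (This is consistent with Proposition~\ref{prop:Phi-Tilt-Par}, which exhibits $\PEx^\bk_\lambda$ as an indecomposable tilting object of the exotic heart.)

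To pin down which tilting module it is, I would first note that $\Xi(\PEx^\bk_\lambda)$ is indecomposable: by property~(1) of $\Xi$ and Lemma~\ref{lem:parity-vanishing}, $\End_{\Rep_\varnothing(\Gp_\bk)}(\Xi(\PEx^\bk_\lambda)) \cong \bigoplus_n \Hom_{\Db \Coh^{\Gp \times \Gm}(\tcN)_\bk}(\PEx^\bk_\lambda, \PEx^\bk_\lambda \langle n \rangle [n])$, whose degree-$0$ part is the local ring $\End(\PEx^\bk_\lambda)$ and whose positively- and negatively-graded parts form a nil ideal (here one uses Lemma~\ref{lem:parity-vanishing} together with the fact that $\PEx^\bk_\lambda \langle n \rangle [n] \not\cong \PEx^\bk_\lambda$ for $n \neq 0$, from Proposition~\ref{prop:classification-parity}), so this endomorphism ring is local. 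Next, using property~(1) once more together with the computation in the proof of Proposition~\ref{prop:parity-BS}, the Weyl-filtration multiplicities of $\Xi(\PEx^\bk_\lambda)$ are $\dim_\bk \bigoplus_n \Hom_{\Db \Coh^{\Gp \times \Gm}(\tcN)_\bk}(\PEx^\bk_\lambda, \nabla_\bk(\mu) \langle n \rangle [n])$, which is nonzero only for $\mu \le' \lambda$ and equals $1$ for $\mu = \lambda$; hence $w_\lambda \cdot_p 0$ is the highest weight of $\Xi(\PEx^\bk_\lambda)$, and an indecomposable tilting module of that highest weight is $\tilt(w_\lambda \cdot_p 0)$.

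With the identification $\Xi(\PEx^\bk_\lambda) \cong \tilt(w_\lambda \cdot_p 0)$ in hand, I would compute the two $\Ext$-spaces over $(\Gp_\bk)_1$ exactly as in the proof of Lemma~\ref{lem:cohom-G1}. Frobenius reciprocity for the quotient $\Gp_\bk \twoheadrightarrow \Gp_\bk/(\Gp_\bk)_1 \cong \Gp_\bk$, together with the tensor identity, gives a $\Gp_\bk$-equivariant isomorphism $\Ext^\bullet_{(\Gp_\bk)_1}(V, W) \cong \Ext^\bullet_{\Gp_\bk}(\bk, \Fr^*(\cO(\Gp_\bk)) \otimes V^* \otimes W)$ for any $\Gp_\bk$-modules $V, W$. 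Taking $(V,W) = (\bk, \tilt(w_\lambda \cdot_p 0))$ and $(V,W) = (\tilt(w_\lambda \cdot_p 0), \tilt(w_\lambda \cdot_p 0))$, writing $\bk = \Xi(\cO_{\tcN_\bk})$ and $\tilt(w_\lambda \cdot_p 0) = \Xi(\PEx^\bk_\lambda)$, using property~(2) of $\Xi$ to absorb the factor $\Fr^*(\cO(\Gp_\bk))$ into $\Xi$ (as $\Xi(-\otimes\cO(\Gp_\bk))$), and then property~(1) of $\Xi$, these become
\[
\bigoplus_{n,k} \Hom_{\Db \Coh^{\Gp \times \Gm}(\tcN)_\bk}(\cO_{\tcN_\bk}, \PEx^\bk_\lambda \otimes \cO(\Gp_\bk) \langle n \rangle [k])
\]
and the analogous expression with $\cO_{\tcN_\bk}$ replaced by $\PEx^\bk_\lambda$. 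Finally, the co-induction adjunction $\Hom_{\Coh^{\Gp \times \Gm}}(\cF, \cG \otimes \cO(\Gp_\bk)) \cong \Hom_{\Coh^{\Gm}}(\cF, \cG)$ (with its residual $\Gp_\bk$-action) removes the factor $\cO(\Gp_\bk)$, and summing over the $\Gm$-twists $n$ removes the $\Gm$-equivariance; this produces $R^\bullet \Gamma(\tcN_\bk, \PEx^\bk_\lambda)$ and $\Ext^\bullet_{\Db \Coh(\tcN)_\bk}(\PEx^\bk_\lambda, \PEx^\bk_\lambda)$ respectively, with their natural $\Gp_\bk$-actions. Each arrow in this chain is natural, and the isomorphism of Lemma~\ref{lem:cohom-G1} is built from the very same chain in the case $\lambda = 0$; hence the resulting isomorphisms automatically intertwine the module actions of $\Ext^\bullet_{(\Gp_\bk)_1}(\bk,\bk)$ and of $\cO(\cN_\bk) = R^\bullet \Gamma(\tcN_\bk, \cO_{\tcN_\bk})$. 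I would leave the detailed verification of this last compatibility to the reader, just as is done for the algebra structure in the proof of Lemma~\ref{lem:cohom-G1}.

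The one genuinely delicate step is the identification $\Xi(\PEx^\bk_\lambda) \cong \tilt(w_\lambda \cdot_p 0)$, and what makes it delicate is that it must hold for \emph{every} $p > h$, with no ``$p$ large'' hypothesis; in particular one cannot invoke $\PEx^\bk_\lambda \cong \fL^\bk(\lambda)$ (Corollary~\ref{cor:parity-simple}), so the conclusion that $\Xi(\PEx^\bk_\lambda)$ is tilting has to be squeezed purely out of the parity-vanishing conditions and the graded full faithfulness of $\Xi$. Everything downstream of that is a formal consequence of the properties of $\Xi$ and of standard Frobenius-reciprocity and co-induction adjunctions, following the template of Lemma~\ref{lem:cohom-G1}.
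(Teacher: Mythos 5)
Your proposal follows essentially the same route as the paper: the whole content is the identification $\Xi(\PEx^\bk_\lambda) \cong \tilt(w_\lambda \cdot_p 0)$, obtained by using the parity-vanishing of $\PEx^\bk_\lambda$ (equivalently Corollary~\ref{cor:Hom-E-R}, which is what the paper invokes) together with properties (1) and (3) of $\Xi$ to get $\Ext^{\neq 0}$-vanishing against all $\weyl(w_\mu \cdot_p 0)$ and $\coweyl(w_\mu \cdot_p 0)$, hence tiltingness, then indecomposability and identification of the highest weight, and finally a rerun of the proof of Lemma~\ref{lem:cohom-G1}. This is exactly the paper's argument, so on the level of strategy there is nothing to add.

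One step of your write-up is not correct as literally stated: the indecomposability of $\Xi(\PEx^\bk_\lambda)$. You assert that in $E := \bigoplus_n \Hom(\PEx^\bk_\lambda, \PEx^\bk_\lambda \langle n \rangle [n])$ the ``positively- and negatively-graded parts form a nil ideal.'' The subspace $\bigoplus_{n \neq 0} E_n$ is not an ideal: a composite of elements of degrees $n$ and $-n$ lands in $E_0$, so this subspace is not even closed under multiplication. Your two ingredients are the right ones --- $E_0 = \End(\PEx^\bk_\lambda)$ is local, and $\PEx^\bk_\lambda \langle n \rangle [n] \not\cong \PEx^\bk_\lambda$ for $n \neq 0$, so by Krull--Schmidt every homogeneous element of nonzero degree, and every composite $E_{-n} \cdot E_n$, is a non-isomorphism --- and these show that $\mathfrak{m} \oplus \bigoplus_{n \neq 0} E_n$ (with $\mathfrak{m}$ the maximal ideal of $E_0$) is a proper two-sided ideal with quotient $\bk$, i.e.\ that $E$ is \emph{graded-local}. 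But passing from graded-local to local (equivalently, showing this ideal is contained in the radical) for a finite-dimensional $\Z$-graded algebra is a genuine, if standard, theorem; it is precisely what the paper's citation of Gordon--Green supplies. So either cite that result or supply the graded-radical argument; as written the parenthetical justification does not stand. (A much smaller quibble: $\Rep_\varnothing(\Gp_\bk)$ has no projective objects, so your phrase about indecomposable projectives being built from standard modules should be replaced by the usual derived-category criterion --- vanishing of $\Ext^{n\neq 0}$ against all standards and all costandards forces an object of $\Db\Rep_\varnothing(\Gp_\bk)$ to lie in the heart and to be tilting --- which is what the paper tacitly uses as well.)
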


\begin{proof}
The proof is the same as that of Lemma~\ref{lem:cohom-G1} once we have shown that $\Xi(\PEx^\bk_\lambda) \cong \tilt(w_\lambda \cdot_p 0)$. For this we first show that $\Xi(\PEx^\bk_\lambda)$ is a tilting $\Gp_\bk$-module. In fact, for any $\mu \in \bX$ we have
\[
\Ext^n_{\Gp_\bk}(\weyl(w_\mu \cdot_p 0), \Xi(\PEx^\bk_\lambda)) \cong \Ext^n_{\Gp_\bk}(\Xi(\Delta_\bk(\mu)), \Xi(\PEx^\bk_\lambda)).
\]
We deduce an isomorphism
\[
\Ext^n_{\Gp_\bk}(\weyl(w_\mu \cdot_p 0), \Xi(\PEx^\bk_\lambda)) \cong \bigoplus_{m \in \Z} \Hom_{\Db \Coh^{\Gp \times \Gm}(\tcN)_\bk}(\Delta_\bk(\mu), \PEx^\bk_\lambda \langle m \rangle [n+m]).
\]
Using Corollary~\ref{cor:Hom-E-R}, it follows that $\Ext^n_{\Gp_\bk}(\weyl(w_\mu \cdot_p 0), \Xi(\PEx^\bk_\lambda))=0$ unless $n=0$. One shows similarly that $\Ext^n_{\Gp_\bk}(\Xi(\PEx^\bk_\lambda), \coweyl(w_\mu \cdot_p 0))=0$ unless $n=0$. Combining these facts, we see that $\Xi(\PEx^\bk_\lambda)$ indeed is a tilting $\Gp_\bk$-module.

Using the properties of $\Xi$ and~\cite[Theorem~3.1]{gordon-green} we obtain that $\Xi(\PEx^\bk_\lambda)$ is indecomposable, hence isomorphic to $\tilt(w \cdot_p 0)$ for some $w \in \fWext$. It is not difficult to check that $w=w_\lambda$, and the proof is complete.
\end{proof}

\begin{rmk}
\label{rmk:proof-Humphreys-quantum}
The functor $\Xi$ has a ``quantum analogue'' constructed (long before its modular counterpart) by Arkhipov--Bezrukavnikov--Ginzburg. More precisely, if $\Gp'_\C=\Gp_\C/Z(\Gp_\C)$, then the main result of the first part of~\cite{abg} is the construction of a functor
\[
\Xi_\C : \Db \Coh^{\Gp' \times \Gm}(\tcN)_\C \to \Db \Rep_0(\UQ)
\]
with the same properties as the functor $\Xi$ of~\S\ref{ss:relation-tilt}, where $\Rep_0(\UQ)$ is the principal block of $\Rep(\UQ)$.
As in the case of $\Xi$, the formal properties of $\Xi_\C$ imply that this functor sends $\PEx^\C_\lambda$ to $\TQ(w_\lambda \cdot_\ell 0)$ for any $\lambda \in \bY$. Combining this observation with
Theorem~\ref{thm:parities-char-0} provides an alternative proof of~\cite[Theorem~3]{bezru} which does not rely on~\cite{ab}. (This proof replaces the weight arguments of~\cite{bezru} by parity considerations.)
\end{rmk}

%----------------------------------------------------------------------
\subsection{A proof of the Humphreys conjectures in large characteristic}
%----------------------------------------------------------------------

The following theorem is the main result of this paper. It provides a proof of the ``lower bound'' part of Conjecture~\ref{conj:Humphreys} and Conjecture~\ref{conj:Humphreys-relative}, and a full proof in large characteristic.

\begin{thm}
\label{thm:Humphreys-large}
\leavevmode
\begin{enumerate}
\item
\label{it:Humphreys-large-1}
For any $w \in \fW$ and $\lambda \in \bX^+$ which belongs to the lower closure of $w \cdot_p \cC_p$, we have
\[
V_{(\Gp_\bk)_1}(\tilt(\lambda)) \supset \overline{\iota_\bk(\scO_{\bc(w)}^\C)}.
\]
Moreover, if $w \in \fWf$ then
\[
\overline{V} \hspace{-2pt}{}_{(\Gp_\bk)_1}(\tilt(w \cdot_p 0)) \supset \overline{\iota_\bk(\scO^\C_{\bc(w)})}.
\]
\item
\label{it:Humphreys-large-2}
There exists $N \in \Z_{\geq 0}$ (depending on $\Gp_\Z$) such that if $p>N$, then for any $w \in \fW$ and $\lambda \in \bX^+$ which belongs to the lower closure of $w \cdot_p \cC_p$, we have
\[
V_{(\Gp_\bk)_1}(\tilt(\lambda)) = \overline{\iota_\bk(\scO_{\bc(w)}^\C)},
\]
and if $w \in \fWf$ then
\[
\overline{V}\hspace{-2pt}{}_{(\Gp_\bk)_1}(\tilt(w \cdot_p 0)) = \overline{\iota_\bk(\scO^\C_{\bc(w)})}.
\]
\end{enumerate}
\end{thm}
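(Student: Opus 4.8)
The plan is to run everything through the dictionary provided by Proposition~\ref{prop:supp-tilt-cN}. Writing $w=w_\mu$ with $\mu\in\bY$ (possible since $w\in\fW$), that proposition identifies $V_{(\Gp_\bk)_1}(\tilt(w\cdot_p 0))$ with $\supp\bigl(\Ext^\bullet_{\Db\Coh(\tcN)_\bk}(\PEx^\bk_\mu,\PEx^\bk_\mu)\bigr)$ and, when $w\in\fWf$ (so that $\mu\in-\bY^+\subset-\bX^+$), identifies $\overline{V}\hspace{-2pt}{}_{(\Gp_\bk)_1}(\tilt(w\cdot_p 0))$ with $\supp(R\pi_*\PEx^\bk_\mu)$ --- here using that $\cN_\bk$ is affine and $\pi$ proper to replace $R\Gamma(\tcN_\bk,-)$ by the global sections of $R\pi_*(-)$. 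For a general $\lambda$ in the lower closure of $w\cdot_p\cC_p$, Lemma~\ref{lem:Tleq-alcoves} gives $\lambda\Tsim w\cdot_p 0$, hence $V_{(\Gp_\bk)_1}(\tilt(\lambda))=V_{(\Gp_\bk)_1}(\tilt(w\cdot_p 0))$ by Lemma~\ref{lem:V-Tleq}, which reduces every assertion to the case $\lambda=w\cdot_p 0$. Finally, Remark~\ref{rmk:orbit-LV} (with $\omega=1$) identifies the orbit $\scO$ furnished by Theorem~\ref{thm:support-char-0} with $\scO^\C_{\bc(w)}$.

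With this in hand, part~\eqref{it:Humphreys-large-1} is immediate: the ``lower bound'' halves of Proposition~\ref{prop:support-geometry}\eqref{it:support-charp-1}--\eqref{it:support-charp-2} say exactly that the two supports above contain $\overline{\iota_\bk(\scO^\C_{\bc(w)})}$, for every $p$. For part~\eqref{it:Humphreys-large-2} the point is to make the bounds $N_1,N_2$ of Proposition~\ref{prop:support-geometry} uniform in $\lambda$. Here I would use that, by Lusztig--Xi, there are only finitely many antispherical right cells, and that by Proposition~\ref{prop:finite-generation} each such cell $\bc$ has a finite generating set $K_\bc\subset\bc$ under left multiplication by the $t_\nu$, $\nu\in\bY^+$. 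Put $K=\bigcup_\bc K_\bc$, a finite set; for $v\in K$ write $v=w_{\nu_v}$ with $\nu_v\in\bY$ and let $N_2(\nu_v)$ be the bound of Proposition~\ref{prop:support-geometry}\eqref{it:support-charp-2}; set $N:=\max_{v\in K}N_2(\nu_v)$, enlarged if necessary so that $p>N$ forces all standing hypotheses. For $p>N$ and arbitrary $w\in\fW$, write $w=t_\nu v$ with $v\in K_{\bc(w)}$ and $\nu\in\bY^+$. Then $w=v\cdot(v^{-1}t_\nu v)$ with $\ell(w)=\ell(v)+\ell(v^{-1}t_\nu v)$ by Lemma~\ref{lem:length-fW-y+} and~\eqref{eqn:xi2}; and since the $p$-canonical basis is unitriangular for the Bruhat order, comparing the coefficient of $H_{ab}$ on the two sides of an expansion $\puH_a\cdot\puH_b=\sum_c(\cdots)\puH_c$ shows that $\puH_{ab}$ occurs with nonzero coefficient whenever $\ell(ab)=\ell(a)+\ell(b)$, so $w\pRleq v$. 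By Theorem~\ref{thm:ostrik} and Lemma~\ref{lem:V-Tleq} this gives $V_{(\Gp_\bk)_1}(\tilt(w\cdot_p 0))\subset V_{(\Gp_\bk)_1}(\tilt(v\cdot_p 0))$, and since $v\in\bc(w)$ the latter is identified with $\overline{\iota_\bk(\scO^\C_{\bc(w)})}$ by Proposition~\ref{prop:supp-tilt-cN}, Proposition~\ref{prop:support-geometry}\eqref{it:support-charp-2} (valid since $p>N\geq N_2(\nu_v)$) and Remark~\ref{rmk:orbit-LV}. Together with part~\eqref{it:Humphreys-large-1} this yields the equality $V_{(\Gp_\bk)_1}(\tilt(w\cdot_p 0))=\overline{\iota_\bk(\scO^\C_{\bc(w)})}$. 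The relative statement then follows formally: for $w\in\fWf$ one inclusion is part~\eqref{it:Humphreys-large-1}, while the other is $\overline{V}\hspace{-2pt}{}_{(\Gp_\bk)_1}(\tilt(w\cdot_p 0))\subset V_{(\Gp_\bk)_1}(\tilt(w\cdot_p 0))=\overline{\iota_\bk(\scO^\C_{\bc(w)})}$ by~\eqref{eqn:V-oV} and the case just proved.

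The genuinely delicate point is the passage to a \emph{uniform} $N$: all the hard geometric input is already packaged in Proposition~\ref{prop:support-geometry} and Theorem~\ref{thm:support-char-0}, so what remains is to see that the finitely many ``cell generators'' $v\in K$ control the support varieties of all $\tilt(w\cdot_p 0)$, $w\in\fW$. This rests on the finite-generation theorem (Proposition~\ref{prop:finite-generation}) --- itself the main output of the combinatorial Sections~\ref{sec:antispherical-cells}--\ref{sec:finite-generation} --- together with the compatibility of the $p$-order $\pRleq$ with translation by the $t_\nu$ noted above; once these are granted, the remaining argument is bookkeeping with the dictionary of Proposition~\ref{prop:supp-tilt-cN}.
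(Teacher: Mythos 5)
Your proposal is correct, and its overall architecture coincides with the paper's proof: reduce to $\lambda=w\cdot_p 0$ via Lemmas~\ref{lem:Tleq-alcoves} and~\ref{lem:V-Tleq}, translate through Proposition~\ref{prop:supp-tilt-cN} and Remark~\ref{rmk:orbit-LV}, get part~\eqref{it:Humphreys-large-1} from the lower bounds in Proposition~\ref{prop:support-geometry}, and for part~\eqref{it:Humphreys-large-2} use the finiteness of the set of antispherical right cells together with the finite generating sets of Proposition~\ref{prop:finite-generation} to make the bound uniform, finishing with~\eqref{eqn:V-oV} for the relative statement.

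The one place you diverge is the step showing that $w=t_\nu v$ (with $v$ a cell generator) has $V_{(\Gp_\bk)_1}(\tilt(w\cdot_p 0))\subset V_{(\Gp_\bk)_1}(\tilt(v\cdot_p 0))$. You prove the $p$-canonical analogue of~\eqref{eqn:xi3} (that $\ell(ab)=\ell(a)+\ell(b)$ forces $\puH_{ab}$ to occur in $\puH_a\cdot\puH_b$, hence $w\pRleq v$) and then pass through Theorem~\ref{thm:ostrik} to get $w\Tleq v$. The paper instead observes directly that $w\cdot_p 0=v\cdot_p 0+p\nu$, so $\tilt(w\cdot_p 0)$ is a direct summand of $\tilt(v\cdot_p 0)\otimes\tilt(p\nu)$ and $w\Tleq v$ by definition, with no appeal to Theorem~\ref{thm:ostrik} or to any new combinatorics. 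Your combinatorial claim is true (unitriangularity of $(\puH_x)$ in the standard basis plus the fact that every standard basis element occurring in $\puH_a\puH_b$ has length at most $\ell(a)+\ell(b)$ forces the coefficient of $\puH_{ab}$ to equal the coefficient of $H_{ab}$, which is $1$), but your one-line justification by "comparing coefficients" should be expanded along these lines, since a priori basis elements $\puH_c$ with $c>ab$ could contribute to the coefficient of $H_{ab}$ and must be ruled out by a maximality/length argument. The paper's route is a touch more economical for this step; yours has the mild advantage of recording the $p$-analogue of~\eqref{eqn:xi3}, which may be of independent combinatorial interest.
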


\begin{proof}
As explained in~\S\ref{ss:humphreys-conj}, in both cases it suffices to treat the case $\lambda = w \cdot_p 0$. In this case, if $\mu \in \bY$ is such that $w=w_\mu$, by Proposition~\ref{prop:supp-tilt-cN} we have
\[
V_{(\Gp_\bk)_1}(\tilt(w \cdot_p 0)) = \supp \bigl( \Ext^\bullet_{\Db \Coh(\tcN)_\bk}(\PEx^\bk_\mu, \PEx^\bk_\mu) \bigr)
\]
and if $w \in \fWf$ (i.e.~$\mu \in -\bX^+$) we have
\[
\overline{V}\hspace{-2pt}{}_{(\Gp_\bk)_1}(\tilt(w \cdot_p 0)) = \supp(R\pi_* \PEx^\bk_\mu).
\]
Then~\eqref{it:Humphreys-large-1} follows from Proposition~\ref{prop:support-geometry} and Remark~\ref{rmk:orbit-LV}.

To prove~\eqref{it:Humphreys-large-2}, recall that $\fW$ has only a finite number of antispherical right cells (see the proof of Corollary~\ref{cor:stabilization}). Hence it suffices to fix such a cell $\mathbf{c}$ 
and prove that there exists $N_{\mathbf{c}}$ such that if $p>N_{\mathbf{c}}$, for any $w \in \mathbf{c}$ and $\lambda \in \bX^+$ which belongs to the lower closure of $w \cdot_p \cC_p$, we have
\[
V_{(\Gp_\bk)_1}(\tilt(\lambda)) = \overline{\iota_\bk(\scO_{\mathbf{c}}^\C)},
\]
and if moreover $w \in \fWf$ then
\[
\overline{V}\hspace{-2pt}{}_{(\Gp_\bk)_1}(\tilt(w \cdot_p 0)) = \overline{\iota_\bk(\scO^\C_{\mathbf{c}})}.
\]

Fix a finite subset $K \subset \mathbf{c}$ as in Proposition~\ref{prop:finite-generation}. By Proposition~\ref{prop:support-geometry}, there exists $N_{\mathbf{c}}$ such that for any $p > N_{\mathbf{c}}$ and any $\mu \in \bY$ such that $w_\mu \in K$ we have
\[
\supp \bigl( \Ext^\bullet_{\Db \Coh(\tcN)_\bk}(\PEx^\bk_\mu, \PEx^\bk_\mu) \bigr) = \overline{\iota_\bk(\scO_{\mathbf{c}}^\C)}.
\]
Then as above, for any $w \in K$ we have
\[
V_{(\Gp_\bk)_1}(\tilt(w \cdot_p 0)) = \overline{\iota_\bk(\scO_{\mathbf{c}}^\C)}.
\]
If $p>N_{\mathbf{c}}$ and $w \in \mathbf{c}$ is arbitrary, then by Proposition~\ref{prop:finite-generation} there exist $v \in K$ and $\nu \in \bY^+$ such that $w=t_\nu v$. Then $w \cdot_p 0 = v \cdot_p 0 + p\nu$, so that $\tilt(w \cdot_p 0)$ is a direct summand of $\tilt(v \cdot_p 0) \otimes \tilt(p\nu)$, and hence
$w \Tleq v$. 
By Lemma~\ref{lem:V-Tleq}, this implies that $V_{(\Gp_\bk)_1}(\tilt(w \cdot_p 0)) \subset V_{(\Gp_\bk)_1}(\tilt(v \cdot_p 0)) = \overline{\iota_\bk(\scO_{\mathbf{c}}^\C)}$. Since the reverse inclusion was already proved in~\eqref{it:Humphreys-large-1}, this implies that $V_{(\Gp_\bk)_1}(\tilt(w \cdot_p 0))=\overline{\iota_\bk(\scO_{\mathbf{c}}^\C)}$.

If we assume in addition that $w \in \fWf$, then by~\eqref{eqn:V-oV} we have
\[
\overline{V}\hspace{-2pt}{}_{(\Gp_\bk)_1}(\tilt(w \cdot_p 0)) \subset V_{(\Gp_\bk)_1}(\tilt(w \cdot_p 0)).
\]
The right-hand side is now known to be equal to $\overline{\iota_\bk(\scO_{\mathbf{c}}^\C)}$, and the left-hand side contains $\overline{\iota_\bk(\scO_{\mathbf{c}}^\C)}$ by~\eqref{it:Humphreys-large-1}. We deduce that $\overline{V}\hspace{-2pt}{}_{(\Gp_\bk)_1}(\tilt(w \cdot_p 0)) = \overline{\iota_\bk(\scO_{\mathbf{c}}^\C)}$.
\end{proof}

\begin{rmk}
\label{rmk:main-thm}
\leavevmode
\begin{enumerate}
\item
From the proof of Theorem~\ref{thm:Humphreys-large} we see that if Conjecture~\ref{conj:Humphreys} is true, then Conjecture~\ref{conj:Humphreys-relative} also holds (for any fixed $\bk$).
\item
Running the arguments of the proof of Theorem~\ref{thm:Humphreys-large} backwards, we see that the bounds $N_1$ and $N_2$ in Proposition~\ref{prop:support-geometry} can be chosen to be independent of $\lambda$.
\end{enumerate}
\end{rmk}

%------------------------------------------------------------------------
\subsection{A simplicity criterion for parity objects}
%------------------------------------------------------------------------

We conclude this section with a criterion that guarantees that certain parity objects in $\Db \Coh^{\Gp \times \Gm}(\tcN)_\bk$ are simple given the information that a corresponding tilting module remains indecomposable upon quantization. This criterion will be used below in the course of the proof of the Humphreys conjecture in type $\mathbf{G}_2$.

\begin{cor}
\label{cor:E-simple}
Let $\lambda \in \bY$, and assume that $\tilt(w_\lambda \cdot_p 0)_q \cong \TQ(w_\lambda \cdot_p 0)$. Then $\PEx^\bk_\lambda \cong \fL^\bk(\lambda)$.
\end{cor}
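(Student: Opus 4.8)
The plan is to deduce this from the simplicity criterion of Lemma~\ref{lem:criterion-E-L}: to obtain $\PEx^\bk_\lambda\cong\fL^\bk(\lambda)$ it suffices, by that lemma, to prove that the inequality
\[
\dim_\bk\Hom(\Delta_\bk(\mu),\PEx^\bk_\lambda\langle m\rangle[m])\ \geq\ \dim_\C\Hom(\Delta_\C(\mu),\PEx^\C_\lambda\langle m\rangle[m])
\]
is in fact an equality for all $\mu\in\bX$ and $m\in\Z$. All these dimensions are finite and vanish for all but finitely many pairs $(\mu,m)$, and the inequality already holds termwise; hence it is enough to prove, for each fixed $\mu$, that the two sides have equal sums over $m$. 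I would then exploit the fact that these ``ungraded'' sums are exactly the quantities read off by the grading-forgetting functors to categories of (modular, resp.\ quantum) tilting modules.

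Concretely, first I would apply the functor $\Xi$ of~\S\ref{ss:relation-tilt} to $\Delta_\bk(\mu)$ and $\PEx^\bk_\lambda$. Its Hom property, together with $\Xi(\Delta_\bk(\mu))\cong\weyl(w_\mu\cdot_p 0)$ and $\Xi(\PEx^\bk_\lambda)\cong\tilt(w_\lambda\cdot_p 0)$ (the latter established in the proof of Proposition~\ref{prop:supp-tilt-cN}, using here that $\lambda\in\bY$), identifies $\sum_{m}\dim_\bk\Hom(\Delta_\bk(\mu),\PEx^\bk_\lambda\langle m\rangle[m])$ with $\dim\Hom_{\Db\Rep_\varnothing(\Gp_\bk)}(\weyl(w_\mu\cdot_p 0),\tilt(w_\lambda\cdot_p 0))$. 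Since a tilting module has no higher $\Ext$ out of a Weyl module, this equals the multiplicity $(\tilt(w_\lambda\cdot_p 0):\weyl(w_\mu\cdot_p 0))_\Delta$. (For $\mu\notin\bY$ this multiplicity is $0$, as $\tilt(w_\lambda\cdot_p 0)$ lies in the principal block; so the termwise equality is automatic there and we may assume $\mu\in\bY$.) Running the identical argument in the quantum setting with the functor $\Xi_\C$ of Remark~\ref{rmk:proof-Humphreys-quantum}---which sends $\Delta_\C(\mu)$ to $\weyl_q(w_\mu\cdot_\ell 0)$ and $\PEx^\C_\lambda$ to $\TQ(w_\lambda\cdot_\ell 0)$, where $\ell:=p$---identifies the characteristic-$0$ sum with $(\TQ(w_\lambda\cdot_\ell 0):\weyl_q(w_\mu\cdot_\ell 0))_\Delta$.

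It then remains to match the two multiplicities. Here I would invoke the hypothesis together with Remark~\ref{rmk:char-tilting}: the quantization $\tilt(w_\lambda\cdot_p 0)_q$ has the same character as $\tilt(w_\lambda\cdot_p 0)$, and since the quantum Weyl modules have the same characters as the classical ones (which are linearly independent) this forces $(\tilt(w_\lambda\cdot_p 0)_q:\weyl_q(w_\mu\cdot_\ell 0))_\Delta=(\tilt(w_\lambda\cdot_p 0):\weyl(w_\mu\cdot_p 0))_\Delta$ for every $\mu$. Combined with the assumed isomorphism $\tilt(w_\lambda\cdot_p 0)_q\cong\TQ(w_\lambda\cdot_p 0)$, this yields $(\TQ(w_\lambda\cdot_\ell 0):\weyl_q(w_\mu\cdot_\ell 0))_\Delta=(\tilt(w_\lambda\cdot_p 0):\weyl(w_\mu\cdot_p 0))_\Delta$, which is precisely the equality of sums we need; Lemma~\ref{lem:criterion-E-L} then gives $\PEx^\bk_\lambda\cong\fL^\bk(\lambda)$.

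I do not anticipate a genuine obstacle: the corollary is essentially a repackaging of Lemma~\ref{lem:criterion-E-L} with the modular/quantum comparison of tilting characters. The only points that need a little care are the verification that $\Xi$ and $\Xi_\C$ collapse exactly the shift $\langle 1\rangle[1]$ (so that the sum over $m$ on the coherent side turns into an honest $\Hom$-space of modules, and the $\Ext$-vanishing used above is legitimate), and the bookkeeping with the dilated dot action needed to see that the weights occurring stay inside the principal blocks.
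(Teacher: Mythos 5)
Your proposal is correct and follows essentially the same route as the paper: translate the hypothesis, via characters, into an equality of Hom-dimensions (equivalently filtration multiplicities) between the modular and quantum tilting modules, transport it through $\Xi$ and its quantum analogue $\Xi_\C$ to the graded Hom-spaces on $\tcN_\bk$ and $\tcN_\C$, and conclude with Lemma~\ref{lem:criterion-E-L}. The only cosmetic point is that $\dim\Hom(\weyl(w_\mu\cdot_p 0),\tilt(w_\lambda\cdot_p 0))$ counts the costandard multiplicity $(\tilt(w_\lambda\cdot_p 0):\coweyl(w_\mu\cdot_p 0))$ rather than the standard one, but for tilting modules these coincide (and both are determined by the character), so your argument is unaffected.
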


\begin{proof}
Our assumption implies that for any $\mu \in \bX$ we have
\[
\dim_\bk \bigl( \Hom_{\Gp_\bk}( \weyl(w_\mu \cdot_p 0), \tilt(w_\lambda \cdot_p 0)) \bigr) = \dim_\C \bigl( \Hom_{\UQ}( \weyl_q(w_\mu \cdot_p 0), \TQ(w_\lambda \cdot_p 0)) \bigr).
\]
Using the properties of the functor $\Xi$ considered in the course of the proof of Proposition~\ref{prop:supp-tilt-cN} and the analogous properties in the quantum setting (see~\S\ref{ss:proof-support-bezru}), this implies that
\[
\sum_{m \in \Z} \dim_\F \bigl( \Hom_\F(\Delta_\F(\mu), \PEx^\F_\lambda \langle m \rangle [m] ) \bigr) = \sum_{m \in \Z} \dim_\C \bigl( \Hom_\C(\Delta_\C(\mu), \PEx^\C_\lambda \langle m \rangle [m] ) \bigr).
\]
Hence the inequalities in
Lemma~\ref{lem:criterion-E-L} must be equalities, and this lemma ensures that $\PEx^\bk_\lambda \cong \fL^\bk(\lambda)$.
\end{proof}

%%%%%%%%%%%%%%%%%%%%%%%%%%%%%%%%
\section{Examples}
\label{sec:examples}
%%%%%%%%%%%%%%%%%%%%%%%%%%%%%%%%

\newcommand{\sreg}{\mathrm{sreg}}

%---------------------------------------------------
\subsection{Easy cases}
\label{ss:examples-easy}
%---------------------------------------------------

In this subsection, we explain how to check the main conjectures for the regular, subregular, and zero nilpotent orbits.

%=========================
\subsubsection{Regular orbit}
%=========================

As explained in Example~\ref{ex:cell-1}, $\{1\}$ is a right cell, which is clearly antispherical. The corresponding orbit in $\cN_\C$ is the unique open orbit $\scO^\C_\reg$, and for any $\bk$ the orbit $\iota_\bk(\scO_\reg^\C)$ is the unique open orbit $\scO_\reg^\bk \subset \cN_\bk$.
Conjecture~\ref{conj:Humphreys} and Conjecture~\ref{conj:Humphreys-relative} are clearly true if $w=1$ since $\tilt(0)=\bk$.

%=========================
\subsubsection{Subregular orbit}
\label{sss:Humphreys-sreg}
%=========================

Now, consider the subregular orbit $\scO^\C_\sreg \subset \cN_\C$, i.e.~the unique orbit which is open in $\cN_\C \smallsetminus \scO^\C_\reg$. Then for any $\bk$ the orbit $\iota_\bk(\scO^\C_\sreg)$ is the subregular orbit $\scO^\bk_\sreg$, i.e.~the unique orbit which is open in $\cN_\bk \smallsetminus \scO^\bk_\reg$. Let $\bc_\sreg \subset \fW$ be the corresponding antispherical cell. Then by Lemma~\ref{lem:full-support} and Proposition~\ref{prop:supp-tilt-cN}, if $w \in \bc_\sreg$ we have $V_{(\Gp_\bk)_1}(\tilt(w \cdot_p 0)) \subset \cN_\bk \smallsetminus \scO^\bk_\reg = \overline{\scO^\bk_\sreg}$. On the other hand, by Theorem~\ref{thm:Humphreys-large} we have $V_{(\Gp_\bk)_1}(\tilt(w \cdot_p 0)) \supset \overline{\scO^\bk_\sreg}$. Hence Conjecture~\ref{conj:Humphreys} holds for these values of $w$. Similar arguments show that Conjecture~\ref{conj:Humphreys-relative} also holds for these values of $w$.

%=========================
\subsubsection{Zero orbit}
\label{sss:Humphreys-0}
%=========================

Finally, consider the unique minimal antispherical cell $\bc_0 \subset \fW$, corresponding to the orbit $\{0\} \subset \cN_\C$. The union of the lower closures of the alcoves of the form $w \cdot_p \cC_p$ with $w \in \bc_0$ is $(p-1)\rho + \bX^+$ (see in particular~\cite[Proposition~6]{andersen}). If $\lambda = (p-1)\rho + \mu$ with $\mu \in \bX^+$ then $\tilt(\lambda)$ is a direct summand in $\tilt((p-1)\rho) \otimes \tilt(\mu)$. Since $\tilt((p-1)\rho)$ is projective and injective as a $(\Gp_\bk)_1$-module (see~\cite[Proposition~II.10.2]{jantzen}), so is $\tilt((p-1)\rho) \otimes \tilt(\mu)$, and finally so is $\tilt(\lambda)$. Hence $V_{(\Gp_\bk)_1}(\tilt(\lambda))=\{0\} = \iota_\bk(\{0\})$, so that Conjecture~\ref{conj:Humphreys} holds if $w \in \bc_0$. Similar arguments show that Conjecture~\ref{conj:Humphreys-relative} also holds if $w \in \bc_0$.

%---------------------------------------------------
\subsection{Type \texorpdfstring{$\mathbf{C}_2$}{C2}}
\label{ss:type-C2}
%---------------------------------------------------

In this subsection we assume that $\Gp_\Z$ is quasi-simple of type $\mathbf{C}_2$, so that $\Gp_\bk=\mathrm{Sp}_4(\bk)$. We will prove Conjecture~\ref{conj:Humphreys} and Conjecture~\ref{conj:Humphreys-relative} in this case, under the assumption that $p>5$. By Remark~\ref{rmk:main-thm}, it suffices to consider Conjecture~\ref{conj:Humphreys}.

\begin{figure}
\includegraphics[scale=0.55]{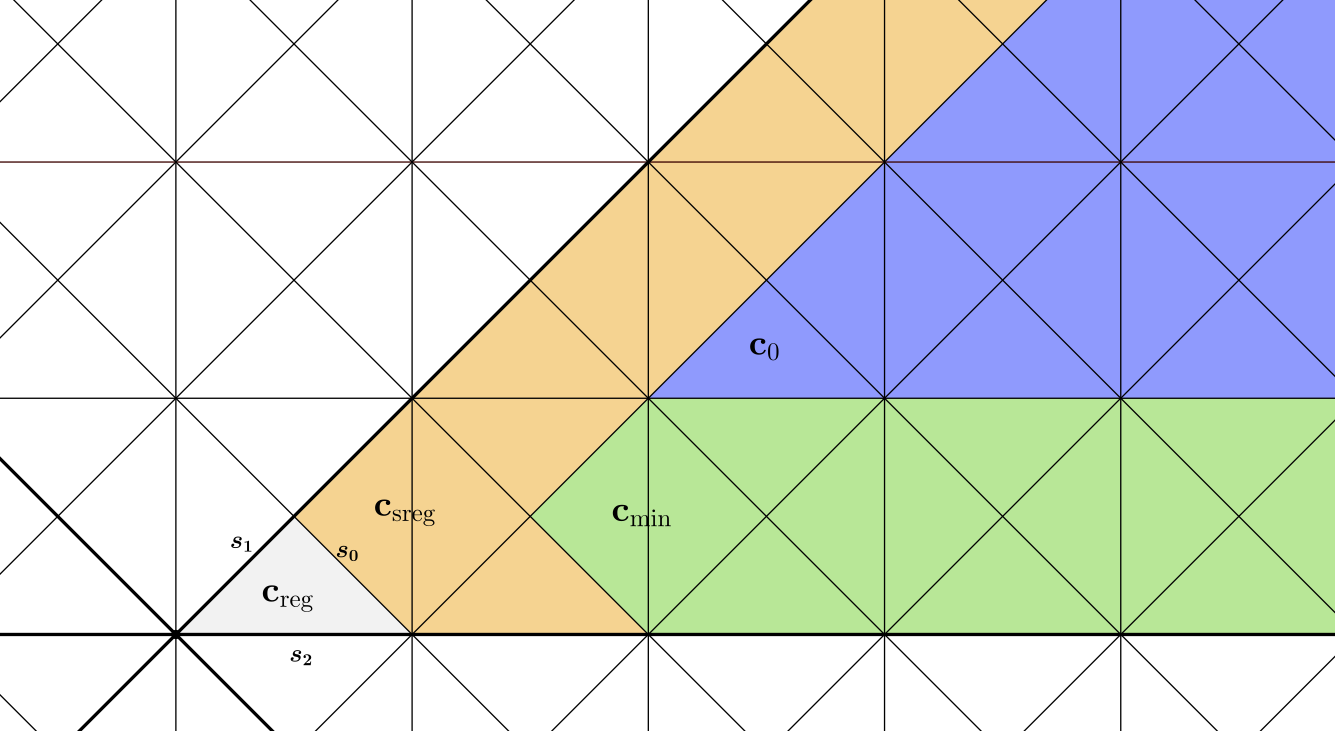}
%\caption{Decomposition of $\bX^+$ into weight cells for $\Gp_\Z$ of type $\mathbf{C}_2$.}
\caption{Antispherical right cells for $\Gp_\Z$ of type $\mathbf{C}_2$.}\label{fig:c2}
\end{figure}

We denote by $\alpha_1$ and $\alpha_2$ the simple roots, with $\alpha_1$ short and $\alpha_2$ long. In this case $\Gp_\bk$ has 4 orbits in $\cN_\bk$ (for any $\bk$): the orbits $\scO_\reg^\bk$, $\scO^\bk_\sreg$ and $\{0\}$ considered in~\S\ref{ss:examples-easy}, and the minimal orbit $\scO_{\min}^\bk$. The antispherical right cells have been determined in~\cite{lusztig-cells1} and are depicted in Figure~\ref{fig:c2}.  Let $\bc_{\min} \subset \fW$ be the antispherical cell corresponding to $\scO^\C_{\min}$; then in view of the cases considered in~\S\ref{ss:examples-easy}, to settle Conjecture~\ref{conj:Humphreys} in this case it suffices to consider the case $w \in \bc_{\min}$. In fact, using Theorem~\ref{thm:Humphreys-large}, it suffices to prove that if $w \in \bc_{\min}$ we have $V_{(\Gp_\bk)_1}(\tilt(w \cdot_p 0)) \subset \overline{\scO^\bk_{\min}}$, or in other words $V_{(\Gp_\bk)_1}(\tilt(w \cdot_p 0)) \not\supset \overline{\scO^\bk_\sreg}$.

Let $H \subset \Gp_\bk$ be the derived subgroup of the Levi factor of $\Gp_\bk$ associated with $\alpha_1$ (so $H \cong \mathrm{SL}_2(\bk)$). Then if $\cN_\bk(H) \subset \cN_\bk$ is the nilpotent cone of $H$, it is not difficult to check that $\overline{\scO_\sreg^\bk} = \overline{\Gp_\bk \cdot \cN_\bk(H)}$. Now if $M$ is an $H$-module we can consider the support variety $V_{H_1}(M) \subset \cN_\bk(H)$, and using~\cite[(2.2.10)]{npv}, the considerations above show that for any $\Gp_\bk$-module $M$ we have
\begin{equation}
\label{eqn:equiv-support-B2-modular}
V_{(\Gp_\bk)_1}(M) \subset \overline{\scO^\bk_{\min}} \quad \text{iff} \quad V_{H_1}(M_{|H}) \neq \cN_\bk(H).
\end{equation}

Now, consider the quantum group $\UQ$ (associated with a root of unity of order $\ell=p$), the subalgebra $\UQ(H) \subset \UQ$ which quantizes $H$, and the corresponding small quantum group $\uQ(H) \subset \uQ$. Then by the same arguments as above (replacing the reference to~\cite{npv} to a reference to~\cite[Lemma~3.4]{ostrik-supp}) show that for any $M$ in $\Rep(\UQ)$ we have
\begin{equation}
\label{eqn:equiv-support-B2-quantum}
\text{if } V_{\uQ}(M) \subset \overline{\scO^\C_{\min}} \quad \text{then} \quad V_{\uQ(H)}(M_{|\UQ(H)}) \neq \cN_\C(H),
\end{equation}
where $\cN_\C(H) \subset \cN_\C$ is the nilpotent cone of the complex counterpart of $H$.

Fix $w \in \bc_{\min}$ and set $M:=\tilt(w \cdot_p 0)$. As in Remark~\ref{rmk:char-tilting} we consider the corresponding (possibly decomposable) tilting module $M_q$ in $\Rep(\UQ)$. By~\cite[Theorem~3.9]{rasmussen}, this quantum tilting module is a direct sum of indecomposable tilting modules of the form $\TQ(x \cdot_p 0)$ with $x \in \bc_{\min} \sqcup \bc_0$ (where $\bc_0$ is the antispherical right cell corresponding to the orbit $\{0\}$). In particular, by the quantum Humphreys conjecture (see~\S\ref{ss:quantum-Humphreys}), we have $V_{\uQ}(M_q) \subset \overline{\scO^\C_{\min}}$, i.e.
\begin{equation}
\label{eqn:support-M-quantum}
V_{\uQ(H)} \bigl( (M_q)_{|\UQ(H)} \bigr) \neq \cN_\C(H)
\end{equation}
by~\eqref{eqn:equiv-support-B2-quantum}. Now by~\cite[Proposition~6.2.1]{hardesty}\footnote{The same argument can be adapted to all types.}, under our assumptions $(M_q)_{|\UQ(H)}$ is a tilting object in $\Rep(\UQ(H))$; hence this object is the ``quantization'' of the tilting module $M_{|H}$ in $\Rep(H)$ by the process considered in Remark~\ref{rmk:char-tilting}. 

For $\nu$ a dominant weight for $H$, we denote by $\tilt^H(\nu)$, resp.~$\TQ^H(\nu)$, the indecomposable tilting module in $\Rep(H)$, resp.~in $\Rep(\UQ(H))$, of highest weight $\nu$. If $\mu$ belongs to the fundamental alcove $\mathcal{C}_p(H)$ for $H$, we have $V_{\uQ(H)}(\TQ^H(\mu)) = \cN_\C(H)$; hence
\eqref{eqn:support-M-quantum} implies that $(M_q)_{|\UQ(H)}$ has no direct summand of the form $\TQ^H(\mu)$ with $\mu \in \cC_p(H)$. 
By Remark~\ref{rmk:multplicities-mod-quantum}, this in turn implies that $M_{|H}$ has no direct summand of the form $\tilt^H(\mu)$ with $\mu \in \cC_p(H)$. By Lemma~\ref{lem:full-support} and Proposition~\ref{prop:supp-tilt-cN}, this means that $V_{H_1}(M_{|H}) \neq \cN_\bk(H)$, hence concludes the proof by~\eqref{eqn:equiv-support-B2-modular}.

%---------------------------------------------------
\subsection{Type \texorpdfstring{$\mathbf{G}_2$}{G2}}
\label{ss:type-G2}
%---------------------------------------------------
In this subsection we assume that $\Gp_\Z$ is simply-connected and quasi-simple of type $\mathbf{G}_2$. Unfortunately, we were unable to prove the main conjectures for this group.  Below, we will describe partial progress towards Conjectures~\ref{conj:Humphreys} and~\ref{conj:Humphreys-relative} under the assumption that $p>7$. Specifically, these conjectures will be shown to hold for every orbit except the \emph{middle orbit}.  For the middle orbit, we check Conjecture~\ref{conj:Humphreys-relative} when $w$ is the smallest element of $\fWf$ in the corresponding cell.

\begin{figure}
\includegraphics[scale=0.7]{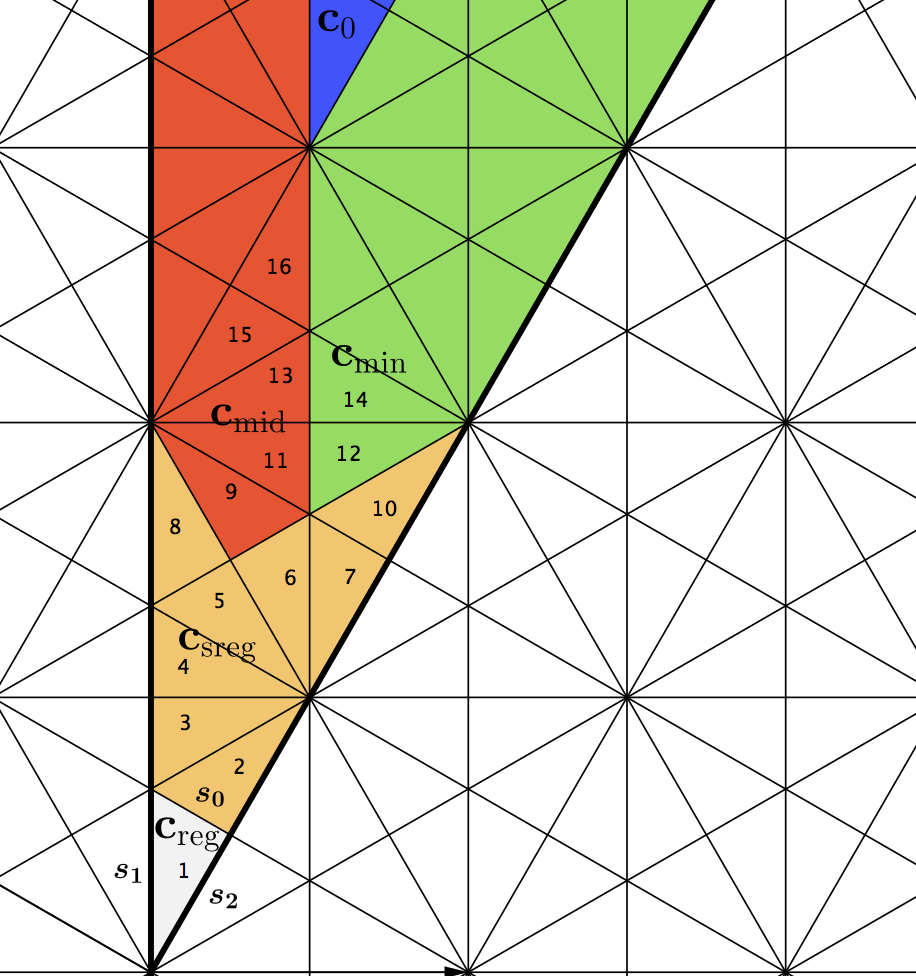}
%\caption{Decomposition of $\bX^+$ into weight cells for $\Gp_\Z$ of type $\mathbf{G}_2$.}
\caption{Antispherical right cells for $\Gp_\Z$ of type $\mathbf{G}_2$.}
\label{fig:g2}
\end{figure}

\subsubsection{Preliminaries}

We let $\alpha_1$ and $\alpha_2$ be the simple roots with $\alpha_1$ short and $\alpha_2$ long, and 
denote by $s_1, s_2$ the corresponding reflections.  Let $s_0 \in S \smallsetminus \Sf$ be the remaining simple reflection in $W$.
The fundamental weights are given by $\varpi_1 = 2\alpha_1 + \alpha_2$ and $\varpi_2 = 3\alpha_1 + 2\alpha_2$, so that $3\alpha_1 + \alpha_2$, 
$\varpi_2$ and $\alpha_2$ form the set of long positive roots, and $\alpha_1$, $\varpi_1$ and $\alpha_1+\alpha_2$ form the set of short positive roots. 
In this case, $\Gp_\bk$ has 5 orbits in $\cN_\bk$ (for any $\bk$), whose closures are linearly ordered: the orbits $\scO_\reg^\bk$, $\scO^\bk_\sreg$ and $\{0\}$ considered in~\S\ref{ss:examples-easy}, 
and two additional orbits: the middle orbit $\scO_{\mathrm{mid}}^\bk$ and the minimal orbit $\scO_{\min}^\bk$. Let $\bc_{\min}$ and $\bc_{\mathrm{mid}}$ denote the antispherical right cells corresponding to $\scO_{\min}^\C$ and $\scO_{\mathrm{mid}}^\C$ respectively. 

The antispherical right cells have been determined in~\cite{lusztig-cells1} and are depicted in Figure~\ref{fig:g2}. This figure also shows certain alcoves that will play a role in the arguments below. For any $k \in \{1,\ldots,16\}$, we will denote by $w_k \in \fW$ the unique element such that $w_k\cdot_p 0$ belongs to the alcove labeled ``$k$.''

We will denote by $\UQ$ the complex quantum group of type $\mathbf{G}_2$ (associated with a root of unity of order $\ell=p$), and by $\uQ$ the corresponding small quantum group. In the following statement we use (once again) the notation introduced in Remark~\ref{rmk:char-tilting}.

\begin{lem}\label{lem:G2-tilting-char}
For any $k \in \{1,\ldots,16\}$, we have
$\tilt(w_k\cdot_p 0)_q = \TQ(w_k\cdot_p 0)$.
\end{lem}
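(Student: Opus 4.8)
The statement to be proved is that for each of the sixteen distinguished alcoves labeled in Figure~\ref{fig:g2}, the quantization $\tilt(w_k\cdot_p 0)_q$ of the modular indecomposable tilting module is itself an \emph{indecomposable} quantum tilting module, namely $\TQ(w_k\cdot_p 0)$. By Remark~\ref{rmk:char-tilting}, the multiplicity of $\TQ(x\cdot_p 0)$ in $\tilt(w_k\cdot_p 0)_q$ equals the coefficient of $\uN^\circ_x$ in the expansion of $\puN^\circ_{w_k}$ in the standard basis $(\uN^\circ_y : y \in \fW)$ of $\Masph^\circ$. So the claim is purely combinatorial: it asserts that $\puN^\circ_{w_k} = \uN^\circ_{w_k}$, i.e.\ that these particular elements of the $p$-canonical basis of the antispherical module coincide with the corresponding Kazhdan--Lusztig (ordinary) basis elements, for the sixteen chosen $w_k$.

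\textbf{Key steps.} First I would reduce to a statement about the $p$-canonical basis of $\Masph$ (not $\Masph^\circ$): since specialization at $v=1$ can only merge basis elements, it suffices to check $\puN_{w_k} = \uN_{w_k}$ in $\Masph$. Second, I would invoke the known computation of the $p$-canonical basis of the antispherical module in type $\mathbf{G}_2$ for $p > 7$; this is available in the literature (e.g.\ from the tables/algorithms of Jensen--Williamson and the LibreSoergel-type computations, cited in the paper's references on $p$-cells, and implicit in~\cite{jw}). The point is that for $p > 7$ (the Coxeter number of $\mathbf{G}_2$ being $6$, and $7$ being the unique torsion-type prime one must exclude), the $p$-canonical basis of $\Masph$ agrees with the ordinary Kazhdan--Lusztig basis \emph{except} on a controlled, explicitly describable set of elements — and none of the sixteen $w_k$ lie in that exceptional set. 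Concretely, one checks alcove by alcove: for each $k$, either $w_k$ is small enough that $\uN_{w_k}$ is already self-dual under the bar involution with the right degree-bound property (forcing $\puN_{w_k} = \uN_{w_k}$), or one exhibits $w_k$ as obtained from a smaller element by multiplication by a generator $\uH_s$ in a way where positivity~\eqref{eqn:positivity-puH} together with the known smaller cases pins down the coefficients. Third, having established $\puN_{w_k} = \uN_{w_k}$, the conclusion $\tilt(w_k\cdot_p 0)_q = \TQ(w_k\cdot_p 0)$ follows immediately from Remark~\ref{rmk:char-tilting} (equivalently from Theorem~\ref{thm:characters} together with Soergel's quantum character formula recalled in~\S\ref{ss:reminder-ostrik}).

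\textbf{Main obstacle.} The hard part is not any single deduction but the verification that the sixteen chosen alcoves genuinely avoid all the places where the $\mathbf{G}_2$ $p$-canonical antispherical basis deviates from the Kazhdan--Lusztig basis for $p > 7$. In practice this is a finite check, but it requires either citing the explicit type-$\mathbf{G}_2$ $p$-canonical basis data or re-deriving the relevant decomposition numbers by hand. I expect to handle it by a careful case analysis keyed to Figure~\ref{fig:g2}: group the $w_k$ by the cell they lie in ($\bc_{\min}$, $\bc_{\mathrm{mid}}$, and the cells already treated in~\S\ref{ss:examples-easy}), and for each use the $p > 7$ hypothesis to exclude the single ``bad'' prime $p = 7$ that would otherwise perturb the structure of, for instance, the parity sheaves supported on the relevant Schubert varieties in the affine flag variety of $\mathbf{G}_2^\wedge$. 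An alternative, if one prefers to avoid invoking external $p$-canonical basis tables, is to run the Bott--Samelson-type argument of~\S\ref{ss:relation-Gr}: each $\tilt(w_k \cdot_p 0)$ corresponds via $\Upsilon$ to the parity exotic sheaf $\PEx^\bk_{\lambda_k}$, and one checks directly — using Corollary~\ref{cor:Hom-E-R} and a dimension count over $\Z$ versus $\F_p$ — that no extra idempotent appears in the relevant Bott--Samelson object when $p > 7$, so that $\PEx^{\F_p}(\omega,\uw)$ and $\PEx^{\overline{\Q}_p}(\omega,\uw)$ have the same indecomposable summands for the reduced expression of $w_{\lambda_k}$; then Corollary~\ref{cor:E-simple}'s hypothesis is met.
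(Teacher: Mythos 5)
Your reduction of the lemma to the combinatorial identity $\puN^\circ_{w_k}=\uN^\circ_{w_k}$ (via Remark~\ref{rmk:char-tilting}, i.e.\ Theorem~\ref{thm:characters} together with Soergel's quantum character formula) is correct, and verifying that identity for the sixteen elements would indeed prove the lemma. The gap is that this verification is never carried out: the assertion that the antispherical $p$-canonical basis of affine $\mathbf{G}_2$ is ``available in the literature'' for all $p>7$, with the locus where it deviates from the Kazhdan--Lusztig basis known to avoid the sixteen $w_k$, is not something you can cite. The computations in~\cite{jw} and related sources treat finite types and a few small (affine) examples, not the antispherical module of affine $\mathbf{G}_2$ over the region of Figure~\ref{fig:g2}; and the general coincidence $\puN_w=\uN_w$ is only known for $p\gg 0$ with no effective bound, so the statement ``for $p>7$ the bases agree outside a controlled set not meeting the $w_k$'' is exactly the content of the lemma, not an available input. (Your heuristic that $p=7$ is ``the unique torsion-type prime'' is also unsupported: the bad primes of $\mathbf{G}_2$ are $2$ and $3$, and the hypothesis $p>7$ enters through the representation-theoretic inputs used in this section.) Your alternative route via Bott--Samelson objects and idempotents over $\Z$ versus $\F_p$ defers the same computation (it amounts to computing the relevant intersection forms for each $w_k$ uniformly in $p$), so it does not close the gap either.

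For comparison, the paper proves the lemma by pinning down the quantum tilting modules directly: the cases $w_1,\dots,w_8$ and $w_9$ come from Rasmussen's computations of second-cell tilting multiplicities~\cite{rasmussen}, the cases $w_{13},w_{15},w_{16}$ are checked using Andersen's sum formula~\cite{andersen-sum}, and the remaining cases are deduced from these by wall-crossing identities such as $\TQ(w_{10}\cdot_p 0)=\Theta_{s_0}(\TQ(w_{7}\cdot_p 0))$ and $\Theta_{s_1}(\TQ(w_{10}\cdot_p 0))=\TQ(w_{12}\cdot_p 0)\oplus\TQ(w_{7}\cdot_p 0)$. If you prefer to keep your combinatorial formulation, you would have to perform the analogous explicit calculation (for instance via antispherical light leaves and intersection forms for each of the sixteen elements, with control on the primes dividing the relevant determinants for every $p>7$); as it stands, that step—which is the whole substance of the lemma—is asserted rather than proved.
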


\begin{proof}
The $w_1$ case is obvious, the $w_2,\ldots, w_8$ cases are handled in \cite[Remark 3.7]{rasmussen} and the $w_{10}$ case follows by observing that
$\TQ(w_{10}\cdot_p 0) = \Theta_{s_0}(\TQ(w_{7}\cdot_p 0))$.\footnote{There appears to have been a slight mistake in \cite[Remark 3.7]{rasmussen} regarding the 
character of $\TQ(w_{10}\cdot_p 0)$.}
This gives the characters of all the tilting modules in $\bc_{\reg}\sqcup \bc_{\sreg}$.  

For the labeled alcoves in 
$\bc_{\mathrm{mid}}$, the $w_{9}$ case immediately follows from \cite[Theorem 3.9]{rasmussen}, the $w_{11}$ case follows from the fact that 
$\TQ(w_{11}\cdot_p 0) = \Theta_{s_1}(\TQ(w_{9}\cdot_p 0))$, and the $w_{13}$, $w_{15}$ and $w_{16}$ cases can be verified by using the sum formula
given in \cite{andersen-sum} (see in particular~\cite[\S 2.13]{andersen-sum}).  Finally, the $w_{12}$ case follows by observing that
\[
\Theta_{s_1}(\TQ(w_{10}\cdot_p 0)) = \TQ(w_{12} \cdot_p 0) \oplus \TQ(w_7 \cdot_p 0)
\]
and then applying \cite[Theorem 3.9]{rasmussen}, and the $w_{14}$ case follows from the fact that $\TQ(w_{14}\cdot_p 0) = \Theta_{s_2}(\TQ(w_{12}\cdot_p 0))$. 
\end{proof}

\subsubsection{The minimal orbit}

We will completely verify Conjecture~\ref{conj:Humphreys} (and hence Conjecture~\ref{conj:Humphreys-relative}) for $w \in \bc_{\min}$. We begin with a lemma similar to \cite[Theorem 3.9]{rasmussen}. (Here, $\bc_0$ is the antispherical right cell corresponding to the orbit $\{0\}$.)

\begin{lem}\label{lem:G2-min-cell}
For any $w \in \bc_{\min}$, the quantum tilting module $\tilt(w\cdot_p 0)_q$ decomposes as a direct sum of modules of the form $\TQ(x\cdot_p 0)$ for $x \in \bc_{\min} \sqcup \bc_0$. 
\end{lem}

\begin{proof}
By Lemma~\ref{lem:G2-tilting-char}, $\TQ(w_{12}\cdot_p 0) = \tilt(w_{12}\cdot_p 0)_q$ and thus by \cite{bezru} we have
$V_{\uQ}(\tilt(w_{12}\cdot_p 0)_q) = \overline{\scO^{\C}_{\min}}$ (see~\S\ref{ss:quantum-Humphreys}). From Figure~\ref{fig:g2}, we can see that for any $w \in \bc_{\min}$ we have $w_{12} \preceq w$, where $\preceq$ is the \emph{weak order} defined in~\cite[\S 2.2]{hardesty}.
By~\cite[Lemma~3.2.1 and its proof]{hardesty} this implies that $\tilt(w \cdot_p 0)$ is a direct summand in a tilting object of the form $\Theta_{r_1} \circ \cdots \circ \Theta_{r_k}(\tilt(w_{12} \cdot_p 0))$ for some $r_1, \ldots, r_k \in S$, and then that $\tilt(w \cdot_p 0)_q$ is a direct summand in $\Theta_{r_1} \circ \cdots \circ \Theta_{r_q}(\TQ(w_{12} \cdot_p 0))$. As above this implies that
$V_{\uQ}(\tilt(w\cdot_p 0)_q) \subset \overline{\scO^{\C}_{\min}}$, hence
that every indecomposable direct summand $\TQ(x\cdot_p 0)$
of $\tilt(w\cdot_p 0)_q$ satisfies $V_{\uQ}(\tilt(x\cdot_p 0)_q) \subset \overline{\scO^{\C}_{\min}}$, hence satisfies $x \in \bc_{\min} \sqcup \bc_0$. 
\end{proof}

Now we observe that $\scO_{\mathrm{mid}}^\bk$ is regular in the Levi factor of $\Gp_\bk$ corresponding to $\alpha_1$. If we let $H \subset \Gp_\bk$ be the derived subgroup
of this Levi factor then, as in \S\ref{ss:type-C2}, to conclude it suffices to show that
\[
V_{H_1}(\tilt(w\cdot_p 0)_{|H}) \neq \cN_\bk(H)
\]
for all $w \in \bc_{\min}$. The proof of Conjecture~\ref{conj:Humphreys} then proceeds identically to the one in \S\ref{ss:type-C2}, where we replace the reference to \cite[Theorem 3.9]{rasmussen} with Lemma~\ref{lem:G2-min-cell}. 

\subsubsection{The middle orbit}

We will now verify Conjecture~\ref{conj:Humphreys-relative} for the tilting module $\tilt(w_{16}\cdot_p 0)$.
It is important to note that $w_{16} \in \fWf$ and that this element is actually the 
\emph{Duflo involution} for $\scO_{\mathrm{mid}}^\bk$. (If $z := s_2s_1s_2s_1s_0$, then it can be checked that 
$\bc_{\mathrm{mid}}\cap \fWf = \{w_{16}z^{r} : r\geq 0\}$.)

\begin{prop}\label{prop:G2-duflo}
Let $\lambda$ be the element corresponding to $w_{16}$ under the bijection $\fWf \leftrightarrow -\bX^+$. Then we have $\PEx_{\lambda}^{\bk} \cong \fL^{\bk}(\lambda)$ and 
\[
\oVG{(\Gp_\bk)_1}(\tilt(w_{16}\cdot_p 0)) = \overline{\scO^{\bk}_{\mathrm{mid}}}.
\]
\end{prop}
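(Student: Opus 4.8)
The plan is to prove the two assertions of Proposition~\ref{prop:G2-duflo} in sequence, with the first (simplicity of the parity exotic sheaf) feeding directly into the second (the relative support computation).

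For the simplicity statement $\PEx_\lambda^\bk \cong \fL^\bk(\lambda)$, I would invoke Corollary~\ref{cor:E-simple}, which reduces the claim to checking that the quantization $\tilt(w_{16}\cdot_p 0)_q$ remains indecomposable, i.e.\ that $\tilt(w_{16}\cdot_p 0)_q \cong \TQ(w_{16}\cdot_p 0)$. But this is exactly the $k=16$ case of Lemma~\ref{lem:G2-tilting-char}, which was established there via the sum formula of \cite{andersen-sum}. So the first assertion is immediate from the already-proven lemmas.

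For the support computation, once we know $\PEx_\lambda^\bk \cong \fL^\bk(\lambda)$, I would use Proposition~\ref{prop:supp-tilt-cN} (applied with $w=w_{16}=w_\lambda$, which lies in $\fWf$, so $\lambda \in -\bX^+$) to identify
\[
\oVG{(\Gp_\bk)_1}(\tilt(w_{16}\cdot_p 0)) = \supp(R\pi_*\PEx^\bk_\lambda) = \supp(R\pi_* \fL^\bk(\lambda)).
\]
The lower bound $\supp(R\pi_*\PEx^\bk_\lambda) \supset \overline{\iota_\bk(\scO^\C_{\bc(w_{16})})} = \overline{\scO^\bk_{\mathrm{mid}}}$ is already contained in Theorem~\ref{thm:Humphreys-large}\eqref{it:Humphreys-large-1} (using Remark~\ref{rmk:orbit-LV} to identify the orbit attached to $w_{16}$ with $\scO^\C_{\mathrm{mid}}$, since $w_{16}\in\bc_{\mathrm{mid}}$). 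For the reverse inclusion, the key point is that the simplicity $\PEx^\bk_\lambda\cong\fL^\bk(\lambda)$ together with the change-of-scalars machinery should force $\supp(R\pi_*\PEx^\bk_\lambda)$ to be cut out by the same closed orbit as in characteristic $0$. Concretely, I would run the argument of Proposition~\ref{prop:support-geometry}\eqref{it:support-charp-1} but now upgraded: since $\PEx^\C_\lambda\cong\fL^\C(\lambda)$ as well (Theorem~\ref{thm:parities-char-0}) and $\supp(R\pi_*\fL^\C(\lambda)) = \overline{\scO^\C_{\mathrm{mid}}}$ by Theorem~\ref{thm:support-char-0}\eqref{it:support-char0-1}, one wants to deduce $\supp(R\pi_*\PEx^\bk_\lambda)\subset\overline{\scO^\bk_{\mathrm{mid}}}$. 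The honest way to get the upper bound is to note that the $\Z_p$-form $\PEx^{\Z_p}_\lambda$ constructed as in the proof of Lemma~\ref{lem:criterion-E-L} satisfies $\varphi^\C_{\Z_p}(\PEx^{\Z_p}_\lambda)\cong\PEx^\C_\lambda$ (because the $\Hom$-dimension inequalities of Lemma~\ref{lem:criterion-E-L} are now equalities, thanks to the char-$0$ simplicity and Lemma~\ref{lem:G2-tilting-char}), and then apply Proposition~\ref{prop:support-change-scalars}-type reasoning to transport the vanishing of stalks at representatives of larger orbits from $\C$ to $\bk$.

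The main obstacle will be the upper bound. Since we are proving this for a \emph{fixed} characteristic $p>7$ rather than for all large $p$, I cannot simply cite the asymptotic statement of Theorem~\ref{thm:Humphreys-large}\eqref{it:Humphreys-large-2}; instead the argument has to go through the explicit $\Z_p$-model. The delicate part is verifying that the equality $\varphi^\C_{\Z_p}(\PEx^{\Z_p}_\lambda)\cong\PEx^\C_\lambda$ really does follow from the equality of $\Hom$-dimensions — this is precisely the content of the last paragraph of the proof of Lemma~\ref{lem:criterion-E-L}, so I would structure the argument to invoke that lemma directly, using Lemma~\ref{lem:G2-tilting-char} (the $w_{16}$ case) to supply the required numerical coincidence via the functor $\Xi$ and its quantum analogue as in the proof of Corollary~\ref{cor:E-simple}. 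Once $\PEx^\bk_\lambda$ is known to have the same stalk/costalk behaviour as in characteristic $0$, the support of $R\pi_*\PEx^\bk_\lambda$ is forced to be $\overline{\scO^\bk_{\mathrm{mid}}}$ by the same perverse-coherent considerations used in the proof of Theorem~\ref{thm:support-char-0}\eqref{it:support-char0-1} combined with the change-of-scalars compatibility, completing the proof.
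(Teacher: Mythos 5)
Your first assertion and your lower bound are fine, and they follow the paper: simplicity of $\PEx^\bk_\lambda$ comes from Corollary~\ref{cor:E-simple} together with the $k=16$ case of Lemma~\ref{lem:G2-tilting-char}, the identification $\oVG(\tilt(w_{16}\cdot_p 0))=\supp(R\pi_*\PEx^\bk_\lambda)$ comes from Proposition~\ref{prop:supp-tilt-cN}, and the inclusion $\supset\overline{\scO^\bk_{\mathrm{mid}}}$ is Proposition~\ref{prop:support-geometry}\eqref{it:support-charp-1} (equivalently Theorem~\ref{thm:Humphreys-large}\eqref{it:Humphreys-large-1}).

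The upper bound, however, has a genuine gap. Your plan is to transport the vanishing of stalks at points of $\scO^\C_\reg$ and $\scO^\C_\sreg$ from $\C$ to $\bk$ via a $\Z_p$-form, using that $\varphi_{\Z_p}^{\C}(\PEx^{\Z_p}_\lambda)\cong\PEx^\C_\lambda$. But this transfer goes the wrong way at a fixed prime: if $x$ lies in a larger orbit, the complex $L(i_x^{\Z_p})^*(R\tilde\pi_*\PEx^{\Z_p}_\lambda)$ is a bounded complex of finitely generated $\Z_p$-modules whose cohomology is only known to be \emph{torsion} (that is what vanishing after $\lotimes_{\Z_p}\C$ gives), and nonzero $p$-torsion does \emph{not} die after $\bk\lotimes_{\Z_p}(-)$ --- by derived Nakayama it survives, and would enlarge $\supp(R\pi_*\PEx^\bk_\lambda)$. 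The freeness/base-change statements of Corollary~\ref{cor:Hom-E-R} apply to $\Hom$'s against $\Delta$'s and $\nabla$'s, not to stalks at nilpotent points, so nothing rules out this torsion for your given $p>7$; this is exactly why Proposition~\ref{prop:support-change-scalars} only produces an unspecified bound $Q$ excluding finitely many primes, and cannot be quoted at a fixed $p$. The paper closes the gap by a different, counting argument: since $\PEx^\bk_\lambda\cong\fL^\bk(\lambda)$, the object $R\pi_*\PEx^\bk_\lambda$ is a simple perverse-coherent (coherent IC) sheaf on $\cN_\bk$, so its support is $\overline{\scO}$ for \emph{some} orbit $\scO$; applying Lemma~\ref{lem:G2-tilting-char} and the same reasoning to every $\mu$ with $w_\mu\in(\bc_{\reg}\sqcup\bc_{\sreg})\cap\fWf$ (a finite set) produces coherent IC objects exhausting all irreducible $\Gp_\bk$-equivariant vector bundles on $\scO^\bk_\reg$ and $\scO^\bk_\sreg$ (their number being the same as over $\C$), and injectivity of the Lusztig--Vogan parametrization over $\bk$ then forces $\scO\subset\overline{\scO^\bk_{\mathrm{mid}}}$, hence $\scO=\scO^\bk_{\mathrm{mid}}$. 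Some argument of this kind, identifying which orbit the char-$p$ Lusztig--Vogan bijection attaches to $\lambda$, is what your proposal is missing.
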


\begin{proof}
We have proved in Lemma~\ref{lem:G2-tilting-char} that $\TQ(w_{16}\cdot_p 0) = \tilt(w_{16}\cdot_p 0)_q$. By
Corollary~\ref{cor:E-simple}, this implies that
$\PEx_{\lambda}^{\bk} \cong \fL^{\bk}(\lambda)$. 
By~\cite[Proposition~2.6 and~\S 4.3]{achar}, it follows
that $R\pi_* \PEx_{\lambda}^\bk$ is the coherent intersection cohomology complex (in the sense of~\cite{arinkin-bezru}) attached to a pair $(\scO,\cV)$, where $\scO \subset \cN_\bk$ is a $\Gp_\bk$-orbit and $\cV$ is an irreducible $\Gp_\bk$-equivariant vector bundle on $\scO$. Then by construction we have $\supp(R\pi_* \PEx_{\lambda}^\bk) =  \overline{\scO}$. 

We will prove that $\scO = \scO^\bk_{\mathrm{mid}}$.
Proposition~\ref{prop:support-geometry}\eqref{it:support-charp-1} and the results of~\cite{bezru} (see~\S\ref{ss:quantum-Humphreys}) already ensure that
$\overline{\scO} \supset \scO^\bk_{\mathrm{mid}}$.
On the other hand, $\bc_{\reg}\sqcup \bc_{\sreg}$ is finite, hence so is its intersection with $\fWf$. Since there are as many irreducible $\Gp_\bk$-equivariant vector bundles on $\scO_\reg^\bk$ and $\scO_\sreg^\bk$ as irreducible $\Gp_\C$-equivariant vector bundles on $\scO_\reg^\C$ and $\scO_\sreg^\C$,
applying Lemma~\ref{lem:G2-tilting-char} and the preceding argument to each 
$\mu \in (\bc_{\reg}\sqcup \bc_{\sreg})\cap \fWf$, we can see that the 
push-forwards $R\pi_*\PEx_{\mu}^{\bk}$ for $\mu \in (\bc_{\reg}\sqcup \bc_{\sreg})\cap \fWf$ exhaust all of the irreducible vector bundles on $\scO_{\reg}^\bk$ and $\scO_{\sreg}^{\bk}$. The Lusztig--Vogan bijection for coefficients $\bk$ (see again~\cite[\S 4.3]{achar}) then forces $\scO \subset \overline{\scO^{\bk}_{\mathrm{mid}}}$, and finally $\scO = \scO^{\bk}_{\mathrm{mid}}$.
\end{proof}

\end{document}